\newtheorem{theorem}{Theorem}
\newtheorem{lemma}{Lemma}
\newtheorem{proposition}{Proposition}
\newtheorem{corollary}{Corollary}
\newtheorem{problem}{Problem}
\newtheorem{remark}{Remark}
\newtheorem{notation}{Notation}
\newtheorem{definition}{Definition}
\theoremstyle{definition}
\newtheorem{example}{Example}
\def\Z{{\mathds Z}}
\def\N{{\mathds N}}
\def\Q{{\mathds Q}}
\def\MA{{\mathbb A}}
\def\MB{{\mathbb B}}
\def\MC{{\mathbb C}}
\def\MR{{\mathds R}}
\def\PLS{{\mathcal{PLS}}}
\def\PDS{{\mathcal{PDS}}}
\def\Aint{{\mathcal{O}}}
\def\K{{\mathcal{K}}}
\def\AC{\mathcal {AC}}
\def\Codes{\mathrm{Int}}
\def\Id{{\mathbf{Id}}}
\newcommand{\nsZ}{\widetilde{\mathds Z}}
\newcommand{\nsA}{\widetilde{\mathbb A}}
\newcommand{\nsB}{\widetilde{\mathbb B}}
\newcommand{\nsuA}{\widetilde{A}}
\newcommand{\Aut}{\mathrm{Aut}}
\newcommand{\Orb}{\mathrm{Orb}}
\newcommand{\Th}{\mathrm{Th}}
\newcommand{\Mod}{\mathrm{Mod}}
\newcommand{\GL}{\mathrm{GL}}
\newcommand{\SL}{\mathrm{SL}}
\newcommand{\UT}{\mathrm{UT}}
\newcommand{\inj}{\mathrm{inj}}
\newcommand{\param}{\mathrm{par}}
\newcommand{\BS}{\mathrm{BS}}
\newcommand{\reg}{\mathrm{reg}}
\newcommand{\abs}{\mathrm{abs}}
\newcommand{\self}{\mathrm{self}}
\newcommand{\CG}{\mathrm{C}}
\newcommand{\ZG}{\mathrm{Z}}
\newcommand{\ring}{\mathrm{ring}}
\newcommand{\group}{\mathrm{group}}
\newcommand{\pre}{\mathrm{pre}}
\newcommand{\tp}{\mathrm{tp}}
\title{Theory of Interpretations I. Foundations\footnote{The research was supported in accordance with the state task of the IM SB RAS, project FWNF-2026-0033.}}
\date{}
\author{Evelina Danyarova\footnote{Sobolev Institute of Mathematics} \quad and \quad Alexei Myasnikov\footnote{Stevens Institute of Technology}} 
\begin{document}

\maketitle

\begin{flushright}
In memory of B.\,I.\,Plotkin
\end{flushright}

\begin{abstract} 
This is the first paper in a series in which we lay down the foundations of the theory of interpretations. We systematically study different types of interpretations and their properties. Some of these interpretations are known, while others are new. Each of them serves a different purpose. In the last section, we describe applications of interpretations to Diophantine problems, first-order classification, isotypeness, definability of structures by types, elimination of imaginaries, richness, logical categories, and bi-interpretations with $\Z$ or $\N$. Additionally, throughout the text, we pose some open questions that naturally arise in this context and provide the most typical examples, usually from algebra.
The current literature is plagued by discrepancies and inconsistencies in definitions, concepts, and fundamental applications of interpretations. To address this, we thoroughly examine various principal notions, definitions, and arguments, bringing order to the existing theory. Simultaneously, we develop several key concepts, such as regular interpretations, regular bi-interpretations, and invertible interpretations, and outline their main applications.

\end{abstract}

\tableofcontents

\section{Introduction}

In this paper, we lay down the foundations of the theory of interpretations. We systematically study different types of interpretations and their properties. Some of these interpretations are known, while others are new. Each of them serves a different purpose. Our main focus is twofold: on the one hand, we study particular properties of interpretations that are required for specific applications to various problems from algebra, geometry, topology, model theory, and logic; on the other hand, we introduce general notions, techniques, and methods that are useful for developing the theory of interpretations itself.  In fact, we aim to outline a framework for the emerging theory that encompasses its own notions and results, techniques, research methods, applications, and general viewpoints on various phenomena that occur in this area.  

This is the first paper in a series; therefore, we discuss in detail various principal notions, provide precise definitions, prove some basic results, and offer the most typical examples, typically from algebra. 

Until recently, the prevalent viewpoint in model theory was that it deals with properties that are transferred via interpretation; hence, the interpretation itself is rarely an object of study, only the fact that it exists. However, it has become apparent that different types of interpretations convey different properties. Interpretations with parameters are particularly effective for undecidability and stability, yet they fall short when it comes to elementary equivalence, elimination of imaginaries, or richness. The distinctions between these various types of interpretations are challenging to articulate using commonly used definitions. Therefore, it is necessary to develop specific notation and terminology to formulate precise definitions, and more importantly, to facilitate the construction of proofs. Standard interpretations, whether absolute or with parameters, are well understood and require no further explanation. However, the role of regular interpretations, also known as interpretations with definable parameters, remains insufficiently clarified. The situation becomes particularly challenging with bi-interpretations. They entail stronger interactions between algebraic structures, their definitions are more involved and complex, and they provide more variety, describing different properties of structures. The best introduction to bi-interpretations is in Hodges' book~\cite{Hodges}, which describes absolute interpretations in detail. It also notes that by adding constants to the language, one can obtain bi-interpretations with parameters. It is important to note that absolute interpretations are very rare. Therefore, one typically deals with various types of bi-interpretations with parameters, among which regular bi-interpretations are the most advantageous. It is challenging to work with such bi-interpretations, particularly when keeping the parameters as constants in the language; therefore, there is a need for precise terminology. To illustrate this point, the notion of bi-interpretability in Hodges' book and the paper~\cite{KhMS} is referred to in this paper as weak bi-interpretability, whereas the bi-interpretability used in the influential paper~\cite{AKNS} is different; we term it here as the strong one. They describe different relations between structures and transfer different properties, with the strong bi-interpretability transferring more properties than the weak one. Sometimes, it is not even clear which type of bi-interpretability is used in the literature.  

One of the main aims of this paper is to thoroughly describe various types of interpretability, particularly bi-interpretability, and to introduce the necessary terminology. While the exposition may appear pedantic at times, this seems unavoidable. 

We also describe some properties that these interpretations transfer and introduce general techniques designed for applications of interpretations to particular problems. For instance, the classical Diophantine problem for a structure $\MA$ asks if there exists an algorithm that, given a finite system of equations with coefficients in $\MA$, decides whether or not this system has a solution in $\MA$. Typically, undecidability of the Diophantine problem in $\MA$ is obtained by providing a computable reduction of the undecidable Diophantine problem of some other structure $\MB$ to the Diophantine problem in $\MA$. In most cases, the structure $\MB$ is the ring of integers $\Z$, or its variations, whose Diophantine problem is undecidable~--- the famous solution to the Tenth Hilbert Problem. But the reductions themselves are very different; they heavily depend on the structure $\MA$. Nevertheless, analysis of these reductions shows that they almost always come from a very particular type of interpretation, so-called interpretations by equations or, more generally, Diophantine interpretations. This means that there is a general uniform method to prove the undecidability of the Diophantine problems, which greatly streamlines and simplifies the proofs and does not depend on the particular structure $\MA$. We describe this and some other such methods in the paper, with more detailed discussions to be presented in future papers in the series.

At present, it becomes evident that interpretations serve purposes beyond merely transferring properties. We give two examples. The first one concerns the fundamental problem of first-order classification of structures due to Tarski. This problem requires describing the algebraic structure of all models of the first-order theory $\Th(\MA)$ of a given structure $\MA$. 
It turns out that if $\MA$ is bi-interpretable with $\Z$, then a structure $\nsA$ is elementarily equivalent to $\MA$ if and only if $\nsA \simeq \Gamma(\nsZ)$, where $\nsZ \equiv \Z$ and $\Gamma$ is the interpretation arising from the bi-interpretability of $\MA$ in $\Z$. Thus, $\Gamma$ provides the algebraic structure of all models of $\Th(\MA)$. For instance, the group $\SL_n(\Z)$, where $n \geqslant 3$, is bi-interpretable with $\Z$~\cite{MS}. Hence, a group $H$ is first-order equivalent to $\SL_n(\Z)$ if and only if $H \simeq \SL_n(\nsZ)$ for some ring $\nsZ$ such that $\nsZ \equiv \Z$. This method works in a much more general context; one does not even need bi-interpretability here; it suffices to have an invertible interpretation of $\MA$ in $\Z$. We discuss this in Subsections~\ref{subsec:ns} and~\ref{subsec:Z}.

The second one is related to classical results like fundamental theorems of geometry (projective, affine, symplectic, etc.), rigidity theorems, theorems on abstract isomorphisms in group theory and topology, and on categorical equivalence of algebraic structures. Quite often, interpretations hidden somewhere behind these results play a key role in the arguments. Furthermore, they offer a more precise description of the interactions between the structures involved, thus enhancing the results themselves.
 See~\cite{Berger} for the fundamental theorem of projective geometry, \cite{ALM} for first-order rigidity of irreducible non-uniform higher-rank lattices, \cite{BuninaGv} for abstract isomorphisms of Chevalley groups, and \cite{ABM} for Mal'cev correspondence between nilpotent $k$\=/groups and nilpotent Lie algebras over a field $k$ of characteristic zero.

In the theory of interpretations, fundamental constructions and basic theorems sometimes require a categorical approach. This view treats interpretations as functors, particularly when examining properties like compositions of interpretations, their homotopies, invertible interpretations, etc.   Following pioneering ideas of Boris Plotkin, one can consider various ``logical categories'' related to the structure $\MA$~\cite{Plotkin6}. Plotkin viewed these logical categories as natural generalizations of the categories of algebraic sets over $\MA$ from algebraic geometry, where the algebraic sets are replaced by the logical sets that are definable in $\MA$ by sets of formulas in the language of $\MA$. One can go even further and consider projective logical geometries over $\MA$, where projective logical sets are the quotients of logical sets modulo equivalence relations definable in $\MA$. It turns out that these logical categories, especially the categories of projective logical geometry, provide useful instruments for studying categorical properties of interpretations. This lies outside the scope of this paper; we only briefly mention it in Subsection~\ref{subsec:cat} in the context of bi-interpretations and will address it in detail in the next article of the series.

Now we say a few words on the principal types of interpretations and how they come about. The notion of interpretability was introduced by Tarski; initially, it was used mainly as a tool to prove the undecidability of theories. This approach is based on the following result: if a structure $\MA$ is interpretable with parameters in a structure $\MB$ then for every sentence $\psi$ in the language of $\MA$ there is a sentence $\psi^\ast$ of the language of $\MB$ with parameters from $\MB$ such that $\MA {\models} \psi \Longleftrightarrow \MB {\models }\psi^\ast$. Moreover, if the signature of $\MA$ is finite (or, more generally, the interpretation is effective, see below), then the formula $\psi^\ast$ can be effectively constructed from $\psi$; also, if the interpretation is absolute (no parameters), then $\psi^\ast$ does not contain parameters as well. Note that the G\"odel's first incompleteness theorem implies that the first-order theory of the arithmetic $\N$ is undecidable. Hence, if $\N$ is absolutely interpretable in $\MB$ then the map $\psi \to \psi^\ast$ gives a computable reduction of $\Th(\N)$ to $\Th(\MB)$, so $\Th(\MB)$ is undecidable. The only inconvenience here is that interpretations often come with parameters. To this end, Tarski, Robinson, and Mostowski proved in~\cite{TRM} that the theory of $\N$ not only is undecidable but it is hereditarily undecidable (see Subsection~\ref{subsec:undecidability} below), which leads to the following much stronger result: if $\N$ (or $\Z$) is interpretable with parameters in $\MB$ then $\Th(\MB)$ is undecidable. The book~\cite{TRM} together with a survey~\cite{Ershov-Lavrov} was very influential in the development of the theory of interpretations at the beginning, where the focus was on the decidability questions. Over the years, this research has produced a lot of interesting interpretability results in algebra. 

The next principal step in the application of interpretability comes from Mal'cev, who demonstrated how interpretations can describe groups that are elementarily equivalent to each other. The starting point here is clear: if $\MA$ is absolutely interpretable in $\MB$ by a set of formulas $\Gamma$, so $\MA \simeq \Gamma(\MB)$, and if $\nsB \equiv \MB$, then the structure $\nsA \simeq \Gamma(\nsB)$, which is interpretable in $\nsB$ by $\Gamma$, satisfies $\nsA \equiv \MA$. However, if the interpretation $\Gamma$ contains parameters, then $\nsA$ and $\MA$ may not be elementarily equivalent. For example, for the field of real numbers $\MR$ and elements $a, b \in \MR$ with $a \neq b$ one has $(\MR,a) \not \equiv (\MR,b)$ (here for $c \in \MR$ $(\MR,c)$ denotes the field  $\MR$ with an extra constant $c$ in the language). In his seminal work~\cite{Malcev-matrices} on the characterization of elementarily equivalent classical matrix groups, Mal'cev used interpretability with parameters, which are taken from a definable set. In this case, similar to the case of absolute interpretability, if $\MA$ is interpretable in $\MB$ with a definable set of parameters, one can still construct a map $\psi \to \psi^\ast$ which gives a reduction of the theory $\Th(\MA)$ to $\Th(\MB)$. Following~\cite{Myas-x, Miasn, Miasn1}, we refer to this type of interpretability as {\em regular interpretability} (Hodges, in his book~\cite{Hodges}, uses the term {\em interpretability with definable parameters}, but we feel that regular interpretability is shorter and avoids overuse of the term ``definable''). Thus, the notion of regular interpretability is somewhat between absolute interpretability and interpretability with parameters. The former fits extremely well for questions related to first-order equivalence but occurs rather rarely. Interpretability with parameters is more common, but the parameters are usually not in the language of $\MB$, which presents an obstacle for elementary equivalence. Regular interpretability combines the useful properties of both: it can be used in first-order classification problems in a way similar to absolute interpretability and occurs almost as often as interpretability with parameters. On the downside, to get regular interpretability, one needs to go deeper into model-theoretic properties of the structures. Nowadays, regular interpretability gives a powerful tool in studying elementarily equivalent structures.

In general, many properties are preserved under interpretations: $\lambda$\=/stability, $\lambda$\=/saturation, $\omega$\=/categoricity, independence property, and finite Morley rank, to mention a few (see below). Nevertheless, even if two structures $\MA$ and $\MB$ are absolutely interpretable in each other, one can't expect that they are ``equivalent from the model-theoretic viewpoint'' (or have the same model-theoretic properties). To achieve this, one needs a much stronger notion called bi-interpretation. Absolute bi-interpretations were introduced by Ahlbrandt and Ziegler in~\cite{AhZ}. In the book~\cite{Hodges}, Hodges laid out the fundamentals on bi-interpretations and showed interesting applications. Later, Kh\'elif  and Nies  linked the QFA property of a structure $\MA$ with bi-interpretability of $\MA$ and $\N$ or $\Z$  (see~\cite{Khelif, Nies1,Nies2}). Recall that a finitely generated structure $\MA$ is QFA (quasi-finitely axiomatizable) if there is a sentence $\psi$ in the language of $\MA$ such that for any finitely generated structure $\MB$ in the language of $\MA$, if $\MB \models \psi$ then $\MB$ and  $\MA$ are isomorphic. They also proved a similar result for the properties of primality and homogeneity. 

For us here, an important milestone was the paper~\cite{KhMS} by Kharlampovich, Myasnikov, and Sohrabi on rich structures, where different types of bi-interpretability, and especially regular bi-interpretability, were introduced and studied. Intuitively, a structure $\MA$ is \emph{rich} if every formula of the weak second-order logic with variables for elements in $\MA$ is equivalent in $\MA$ to some first-order formula on the same variables. The initial result here is that if a structure $\MA$ is absolutely bi-interpretable with a rich structure $\MB$, then $\MA$ is also rich. But again, absolute bi-interpretability of structures $\MA$ and $\MB$ is very rare (for example, it implies that the automorphism groups of $\MA$ and $\MB$ have to be isomorphic). Fortunately, the regular bi-interpretability (more precisely, its variation, which is called {\em regular half-absolute bi-interpretability}) strikes again and gives the required results. In particular, structures regularly bi-interpretable with $\N$ are rich~\cite{KhMS}. 

In this paper, following~\cite{KhMS}, we develop further various types of bi-interpretability. We believe that the regular strong bi-interpretations are the most useful and applicable; we study them in detail. Invertible interpretations are another new type of interpretation that we introduce here; in many cases, they are as useful as bi-interpretations, but more common.

In the last section, we discuss applications of interpretations to Diophantine problems, the first-order classification, isotypeness, and definability of structures by types, elimination of imaginaries, richness, logical categories, and bi-interpretations with $\Z$ or $\N$. To illustrate the concepts and techniques, we provide typical examples and fundamental results. Additionally, throughout the text, we pose some open questions that naturally arise in this context.

{\bf Acknowledgments.} The authors express their sincere gratitude to the organizers and participants of the International Workshop ``Algebraic Geometry and Model Theory of Groups II'', held at Ivane Javakhishvili Tbilisi State University, Tbilisi, Georgia, in August--September 2024. In particular, discussions with colleagues at this conference~--- especially Pavel Gvozdevsky~--- were invaluable for the preparation of this article.

\section{Interpretations and their codes}

In this section, we discuss various types of interpretations of structures. Some of them are well-known, others are new, and they each serve different purposes. 

Interpretations provide very precise descriptions of some intrinsic relationships between algebraic structures. These descriptions are given by formulas of first-order logic in a specific manner, so they are very precise and unambiguous. When possible, we follow the Hodges'~\cite[Section~5.3]{Hodges} and Marker's books~\cite[Section~1.3]{Marker}.   

\subsection{Languages and algebraic structures}

Let $L$ be a language (also called a signature) with a set of functional (operational) symbols $\{f, \ldots\}$ along with their arities $n_f \in \N$, a set of constant symbols $\{c, \ldots\}$ ($n_c=0$ for every constant symbol) and a set of relation (or predicate) symbols $\{R, \ldots \}$ along with their arities $n_R \in \N$. We write $f(x_1, \ldots,x_n)$ or $R(x_1,\ldots,x_n)$ to show that $n_f = n$ or $n_R=n$. The standard language of groups $L_\group=\{\,\cdot\,,\,^{-1},e\}$ includes the symbol $\cdot$ for the binary operation of multiplication, the symbol $^{-1}$ for the unary operation of inversion, and the symbol $e$ for the group identity; the standard language of unitary rings is $L_\ring=\{+,\,\times\,,0,1\}$. An interpretation of a constant symbol $c$ in a set $A$ is an element $c^A\in A$, of a functional symbol $f$ is a function $f^A\colon A^{n_f}\to A$, and of a predicate symbol $R$ is a set $R^A\subseteq A^{n_R}$.

A countable first-order language $L = \{f_1, \ldots, R_1, \ldots, c_1, \ldots\}$, where the sets of function, predicate, and constant symbols may be finite or even empty, is termed as {\em computable} if the functions $i \to n_{f_i}$ and $i \to n_{R_i}$, defining the arities of $f_i$ and $R_i$ respectively, are computable. For a computable language $L$, there is a computable G\"odel numbering for all first-order formulas in $L$, using variables from a countable set $X$.

An algebraic structure in the language $L$ (a $L$\=/structure) with the underlying set $A$ is denoted by $\MA = \langle A; L\rangle$, or by $\MA = \langle A; f, \ldots, R,\ldots, c, \ldots \rangle$, or by $\MA = \langle A; f^A, \ldots, R^A,\ldots, c^A, \ldots \rangle$. For a structure $\MA$ we denote by $L(\MA)$ the language $L$ of $\MA$.

We usually denote variables by small letters $x,y,z, a,b, u,v, \ldots$, while the same symbols with bars $\bar x, \bar y,  \ldots$ denote tuples of the corresponding distinct variables, say $\bar x = (x_1, \ldots,x_n)$, and $\bar{\bar x}$ is a tuple of tuples $\bar{\bar x}=(\bar x_1,\ldots,\bar x_m)$, where $|\bar x_i|=n$. Also  $\forall\, \bar x$ denotes $\forall\, x_1 \ldots \forall \,x_n$ and $\exists\, \bar x$ denotes $\exists \,x_1 \ldots \exists \,x_n$,  provided $\bar x = (x_1, \ldots,x_n)$. For any map $h\colon X\to Y$ and any tuple $\bar a=(a_1,\ldots,a_n)$ from $X^n$ (or similarly, tuple $\bar x$ of variables) we denote by $h(\bar a)$ the tuple $(h(a_1),\ldots,h(a_n))$. For an equivalence relation $\sim$ on $X^n$ by $X^n/{\sim}$ we denote the set of all equivalence classes of $X^n$ (the quotient of $X^n$ by $\sim$). If  $\bar{\bar a}$ is a tuple $(\bar a_1,\ldots,\bar a_m)$ of tuples $\bar a_i\in X^n$, then $\bar{\bar a}/{\sim}$ is the tuple of equivalence classes  $(\bar a_1/{\sim},\ldots,\bar a_m/{\sim})$.

Given a structure $\MA$ by a complete type in free variables $x_1, \ldots,x_n$  relative to the theory $\Th(\MA)$, we understand a maximal set of formulas in variables $x_1, \ldots,x_n$ that is locally satisfiable in $\MA$. We denote such types by $\tp(x_1, \ldots,x_n)$.  For any tuple of elements $\bar a=(a_1,\ldots,a_n)\in \MA$ by $\tp(\bar a)$ we denote the complete type of $\bar a$, i.\,e., the set of all formulas $\phi(x_1, \ldots,x_n)$ in the language $L(\MA)$ such that $\MA \models \phi(a_1, \ldots,a_n)$. By $\Orb(\bar a)$ we denote the automorphic orbit of $\bar a$ in $\MA$. Clearly, for any tuples $\bar a,\bar b\in A^n$ if $\bar b\in \Orb(\bar a)$ then $\tp(\bar a)=\tp(\bar b)$.  

\subsection{Definable sets}\label{subsec:def_sets}

Let $\MB = \langle B; L(\MB)\rangle$ be an algebraic structure. A subset $X \subseteq B^n$ is called {\em definable} in $\MB$ if there is a first-order formula $\psi(x_1,\ldots,x_n,y_1,\ldots,y_k)$ in $L(\MB)$ and a tuple $\bar{p}=(p_1,\ldots,p_k)$ of elements from $B$ such that 
$$
X = \{(b_1,\ldots,b_n) \in B^n \mid \MB \models \psi(b_1, \ldots,b_n,p_1,\ldots,p_k)\}.
$$ 
The set $X$ is also called {\em $P$\=/definable}, where $\{p_1,\ldots,p_k\}\subseteq P\subseteq B$, and denoted by $\psi(\MB,\bar p)$ or $\psi(\MB_P)$, or $\psi(B^n,\bar p)$. Here, the algebraic structure $\MB_P=\langle B; L(\MB)\cup P\rangle$ is obtained from $\MB$ by adding new constant symbols from $P$ into the language.  If $P=\emptyset$ then $X$ is called {\em $0$\=/definable} or {\em absolutely definable}. A set $X\subseteq B^n$ is definable in $\MB$ if and only if there exists a finite set of parameters $P\subseteq B$ such that $X$ is absolutely definable in $\MB_P$. 

Recall that a formula $\phi(x_1,\ldots,x_n)$ in $L(\MB)$ is called {\em primitive positive} or {\em Diophantine} if 
$$
\phi(x_1,\ldots,x_n) = \exists\, y_1\ldots \exists \, y_k\; \psi(x_1,\ldots,x_n,y_1,\ldots,y_k),
$$
where $\psi$ is a conjunction of atomic formulas. Sets definable by Diophantine formulas  are called {\em Diophantine}.

\begin{definition}\label{def:definable_map}
  Let $X\subseteq B^n$, $Y\subseteq B^s$ be sets and $\sim_X$, $\sim_Y$ be equivalence relations on $X$, $Y$, respectively. A map $g\colon (X/{\sim_X})^m\to Y/{\sim_Y}$ is termed {\em definable} in $\MB$, if the preimage in $\MB$ of the graph of $g$ is definable in $\MB$, i.\,e., the set 
\begin{multline*}
\{(b^1_1,\ldots,b^1_n,\:\ldots,\: b^m_1,\ldots,b^m_n,\: c_1,\ldots, c_s) \in B^{nm+s}\mid  \\ 
g((b^1_1,\ldots,b^1_n)/{\sim_X},\:\ldots,\: (b^m_1,\ldots,b^m_n)/{\sim_X})=(c_1,\ldots,c_s)/{
\sim_Y},\\
(b^1_1,\ldots,b^1_n),\ldots, (b^m_1,\ldots,b^m_n)\in X, \: (c_1,\ldots,c_s)\in Y\}
\end{multline*}
is definable in $\MB$.  Note that here $g$ may also be a function of arity $0$ (i.\,e., a constant or a point $\bar y/{\sim_Y}\in Y/{\sim_Y}$).  Similarly, a relation $Q$ of arity $m$ on the quotient set $X/{\sim_X}$ is {\em definable} in $\MB$, if the {\em preimage in $\MB$ of its graph}, i.\,e., the full preimage of the set $Q$ in $B^{nm}$, is definable in $\MB$. Correspondingly, $g$ and $Q$ are $0$\=/{\em definable} in $\MB$, if their preimages  are $0$\=/definable in $\MB$.
\end{definition}

It is obvious that the preimage in $\MB$ of the graph of $g$ coincides with the preimage in $\MB$ of the graph of the map $\bar g\colon X^m\to Y/{\sim_Y}$, where $\bar g(\bar {\bar x})= g(\bar{\bar x}/{\sim_X})$, $\bar {\bar x}\in X^m$.

Usually, when definable maps $g$ or predicates $Q$ occur in this paper, the relations $\sim_X$ and $\sim_Y$ are also definable, but we do not require this in the definitions.

By default, we consider the sets $\N$, $\Z$, $\Q$, $\MR$ and $\MC$ as algebraic structures in the language $\{+,\times,0,1\}$. Note that definable sets in arithmetic $\N$, the so-called arithmetic sets, are well-studied. In particular, it is known that every computable and computably enumerable set in $\N$ is definable. The same holds for the ring of integers $\Z$. Furthermore, every element of $\N$ (or $\Z$, or $\Q$) is absolutely definable in $\N$ ($\Z$, $\Q$), so every definable set in $\N$ ($\Z$, $\Q$) is absolutely definable.

Recall that an algebraic structure $\MB$ is called {\em rigid} if the group of automorphisms $\Aut(\MB)$ of $\MB$ is trivial. For example, $\N,\Z, \Q, \MR$ are rigid,  as well as, $\MB_B$ for any $\MB=\langle B;L(\MB)\rangle$.

\subsection{Interpretability with parameters}\label{subsec:def_int}

Intuitively, {\em interpretability} is a particular way of describing by formulas an algebraic structure $\MA$ in an algebraic structure $\MB$. 

\begin{definition}  \label{de:interpretation}
An algebraic  structure $\MA = \langle A;f,\ldots,R,\ldots,c,\ldots\rangle$ is {\em interpretable} with parameters in an algebraic structure $\MB=\langle B;L(\MB)\rangle$ if the following conditions hold:
\begin{enumerate}[label=\arabic*)]
\item there is a finite subset $P\subseteq B$,
\item for some integer $n$ there is a subset $A^\ast  \subseteq B^n$ $P$\=/definable in $\MB$, 
\item there is an equivalence relation $\sim$ on $A^\ast$ $P$\=/definable in $\MB$, 
\item there are interpretations $f^A, \ldots$, $R^A,\ldots$, $c^A, \ldots$ of the symbols $f, \ldots, R,\ldots, c, \ldots$ on the quotient set $A^\ast /{\sim}$, all $P$\=/definable in $\MB$ (see Definition~\ref{def:definable_map}),\item the structure $\MA^\ast=\langle A^\ast /{\sim}; f^A, \ldots,R^A,\ldots, c^A, \ldots \rangle$ is $L(\MA)$\=/isomorphic to $\MA$.
\end{enumerate}
\end{definition}

 In Definition~\ref{de:interpretation}  we allow $P=\emptyset$, in which case the interpretation  of $\MA$ in $\MB$ is called {\em absolute} or {\em $0$\=/interpretation}, or {\em interpretation without parameters}. Thus, we may assume that absolute interpretations are a particular case of interpretations with parameters. In all other similar situations where defining a notion $C$ that could be absolute, we may also refer to it as $0$\=/$C$, or simply as $C$ without parameters. Note that we have already done it in  Definition~\ref{def:definable_map}. Furthermore, following Marker's book~\cite{Marker}, we sometimes simply write ``interpretation'' instead of ``interpretation with parameters'', so by default our interpretation is interpretation with parameters. We write $\MA\rightsquigarrow\MB$ if $\MA$ is interpretable in $\MB$ (with or without parameters).

If the equivalence relation $\sim$ in Definition~\ref{de:interpretation} is the identity relation on $A^\ast$, we follow Hodges' book~\cite{Hodges} in saying that $\MA$ is \emph{injectively} interpretable in $\MB$ (with or without parameters).  Injective interpretability $\MA\rightsquigarrow\MB$ is sometimes called {\em definability} or {\em definable interpretability} of $\MA$ in $\MB$. Any interpretation $\MA\rightsquigarrow\MA$ is called {\em self-interpretation}.

\begin{example}
If $N$ is a normal $0$\=/definable subgroup of a group $G$ (in the language $\{\cdot,^{-1}, e\}$), then the equivalence relation $x \sim y$ on $G$ given by $xN = yN$ is $0$\=/definable in $G$. It is easy to see that the  preimages in $G$ of the group operations $\cdot,^{-1}, e$ on $G/N$, are $0$\=/definable in $G$. This shows that the quotient group $G/N$ is $0$\=/interpretable in $G$.  Similarly, if $N$ is a normal subgroup of $G$ definable with parameters in $G$, then the quotient group $G/N$ is interpretable with parameters in $G$.  
\end{example}

An embedding $h\colon\MA\to\MA^\prime$ of $L$\=/structures gives an (absolute) injective interpretability of $\MA$ in $\MA^\prime$ if and only if the set $h(A)$ is (absolutely) definable in $\MA^\prime$. In particular, every isomorphism $h\colon\MA\to\MA^\prime$ gives an absolute injective interpretability. 

\begin{example}\label{ex:NZ}
The semiring of natural numbers $\N$ is injectively $0$\=/interpretable in the ring of integers $\Z$. Indeed, by Lagrange’s Four Squares Theorem, every nonnegative integer is a sum of four squares, so the formula 
$$
\exists\, x_1\:\exists\, x_2\:\exists\, x_3 \:\exists\, x_4\;(y = x_1^2 +x_2^2 +x_3^2 +x_4^2)
$$
defines $\N$ in $\Z$. It is also easy to see that the ring $\Z$ is injectively $0$\=/interpretable in the semiring $\N$.
\end{example}

\begin{example}
The ring of integers $\Z$ is injectively $0$\=/interpretable in the ring of rationals $\Q$. Due to J.\,Robinson~\cite{J-Robinson}, the formula 
$$
\varphi (x)= \forall\:y\,\forall\:z\; ([\Phi(y,z,0)\:\wedge\:(\forall\: w\;(\Phi(y,z,w)\longrightarrow\Phi(y,z,w+1)))]\longrightarrow\Phi(y,z,x)),
$$
where $\Phi(y,z,t)=\exists\:a\,\exists\:b\,\exists\:c\;(yzt^2+2=a^2+yb^2-zc^2)$,
defines the set $\Z$ in $\Q$.
\end{example}

\begin{example}
The following is a simple $0$\=/interpretation of  $\Q$ in $\Z$. Here we associate a fraction $m/n$ with a pair of numbers $(m,n)$, $n\ne 0$, and define the equivalence relation by the rule: $(m_1,n_1)\sim (m_2,n_2)$ if and only if $m_1n_2=m_2n_1$.  The ring operations on pairs $(m,n)$, $n\ne 0$, are defined by the standard formulas on fractions. We can also interpret $\Q$ in $\Z$ by irreducible fractions; it gives an absolute injective interpretability of $\Q$ in $\Z$.
\end{example}

\begin{example}\label{ex:RC}
The field of complex numbers $\MC$ (in the ring language) is absolutely injectively interpretable in the field of reals $\MR$, where a complex number $a+ib$ is represented by a pair $(a,b)$. At the same time, $\MR$ is not interpretable in $\MC$~\cite[Corollary~1.3.6]{Marker}. 
\end{example}

\begin{example}\label{ex:O1}
The ring $\Z$ is injectively $0$\=/interpretable in any ring of algebraic integers $\Aint$; the ring of algebraic integers $\Aint$ is injectively $0$\=/interpretable in its field of fractions; the ring $\Z$ is injectively $0$\=/interpretable in any algebraic number field $K$~\cite{J-Robinson}; the ring $\Z$ is interpretable in any finitely generated infinite field $F$ if $\mathrm{char} \, F\ne 2$~\cite{DP, Scanlon}; the ring $\Z$ is interpretable in any infinite finitely generated integral domain~\cite{AKNS}.
\end{example}

\begin{example}\label{ex:O2}
Any finitely generated ring of algebraic integers $\Aint$ is injectively absolutely interpretable in the ring $\Z$.
\end{example}

\begin{example}\label{ex:Noskov}
G.\,A.\,Noskov in~\cite{Noskov} proved that the ring $\Z$ is interpretable with parameters in any finitely generated non-virtually abelian soluble group $G$.
\end{example}

\begin{example}
Let $G$ be a countable group with an arithmetic multiplication table. Then $G$ is absolutely interpretable in $\N$. Indeed, by our assumptions, the group $G$ can be represented on the set $\N$ with operations of multiplication and inversion such that their graphs are definable in $\N$ by formulas of arithmetic; hence the result. 
\end{example}

\begin{example}\label{ex:BS1}
The metabelian non-abelian Baumslag\,--\,Solitar group $\BS(1,k)$, $k>1$, is absolutely interpretable in $\Z$~\cite{BS, Khelif}. Indeed, the group $\BS(1,k)$ is isomorphic to the semi-direct product $\Z[1/k]\rtimes \Z$ of the abelian groups $\Z[1/k] =\{zk^i\mid  z,i\in\Z\}$ and an infinite cyclic $\Z$; where the action of $\Z$ on $\Z[1/k]$ is defined by $(m,x)\to xk^{-m}$, $m\in \Z$, $x\in \Z[1/k]$. Every element from $\Z[1/k]\rtimes \Z$ has a form $(zk^i,m)$, $z,i,k\in \Z$;  and for any $z,i,m,z_j,i_j,m_j$, $j=1,2$, one has
\begin{enumerate}[label=\alph*)]
\item $(z_1k^{i_1},m_1)\cdot(z_2k^{i_2},m_2)=(z_1k^{i_1}+z_2k^{i_2-m_1},m_1+m_2)$,
\item $(zk^{i},m)^{-1} = (-zk^{i+m}, -m)$,
\item $e=(0,0)$.
\end{enumerate}
So an element $(zk^i,m)$ is interpretable by a triple $(z,i,m)\in \Z$. Further, $(z_1,i_1,m_1)\sim(z_2,i_2,m_2)$ if and only if $z_1k^{i_1}=z_2k^{i_2}$ and $m_1=m_2$. Note that the equivalence relation $\sim$ and the group operations in the interpretation are given by computable predicates, hence this interpretation can be given by Diophantine formulas~\cite{Matijasevich}. It is easy to ``refine'' this interpretation into an injective one.
\end{example}

\begin{example}\label{ex:BS2}
The ring $\Z$ is injectively interpretable in the group $\BS(1,k)$, $k>1$, with parameter $b=(0,1)$ on the set $\{b^n, n\in \Z\}=\CG_{\BS(1,k)}(b)$. Here an integer $m$ is interpretable by an element $b^m$ and for any $m_1,m_2\in \Z$ one has $b^{m_1+m_2}=b^{m_1}\cdot b^{m_2}$. Besides, there exists a formula $\otimes(x_1,x_2,x_3)$ in $L_{\group}\cup\{b\}$, such that $\BS(1,k)\models \otimes(x_1,x_2,x_3)$ if and only if $x_1=b^{m_1}$, $x_2=b^{m_2}$ and $x_3=b^{m_1m_2}$ for some $m_1,m_2\in \Z$~\cite[Lemma~5]{BS}.
\end{example}

\begin{example}\label{ex:GL}
For any associative unitary ring $R$ the classical matrix groups $\GL_n(R)$, $\SL_n(R)$ (in this case $R$ is commutative), $\mathrm{T}_n(R)$, and $\UT_n(R)$ are absolutely injectively interpretable in $R$ (a matrix is interpretable by a tuple of its elements listed in a fixed order); while $R$ is interpretable in these groups, for $n \geqslant 3$, with parameters (see~\cite[Lemma~5.1]{MS}). 
\end{example}

\begin{example}\label{ex:UT1}
Let $R$ be an arbitrary unitary ring (not necessarily associative). Consider the group of upper unitriangular $3\times3$\=/matrices $\UT_3(R)$ over $R$ (also called the Heisenberg group over $R$). As was mentioned in Example~\ref{ex:GL}, there exists the standard absolute injective interpretation $\Gamma\colon \UT_3(R)\rightsquigarrow R$, where a matrix $A =(a_{ij}) \in \UT_3(R)$ is represented by a triple  $(a_{12},a_{23},a_{13})$ from $R^3$, so that
\begin{enumerate}[label=\roman*)]
\item $(a_1,b_1,c_1)\cdot (a_2,b_2,c_3)=(a_1+a_2,\: b_1+b_2,\: c_1+c_2+a_1b_2)$, \; $a_i,b_i,c_i\in R$, \; $i=1,2$,
\item $(a,b,c)^{-1}=(-a,\: -b,\: -c+ab)$, \; $a,b,c\in R$,
\item $e=(0,0,0)$.
\end{enumerate}
Now we describe another absolute injective interpretation $\Delta\colon\UT_3(R)\rightsquigarrow R$, which comes from nilpotent groups or groups with bounded generation. Recall that every element $g\in\UT_3(R)$ can be decomposed uniquely as a product $g=t_{12}(\alpha)\cdot t_{23}(\beta)\cdot t_{13}(\gamma)$ for some $\alpha,\beta,\gamma\in R$, where $t_{ij}(\alpha)=e+\alpha e_{ij}$ are transvections, $\alpha\in R$. Furthermore, for every $\alpha, \beta, \gamma, \alpha_i, \beta_i, \gamma_i\in R$, $i=1,2$, one has the following equalities that define group operations on triples $(\alpha,\beta,\gamma)$:
\begin{enumerate}[label=\arabic*)]
    \item $(t_{12}(\alpha_1)\cdot t_{23}(\beta_1)\cdot t_{13}(\gamma_1))\cdot( t_{12} (\alpha_2) \cdot t_{23}(\beta_2)\cdot t_{13}(\gamma_2))=t_{12}(\alpha_1+\alpha_2)\cdot t_{23}(\beta_1+\beta_2)\cdot t_{13}(\gamma_1+\gamma_2-\alpha_2\beta_1)$,
    \item $(t_{12}(\alpha)\cdot t_{23}(\beta)\cdot t_{13}(\gamma))^{-1}=t_{12}(-\alpha)\cdot t_{23}(-\beta)\cdot t_{13}(-\gamma-\alpha\beta)$,
    \item $e=t_{12}(0)\cdot t_{23}(0)\cdot t_{13}(0)$,
    \item\label{item4} $[t_{12}(\alpha_1)\cdot t_{23}(\beta_1)\cdot t_{13}(\gamma_1), t_{12}(\alpha_2)\cdot t_{23}(\beta_2)\cdot t_{13}(\gamma_2)]=t_{13}(\alpha_1\beta_2-\alpha_2\beta_1)$.
\end{enumerate} 
This gives the interpretation $\Delta\colon\UT_3(R)\rightsquigarrow R$. Note that the last equality~\ref{item4} is not needed for this interpretation, but we will need it in the sequel.  
\end{example}

The interpretations $\Gamma$ and $\Delta$ in Example~\ref{ex:UT1} differ in their codes. Thus, we proceed to discuss the concept of code.

\subsection{Interpretation codes}\label{subsec:code}

Here we discuss the main technical tool, the interpretation code, which formalizes the notion of interpretation.  

The structure $\MA^\ast$ from Definition~\ref{de:interpretation} is completely described by the tuple of parameters $\bar p$ and the following set of formulas in the language $L(\MB)$:
\begin{equation*} 
\Gamma =  \{U_\Gamma(\bar x,\bar y), E_\Gamma(\bar x, \bar x^\prime,\bar y), Q_\Gamma(\bar x_1, \ldots,\bar x_{t_Q},\bar y) \mid Q \in L(\MA)\},
\end{equation*}
where $\bar x$, $\bar x^\prime$ and $\bar x_i$ are $n$\=/tuples of variables and $\bar y = (y_1, \ldots,y_k)$ is a tuple of extra variables for parameters $\bar p$. Namely,  $U_\Gamma$  defines in $\MB$ a set $A_\Gamma  = U_\Gamma(B^n,\bar p)  \subseteq B^n$ (the set $A^\ast$ in Definition~\ref{de:interpretation}), $E_\Gamma$ defines an equivalence relation $\sim_\Gamma$ on $A_\Gamma$ (the equivalence relation $\sim$ in Definition~\ref{de:interpretation}), sometimes we denote it by $\sim_{\Gamma,\bar p}$, and the formulas $Q_\Gamma$ define preimages in $\MB$ of graphs of constants, functions and predicates $Q\in L(\MA)$ on the quotient set $A_\Gamma/{\sim_\Gamma}$ in such a way that the structure $\Gamma(\MB,\bar p) = \langle A_\Gamma/{\sim_\Gamma}; L(\MA) \rangle $ is isomorphic to $\MA$. Here $t_c=1$ for a constant $c\in L(\MA)$, $t_f=n_f+1$ for a function $f\in L(\MA)$ and $t_R=n_R$ for a predicate $R\in L(\MA)$. Note that we interpret a constant $c \in L(\MA)$ in the structure $\Gamma(\MB,\bar p)$ by the $\sim_\Gamma$\=/equivalence class of some tuple $\bar b_c \in A_\Gamma$ defined in $\MB$ by the formula $c_\Gamma(\bar x,\bar p)$. Sometimes we write formulas from $\Gamma$ in a generalized form $\psi(\bar{\bar x},\bar y)$, where $\bar{\bar x}$ either equals $\bar x$ or $(\bar x,\bar x^\prime)$, or $(\bar x_1,\ldots, \bar x_{t_Q})$, which is always clear from the context. 

We term  $\Gamma$ the {\em interpretation code from $L(\MA)$ to $L(\MB)$} (and sometimes write as $\Gamma\colon L(\MA) \to L(\MB)$). The notation $U_\Gamma$, $E_\Gamma$, and $Q_\Gamma$ is consistently used throughout the entire paper. The number $n$ is the {\em dimension} of $\Gamma$, while $k$ is the {\em parameter dimension} of $\Gamma$, we denote them by $n = \dim \Gamma$ and $k=\dim_{\param}\Gamma$, respectively. 

If $\Gamma$ interprets $\MA$ in $\MB$ then we refer to $\Gamma$  as the {\em code} of this interpretation of $\MA$ in $\MB$, and the interpretation itself is completely defined by a pair $(\Gamma, \bar p)$ (assuming the structure $\MB$ is clear from context).  Note that there may be several different interpretations of  $\MA$ in $\MB$ defined by different codes and different parameters. As we explained above, the writing $\MA\rightsquigarrow\MB$ means that $\MA$ is interpretable in $\MB$ by some code $\Gamma$ and some tuple of parameters $\bar p$. If we want to specify the code $\Gamma$, we may write 
$\MA\stackrel{\Gamma}{\rightsquigarrow}\MB$ or $\Gamma\colon\MA\rightsquigarrow\MB$, or $\MA\simeq\Gamma(\MB)$; in the case when we want to specify the code $\Gamma$ and the parameters $\bar p$, we write $\MA \stackrel{\Gamma,\bar p}{\rightsquigarrow}\MB$ or $(\Gamma,\bar p)\colon \MA\rightsquigarrow\MB$, or $\MA \simeq \Gamma(\MB,\bar p)$. The code $\Gamma$ (and an interpretation $\MA\stackrel{\Gamma}{\rightsquigarrow}\MB$) is {\em absolute} if $\bar y=\emptyset$; in this case, we may write $\MA \stackrel{\Gamma,\emptyset}{\rightsquigarrow}\MB$, $(\Gamma,\emptyset)\colon \MA\rightsquigarrow\MB$ and $\MA\simeq\Gamma(\MB,\emptyset)$. 

A surjective map $\mu_\Gamma=\mu_{\Gamma,\bar p}\colon A_\Gamma \to A$ that induces an isomorphism $\bar \mu_\Gamma=\bar\mu_{\Gamma,\bar p}\colon \Gamma(\MB,\bar p) \to \MA$ is termed the {\em coordinate map} of the interpretation $(\Gamma,\bar p)\colon \MA\rightsquigarrow\MB$. Note that such $\mu_\Gamma$ may not be unique; it is uniquely defined up to an automorphism of $\MA$; So, if $\MA$ is rigid, then $\mu_\Gamma$ is uniquely defined. If we want to specify a particular coordinate map $\mu_\Gamma$ for an interpretation $(\Gamma, \bar p)$, we write $(\Gamma,\bar p, \mu_\Gamma)$. Sometimes, we refer to such a  triple $(\Gamma,\bar p, \mu_\Gamma)$ as {\em a coordinatization of the interpretation $\MA\stackrel{\Gamma, \bar p}{\rightsquigarrow}\MB$}. Some authors define interpretations exclusively as triples $(\Gamma,\bar p, \mu_\Gamma)$ (see, for instance,~\cite{AKNS}), so in this case each interpretation $\MA\stackrel{\Gamma,\bar p}{\rightsquigarrow}\MB$ inherently comes equipped with a fixed coordinate map. We want to point out that in~\cite{AKNS}, the direction of the arrow $\rightsquigarrow$ is opposite to the one used in our notation. Note that if $\MA\stackrel{\Gamma,\bar p}{\rightsquigarrow}\MB$ is an injective interpretation, then the surjection $\mu_\Gamma$ is also injective. The map $\mu_\Gamma\colon A_\Gamma \to A$ gives rise to a map $\mu_\Gamma^m\colon A^m_\Gamma\to A^m$ on the corresponding Cartesian powers, which we often denote again by  $\mu_\Gamma$. And we write $U^{\wedge}_\Gamma(\bar{\bar x},\bar y)$ instead of $\bigwedge_{i=1}^m U_\Gamma(\bar x_i,\bar y)$, where $\bar{\bar x}=(\bar x_1,\ldots,\bar x_m)$, $|\bar x_i|=\dim\Gamma$. In these notations, for example, the entry $U^\wedge_\Gamma(\bar x,\bar u,\bar v,\bar y)$ denotes $U_\Gamma(\bar x,\bar y)\wedge U_\Gamma(\bar u,\bar y)\wedge U_\Gamma(\bar v,\bar y)$.  

It is important to note that there can be really different interpretations of $\MA$ in $\MB$, as illustrated in Example~\ref{ex:UT1}, while others are essentially the same, differing mainly in the way we write formulas. We will define the latter formally below. Before that, note that for an $L(\MB)$\=/structure $\nsB$ and a tuple $\bar q \in \nsB$, the algebraic structure $\Gamma(\nsB, \bar q)$ described above might not exist. The criteria for its existence are discussed in Subsection~\ref{subsec:AC}.

\begin{definition}\label{def:equiv_codes}
    Two codes $\Gamma\colon L(\MA)\to L(\MB)$ and $\Gamma^\prime\colon L(\MA)\to L(\MB)$ are called {\em equivalent} (correspondingly, {\em $\phi$\=/equivalent}, for a formula $\phi(y_1,\ldots,y_k)$ in $L(\MB)$) if
    \begin{enumerate}[label=(\arabic*)]
        \item $\dim \Gamma=\dim \Gamma^\prime$, $\dim_\param \Gamma=\dim_\param \Gamma^\prime=k$,
        \item for any $L(\MB)$\=/structure $\nsB$ and any tuple $\bar q\in \nsB$ (correspondingly, any tuple $\bar q\in \phi(\nsB)$) the pair $(\Gamma,\bar q)$ defines an $L(\MA)$\=/structure $\Gamma(\nsB,\bar q)$ in $\nsB$ if and only if $(\Gamma^\prime,\bar q)$ defines an $L(\MA)$\=/structure $\Gamma^\prime(\nsB,\bar q)$ in $\nsB$, moreover, in this case $\Gamma(\nsB,\bar q)=\Gamma^\prime(\nsB,\bar q)$.
    \end{enumerate}
We write $\Gamma\approx\Gamma^\prime$ ($\Gamma\approx_\phi\Gamma^\prime$) if $\Gamma$ and $\Gamma^\prime$ are equivalent ($\phi$\=/equivalent).
\end{definition}

\begin{remark}\label{rem:Id1}
We can always assume that for any tuple $(\bar b,\bar b\,^\prime,\bar q)\in E_\Gamma(\MB)$ (similarly, $(\bar b_1,\ldots,\bar b_{t_Q},\bar q)\in Q_\Gamma(\MB)$) one has $\bar b,\bar b\,^\prime \in U_\Gamma(\MB,\bar q)$ (correspondingly, $\bar b_1,\ldots,\bar b_{t_Q} \in U_\Gamma(\MB,\bar q)$). Otherwise, one can replace the formula $E_\Gamma$ with a new formula $E_\Gamma^\prime(\bar x, \bar x^\prime, \bar y) = E_\Gamma(\bar x, \bar x^\prime, \bar y) \wedge U^\wedge_\Gamma(\bar x, \bar x^\prime,\bar y)$. And the same we can do with all the formulas $Q_\Gamma$, $Q\in L(\MA)$. In particular, it follows that we can also assume $U_\Gamma(\bar x, \bar y) = E_\Gamma(\bar x, \bar x, \bar y)$. The new code $\Gamma^\prime$ we get this way is equivalent to the old one $\Gamma$.
\end{remark}

\begin{remark}\label{rem:Id3}
Sometimes it is necessary to adjust formulas from a code $\Gamma$, such that  $\sim_\Gamma$ become an equivalence relation on the whole set $B^n$ and $U_\Gamma(\MB,\bar p)$ is closed under $\sim_\Gamma$, i.\,e., if $\bar b_1,\bar b_2\in B^n$ such that $\bar b_1\sim_\Gamma \bar b_2$ and $\bar b_1\in U_\Gamma(\MB,\bar p)$, then $\bar b_2\in U_\Gamma(\MB,\bar p)$. Indeed, it is enough to  replace the formula $E_\Gamma$ by
$$
E^\prime_\Gamma(\bar x_1, \bar x_2,\bar y)= (U^\wedge_\Gamma(\bar x_1,\bar x_2,\bar y) \longrightarrow E_\Gamma(\bar x_1,\bar x_2, \bar y)) \,  \wedge\,(\neg (\bar x_1=\bar x_2) \longrightarrow U^\wedge_\Gamma(\bar x_1,\bar x_2,\bar y)). 
$$
In this way, we will again obtain a code equivalent to the original one.
\end{remark}

\begin{definition}\label{E_inj}
A code $\Gamma$ is {\em injective} if the formula $E_\Gamma(\bar x,\bar x',\bar y)$ has the form $E_{\inj}(\bar x,\bar x^\prime,\bar y) = \bigwedge_{i = 1}^n (x_i = x_i^\prime)$, where $\bar x=(x_1,\ldots,x_n)$, $\bar x^\prime=(x_1^\prime,\ldots,x_n^\prime)$.
\end{definition}

The following remark shows that every injective interpretation can be replaced with an interpretation given by an injective code.

\begin{remark}\label{rem:Id4}
    For a code $\Gamma$ denote by $\Gamma_{\inj}$ the code obtained from $\Gamma$ by replacing the formula $E_\Gamma(\bar x,\bar x^\prime,\bar y)$ with the formula $E_{\inj}(\bar x,\bar x^\prime,\bar y)$.  Then if $\MA \simeq \Gamma(\MB,\bar p)$ is an injective interpretation of $\MA$ in $\MB$ then $\MA \simeq \Gamma_{\inj}(\MB,\bar p)$.  
\end{remark}

\begin{definition}
Let $L(\MA)$ and $L(\MB)$ be computable languages. We term a code $\Gamma\colon L(\MA)\to L(\MB)$, $\Gamma=\{U_\Gamma,E_\Gamma,Q_\Gamma\mid Q\in L(\MA)\}$, {\em computable}, if for every symbol $Q$ in the signature of $L(\MA)$ one can effectively construct the formula $Q_\Gamma$ from the code $\Gamma$. To be more precise, one can use G\"odel numbering of formulas in the languages $L(\MA)$ and $L(\MB)$, but we leave it to the reader.  Furthermore, an interpretation $\MA\rightsquigarrow\MB$ is {\em computable}, if it is defined by a computable code.
\end{definition}

In particular, if languages $L(\MA)$ and $L(\MB)$ are finite, then every code $\Gamma\colon L(\MA)\to L(\MB)$ is computable. All examples of interpretations in Subsection~\ref{subsec:code} are computable.

\begin{definition}
A code $\Gamma$ is {\em Diophantine} (correspondingly, $\Sigma_n$\=/{\em code}, $\Pi_n$\=/{\em code}) if all its formulas are Diophantine (correspondingly, have $\Sigma_n$\=/form, $\Pi_n$\=/form). Furthermore, an interpretation $\MA\rightsquigarrow\MB$ is {\em Diophantine} (correspondingly, $\Sigma_n$\=/{\em interpretation}, $\Pi_n$\=/{\em interpretation}), if it is defined by a Diophantine code (correspondingly, by a $\Sigma_n$\=/{\em code}, $\Pi_n$\=/{\em code}) $\Gamma$. 
\end{definition}

\begin{remark}\label{rem:cor_code}
In practical scenarios, when we design a code $\Gamma$, we often neglect to ensure that the formulas $Q_\Gamma$ represent full preimages of the graphs of $Q \in L(\MA)$. For example, while creating an interpretation $\Q \rightsquigarrow \Z$ on pairs $(x_1,x_2) \in \Z^2$ with $x_2 \neq 0$, we may define addition  $(x_1,x_2) + (x_3,x_4) = (x_5,x_6)$ using the formula:
$$
+(x_1,x_2,x_3,x_4,x_5,x_6)\:=\:(x_5=x_1\cdot x_4 + x_3\cdot x_2)\,\wedge\,(x_6=x_2\cdot x_4).
$$
Thus, addition is defined only for certain representatives of equivalence classes. However, the definition can be refined by replacing the formulas $Q_\Gamma$ with new ones: 
$$
Q^\prime_\Gamma(\bar x_1, \ldots, \bar x_{t_Q},\bar y)=\exists\bar x_1^\prime\:\ldots \exists\bar x_{t_Q}^\prime \; (Q_\Gamma(\bar x_1^\prime,\ldots, \bar x_{t_Q}^\prime,\bar y) \wedge \bigwedge\limits_{i=1}^{t_Q}E_\Gamma(\bar x_i,\bar x_i^\prime,\bar y)\,)
$$
and obtaining the full preimages of the graphs. Besides, if here $Q_\Gamma$ and $E_\Gamma$ are Diophantine, then $Q^\prime_\Gamma$ is Diophantine as well.
\end{remark}

The classical interpretations $\N\rightsquigarrow\Z$, $\MC\rightsquigarrow\MR$, $\Aint\rightsquigarrow\Z$, $\BS(1,k)\rightsquigarrow\Z$, $\GL_n(R)\rightsquigarrow R$, $\SL_n(R)\rightsquigarrow R$, $\UT_3(R)\rightsquigarrow R$, mentioned in Subsection~\ref{subsec:code}, are Diophantine, while the interpretation $\Z\rightsquigarrow\Q$ is $\Pi_3$.

Diophantine interpretations were introduced in~\cite{GMO1, GMO2} to address decidability of the Diophantine problems in groups and rings. Sometimes, they are referred to as {\em e\=/interpretations}, or {\em interpretations by equations}.

For a language $L$, the following absolute injective Diophantine code of dimension $1$
\begin{multline*}
\Id_{L}=\{U_{\Id_{L}}(x)=(x=x),\: E_{\Id_{L}}(x,x^\prime)=(x=x^\prime),\\ c_{\Id_{L}}(x)=(c=x), \: f_{\Id_{L}}(x_1,\ldots,x_{n_f},x_0)=(f(x_1,\ldots,x_{n_f})=x_0),\\ R_{\Id_{L}}(x_1,\ldots,x_{n_R})=R(x_1,\ldots,x_{n_R})\mid c,f,R\in L\}
\end{multline*}
gives an interpretation $\MA\stackrel{\Id_{L}}{\rightsquigarrow}\MA$ for any algebraic structure $\MA$ with $L(\MA) = L$, termed the {\em identical interpretation}.

For an interpretation $(\Gamma,\bar p, \mu_\Gamma)$ of $\MA$ in $\MB$ and an element $a\in A$ we denote by $\bar\mu^{-1}_\Gamma(a)$ some fixed tuple $\bar b$ from $\mu_\Gamma^{-1}(a)$. Further, for a subset $X\subseteq A$ we denote by $\Gamma_X\colon L(\MA)\cup X \to L(\MB)\cup B$ the code with additional constants from $X$,
$$
\Gamma_X=\Gamma\cup\{c_{\Gamma_X}(\bar x) \mid c\in X\}, \ \mbox{ where } c_{\Gamma_X}(\bar x)=E_\Gamma(\bar x,\bar\mu_\Gamma^{-1}(c),\bar p). 
$$

\begin{remark}\label{lem:AB1}
If $\MA=\langle A; L(\MA)\rangle$ is (injectively) interpretable in $\MB=\langle B; L(\MB)\rangle$ by a code $\Gamma$, then $\MA_A=\langle A; L(\MA)\cup A\rangle$ is absolutely (injectively) interpretable in $\MB_B=\langle B; L(\MB)\cup B\rangle$ by the code $\Gamma_A$. If the language $L(\MA)$ is finite, then the inverse statement also holds: if $\MA_A$ is (injectively) interpretable in $\MB_B$, then $\MA$ is (injectively) interpretable in $\MB$.
\end{remark}

We continue with Example~\ref{ex:UT1} and describe the interpretation $R\rightsquigarrow \UT_3(R)$ due to A.\,I.\,Mal'cev~\cite{Malcev}.

\begin{example}\label{ex:UT2}
Let $R$ be a unitary ring and $G=\UT_3(R)$, $\ZG(G)$ be the center of $G$, $[G,G]$ be its commutator subgroup, and $\CG_G(g)$ be the centralizer of an element $g\in G$. Recall that $\ZG(G)=\{t_{13}(\alpha), \alpha\in R\}=[G,G]=\{[t_{12}(\alpha), t_{23}(1)] \mid \alpha\in R\}$,  $\CG_G(t_{12}(1))=\{t_{12}(\alpha)\cdot t_{13}(\gamma) \mid \alpha,\gamma\in R\}$, and $\CG_G(t_{23}(1))=\{t_{23}(\beta)\cdot t_{13}(\gamma) \mid \beta,\gamma\in R\}$. The following code $\Gamma\colon L_\ring\to L_\group$ and parameters $(y_1,y_2)=(t_{12}(1),t_{23}(1))$ give an interpretation $R\rightsquigarrow \UT_3(R)$:
\begin{gather*}
U_\Gamma(x,y_1,y_2)=\exists\,x_1\:\exists\,x_2\:(x=[x_1,x_2]),  \quad
E_\Gamma(x_1,x_2,y_1,y_2)=(x_1=x_2),\\
\oplus(x_1,x_2,x_3,y_1,y_2)=(x_1\cdot x_2=x_3),\\   \otimes(x_1,x_2,x_3,y_1,y_2)=\exists\,z_1\,\exists\,z_2\,([y_1,z_1]=[y_2,z_2]=e\wedge x_1=[z_1,y_2]\wedge x_2=[y_1,z_2]\wedge x_3=[z_1,z_2]),\\ \mathbf{0}(x,y_1,y_2)=(x=e), \quad
   \mathbf{1}(x,y_1,y_2)=(x=[y_1,y_2]).
\end{gather*}
Note that the code $\Gamma$ is injective and Diophantine. Here $U_\Gamma(G)=\ZG(G)$. In the sequel  we will write $x_1\oplus x_2=x_3$, $x_1\otimes x_2=x_3$, $\mathbf{0}$, $\mathbf{1}$, instead of exact formulas from the code $\Gamma$, when considering this example. So, $t_{13}(\alpha)\oplus t_{13}(\beta)=t_{13}(\alpha+\beta)$, $t_{13}(\alpha)\otimes t_{13}(\beta)=t_{13}(\alpha\beta)$ for all $\alpha,\beta\in R$.
\end{example}

\subsection{Regular interpretability}\label{subsec:reg_int}

An absolute interpretation is a specific type of interpretation with parameters. While it is more convenient, it appears rarely. Conversely, parameters can limit the interpretation's applicability in certain cases. This issue can be resolved by aligning the parameter choice with a specific formula-descriptor.

\begin{definition}\label{def:regular_int}
We say that $\MA$ is {\em regularly interpretable} in $\MB$ if there exists a code $\Gamma\colon L(\MA)\to L(\MB)$ and an $L(\MB)$\=/formula $\phi(y_1,\ldots,y_k)$, $k=\dim_\param\Gamma$ (called a {\em parameter descriptor}, or just a {\em descriptor} of $\Gamma$), such that $\phi(\MB)\ne\emptyset$ and for each $\bar p=(p_1,\ldots,p_k)\in \phi(\MB)$ the pair $(\Gamma,\bar p)$ gives an interpretation $\MA\simeq\Gamma(\MB,\bar p)$ of $\MA$ in $\MB$.
\end{definition}

We denote by $(\Gamma,\phi)$  the regular interpretation with the code $\Gamma$ and the descriptor $\phi$. We write $\MA\simeq \Gamma(\MB,\phi)$ or $\MA\stackrel{\Gamma, \phi}{\rightsquigarrow}\MB$, or $(\Gamma,\phi)\colon\MA\rightsquigarrow \MB$,  if $(\Gamma,\phi)$ regularly interprets $\MA$ in $\MB$. 

\begin{remark}\label{rem:Id5}
When dealing with a regular interpretation $\MA\simeq\Gamma(\MB,\phi)$ it is sometimes convenient to redefine each formula $\psi(\bar{\bar x},\bar y)$ from the code $\Gamma$ by $\psi^\prime(\bar{\bar x},\bar y)=\phi(\bar y)\wedge \psi(\bar {\bar x},\bar y)$. The new code will be $\phi$\=/equivalent to $\Gamma$.
\end{remark}

If for all $\bar p\in \phi(\MB)$ the interpretation $(\Gamma,\bar p)$ is injective, then the regular interpretation $(\Gamma,\phi)$ is called {\em injective}. Note that if $\MA\simeq \Gamma(\MB,\phi)$ is a regular injective interpretation, then $\MA\simeq \Gamma_{\inj}(\MB,\phi)$ is a regular (injective) interpretation  of $\MA$ in $\MB$.

We say that descriptor $\phi$ of a regular interpretation $(\Gamma,\phi)\colon\MA\rightsquigarrow\MB$ is {\em removable}, if $\Gamma(\MB,\bar p_1)=\Gamma(\MB,\bar p_2)$ for any tuples $\bar p_1,\bar p_2\in \phi(\MB)$. If $(\Gamma,\phi)\colon\MA\rightsquigarrow\MB$ is a regular interpretation with a removable descriptor, then $(\Gamma^\abs,\emptyset)\colon\MA\rightsquigarrow\MB$ is an absolute interpretation, where the code $\Gamma^\abs$ is obtained from the code $\Gamma$ by replacing every formula $\psi(\bar{\bar x},\bar y)$ from $\Gamma$   by the formula $\psi^\abs(\bar{\bar x})=\exists\,\bar y\:(\phi(\bar y)\wedge\psi(\bar{\bar x},\bar y))$ (here $|\bar y|=\dim_\param\Gamma$).

Thus, we intend to view absolute interpretations as a particular case of regular ones. Informally,  whatever is true for regular interpretations also holds for the absolute ones, without changing much of the arguments. More formally, any absolute code $\Gamma$ can be converted into a pair  $(\Gamma^\reg,\reg(y))$, termed as the {\em simple parameter extension} of $\Gamma$, by introducing a fictitious variable $y$, for parameter, into all formulas in $\Gamma$, namely, formally denoting every formula $\psi(\bar{\bar x})$ from $\Gamma$ by $\psi^\reg(\bar{\bar x},y)=\psi(\bar{\bar x})$; and defining $\reg(y)=(y=y)$. If $\MA \simeq \Gamma(\MB,\emptyset)$, then $\MA \simeq \Gamma^\reg(\MB,\reg)$ is a regular interpretation with removable descriptor. Similarly, $(\Gamma,\emptyset)$ is the {\em simple absolutization} of $(\Gamma^\reg,\reg)$. 
The simple parameter extension $(\Gamma,\emptyset)\to(\Gamma^\reg,\reg)$ and simple absolutization $(\Gamma^\reg,\reg)\to(\Gamma,\emptyset)$ are particular cases of notions of parameter extension and absolutization; the former is introduced in this subsection, and the latter will be addressed in one of the upcoming papers in this series.

Note that if there exists an absolute interpretation $\MA\rightsquigarrow\MB$, then there exists a regular interpretation $\MA\rightsquigarrow\MB$, and if there exists a regular interpretation $\MA\rightsquigarrow\MB$, then there exists an interpretation $\MA\rightsquigarrow\MB$ with parameters. Example~\ref{ex:G1} below shows two structures $\MA$ and $\MB$ such that $\MA$ is regularly interpretable in $\MB$, but not absolutely; while Example~\ref{ex1} presents two structures $\MA$ and $\MB$ such that $\MA$ is interpretable in $\MB$ with parameters, but not regularly. The general intuition is that absolute interpretations are rarer or harder to establish than those with parameters, and regular interpretations offer a compromise between them. Regular interpretations allow for obtaining a number of consequences that cannot be derived from interpretations with parameters.

By the way, in Hodges’ book~\cite{Hodges} the settings are slightly different: there, interpretations by default are absolute, and the regular ones are called {\em interpretations with definable parameters}. Here, the basic interpretations are with parameters. Of course, one can always put parameters in the language, as in Remark~\ref{lem:AB1}, and obtain an absolute interpretation. Sometimes this simplifies things, but sometimes not. This makes it especially difficult to deal with regular interpretations $(\Gamma,\phi)$ of $\MA$ in $\MB$, for example, if the set $\phi(\MB)$ is infinite the language $L(\MB) \cup \phi(\MB)$ becomes infinite, even though the initial language $L(\MB)$ is finite. Also, adding constants to the language makes it difficult to study elementary equivalence of structures, in particular, the first-order classification problems, or to study decidability problems: a decidable elementary theory $\Th(\MB)$  might become undecidable after adding some constants, etc. 

In an upcoming paper in this series, we show that, under certain favorable conditions, an interpretation with parameters can be enhanced to a regular interpretation, and occasionally to an absolute interpretation. As an illustration, in Lemma~\ref{ex:UT3} below we construct a regular interpretation $R\rightsquigarrow \UT_3(R)$ from an interpretation with parameters. Furthermore, from this regular interpretation, one can build up an absolute one. However, this is not always the case, as the following example shows two structures $\MA$ and $\MB$ such that $\MA$ is regularly interpretable in $\MB$, but it can't be absolutely interpretable in $\MB$. 

\begin{example}[P.\,Gvozdevsky]\label{ex:G1}
Let $L(\MA)$ be the empty language and $A=\Z$, while $L(\MB)$ be the language with a unique predicate $R$ of arity $2$ and $B=\MR$, where $\MB\models R(x,y)$ if and only if $x-y\in \Z$. Then $\MA$ is regularly interpretable in $\MB$ by means of $(\Gamma, \phi)$, where $\Gamma=\{U_\Gamma(x,y)=R(x,y), E_\Gamma(x_1,x_2,y)=(x_1=x_2)\}$ and $\phi(y)=(y=y)$. At the same time, $\MA$ is not absolutely interpretable in $\MB$, since for any $0$\=/definable non-empty set $X$ in $\MB$ and any $0$\=/definable equivalence relation $\sim$ on $X$ the quotient $X/{\sim}$ is finite or uncountable. Let us show that this fact is true. Suppose that a set $X\subseteq B^n$ and a relation $\sim$ are fixed. Firstly, note that for any infinite set $W$, a finite subset $U\subset W$ and an injective map $f\colon U\to W$, there exists a bijection $\bar f\colon W\to W$, such that $f(u)=\bar f(u)$ for all $u\in U$. Secondly, for a real number $r\in \MR$ let us denote by $[r]$ the set $\{r+z\in \MR \mid z\in \Z\}$, i.\,e., the equivalence class of $r$ with respect to relation $R$. For any tuple $\bar b=(b_1,\ldots,b_n)\in \MR^n$ denote by $R_{\bar b}$ the set $\{[b_{i_1}],\ldots,[b_{i_s}]\}$ of all pairwise distinct elements from  the set $\{[b_1],\ldots,[b_n]\}$. Then for any pairwise distinct sets $[r_{i_1}],\ldots, [r_{i_s}]$, $r_{i_j}\in \MR$, there exists a tuple $\bar r=(r_1,\ldots,r_n)\in \MR^n$, such that $\bar r\in \Orb(\bar b)$. Thirdly, the automorphic orbit $\Orb(\bar b)$ is an absolutely definable set; it is defined by the following formula
$$
\varphi_{\bar b}(x_1,\ldots,x_n)=\bigwedge_{b_i=b_j}(x_i=x_j)\wedge \bigwedge_{b_i\ne b_j}\neg(x_i=x_j)\wedge\bigwedge_{b_i-b_j\in\Z} R(x_i,x_j)\wedge \bigwedge_{b_i-b_j\not\in\Z} \neg R(x_i,x_j).
$$
Thus, one has $\Orb(\bar b)=\varphi_{\bar b}(\MB)$ and for any tuple $\bar c\in B^n$ one has $\Orb(\bar b)=\Orb(\bar c)$ if and only if $\bar c\in \varphi_{\bar b}(\MB)$. As we see, the automorphic orbit $\Orb(\bar b)$ is an uncountable set. At the same time, for the fixed integer $n\geqslant 1$, the set of all automorphic orbits $\Orb(\bar b)$ of tuples $\bar b\in B^n$ is finite, as soon as there exists just a finite set of formulas $\varphi_{\bar b}$. Fourthly, note that $\Orb(\bar b)\subseteq X$ for any tuple $\bar b\in X$, since $X$ is $0$\=/definable, and suppose that there exist tuples $\bar b_0,\bar c_0\in X$, such that $\Orb(\bar b_0)=\Orb(\bar c_0)=Y$, but $\bar b_0\not\sim\bar c_0$ (otherwise, the set $X/{\sim}$ is finite, and there is nothing to prove). In this case one can find tuples $\bar b,\bar c\in Y$, such that $\bar b\not\sim\bar c$ and $R_{\bar b}\cap R_{\bar c}=\emptyset$, i.\,e., $\MB\models \psi (\bar b,\bar c)$, where $\psi( x_1,\ldots x_n, y_1,\ldots,y_n)=\bigwedge\limits_{i,j=1}^n\neg R(x_i,y_j)$. Indeed, if for all $\bar b_1,\bar c_1\in Y$ with $\MB\models \psi (\bar b_1,\bar c_1)$ one has $\bar b_1\sim \bar c_1$, then it is easy to find a tuple $\bar a\in Y$ such that $\MB\models \psi (\bar a,\bar b_0)\wedge \psi (\bar a,\bar c_0)$, so that $\bar b_0\sim\bar a\sim \bar c_0$, but it contradicts to $\bar b_0\not\sim\bar c_0$. Fifthly, there exists an uncountable set $Y_0\subset \Orb(\bar b)=Y$, such that $\bar c\in Y_0$ and for every non-equal tuples $\bar r_1,\bar r_2\in Y_0$ one has $R_{\bar r_1}\cap R_{\bar r_2}=\emptyset$, i.\,e., $\MB\models \psi(\bar r_1,\bar r_2)$. Therefore, for any $\bar r_1\ne\bar r_2\in Y_0$ one has $(\bar r_1,\bar r_2)\in \phi_{(\bar b,\bar c)}(\MB)$, i.\,e.,  
 $\Orb((\bar b,\bar c))=\Orb((\bar r_1,\bar r_2))$. It gives that $\bar r_1\not\sim \bar r_2$, so the set $X/{\sim}$ is uncountable. 
\end{example}

We don't know yet of any examples of this kind that are more natural, because it can be difficult to prove the absence of an absolute interpretation.

\begin{problem}\label{problem1}
Find an example of algebraic structures $\MA$ and $\MB$ in the class of groups or rings such that there exists a regular interpretation $\MA\rightsquigarrow\MB$, but $\MA$ is not absolutely interpretable in $\MB$. 
\end{problem}

In the next result, we use the ideas due to A.\,I.\,Mal'cev~\cite{Malcev} and Examples~\ref{ex:UT1} and~\ref{ex:UT2} to prove a regular interpretability $R\rightsquigarrow \UT_3(R)$. 

\begin{lemma}\label{ex:UT3}
Suppose that $R$ is a commutative associative unitary ring and $G=\UT_3(R)$. Then there exists a regularization of the interpretation $R\rightsquigarrow \UT_3(R)$ from Example~\ref{ex:UT2} with descriptor $\phi(y_1,y_2)$, which describes the following statements:
\begin{enumerate}[label=\arabic*)]
    \item $[y_1,y_2]\ne e$,
    \item $\CG_G(y_1)\cap \CG_G(y_2)=\ZG(G)$,
    \item for any element $z\in \ZG(G)$ there exist unique modulo $\ZG(G)$ elements $z_1\in \CG_G(y_1)$ and $z_2\in \CG_G(y_2)$, such that $z=[y_1,z_2]=[z_1,y_2]$.
\end{enumerate}
\end{lemma}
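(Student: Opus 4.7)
The plan is to take $\phi(y_1, y_2)$ to be the first-order conjunction expressing conditions 1--3 and then, for each $(y_1, y_2) \in \phi(G)$, verify that the code $\Gamma$ of Example~\ref{ex:UT2} yields an isomorphism $R \simeq \Gamma(G, (y_1, y_2))$. Conditions 1--3 are expressible in $L_\group$ because $G = \UT_3(R)$ is nilpotent of class $2$: the subgroup $\ZG(G) = [G,G]$ is definable by $\forall g\,([x,g] = e)$, each centralizer by $[x, y_i] = e$, and ``unique modulo $\ZG(G)$'' just means that any two witnesses differ by a central element. Non-emptiness of $\phi(G)$ follows by checking that the original parameter pair $(t_{12}(1), t_{23}(1))$ of Example~\ref{ex:UT2} satisfies $\phi$, using the normal form $t_{12}(\alpha)t_{23}(\beta)t_{13}(\gamma)$ and the commutator identity of Example~\ref{ex:UT1}(\ref{item4}).

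Fix $(y_1, y_2) \in \phi(G)$, write $y_i = t_{12}(\alpha_i)t_{23}(\beta_i)t_{13}(\gamma_i)$, and set $d = \alpha_1\beta_2 - \alpha_2\beta_1$, so $[y_1, y_2] = t_{13}(d)$. Since $[G,G] = \ZG(G) = \{t_{13}(\gamma) : \gamma \in R\}$ in $\UT_3(R)$, the formula $U_\Gamma$ defines $\ZG(G)$; on this set $\oplus$ is just addition in $R$ under $\gamma \leftrightarrow t_{13}(\gamma)$, with $\mathbf{0} = t_{13}(0)$ and $\mathbf{1} = t_{13}(d)$. Well-definedness of $\otimes$ is automatic because a central change in $z_1$ or $z_2$ leaves $[z_1, z_2]$ unchanged, and condition 3 furnishes witnesses. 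The crucial step is to show that conditions 1--3 force $d$ to be a unit in $R$. Apply the existence clause of condition 3 at $z = t_{13}(1)$ to obtain tuples $(\omega_1', \omega_1''), (\omega_2', \omega_2'') \in R^2$ (the $(\alpha,\beta)$-components of representatives $z_1 \in \CG_G(y_1)$ and $z_2 \in \CG_G(y_2)$) satisfying $\omega_1'\beta_1 = \alpha_1\omega_1''$, $\omega_1'\beta_2 - \alpha_2\omega_1'' = 1$, and the symmetric pair of relations for the second tuple. Then apply the uniqueness clause of condition 3 at $z = t_{13}(d) = [y_1,y_2]$: both the tuple $(\alpha_1, \beta_1)$ (coming from $y_1$ itself) and $(d\omega_1', d\omega_1'')$ represent centralizer elements whose commutator with $y_2$ equals $t_{13}(d)$, so uniqueness modulo $\ZG(G)$ forces $\alpha_i = d\omega_i'$ and $\beta_i = d\omega_i''$ for $i = 1, 2$. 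Substituting back into $\omega_1'\beta_2 - \alpha_2\omega_1'' = 1$ gives $d(\omega_1'\omega_2'' - \omega_2'\omega_1'') = 1$, so $d$ is invertible in $R$ with inverse $e := \omega_1'\omega_2'' - \omega_2'\omega_1''$.

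With $d$ a unit, a direct commutator calculation on representatives with $(\alpha,\beta)$-components $(a\omega_1', a\omega_1'')$ and $(b\omega_2', b\omega_2'')$ yields $t_{13}(a) \otimes t_{13}(b) = t_{13}(abd^{-1})$; the coordinate map $\mu \colon R \to \Gamma(G, (y_1, y_2))$ defined by $\mu(r) = t_{13}(rd)$ is then a bijection (multiplication by the unit $d$), is additive, and sends $0 \mapsto \mathbf{0}$, $1 \mapsto \mathbf{1}$, with $\mu(r) \otimes \mu(s) = t_{13}((rd)(sd)d^{-1}) = t_{13}(rsd) = \mu(rs)$, hence is a ring isomorphism. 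I expect the principal obstacle to be the invertibility argument above: since $R$ is only assumed commutative and not a domain, the conclusion ``$d$ is a unit'' requires carefully pairing the \emph{existence} clause of condition 3 at $z = t_{13}(1)$ with the \emph{uniqueness} clause at $z = t_{13}(d)$, and the substitution that cancels $d$ is the combinatorial heart of the proof.
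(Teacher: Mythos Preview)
Your proof is correct and runs parallel to the paper's except at the key step, the invertibility of $d = \alpha_1\beta_2 - \alpha_2\beta_1$. The paper applies only the \emph{existence} clause of condition~3 at $z = t_{13}(1)$ to obtain the $(\xi_i,\tau_i)$ (your $(\omega_i',\omega_i'')$) and then verifies the identity $\gamma\delta = 1$ by a direct expansion of $(\alpha_1\beta_2 - \alpha_2\beta_1)(\xi_1\tau_2 - \xi_2\tau_1)$ using the four relations $\alpha_i\tau_i = \xi_i\beta_i$, $\alpha_1\tau_2 - \xi_2\beta_1 = 1$, $\xi_1\beta_2 - \alpha_2\tau_1 = 1$. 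Your route is different: you invoke the \emph{uniqueness} clause of condition~3 at $z = t_{13}(d)$, comparing the obvious witness $y_i$ with the scaled witness having $(\alpha,\beta)$-components $(d\omega_i',d\omega_i'')$, to force $\alpha_i = d\omega_i'$, $\beta_i = d\omega_i''$, and then a single substitution gives $d(\omega_1'\omega_2'' - \omega_2'\omega_1'') = 1$. Your argument is more conceptual and explains \emph{why} the uniqueness hypothesis is needed for invertibility, whereas the paper's computation hides this behind algebra and reserves uniqueness only for the well-definedness of $\otimes$; conversely, the paper's approach shows that existence at $z = t_{13}(1)$ alone already yields a two-sided inverse $\delta$ without appealing to uniqueness. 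The remaining verifications (the formula $t_{13}(a)\otimes t_{13}(b) = t_{13}(abd^{-1})$ and the isomorphism $r \mapsto t_{13}(rd)$) match the paper's, with your coordinate map being the inverse of the paper's $t_{13}(\alpha) \mapsto \alpha\delta$.
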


\begin{proof}
It is easy to see that $(t_{12}(1),t_{23}(1))\in \phi(G)$. Take an arbitrary pair of elements $\bar a=(a_1,a_2)\in \phi(G)$. Then the $L_\group$\=/structure $\Gamma(G,\bar a)$ is well-defined. We need to show that $R\simeq \Gamma(G,\bar a)$. 

Let $a_i=t_{12}(\alpha_i)\cdot t_{23}(\beta_i)\cdot t_{13}(\gamma_i)$, $\alpha_i,\beta_i,\gamma_i\in R$, $i=1,2$. Then $[a_1,a_2]=t_{13}(\alpha_1\beta_2-\alpha_2\beta_1)$; put  $\gamma=\gamma_{\bar a}=\alpha_1\beta_2-\alpha_2\beta_1$. For  $z=t_{13}(1)$ there exist  $z_1\in \CG_G(a_1)$, $z_2\in \CG_G(a_2)$, such that $z=[a_1,z_2]=[z_1,a_2]$, say, $z_i=t_{12}(\xi_i)\cdot t_{23}(\tau_i)\cdot t_{13}(\kappa_i)$, $\xi_i,\tau_i,\kappa_i\in R$, $i=1,2$. Then $[z_1,z_2]=t_{13}(\xi_1\tau_2-\xi_2\tau_1)$; denote $\delta=\delta_{\bar a}=\xi_1\tau_2-\xi_2\tau_1$. 

In this notation, $\gamma\delta=1$, so $\gamma,\delta$ are units of the ring $R$. Indeed, since $z_i\in \CG_G(a_i)$, one has $\alpha_i\tau_i=\xi_i\beta_i$, $i=1,2$, and since $z=[a_1,z_2]=[z_1,a_2]$, one has $1=\alpha_1\tau_2-\xi_2\beta_1=\xi_1\beta_2-\alpha_2\tau_1$. Therefore, $\gamma\delta=(\xi_1\tau_2-\xi_2\tau_1)(\alpha_1\beta_2-\alpha_2\beta_1)=\xi_1\beta_2\tau_2\alpha_1+\xi_2\beta_1\tau_1\alpha_2-\xi_1\beta_1\tau_2\alpha_2-\xi_2\beta_2\tau_1\alpha_1=\xi_1\beta_2(1+\xi_2\beta_1)+(\tau_2\alpha_1-1)\tau_1\alpha_2-\tau_1\alpha_1\tau_2\alpha_2-\xi_2\beta_2\xi_1\beta_1=\xi_1\beta_2-\tau_1\alpha_2=1$. Further, for any $\rho\in R$ and $z_i(\rho)=t_{12}(\rho\alpha_i\delta)\cdot t_{23}(\rho\beta_i\delta)$ one has $z_i(\rho)\in \CG_G(a_i)$, $i=1,2$, and since $\alpha_1\rho\beta_2\delta-\rho\alpha_2\delta\beta_1=\rho\gamma\delta=\rho$, then $t_{13}(\rho)=[z_1(\rho),a_2]=[a_1,z_2(\rho)]$. Recall, that the formula $\phi$ guarantees that $z_1(\rho),z_2(\rho)$ are unique modulo $\ZG(G)$ with such properties. Therefore, for any $\alpha,\beta\in R$ one has $t_{13}(\alpha)\otimes t_{13}(\beta)=[z_1(\alpha),z_2(\beta)]=[t_{12}(\alpha\alpha_1\delta)\cdot t_{23}(\alpha\beta_1\delta),t_{12}(\beta\alpha_2\delta)\cdot t_{23}(\beta\beta_2\delta)]=t_{13}(\alpha\beta(\alpha_1\beta_2-\alpha_2\beta_1)\delta^2)=t_{13}(\alpha\beta\gamma\delta^2)=
t_{13}(\alpha\beta\delta)$.

The isomorphism $\bar\mu_\Gamma=\bar\mu_{\Gamma,\bar a}\colon\Gamma(\UT_3(R),\bar a)\to R$ is set by the rule: 
$$
\bar\mu_\Gamma \colon \;t_{13}(\alpha)\in \ZG(G), \:\alpha\in R \ \longrightarrow \ \alpha\delta\in R.
$$
In particular, $\bar\mu_\Gamma(\mathbf{0})=\bar\mu_\Gamma(e)=\bar\mu_\Gamma(t_{13}(0))=0$, $\bar\mu_\Gamma(\mathbf{1})=\bar\mu_\Gamma([a_1,a_2])=\bar\mu_\Gamma(t_{13}(\gamma))=\gamma\delta=1$,
$\bar\mu_\Gamma(t_{13}(\alpha)\oplus t_{13}(\beta))=\bar\mu_\Gamma(t_{13}(\alpha)\cdot t_{13}(\beta))=\bar\mu_\Gamma(t_{13}(\alpha+\beta))=(\alpha+\beta)\delta=\bar\mu_\Gamma(t_{13}(\alpha))+ \bar\mu_\Gamma(t_{13}(\beta))$, and finally, $\bar\mu_\Gamma(t_{13}(\alpha)\otimes t_{13}(\beta))=\bar\mu_\Gamma(t_{13}(\alpha\beta\delta))=\alpha\beta\delta^2=\bar\mu_\Gamma(t_{13}(\alpha)) \bar\mu_\Gamma(t_{13}(\beta))$. Thus, $\bar\mu_\Gamma$ is an $L_\ring$\=/homomorphism. Since $\delta$ is a unite of the ring $R$, $\bar\mu_\Gamma$ is surjective, and $\alpha\delta=\beta\delta$ implies that $\alpha=\beta$, $\alpha,\beta\in R$, i.\,e., $\bar\mu_\Gamma$ is injective, so $\bar\mu_\Gamma$ is bijective and it is an $L_\ring$\=/isomorphism and $(\Gamma,\phi)$ is a regular interpretation $R\rightsquigarrow G$, as required. 
\end{proof}

We denote by $\Gamma_\param$ any code, which is obtained from the code $\Gamma$ (even if it is not absolute) after adding a tuple $\bar w$ of new fictitious variables (maybe empty) for parameters. We say that $\Gamma_\param$ is a {\em parameter extension} of the code $\Gamma$ by the tuple $\bar w$. In this case $\dim\Gamma_\param=\dim\Gamma$ and $\dim_\param\Gamma_\param\geqslant\dim_\param\Gamma$ and an interpretation $\MA \simeq \Gamma(\MB,\bar p)$ implies that  $\Gamma(\MB)=\Gamma_\param(\MB,\bar p,\bar q)$ for any $\bar q\in \MB$ and therefore $\MA \simeq \Gamma_\param(\MB,\bar p,\bar q)$.

\begin{definition}\label{def:param_ext}
    A regular interpretation $(\Gamma_\param,\phi\wedge\delta)\colon\MA\rightsquigarrow\MB$ is called a {\em parameter extension} of a regular interpretation $\MA\simeq\Gamma(\MB,\phi(\bar y))$, if $\Gamma_\param$ is a parameter extension of the code $\Gamma$ by a tuple  $\bar w$, and $\delta(\bar y,\bar w)$ is an $L(\MB)$\=/formula. In the case, when all free variables of $\delta$ are in $\bar w$, we name the parameters extension $(\Gamma_\param,\phi\wedge\delta)$ {\em split}. For an absolute interpretation $\MA\simeq\Gamma(\MB,\emptyset)$ {\em parameter extension} is a regular interpretation $(\Gamma_\param,\delta)$; it always splits.
\end{definition}

It is clear, that for any regular interpretation $\MA\simeq\Gamma(\MB,\phi(\bar y))$ and any parameter extension $\Gamma_\param$ of the code $\Gamma$ by a tuple $\bar w$, and any $L(\MB)$\=/formula $\delta(\bar y,\bar w)$ the pair $(\Gamma_\param,\phi\wedge\delta)$ is a parameter extension of $\MA\simeq\Gamma(\MB,\phi(\bar y))$ if and only if $\phi\wedge\delta(\MB)\ne \emptyset$. The simple parameter extension above $(\Gamma^\reg,\reg)$ is a particular case of parameter extension.

\subsection{The admissibility conditions for interpretations}\label{subsec:AC}

The book~\cite{Hodges} presented admissibility conditions for (absolute) interpretations as sets of formulas and discussed their significance when approaching interpretations as functors. The specific formulas weren't detailed as they weren't crucial for understanding. However, to understand compositions of interpretations and bi-interpretations, it's necessary to explore admissible conditions further, which we do in this section for arbitrary interpretations (with parameters or not). 

Consider a code 
$$
\Gamma =  \{U_\Gamma(\bar x,\bar y), E_\Gamma(\bar x_1, \bar x_2,\bar y), Q_\Gamma(\bar x_1, \ldots,\bar x_{t_Q},\bar y) \mid Q \in L(\MA)\}
$$ 
for a language $L(\MA)$ into a language $L(\MB)$ with parameters $\bar y=(y_1, \ldots,y_k)$, $k=\dim_{\param}\Gamma$.

Intuitively, the {\em admissibility conditions} $\AC_\Gamma(\bar y)$ for $\Gamma$ is a set of formulas in the language $L(\MB)$ with free variables $\bar y$ such that for an arbitrary $L(\MB)$\=/structure $\nsB$ and arbitrary tuple $\bar q$ in $\nsB$ there exists an $L(\MA)$\=/structure $\Gamma(\nsB,\bar q)$ in $\nsB$ if and only if $\nsB\models \AC_\Gamma(\bar q)$. 
Now we describe formulas in $\AC_\Gamma$ and mention what each one of them ``means''. Here for brevity, we use the notation $U^\wedge_\Gamma$ from Subsection~\ref{subsec:code}.
\begin{enumerate}[label=(\roman*)] 
\item The condition ``$U_\Gamma(\nsB,\bar q)\ne\emptyset$'' is described by the formula
$$
AC_0(\bar y)\:=\:\exists \, \bar x \; U_\Gamma(\bar x, \bar y);
$$
\item The condition ``$E_{\Gamma}(\bar x_1, \bar x_2,\bar q)$ defines an equivalence relation $\sim_\Gamma$ on $U_\Gamma(\nsB,\bar q)$'' is described by formulas 
\begin{gather*}
AC_1(\bar y)\:=\:\forall \, \bar x\: (U_\Gamma(\bar x, \bar y) \,\longrightarrow\, E_\Gamma (\bar x, \bar x, \bar y)\,),\\
AC_2(\bar y)\:=\:\forall\, \bar x_1 \,\forall\, \bar x_2 \: (U^\wedge_\Gamma(\bar x_1,\bar x_2,\bar y)  \wedge E_\Gamma(\bar x_1,\bar x_2, \bar y)\, \longrightarrow \, E_\Gamma (\bar x_2, \bar x_1, \bar y)\,),\\
AC_3(\bar y)\:=\:\forall \, \bar x_1 \,\forall \,\bar x_2 \,\forall \, \bar x_3 \: (U^\wedge_\Gamma(\bar x_1,\bar x_2,\bar x_3,\bar y)\,
\wedge\, E_\Gamma(\bar x_1,\bar x_2,\bar y)\wedge E_\Gamma (\bar x_2, \bar x_3,\bar y) \, \longrightarrow \, E_\Gamma(\bar x_1,\bar x_3,\bar y)\,); 
\end{gather*}
\item ``The interpretation of a symbol $Q\in L(\MA)$ on $U_\Gamma(\nsB,\bar q)/{\sim_\Gamma}$ is well-defined'' is described by 
$$
AC_{Q}(\bar y)\:=\:\forall \, \bar{\bar x}^1 \, \forall\, \bar {\bar x}^2\: (U^\wedge_\Gamma(\bar{\bar x}^1,\bar{\bar x}^2,\bar y)\wedge \,Q_\Gamma (\bar {\bar x}^1, \bar y) \,\wedge 
\bigwedge\limits_{i=1}^{t_Q} E_\Gamma (\bar x_i^1,\bar x_i^2,\bar y) \, \longrightarrow\, Q_\Gamma (\bar {\bar x}^2, \bar y)\,),$$
where $\bar{\bar x}^j=(\bar x_1^j,\ldots,\bar x_{t_Q}^j)$, $j=1,2$;
\item ``Every constant symbol $c\in L(\MA)$ interprets an element from $U_\Gamma(\nsB,\bar q)/{\sim_\Gamma}$'':
\begin{gather*}
 AC_{c,1}(\bar y)\:=\:\exists \, \bar x\; (U_\Gamma (\bar x,\bar y)\wedge c_\Gamma (\bar x, \bar y)\,),\\ 
 AC_{c,2}(\bar y)\:=\:\forall \, \bar x_1 \forall \, \bar x_2 \; (U^\wedge_\Gamma(\bar x_1,\bar x_2,\bar y) \,\wedge \,c_\Gamma (\bar x_1,\bar y) \,\wedge c_\Gamma (\bar x_2, \bar y)\longrightarrow E_\Gamma(\bar x_1, \bar x_2, \bar y)\,);
\end{gather*}
\item ``Every functional symbol $f\in L(\MA)$ interprets a function on $U_\Gamma(\nsB,\bar q)/{\sim_\Gamma}$'':
\begin{gather*}
AC_{f,1}(\bar y)\:=\:\forall \, \bar{\bar x} \; (U^\wedge_\Gamma (\bar {\bar x},\bar y) \,\longrightarrow\, \exists \, \bar u \;(U_\Gamma(\bar u,\bar y) \wedge f_\Gamma (\bar {\bar x},\bar u, \bar y)\,)\,),\\ 
AC_{f,2}(\bar y)\:=\:\forall \, \bar {\bar x}\:\forall \, \bar u\: \forall \, \bar u^\prime\; (U^\wedge_\Gamma (\bar{\bar x},\bar u,\bar u^\prime,\bar y) \,\wedge \,f_\Gamma (\bar{\bar x},\bar u, \bar y) \,\wedge
 \, f_\Gamma (\bar {\bar x},\bar u^\prime, \bar y)\,\longrightarrow\, E_\Gamma(\bar u, \bar u^\prime,\bar y)\,),
\end{gather*}
where $\bar{\bar x}=(\bar x_1,\ldots,\bar x_{n_f})$.
\end{enumerate}

\begin{definition}
For a code $\Gamma\colon L(\MA)\to L(\MB)$ the set of $L(\MB)$\=/formulas $\AC_\Gamma=\{AC_0, AC_1, AC_2, AC_3\} \cup \{AC_c,  AC_{c,1}, AC_{c,2},AC_f, AC_{f,1}, AC_{f,2}, AC_R \mid c,f, R\in L(\MA)\}$ is called the {\em admissibility conditions} of the code $\Gamma$.
\end{definition}

To deal with regular interpretations, we introduce the following set of formulas.

\begin{definition}
Let $\Gamma\colon L(\MA)\to L(\MB)$ be an interpretation code and $\phi(\bar y)$ be an $L(\MB)$\=/formula, $|\bar y|=\dim_{\param}\Gamma$. We define the admissibility conditions $\AC_{\Gamma,\phi}$ as follows: 
$$
\AC_{\Gamma, \phi} = \{AC_{\phi} \mid AC \in \AC_\Gamma\} \cup\{\exists \,\bar y \:\phi(\bar y)\}, \ \mbox{where} \  AC_{\phi} = \forall \,\bar y\:( \phi(\bar y) \,\longrightarrow\, AC(\bar y)).
$$
Thus, all formulas in $\AC_{\Gamma, \phi}$ are sentences in $L(\MB)$.
\end{definition} 

\begin{definition} \label{def:well-defined}
Let $\Gamma\colon L(\MA)\to L(\MB)$ be an interpretation code and $k=\dim_{\param}\Gamma$. For an $L(\MB)$\=/structure $\nsB$ and a $k$\=/tuple $\bar q$ in $\nsB$ we say that $\Gamma(\nsB,\bar q)$ is {\em well-defined}, if $\Gamma(\nsB,\bar q)$ defined in Subsection~\ref{subsec:code} is, indeed, some $L(\MA)$\=/structure. 
\end{definition} 

From the construction of $\AC_\Gamma$, we get the following fact.

\begin{proposition} \label{le:adm-cond}
Let $\Gamma\colon L(\MA)\to L(\MB)$ be an interpretation code and $k=\dim_{\param}\Gamma$. Then for an $L(\MB)$\=/structure $\nsB$ and a $k$\=/tuple $\bar q$ in $\nsB$ the structure $\Gamma(\nsB,\bar q)$ is  well-defined if and only if $\nsB \models \AC_\Gamma(\bar q)$, in which case $(\Gamma,\bar q)$ gives an interpretation $\Gamma(\nsB,\bar q)\stackrel{\Gamma,\bar q}{\rightsquigarrow}\nsB$ with a coordinate map $\mu_\Gamma\colon U_\Gamma(\nsB,\bar q)\to U_\Gamma(\nsB,\bar q)/{\sim_\Gamma}$, that maps a tuple $\bar b\in U_\Gamma(\nsB,\bar q)$ to the equivalence class $\bar b/{\sim_\Gamma}$. In particular, if $(\Gamma,\bar p)$ interprets $\MA$ in $\MB$ then $\MB \models \AC_\Gamma(\bar p)$.
\end{proposition}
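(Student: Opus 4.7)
The proof is essentially a translation exercise: each admissibility formula in $\AC_\Gamma$ was designed to encode, in first-order logic, exactly one of the semantic requirements that make $\Gamma(\nsB,\bar q)$ into an $L(\MA)$\=/structure. So the strategy is to unpack the definition of ``well-defined'' from Definition~\ref{def:well-defined} into a list of semantic conditions, and then match them one-by-one against the formulas in $\AC_\Gamma$.

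First I would spell out what ``well-defined'' really requires. Following the construction in Subsection~\ref{subsec:code}, the $L(\MA)$\=/structure $\Gamma(\nsB,\bar q)=\langle A_\Gamma/{\sim_\Gamma};L(\MA)\rangle$ exists if and only if: (a) the set $A_\Gamma=U_\Gamma(\nsB,\bar q)$ is non-empty; (b) the binary relation defined by $E_\Gamma(\bar x_1,\bar x_2,\bar q)$ on $A_\Gamma$ is an equivalence; (c) for every predicate symbol $R\in L(\MA)$ the set defined by $R_\Gamma$ is closed under $\sim_\Gamma$ in each coordinate, so that it descends to a relation on the quotient; (d) for every constant symbol $c\in L(\MA)$ the formula $c_\Gamma$ picks out a unique $\sim_\Gamma$\=/class; and (e) for every functional symbol $f\in L(\MA)$ the formula $f_\Gamma$ defines the graph of a total function on the quotient. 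I would then observe that the collection of formulas $\AC_\Gamma$ has been designed so that the sentences $AC_0,AC_1,AC_2,AC_3,AC_Q,AC_{c,1},AC_{c,2},AC_{f,1},AC_{f,2}$ are the exact first-order transcriptions of items (a)--(e) respectively, holding in $\nsB$ with parameters $\bar q$.

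Next I would carry out the matching explicitly, which is what gives the bi-implication. For the forward direction, suppose $\Gamma(\nsB,\bar q)$ is well-defined; then condition (a) gives $\nsB\models AC_0(\bar q)$, condition (b) gives $\nsB\models AC_1(\bar q)\wedge AC_2(\bar q)\wedge AC_3(\bar q)$, and conditions (c)--(e) give the remaining formulas. Conversely, assuming $\nsB\models\AC_\Gamma(\bar q)$, one reads off each of (a)--(e) from the matching sentence and thereby constructs the quotient structure. Once the structure is built, the coordinate map $\mu_\Gamma\colon U_\Gamma(\nsB,\bar q)\to U_\Gamma(\nsB,\bar q)/{\sim_\Gamma}$ sending $\bar b\mapsto \bar b/{\sim_\Gamma}$ is surjective by definition and induces the identity on the target, so $(\Gamma,\bar q)$ is a coordinatized interpretation of the built structure into $\nsB$.

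The ``in particular'' clause is then immediate: if $(\Gamma,\bar p)$ interprets $\MA$ in $\MB$, then by Definition~\ref{de:interpretation} the structure $\Gamma(\MB,\bar p)$ is $L(\MA)$\=/isomorphic to $\MA$, so in particular it is well-defined, and the forward direction of the equivalence yields $\MB\models\AC_\Gamma(\bar p)$. I do not anticipate any real obstacle here; the only thing requiring slight care is ensuring that the universally quantified formulas $AC_Q,AC_{c,2},AC_{f,2}$ are stated with the redundant conjuncts $U^\wedge_\Gamma$ so that they genuinely express closure of the preimages under $\sim_\Gamma$ on the domain $A_\Gamma$ rather than on all of $\nsB^n$, as previously discussed in Remark~\ref{rem:Id1}.
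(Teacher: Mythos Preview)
Your proposal is correct and takes essentially the same approach as the paper, which treats the result as immediate from the construction of $\AC_\Gamma$ without giving any further argument. Your version simply spells out the semantic unpacking that the paper leaves implicit.
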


Let $\MA \simeq \Gamma(\MB,\bar p)$. If $\MB\models\AC_\Gamma(\bar q)$  for some tuple $\bar q $ in $\MB$ then  $\Gamma(\MB,\bar q)$ is well-defined, but it may not be isomorphic to $\MA$. 

\begin{corollary} 
Let  $\MA\stackrel{\Gamma, \phi}{\rightsquigarrow}\MB$ be  a regular interpretation.  Then $\MB \models  \AC_{\Gamma, \phi}$ and if $\nsB \models \AC_{\Gamma, \phi}$ for an $L(\MB)$\=/structure $\nsB$, then $\phi(\nsB) \neq \emptyset$ and for every tuple $\bar q \in \phi(\nsB)$ the $L(\MA)$\=/structure $\Gamma(\nsB,\bar q)$ is well-defined. 
\end{corollary}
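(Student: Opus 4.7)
The plan is to derive both claims by unpacking the definition of $\AC_{\Gamma,\phi}$ and invoking Proposition~\ref{le:adm-cond} pointwise over tuples satisfying $\phi$.

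For the first assertion, I would argue as follows. Since $(\Gamma,\phi)\colon\MA\rightsquigarrow\MB$ is a regular interpretation, Definition~\ref{def:regular_int} gives $\phi(\MB)\neq\emptyset$, so $\MB\models\exists\,\bar y\,\phi(\bar y)$, which takes care of one of the sentences in $\AC_{\Gamma,\phi}$. For each $AC\in\AC_\Gamma$, I must show $\MB\models AC_\phi$, i.e.\ $\MB\models\forall\,\bar y\,(\phi(\bar y)\to AC(\bar y))$. Fix an arbitrary $\bar p\in\phi(\MB)$; by regularity, $(\Gamma,\bar p)$ gives an interpretation $\MA\simeq\Gamma(\MB,\bar p)$, and so Proposition~\ref{le:adm-cond} yields $\MB\models AC(\bar p)$. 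Since $\bar p$ was arbitrary, $\MB\models AC_\phi$, and combining over all formulas in $\AC_\Gamma$ gives $\MB\models\AC_{\Gamma,\phi}$.

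For the second assertion, suppose $\nsB\models\AC_{\Gamma,\phi}$. Then the sentence $\exists\,\bar y\,\phi(\bar y)$ belongs to $\AC_{\Gamma,\phi}$, so $\phi(\nsB)\neq\emptyset$. Fix any $\bar q\in\phi(\nsB)$. For each $AC\in\AC_\Gamma$ the sentence $AC_\phi$ is satisfied in $\nsB$, hence $\nsB\models\phi(\bar q)\to AC(\bar q)$, and since $\nsB\models\phi(\bar q)$ by choice of $\bar q$, we conclude $\nsB\models AC(\bar q)$. Ranging over all $AC\in\AC_\Gamma$ gives $\nsB\models\AC_\Gamma(\bar q)$, and Proposition~\ref{le:adm-cond} then guarantees that $\Gamma(\nsB,\bar q)$ is a well-defined $L(\MA)$-structure.

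No serious obstacle is expected: the entire content is a routine translation between the ``relativized'' sentences $AC_\phi$ and the pointwise conditions $AC(\bar q)$ for $\bar q\in\phi$, together with a direct appeal to the already established Proposition~\ref{le:adm-cond}. The only point requiring minor care is to remember that $\AC_{\Gamma,\phi}$ also contains the existential sentence $\exists\,\bar y\,\phi(\bar y)$, which is precisely what ensures $\phi(\nsB)\neq\emptyset$ in the converse direction and which is automatic in the forward direction from $\phi(\MB)\neq\emptyset$.
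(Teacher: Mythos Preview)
Your argument is correct and is exactly the routine unpacking the paper intends; the corollary is stated without proof in the paper, and your derivation from the definition of $\AC_{\Gamma,\phi}$ together with Proposition~\ref{le:adm-cond} is the natural (and only reasonable) way to fill it in.
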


\begin{remark}\label{single_AC}
    If the language $L(\MA)$ is finite, then the set $\AC_\Gamma$ is also finite. In this case one can take a single formula $AC_\Gamma(\bar y)=\bigwedge \{AC(\bar y) \mid AC(\bar y) \in \AC_\Gamma\}$, such that for any $L(\MB)$\=/structure $\nsB$ and $\bar q\in \nsB$ one has $\nsB\models AC_\Gamma(\bar q)$ if and only if $\Gamma(\nsB,\bar q)$ is well-defined. In this case, the set of sentences $\AC_{\Gamma, \phi}$ is also finite and it can be described by a single sentence $AC_{\Gamma,\phi}$~--- the conjunction of the sentences in $\AC_{\Gamma, \phi}$. 
\end{remark}

\subsection{Translation for formulas}\label{subsec:tran}

Let $\Gamma =  \{U_\Gamma(\bar x,\bar y), E_\Gamma(\bar x_1, \bar x_2,\bar y), Q_\Gamma(\bar x_1, \ldots,\bar x_{t_Q},\bar y) \mid Q \in L(\MA)\}$ be an interpretation code from $L(\MA)$ to $L(\MB)$. In this subsection, we describe three types of translations, or reductions,  of formulas from $L(\MA)$ to $L(\MB)$ (perhaps, with extra constants in the language): $\Gamma$\=/{\em translation} $\psi \to \psi_\Gamma$, $(\Gamma,\phi)$\=/{\em translations} $\psi \to \psi_{\Gamma,\forall\phi}$, $\psi \to \psi_{\Gamma,\exists\,\phi}$, and $(\Gamma,\bar p,\mu_\Gamma)$\=/{\em translation} $\psi \to \psi_{\Gamma,\bar p,\mu_\Gamma}$. In Section~\ref{subsec:reduction} we prove the Reduction Theorem and its corollaries that reveal the meaning and purpose of these translations.

We start with the translation $\psi \to \psi_\Gamma$. 
Take a formula $\psi(x_1,\ldots,x_m)$ in $L(\MA)$. We may assume that  $\psi$ does not have variables from $\bar y$ and that it is written in the prenex normal form where the matrix is a disjunction of conjunctions of unnested atomic formulas and their negations; otherwise, we replace $\psi$ with an equivalent formula $\psi^\pre$ in the required form.  Here by an unnested atomic formula, we understand a formula in one of the following forms $R(x_1, \ldots,x_{n_R})$, $f(x_1, \ldots,x_{n_f}) = x_0$, $x_i = x_j$, or $x_i=c$, where $R$ is a predicate symbol from $L(\MA)$, $f$ is a functional symbol from $L(\MA)$, $c$ is a constant from $L(\MA)$, and $x_i$\=/s are variables (see~\cite[Section~2.6]{Hodges}). If the language $L(\MA)$ is computable, then there is an algorithm that, when given $\psi$, computes such a formula $\psi^\pre$ and the length of $\psi^\pre$ is bounded by a linear function of the length of $\psi$. 

Following~\cite{Hodges}, we first define an intermediate map $\psi \to \psi^{\ast}$ as follows. We start with replacing every variable $x_i$ by an $n$\=/tuple of distinct variables $\bar x_i$, where $n = \dim\Gamma$. Then we define the map $\psi \to \psi^{\ast}$ on unnested atomic formulas as 
$$
\left.
\begin{array}{rcl}
x_i=x_j & \longrightarrow & E_\Gamma(\bar x_i,\bar x_j,\bar y),\\
x_i=c & \longrightarrow & c_\Gamma(\bar x_i, \bar y),\\
f(x_1, \ldots,x_{n_f}) = x_0 & \longrightarrow & f_\Gamma (\bar x_1, \ldots, \bar x_{n_f},\bar x_0,\bar y),\\
R(x_1, \ldots,x_{n_R}) & \longrightarrow & R_\Gamma(\bar x_1, \ldots, \bar x_{n_R},\bar y),
\end{array}
\right.
$$
where $|\bar y|=\dim_{\param}\Gamma$ and  $U_\Gamma, E_\Gamma, c_\Gamma, f_\Gamma, R_\Gamma$ are formulas from the code $\Gamma$. Furthermore, we put 
$$
\left.
\begin{array}{rcl}
(\psi_1 \vee \psi_2)^{\ast} & = & \psi^{\ast}_1 \vee \psi^{\ast}_2,\\
(\psi_1 \wedge \psi_2)^{\ast} & = & \psi^{\ast}_1 \wedge \psi^{\ast}_2,\\
(\neg \psi_1)^{\ast} &=& \neg \psi^{\ast}_1,\\
(\exists\, x \: \psi_0)^\ast  & = & \exists\, \bar x\: (U_\Gamma(\bar x, \bar y) \wedge \psi^\ast_0), \\
 (\forall \,x \: \psi_0)^\ast & = & \forall\, \bar x \:(U_\Gamma(\bar x,\bar y) \longrightarrow \psi^\ast_0).
\end{array}
\right.
$$
And finally, we define the $\Gamma$\=/translation $\psi(x_1,\ldots,x_m) \to \psi_\Gamma(\bar x_1,\ldots, \bar x_m, \bar y)$ by
$$
\psi_\Gamma(\bar x_1,\ldots, \bar x_m, \bar y)= \psi^\ast(\bar x_1,\ldots, \bar x_m, \bar y) \wedge U^\wedge_\Gamma(\bar x_1,\ldots,\bar x_m, \bar y).
$$
Here we use the notation $U^\wedge_\Gamma$ from Subsection~\ref{subsec:code} for brevity.

In the book~\cite{Hodges}, the map $\psi \to \psi_\Gamma$ is defined differently with $\psi^*$ denoted as $\psi_\Gamma$. Here, we've slightly modified the definition of $\psi_\Gamma$ to maintain its utility, while facilitating our proofs.

Note that if $\Gamma$ is absolute, i.\,e., $\bar y=\emptyset$, then for every formula $\psi$ in $L(\MA)$ the formula $\psi_\Gamma$ has no variables corresponding to the parameters $\bar y$.

\begin{remark} \label{rem:comp-transl}
If the languages $L(\MA), L(\MB)$ and the interpretation $\Gamma$ are computable, then the map $\psi \to \psi_\Gamma$ is computable. And if the language $L(\MA)$ is finite, then for an $L(\MA)$\=/formula $\psi$ the length of the $L(\MB)$\=/formula $\psi_\Gamma$ is bounded by a linear function of the length of $\psi$.
\end{remark}

\begin{remark}\label{rem:Diophantine}
    If a formula $\psi$ and a code $\Gamma$ are Diophantine, then the formula $\psi_\Gamma$ is Diophantine too.
\end{remark}

\begin{remark}
    If $\psi$ is in (prenex) $\Sigma_n$\=/form ($\Pi_n$\=/form) and $\Gamma$ is $\Sigma_m$\=/code or $\Pi_m$\=/code, then $\psi_\Gamma$ is equivalent to a formula in (prenex) $\Sigma_{n+m+1}$\=/form ($\Pi_{n+m+1}$\=/form). 
\end{remark}

\begin{remark}\label{rem:Id2}
If $\Gamma=\Id_{L(\MA)}$ then  $\psi$ is equivalent to $\psi_\Gamma$ in first-order logic.   
\end{remark}

\begin{remark}\label{remark_Id_AC}
For an interpretation $\Gamma$ the $\Gamma$\=/translation $(\AC_{\Id_{L(\MA)}})_\Gamma = \{\psi_\Gamma \mid \psi \in \AC_{\Id_{L(\MA)}} \}$ of  formulas from the admissibility conditions  $\AC_{\Id_{L(\MA)}}$ could play the role of the admissibility condition set $\AC_\Gamma(\bar y)$. Namely, $(\AC_{\Id_{L(\MA)}})_\Gamma$ satisfies Proposition~\ref{le:adm-cond}, where $\AC_\Gamma(\bar y)$ is replaced by $(\AC_{\Id_{L(\MA)}})_\Gamma$. 
\end{remark}

\begin{definition}\label{def:reg_transl}
    Let $\Gamma\colon L(\MA) \to L(\MB)$ be an interpretation  code and $L(\MB)$\=/formula $\phi(\bar y)$, where $|\bar y|=\dim_{\param}\Gamma$. Then for a formula $\psi$ in $L(\MA)$  we define    $(\Gamma,\phi)$\=/translations $\psi\to\psi_{\Gamma,\forall\phi}$ and  $\psi\to\psi_{\Gamma,\exists\,\phi}$ as
$$
\psi_{\Gamma,\forall\phi}= \forall \,\bar y \:(\phi(\bar y) \longrightarrow  \psi_\Gamma(\bar y)\,),\quad \psi_{\Gamma,\exists\,\phi}= \exists \,\bar y \:(\phi(\bar y) \wedge  \psi_\Gamma(\bar y)\,).
$$
If $\Gamma$ is an absolute code, then we assume that $\psi_{\Gamma,\forall\phi}$ and $\psi_{\Gamma,\exists\,\phi}$ are just $\psi_\Gamma$.
\end{definition}

In particular, if $\psi$ is a sentence, then $\psi_{\Gamma,\forall\phi}$, $\psi_{\Gamma,\exists\,\phi}$ are sentences too. 

\begin{remark} \label{rem:comp-transl-2}
If the languages $L(\MA),L(\MB)$ and the code $\Gamma$ are computable, then the maps $\psi \to \psi_{\Gamma,\forall\phi}$, $\psi \to \psi_{\Gamma,\exists\,\phi}$ are computable too.  
\end{remark}

\begin{remark} 
If the code $\Gamma$ and formulas $\phi,\psi$ are Diophantine, the formula $\psi_{\Gamma,\exists\,\phi}$ is Diophantine too.  
\end{remark}

Let us note that the $\Gamma$\=/translation and the $(\Gamma,\phi)$\=/translations of formulas depend solely on a code $\Gamma$ and a formula $\phi$, and they have no connection to a specific interpretation $\MA\rightsquigarrow \MB$. The third translation, $\psi\to\psi_{\Gamma,\bar p,\mu_\Gamma}$, which we will consider, translates formulas from $L(\MA)\cup A$ into formulas in $L(\MB)\cup B$. This translation depends on the interpretation $(\Gamma,\bar p, \mu_\Gamma)$ of the algebraic structure $\MA=\langle A;L(\MA)\rangle$ into the algebraic structure $\MB=\langle B;L(\MB)\rangle$. As before, for every element $a\in A$ we denote by $\bar\mu^{-1}_\Gamma(a)$ some fixed tuple $\bar b$ from $\mu_\Gamma^{-1}(a)$.

\begin{definition}
    For any formula $\psi(x_1,\ldots,x_m, a_1,\ldots,a_s)$ in $L(\MA)\cup A$ with free variables $x_1,\ldots,x_m$ and parameters $a_1,\ldots,a_s\in A$ we put 
$$
\psi_{\Gamma,\bar p, \mu_\Gamma}(\bar x_1,\ldots,\bar x_m)=\psi_\Gamma(\bar x_1,\ldots,\bar x_m,\bar\mu_\Gamma^{-1}(a_1),\ldots,\bar\mu_\Gamma^{-1}(a_s), \bar p),
$$
so, $\psi_{\Gamma,\bar p, \mu_\Gamma}$ is a formula in $L(\MB)\cup B$. 
\end{definition}

Note that the formula $\psi_{\Gamma,\bar p, \mu_\Gamma}$ depends on the choice of fixed tuples $\bar b$ from preimages $\mu_\Gamma^{-1}(a)$, $a\in A$, but, as we will see later, the set $\psi_{\Gamma,\bar p, \mu_\Gamma}(\MB_B)\subseteq B^{nm}$ does not depend on this choice (see Lemma~\ref{cor2}), and this is enough for us to work with the $(\Gamma,\bar p,\mu_\Gamma)$\=/translation.

If the code $\Gamma$ is absolute, then for any formula $\psi$ in $L(\MA)$ one has $\psi_{\Gamma,\emptyset, \mu_\Gamma}=\psi_\Gamma$.

\subsection{Reduction Theorem}\label{subsec:reduction}

In Section~\ref{subsec:tran}, translations of formulas via a code $\Gamma\colon L(\MA)\to L(\MB)$ were defined. In this section, we prove the Reduction Theorem, which, together with its implications, reveals the power of these translations. This theorem is a fundamental result in interpretability, appearing in various forms in the literature, such as in~\cite[Theorem~5.3.2]{Hodges}. 

We start with the following notations.

\begin{notation}
Assume that $\Gamma\colon L(\MA)\to L(\MB)$ is a code, $\psi(x_1,\ldots,x_m)$ is a formula in $L(\MA)$ and $\psi^+(\bar x_1,\ldots,\bar x_m,\bar y)$ is a formula in $L(\MB)$, where $|\bar y|=\dim_\param\Gamma$, $|\bar x_i|=\dim\Gamma$, $i=1,\ldots,m$. Then we write
$$
\psi \Longleftrightarrow_\Gamma \psi^+
$$
if and only if for any interpretation $(\Gamma,\bar q,\mu_\Gamma)$ of an $L(\MA)$\=/structure $\nsA$ in an $L(\MB)$\=/structure $\nsB$ one has 
$$
\mu_\Gamma^{-1}(\psi(\nsA))=\psi^+(\nsB,\bar q),
$$
or, equivalently, if and only if for any $L(\MA)$\=/structure $\nsA$, any $L(\MB)$\=/structure $\nsB$ and any tuple $\bar q\in \nsB$ with interpretation $\nsA\simeq \Gamma(\nsB,\bar q)$
\begin{enumerate}[label=\arabic*)]
\item if $\MA\models \psi(\bar a)$, $\bar a=(a_1,\ldots,a_m)\in \nsA$, then for any coordinate map $\mu_\Gamma$ of the interpretation $\nsA\simeq \Gamma(\nsB,\bar q)$ one has $\nsB\models \psi^+(\mu_\Gamma^{-1}(a_1),\ldots, \mu_\Gamma^{-1}(a_m),\bar q)$, i.\,e., $\MB\models \psi^+(\bar b_1,\ldots, \bar b_m,\bar q)$ for any elements $\bar b_i\in \mu_\Gamma^{-1}(a_i)$,
\item and conversely, if $\MB\models \psi^+(\bar b_1,\ldots, \bar b_m,\bar q)$, $\bar b_1,\ldots,\bar b_m\in \nsB$, then $\bar b_1,\ldots,\bar b_m\in U_\Gamma(\nsB,\bar q)$ and for any coordinate map $\mu_\Gamma$ of the interpretation $\nsA\simeq \Gamma(\nsB,\bar q)$ one has $\nsA\models \psi(\mu_\Gamma(\bar b_1),\ldots,\mu_\Gamma(\bar b_m))$.
\end{enumerate}
\end{notation}

\begin{theorem}[The Reduction Theorem]\label{le:interpr_corol} 
For a code $\Gamma\colon L(\MA)\to L(\MB)$ and a formula $\psi(x_1,\ldots,x_m)$ in $L(\MA)$ one has
$$
\psi\Longleftrightarrow_{\Gamma} \psi_\Gamma.
$$
\end{theorem}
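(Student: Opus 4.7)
The plan is to proceed by structural induction on the formula $\psi$, following the inductive construction of $\psi_\Gamma$ via the intermediate map $\psi \mapsto \psi^\ast$. Since replacing $\psi$ by its prenex, unnested, disjunctive-normal-form equivalent $\psi^{\pre}$ preserves logical equivalence in first-order logic, we may assume from the outset that $\psi$ is built by boolean connectives and quantifiers from unnested atomic formulas of the four permissible shapes ($x_i = x_j$, $x_i = c$, $f(\bar x) = x_0$, $R(\bar x)$). Fix an interpretation $(\Gamma,\bar q,\mu_\Gamma)\colon \nsA \rightsquigarrow \nsB$; the task is to show $\mu_\Gamma^{-1}(\psi(\nsA)) = \psi_\Gamma(\nsB,\bar q)$.

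For the base case, I would unwind the definitions directly on each of the four atomic shapes. For instance, for $x_i = x_j$, the translation is $E_\Gamma(\bar x_i, \bar x_j, \bar y) \wedge U_\Gamma(\bar x_i,\bar y) \wedge U_\Gamma(\bar x_j,\bar y)$, and by construction of the interpretation, $E_\Gamma$ defines the equivalence relation whose quotient, via $\mu_\Gamma$, realizes the identity on $\nsA$; similarly, $c_\Gamma$, $f_\Gamma$, $R_\Gamma$ define the preimages under $\mu_\Gamma$ of the graphs of $c^{\nsA}$, $f^{\nsA}$, $R^{\nsA}$. Here I would invoke Remark~\ref{rem:Id1} to assume the formulas of $\Gamma$ already imply $U^{\wedge}_\Gamma$ on their free variables, so that the added conjunct $U^{\wedge}_\Gamma$ in the definition of $\psi_\Gamma$ does not interfere with the atomic case.

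The inductive step for the boolean connectives $\vee$, $\wedge$, $\neg$ is routine since $\psi^\ast$ commutes with these connectives and the conjunction with $U^{\wedge}_\Gamma(\bar x_1,\ldots,\bar x_m,\bar y)$ can be distributed or absorbed. For quantifiers, the key identity is that $\mu_\Gamma$ induces a bijection between $U_\Gamma(\nsB,\bar q)/{\sim_\Gamma}$ and the underlying set of $\nsA$; thus a witness $a \in \nsA$ for $\exists x\,\psi_0$ corresponds to some (indeed, any) tuple $\bar b \in \mu_\Gamma^{-1}(a) \subseteq U_\Gamma(\nsB,\bar q)$, which is exactly what the translation $\exists \bar x\,(U_\Gamma(\bar x,\bar y) \wedge \psi_0^\ast)$ asserts. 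The universal case is dual.

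The delicate point — and the only real obstacle — is bookkeeping the outer conjunct $U^{\wedge}_\Gamma(\bar x_1,\ldots,\bar x_m,\bar y)$ that distinguishes $\psi_\Gamma$ from the raw $\psi^\ast$. It is essential for the backward direction of the equivalence: if $\nsB \models \psi_\Gamma(\bar b_1,\ldots,\bar b_m,\bar q)$, we need to conclude $\bar b_i \in U_\Gamma(\nsB,\bar q)$ before we can even apply $\mu_\Gamma$ to obtain elements of $\nsA$. Without this conjunct, negated atomics such as $\neg(x_i = x_j)$ could be vacuously satisfied by tuples outside $U_\Gamma(\nsB,\bar q)$, breaking the equivalence. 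With it, I would close the induction by observing that for each syntactic case the set defined by $\psi_\Gamma(\nsB,\bar q)$ is contained in $U_\Gamma(\nsB,\bar q)^m$, and the restriction of $\psi^\ast$ to that set coincides with $\mu_\Gamma^{-1}(\psi(\nsA))$ by the inductive hypothesis applied to subformulas whose quantifier variables already range over $U_\Gamma(\nsB,\bar q)$.
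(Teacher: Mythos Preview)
Your proposal is correct and follows essentially the same approach as the paper's proof: structural induction on $\psi$ via the intermediate $\psi^\ast$, with the observation that the outer conjunct $U^\wedge_\Gamma$ is what guarantees the backward direction (forcing $\bar b_i \in U_\Gamma(\nsB,\bar q)$ before $\mu_\Gamma$ can be applied). The paper's own proof is a two-sentence sketch of precisely this argument; your version supplies the detail the paper omits, in particular the explicit handling of the atomic base cases and the role of $U^\wedge_\Gamma$ under negation, but there is no difference in strategy.
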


\begin{proof}
By induction on the construction of $\psi$ and the definition of the interpretation, it is easy to see that for any interpretation $\nsA\simeq\Gamma(\nsB,\bar q)$, condition $\nsA\models \psi(\bar a)$, $\bar a\in \nsA$, implies $\nsB\models \psi^\ast(\bar{\bar b},\bar q)$ for any $\mu_\Gamma$ and $\bar{\bar b}\in \mu_\Gamma^{-1}(\bar a)$; therefore, $\nsB\models \psi_\Gamma(\bar{\bar b},\bar q)$. And inversely, for any $\bar{\bar b}\in \nsB$ condition $\nsB\models \psi_\Gamma(\bar{\bar b},\bar q)$ means that $\nsB\models \psi^\ast(\bar{\bar b},\bar q)$ and $\bar{\bar b}\in U^\wedge_\Gamma(\nsB,\bar q)$, therefore, for any $\mu_\Gamma$ one has $\nsA\models \psi (\mu_\Gamma(\bar{\bar b}))$, hence the required.
\end{proof}

Sometimes it is convenient to have the Reduction Theorem~\ref{le:interpr_corol} in the following slightly different formulation.

\begin{corollary}\label{le:interpr_corol_ns}
Let $\Gamma\colon L(\MA)\to L(\MB)$ be a code, $\nsB$ be an $L(\MB)$\=/structure and $\bar q\in \nsB$ be a tuple of elements, such that $\nsB\models\AC_{\Gamma}(\bar q)$, i.\,e., the $L(\MA)$\=/structure $\Gamma(\nsB,\bar q)$ is well-defined. Then for any formula $\psi(x_1,\ldots,x_m)$ in $L(\MA)$ and any tuples $\bar b_1,\ldots,\bar b_m$ of elements from $\nsB$ of length $n=\dim\Gamma$ one has
$$
\Gamma(\nsB,\bar q)\models \psi(\bar b_1/{\sim_\Gamma},\ldots,\bar b_m/{\sim_\Gamma})\iff \nsB\models \psi_\Gamma(\bar b_1,\ldots,\bar b_m,\bar q).
$$
\end{corollary}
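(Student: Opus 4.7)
The plan is to obtain the corollary as a direct specialization of the Reduction Theorem~\ref{le:interpr_corol} to the canonical coordinate map arising from the admissibility conditions. Since $\nsB \models \AC_\Gamma(\bar q)$, Proposition~\ref{le:adm-cond} guarantees that $\nsA := \Gamma(\nsB,\bar q)$ is a well-defined $L(\MA)$-structure and that the quotient map $\mu_\Gamma\colon U_\Gamma(\nsB,\bar q) \to \nsA$ sending $\bar b \mapsto \bar b/{\sim_\Gamma}$ is a coordinate map for the interpretation $\nsA \stackrel{\Gamma,\bar q}{\rightsquigarrow} \nsB$. Thus $(\Gamma,\bar q,\mu_\Gamma)$ is a coordinatization of exactly the type to which Theorem~\ref{le:interpr_corol} applies.

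I would then simply invoke the Reduction Theorem for this coordinatization, which asserts $\psi \Longleftrightarrow_\Gamma \psi_\Gamma$, i.e., $\mu_\Gamma^{-1}(\psi(\nsA)) = \psi_\Gamma(\nsB,\bar q)$, and unpack this equality into the biconditional. For the forward direction, assume $\nsA \models \psi(\bar b_1/{\sim_\Gamma},\ldots,\bar b_m/{\sim_\Gamma})$; then each $\bar b_i$ already lies in $U_\Gamma(\nsB,\bar q)$ (so that the equivalence classes even make sense), and the tuple of classes lies in $\psi(\nsA)$, whence by the theorem its preimage $(\bar b_1,\ldots,\bar b_m)$ lies in $\psi_\Gamma(\nsB,\bar q)$. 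Conversely, if $\nsB \models \psi_\Gamma(\bar b_1,\ldots,\bar b_m,\bar q)$, then the conjunct $U^\wedge_\Gamma(\bar b_1,\ldots,\bar b_m,\bar q)$ built into the definition of $\psi_\Gamma$ forces each $\bar b_i \in U_\Gamma(\nsB,\bar q)$, so $\mu_\Gamma$ is defined on each $\bar b_i$ and the theorem gives $\nsA \models \psi(\mu_\Gamma(\bar b_1),\ldots,\mu_\Gamma(\bar b_m))$, which is precisely $\nsA \models \psi(\bar b_1/{\sim_\Gamma},\ldots,\bar b_m/{\sim_\Gamma})$.

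I do not expect any genuine obstacle here: the corollary is essentially a rephrasing of the Reduction Theorem for the specific, canonical coordinatization supplied by Proposition~\ref{le:adm-cond}, rather than for an abstract one. The only piece of bookkeeping worth highlighting is that the $U^\wedge_\Gamma$-conjunct in the definition of $\psi_\Gamma$ is precisely what allows the equivalence to hold \emph{unconditionally} in the tuples $\bar b_i$, without having to separately require that the $\bar b_i$ lie in $U_\Gamma(\nsB,\bar q)$ on the right-hand side.
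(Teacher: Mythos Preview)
Your proposal is correct and matches the paper's approach: the paper presents this corollary as an immediate reformulation of the Reduction Theorem~\ref{le:interpr_corol} without a separate proof, and your argument---invoking Proposition~\ref{le:adm-cond} to obtain the canonical quotient coordinate map and then reading off the biconditional from $\psi \Longleftrightarrow_\Gamma \psi_\Gamma$---is exactly the intended unpacking. Your observation that the $U^\wedge_\Gamma$ conjunct in $\psi_\Gamma$ makes the equivalence hold for arbitrary $n$-tuples $\bar b_i$ (not just those already in $U_\Gamma(\nsB,\bar q)$) is the one point worth making explicit, and you do so.
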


Formally, the Reduction Theorem~\ref{le:interpr_corol}  is proven only for formulas $\psi$ in the prenex normal form. However, we can eliminate this restriction and, at the same time, show that the $\Gamma$\=/translation does not depend on the way the normal form is written. The following notation will be useful.

\begin{notation}
Assume that $\Gamma\colon L(\MA)\to L(\MB)$ is a code and $\varphi_1(\bar {\bar x},\bar y)$, $\varphi_2(\bar {\bar x},\bar y)$ are formulas in $L(\MB)$, where $\bar{\bar x}=(\bar x_1,\ldots,\bar x_m)$, $|\bar x_i|=\dim \Gamma$, $i=1,\ldots,m$, $|\bar y|=\dim_{\param}\Gamma$. Then we write 
$$
\varphi_1\longleftrightarrow_\Gamma \varphi_2
$$ 
if and only if for any interpretation $(\Gamma,\bar q)$ of an $L(\MA)$\=/structure $\nsA$ in an $L(\MB)$\=/structure $\nsB$ one has 
$$
\varphi_1(\nsB,\bar q)\cap U^\wedge_\Gamma(\nsB,\bar q)=\varphi_2(\nsB,\bar q)\cap U^\wedge_\Gamma(\nsB,\bar q),
$$
or, equivalently, 
$$
\AC_{\Gamma}(\bar y)\cup \{U_\Gamma(\bar x_1,\bar y),\ldots,U_\Gamma(\bar x_m,\bar y)\}\vdash \varphi_1(\bar {\bar x},\bar y) \longleftrightarrow \varphi_2(\bar {\bar x},\bar y).
$$
It is obvious that $\longleftrightarrow_\Gamma$ is an equivalence relation on the set of all $L(\MB)$\=/formulas $\varphi(\bar{\bar x},\bar y)$.
\end{notation}

\begin{corollary}\label{cor_form}
Let $\Gamma\colon L(\MA)\to L(\MB)$ be a code, $\psi(x_1,\ldots,x_m)$ be a formula in $L(\MA)$ and $\psi^1,\psi^2$ be $L(\MA)$\=/formulas in prenex normal form with matrix as disjunction of conjunctions of unnested atomic formulas and their negations, both equivalent to $\psi$. Then  one has 
$\psi^1_\Gamma\longleftrightarrow_\Gamma \psi^2_\Gamma$.
\end{corollary}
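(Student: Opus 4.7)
The plan is to derive this directly from the Reduction Theorem~\ref{le:interpr_corol}, using the fact that $\psi^1$ and $\psi^2$ are semantically equivalent in first-order logic (both being equivalent to $\psi$). The corollary is essentially the statement that the $\Gamma$-translation descends to a well-defined operation on equivalence classes modulo the admissibility conditions, even though the syntactic definition of $\psi \to \psi_\Gamma$ requires first passing to a particular prenex-unnested form.

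Concretely, fix an arbitrary interpretation $(\Gamma,\bar q,\mu_\Gamma)\colon \nsA\rightsquigarrow\nsB$; it suffices to show that the sets $\psi^1_\Gamma(\nsB,\bar q)$ and $\psi^2_\Gamma(\nsB,\bar q)$ coincide on $U^\wedge_\Gamma(\nsB,\bar q)$. For any $\bar{\bar b}=(\bar b_1,\ldots,\bar b_m)\in U^\wedge_\Gamma(\nsB,\bar q)$, set $\bar a=\mu_\Gamma(\bar{\bar b})\in \nsA^m$. Two applications of Theorem~\ref{le:interpr_corol} yield
\[
\nsB\models \psi^i_\Gamma(\bar{\bar b},\bar q) \ \Longleftrightarrow\ \nsA\models \psi^i(\bar a), \qquad i=1,2.
\]
Since $\psi^1$ and $\psi^2$ are logically equivalent (both express the same formula $\psi$), we have $\nsA\models \psi^1(\bar a)\Longleftrightarrow \nsA\models \psi^2(\bar a)$. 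Chaining the three biconditionals gives $\nsB\models \psi^1_\Gamma(\bar{\bar b},\bar q) \Longleftrightarrow \nsB\models \psi^2_\Gamma(\bar{\bar b},\bar q)$ for every $\bar{\bar b}\in U^\wedge_\Gamma(\nsB,\bar q)$, which is exactly the meaning of $\psi^1_\Gamma\longleftrightarrow_\Gamma \psi^2_\Gamma$.

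There is no real obstacle here: the Reduction Theorem has already done the heavy lifting of transferring truth between the interpreted structure and its code, and the corollary only needs to observe that this transfer factors through the (interpretation-independent) truth of $\psi$ in $\nsA$. The only point worth noting is that one must use the semantic, rather than syntactic, formulation of the equivalence $\psi^1\equiv \psi^2$: different prenex-unnested normal forms of $\psi$ need not be syntactically identical, but they agree in every $L(\MA)$-structure, and that is all the argument requires.
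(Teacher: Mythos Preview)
Your proof is correct and follows essentially the same route as the paper's: both fix an arbitrary interpretation $(\Gamma,\bar q)$ (equivalently, an $\nsB$ with $\nsB\models\AC_\Gamma(\bar q)$), invoke the Reduction Theorem to transfer truth of $\psi^i_\Gamma$ in $\nsB$ to truth of $\psi^i$ in the interpreted structure, and then use the logical equivalence of $\psi^1$ and $\psi^2$ there. The paper phrases the transfer via Corollary~\ref{le:interpr_corol_ns} applied to $\Gamma(\nsB,\bar q)$, while you work with a general $\nsA$ and $\mu_\Gamma$, but this is a cosmetic difference.
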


\begin{proof}
For any $L(\MB)$\=/structure $\nsB$ and any tuple $\bar q\in \nsB$, such that $\nsB\models\AC_\Gamma(\bar q)$,
an  $L(\MA)$\=/structure $\Gamma(\nsB,\bar q)$ is well-defined and interpretable in $\nsB$ by the code $\Gamma$ and parameters $\bar q$. Since $\Gamma(\nsB,\bar q)\models\forall\,{\bar x} \:(\psi^1({\bar x})\longleftrightarrow \psi^2({\bar x}))$, we obtain the required by Corollary~\ref{le:interpr_corol_ns}.
\end{proof}

\begin{corollary}\label{cor1}
Let $\Gamma\colon L(\MA)\to L(\MB)$ be a code and $\psi,\psi^1,\psi^2$ are formulas in $L(\MA)$. Then one has 
$$
\left.
\begin{array}{rcl} 
(\neg\psi)_\Gamma & \longleftrightarrow_\Gamma & \neg \psi_\Gamma,\\
(\psi^1\wedge\psi^2)_\Gamma &\longleftrightarrow_\Gamma &\psi^1_\Gamma\wedge\psi^2_\Gamma, \\
(\psi^1\vee\psi^2)_\Gamma  &\longleftrightarrow_\Gamma &\psi^1_\Gamma\vee\psi^2_\Gamma.
\end{array}
\right.
$$
\end{corollary}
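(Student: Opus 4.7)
The plan is to derive all three equivalences as immediate consequences of Corollary~\ref{le:interpr_corol_ns}, applied separately to $\psi$, $\psi^1$, $\psi^2$ and to the compound formulas $\neg\psi$, $\psi^1\wedge\psi^2$, $\psi^1\vee\psi^2$. The equivalences then fall out of the fact that satisfaction in the well-defined $L(\MA)$\=/structure $\Gamma(\nsB,\bar q)$ commutes with the boolean connectives.

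More explicitly, I would fix an arbitrary $L(\MB)$\=/structure $\nsB$ and tuple $\bar q\in\nsB$ with $\nsB\models\AC_\Gamma(\bar q)$, so that $\nsA:=\Gamma(\nsB,\bar q)$ is well-defined, and fix tuples $\bar b_1,\ldots,\bar b_m$ in $U_\Gamma(\nsB,\bar q)$ (where $m$ is the maximum number of free variables appearing in $\psi,\psi^1,\psi^2$). Writing $a_i=\bar b_i/{\sim_\Gamma}\in\nsA$, Corollary~\ref{le:interpr_corol_ns} gives the biconditional $\nsB\models\psi_\Gamma(\bar b_1,\ldots,\bar b_m,\bar q)\iff \nsA\models\psi(a_1,\ldots,a_m)$, and likewise for $\psi^1$, $\psi^2$, and for $\neg\psi$, $\psi^1\wedge\psi^2$, $\psi^1\vee\psi^2$. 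Combining these with the classical clauses for $\models$ under $\neg,\wedge,\vee$ yields, for every such $(\bar b_1,\ldots,\bar b_m)$, the equalities $\nsB\models(\neg\psi)_\Gamma(\cdots,\bar q)\iff \nsB\models\neg\psi_\Gamma(\cdots,\bar q)$ and similarly for the other two connectives. Since this holds for every tuple in $U^\wedge_\Gamma(\nsB,\bar q)$, the required relation $\longleftrightarrow_\Gamma$ is established.

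One subtlety worth flagging: outside $U^\wedge_\Gamma(\nsB,\bar q)$ the formulas on the two sides of the negation case genuinely disagree (since $(\neg\psi)_\Gamma$ carries the conjunct $U^\wedge_\Gamma$ and so is false there, while $\neg\psi_\Gamma$ is true there). This is precisely why $\longleftrightarrow_\Gamma$ is defined only modulo the cut-down by $U^\wedge_\Gamma$; the relativization built into the definition of the translation $\psi\to\psi_\Gamma$ and into $\longleftrightarrow_\Gamma$ is exactly what makes the statement true without having to modify $\neg$ artificially. There is no real obstacle here: the only thing to watch is to apply Corollary~\ref{le:interpr_corol_ns} only after restricting attention to tuples in $U^\wedge_\Gamma(\nsB,\bar q)$, so that the hypothesis $\bar b_i\in U_\Gamma(\nsB,\bar q)$ needed to form the classes $a_i=\bar b_i/{\sim_\Gamma}$ is in force.
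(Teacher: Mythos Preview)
Your proposal is correct and follows essentially the same approach as the paper: fix an $L(\MB)$-structure $\nsB$ and $\bar q$ with $\nsB\models\AC_\Gamma(\bar q)$, restrict to tuples in $U^\wedge_\Gamma(\nsB,\bar q)$, and use Corollary~\ref{le:interpr_corol_ns} to pass back and forth between satisfaction of $\psi_\Gamma$ in $\nsB$ and satisfaction of $\psi$ in $\Gamma(\nsB,\bar q)$, then invoke the ordinary boolean clauses for $\models$. Your added remark about the negation case outside $U^\wedge_\Gamma$ is apt and makes explicit why the relativization in the definition of $\longleftrightarrow_\Gamma$ is needed.
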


\begin{proof}
Indeed, take any $L(\MB)$\=/structure $\nsB$ and any tuples $\bar{\bar b},\bar q\in \nsB$, such that $\nsB\models\AC_\Gamma(\bar q)$ and $\bar{\bar b}\in U_\Gamma^\wedge(\nsB,\bar q)$. If, say, $\nsB\models \psi^1_\Gamma(\bar{\bar b},\bar q)\wedge\psi^2_\Gamma (\bar{\bar b},\bar q)$, then $\Gamma(\nsB,\bar q)\models \psi^1(\bar {\bar b}/{\sim_\Gamma})$ and $\Gamma(\nsB,\bar q)\models \psi^2(\bar {\bar b}/{\sim_\Gamma})$, i.\,e., $\Gamma(\nsB,\bar q)\models \psi^1\wedge\psi^2(\bar {\bar b}/{\sim_\Gamma})$, therefore, $\nsB\models (\psi^1\wedge\psi^2)_\Gamma(\bar{\bar b},\bar q)$. And so on.
\end{proof}

One can use $\Gamma$\=/translation and the Reduction Theorem to establish decidability/undecidability of the Diophantine problems (see Subsection~\ref{subsec:Diophantine}). However, to prove
decidability/undecidability of elementary theories one can  use $(\Gamma,\phi)$\=/translations and the lemma below (see Subsection~\ref{subsec:undecidability}).

\begin{lemma}  \label{le:interpr_corol_2} 
Let $\MA \simeq  \Gamma(\MB,\phi)$ be a regular interpretation of $\MA$ in $\MB$. Then for every sentence  $\psi$ of $L(\MA)$ the following holds 
$$
\MA\models \psi \iff \MB\models \psi_{\Gamma,\forall\phi}\iff \MB\models\psi_{\Gamma,\exists\,\phi}.
$$
\end{lemma}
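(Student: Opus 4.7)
The plan is to apply the Reduction Theorem (Theorem~\ref{le:interpr_corol}) pointwise for each admissible parameter tuple $\bar p \in \phi(\MB)$, and then use the fact that $\phi(\MB) \neq \emptyset$ (which is guaranteed by the definition of regular interpretation) to close the cycle of three equivalences.

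First I would fix the setup: since $\MA \simeq \Gamma(\MB, \phi)$ is a regular interpretation, by Definition~\ref{def:regular_int} we have $\phi(\MB) \neq \emptyset$ and for every tuple $\bar p \in \phi(\MB)$ the pair $(\Gamma, \bar p)$ gives an interpretation $\MA \simeq \Gamma(\MB, \bar p)$. Since $\psi$ is a sentence, its $\Gamma$-translation $\psi_\Gamma(\bar y)$ has only the parameter variables $\bar y$ as free variables. By the Reduction Theorem (or more precisely Corollary~\ref{le:interpr_corol_ns} applied to the sentence $\psi$ with no element variables) we obtain, for every $\bar p \in \phi(\MB)$,
\[
\MA \models \psi \iff \MB \models \psi_\Gamma(\bar p). \qquad (\ast)
\]

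Next I would prove the three implications in cyclic form. Assume $\MA \models \psi$. Then by $(\ast)$, $\MB \models \psi_\Gamma(\bar p)$ for \emph{every} $\bar p \in \phi(\MB)$, hence $\MB \models \forall \bar y\,(\phi(\bar y) \to \psi_\Gamma(\bar y))$, i.e.\ $\MB \models \psi_{\Gamma,\forall\phi}$. Assume now $\MB \models \psi_{\Gamma,\forall\phi}$. Since $\phi(\MB) \neq \emptyset$, pick any $\bar p \in \phi(\MB)$; then $\MB \models \psi_\Gamma(\bar p)$, which witnesses $\MB \models \exists\bar y\,(\phi(\bar y) \wedge \psi_\Gamma(\bar y))$, i.e.\ $\MB \models \psi_{\Gamma,\exists\,\phi}$. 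Finally, assume $\MB \models \psi_{\Gamma,\exists\,\phi}$. Then there exists $\bar p \in \MB$ with $\MB \models \phi(\bar p) \wedge \psi_\Gamma(\bar p)$. Since $\bar p \in \phi(\MB)$, the interpretation $\MA \simeq \Gamma(\MB, \bar p)$ is well-defined, and applying $(\ast)$ to this particular $\bar p$ yields $\MA \models \psi$.

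There is really no main obstacle here; the only point to be careful about is that the equivalence $\MA \models \psi_{\Gamma,\forall\phi} \Rightarrow \MA \models \psi_{\Gamma,\exists\,\phi}$ genuinely requires the nonemptiness of $\phi(\MB)$, which is built into the definition of a regular interpretation. The case of an absolute interpretation ($\bar y = \emptyset$) is covered by the convention that $\psi_{\Gamma,\forall\phi} = \psi_{\Gamma,\exists\,\phi} = \psi_\Gamma$, in which event the statement reduces directly to $(\ast)$.
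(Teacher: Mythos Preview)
Your proof is correct and is precisely the unpacking of the paper's one-line proof, which simply states that the lemma ``immediately follows from the Reduction Theorem~\ref{le:interpr_corol}.'' The only minor slip is a typo in your closing remark: you wrote $\MA \models \psi_{\Gamma,\forall\phi} \Rightarrow \MA \models \psi_{\Gamma,\exists\,\phi}$ where you meant $\MB$ in both places.
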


\begin{proof}
    This immediately follows from the Reduction Theorem~\ref{le:interpr_corol}.
\end{proof}

Now we state some corollaries of the  Reduction Theorem for $(\Gamma,\bar p,\mu_\Gamma)$\=/translations.

\begin{lemma}\label{cor2}
Let $(\Gamma,\bar p,\mu_\Gamma)$ be an interpretation of an algebraic structure $\MA=\langle A; L(\MA)\rangle$ into an algebraic structure $\MB=\langle B; L(\MB)\rangle$, $n=\dim\Gamma$. Then if $X\subseteq A^m$ is a definable subset in $\MA$, then the set $\mu_\Gamma^{-1}(X)\subseteq B^{m\cdot n}$ is definable in $\MB$. Namely, if $X\subseteq A^m$ is defined by a formula \ $\psi(x_1, \ldots, x_m)$ in $L(\MA)\cup A$, then $\mu_\Gamma^{-1}(X)$ is defined by the formula $\psi_{\Gamma,\bar p, \mu_\Gamma}(\bar x_1, \ldots, \bar x_m)$ in $L(\MB)\cup B$. In particular, the set $\psi_{\Gamma,\bar p, \mu_\Gamma}(\MB_B)$ does not depend on the choice of tuples from preimages $\mu_\Gamma^{-1}(a)$, $a\in A$, despite the fact that the formula $\psi_{\Gamma,\bar p, \mu_\Gamma}$ itself depends on them.
\end{lemma}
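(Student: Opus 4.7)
The plan is to reduce this directly to the Reduction Theorem (Theorem~\ref{le:interpr_corol}) after moving the parameters from $A$ that appear in $\psi$ into the position of free variables. Write $\psi$ as $\psi(x_1,\ldots,x_m, a_1,\ldots,a_s)$, where $a_1,\ldots,a_s\in A$ are the constants occurring in it. Introduce fresh variables $z_1,\ldots,z_s$ and let $\tilde\psi(x_1,\ldots,x_m, z_1,\ldots,z_s)$ be the corresponding formula in $L(\MA)$, so that
$$
X=\{\bar a\in A^m\mid \MA\models\tilde\psi(\bar a, a_1,\ldots,a_s)\}.
$$
By definition of the $(\Gamma,\bar p,\mu_\Gamma)$\=/translation, $\psi_{\Gamma,\bar p,\mu_\Gamma}(\bar x_1,\ldots,\bar x_m)$ is exactly $\tilde\psi_\Gamma(\bar x_1,\ldots,\bar x_m,\bar\mu_\Gamma^{-1}(a_1),\ldots,\bar\mu_\Gamma^{-1}(a_s),\bar p)$, viewed as a formula in $L(\MB)\cup B$ after substituting the elements of the tuples $\bar p$ and $\bar\mu_\Gamma^{-1}(a_j)$ as constants.

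Next I apply Theorem~\ref{le:interpr_corol} to the interpretation $(\Gamma,\bar p,\mu_\Gamma)\colon\MA\rightsquigarrow\MB$ and to the formula $\tilde\psi$. The theorem gives $\mu_\Gamma^{-1}(\tilde\psi(\MA))=\tilde\psi_\Gamma(\MB,\bar p)$, so for every choice of tuples $\bar b_1,\ldots,\bar b_m,\bar c_1,\ldots,\bar c_s\in B^n$,
$$
\MB\models\tilde\psi_\Gamma(\bar b_1,\ldots,\bar b_m,\bar c_1,\ldots,\bar c_s,\bar p)
\iff
\bar b_i,\bar c_j\in U_\Gamma(\MB,\bar p)\ \text{and}\ \MA\models\tilde\psi(\mu_\Gamma(\bar b_1),\ldots,\mu_\Gamma(\bar b_m),\mu_\Gamma(\bar c_1),\ldots,\mu_\Gamma(\bar c_s)).
$$
Now specialize $\bar c_j:=\bar\mu_\Gamma^{-1}(a_j)$, which automatically lies in $U_\Gamma(\MB,\bar p)$ and satisfies $\mu_\Gamma(\bar c_j)=a_j$. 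The right-hand side becomes exactly $(\mu_\Gamma(\bar b_1),\ldots,\mu_\Gamma(\bar b_m))\in X$, while the left-hand side is $\MB\models\psi_{\Gamma,\bar p,\mu_\Gamma}(\bar b_1,\ldots,\bar b_m)$. This proves that $\psi_{\Gamma,\bar p,\mu_\Gamma}(\MB_B)=\mu_\Gamma^{-1}(X)$.

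Finally, for the independence of $\psi_{\Gamma,\bar p,\mu_\Gamma}(\MB_B)$ from the particular choice of tuples $\bar\mu_\Gamma^{-1}(a_j)$ from the preimages $\mu_\Gamma^{-1}(a_j)$: the set $\mu_\Gamma^{-1}(X)$ on the right is manifestly defined without reference to any such choice. So any admissible choice of representatives produces the same subset of $B^{mn}$, even though the formula $\psi_{\Gamma,\bar p,\mu_\Gamma}$ (as a syntactic object in $L(\MB)\cup B$) depends on it. The main obstacle is merely notational bookkeeping, in particular tracking carefully that after treating the $a_j$'s as free variables the translation $(\tilde\psi)_\Gamma$ coincides with the one used in the definition of $\psi_{\Gamma,\bar p,\mu_\Gamma}$; once this identification is set up cleanly, the result is an immediate instance of the Reduction Theorem.
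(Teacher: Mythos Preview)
Your proof is correct and follows exactly the approach the paper takes: the paper's proof is the single line ``Immediately follows the Reduction Theorem~\ref{le:interpr_corol}'', and you have simply unpacked that sentence by making the passage from $\psi$ to the parameter-free $\tilde\psi$ explicit and then specializing the Reduction Theorem at $\bar c_j=\bar\mu_\Gamma^{-1}(a_j)$. There is nothing to add.
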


\begin{proof}
Immediately follows the Reduction Theorem~\ref{le:interpr_corol}.
\end{proof}

The result above shows that every subset of $U^m_\Gamma(\MB,\bar p)$ definable in $\Gamma(\MB,\bar p)$ is definable in $\MB$. However, the converse is not true in general, i.\,e., a subset of $U^m_\Gamma(\MB,\bar p)$ definable in $\MB$ may not be definable in $\Gamma(\MB,\bar p)$. In the case, when every subset of $U^m_\Gamma(\MB,\bar p)$, $m\in \N$, definable in $\MB$ is definable in $\Gamma(\MB,\bar p)$, the interpretation $\Gamma(\MB,\bar p)\rightsquigarrow\MB$ is called {\em pure}~\cite{Marker}.

\begin{corollary}\label{cor2.0}
Let $(\Gamma, \emptyset)\colon\MA\rightsquigarrow\MB$ be an absolute interpretation  and $X\subseteq A^m$ a $0$\=/definable subset in $\MA$. Then for any coordinate map $\mu_\Gamma$ of interpretation $\Gamma$ the set $\mu_\Gamma^{-1}(X)\subseteq B^{m\cdot n}$ is $0$\=/definable in $\MB$; moreover, the set $\mu_\Gamma^{-1}(X)$ does not depend on $\mu_\Gamma$, i.\,e., for any other coordinate map $\mu_{\Gamma 0}$ of $\Gamma$ one has $\mu_\Gamma^{-1}(X)=\mu_{\Gamma0}^{-1}(X)$.
\end{corollary}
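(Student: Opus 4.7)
The plan is to specialize Lemma~\ref{cor2} and the Reduction Theorem~\ref{le:interpr_corol} to the absolute case, where the key observation is that the translated formula $\psi_\Gamma$ depends only on $\psi$ and the code $\Gamma$, and contains no parameter variables at all. So the whole corollary should fall out almost immediately from what has already been proved; the main thing is to track carefully what ``$0$-definable'' means at each step.

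First I would fix a formula $\psi(x_1,\ldots,x_m)$ of $L(\MA)$ that defines $X$ (no constants from $A$ appear, since $X$ is $0$-definable). Then I form the $\Gamma$-translation $\psi_\Gamma(\bar x_1,\ldots,\bar x_m,\bar y)$. Because $\Gamma$ is absolute, $\bar y$ is empty, and moreover, as noted just before the Reduction Theorem, $\psi_{\Gamma,\emptyset,\mu_\Gamma}$ coincides with $\psi_\Gamma$ for any choice of coordinate map $\mu_\Gamma$. Thus $\psi_\Gamma$ is an $L(\MB)$-formula with no parameter variables and no constants from $B$, i.e., a bona fide $0$-formula over $L(\MB)$.

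Next, I apply Lemma~\ref{cor2} (or, equivalently, the Reduction Theorem~\ref{le:interpr_corol} directly, via $\psi \Longleftrightarrow_\Gamma \psi_\Gamma$) to conclude that $\mu_\Gamma^{-1}(X) = \psi_\Gamma(\MB)$. Since $\psi_\Gamma$ is a $0$-formula in $L(\MB)$, this exhibits $\mu_\Gamma^{-1}(X)$ as a $0$-definable subset of $B^{m\cdot n}$, proving the first claim.

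Finally, for independence of the coordinate map, note that the formula $\psi_\Gamma$ is manufactured from $\psi$ and the code $\Gamma$ alone — the construction $\psi \to \psi_\Gamma$ in Subsection~\ref{subsec:tran} never mentions $\mu_\Gamma$. Hence for any other coordinate map $\mu_{\Gamma 0}$ of $\Gamma$, the same application of the Reduction Theorem gives $\mu_{\Gamma 0}^{-1}(X) = \psi_\Gamma(\MB) = \mu_\Gamma^{-1}(X)$. There is no real obstacle here; the only subtlety worth flagging is to make sure the reader sees that the absence of parameters ($\bar y = \emptyset$) and the absence of constants from $A$ in $\psi$ together eliminate both sources of possible dependence on $\mu_\Gamma$, so that $\psi_\Gamma$ is genuinely $0$ and genuinely canonical.
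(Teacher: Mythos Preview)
Your proposal is correct and matches the paper's approach: the paper states this as an immediate corollary of Lemma~\ref{cor2} (with no separate proof given), and your argument spells out exactly the intended specialization to the absolute case, where $\bar y = \emptyset$ and $\psi$ has no constants from $A$, so $\psi_\Gamma$ is a parameter-free $L(\MB)$-formula independent of $\mu_\Gamma$.
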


\begin{remark}\label{rem:WSOL-def}
If $\MA\simeq\Gamma(\MB)$ is a $0$\=/interpretation, then $0$\=/definable subsets of $A^m$ are not the only sets whose preimages under $\mu_\Gamma$ do not depend on the coordinate map $\mu_\Gamma$. Moreover, any unions, intersections of such sets, and any sets absolutely definable by formulas in weak second-order logic also satisfy this property~\cite[Theorem~4.7]{KhMS}.
\end{remark}

The following corollary of the Reduction Theorem will be useful for us later when constructing compositions of interpretations.

\begin{lemma}\label{psi+}
Let $\Gamma\colon L(\MA)\to L(\MB)$ be a code, $n=\dim\Gamma$. Then for any formula $\psi(x_1,\ldots,x_m)$ in $L(\MA)$ of the type 
\begin{equation}\label{AC}
\forall \, \bar t \; (\bigwedge_{i=1}^r\varphi^i(\bar t,\bar x) \:\longrightarrow \: \exists \, \bar z \:  (\bigwedge_{j=1}^h\vartheta^j(\bar z,\bar t,\bar x)\,) \,), \ \bar x=(x_1,\ldots,x_m), 
\end{equation}
where $\varphi^i,\vartheta^j$ are any formulas in $L(\MA)$, one has 
$$
\psi\Longleftrightarrow_{\Gamma} \psi^+,
$$
where $\psi^+(\bar x_1,\ldots,\bar x_m,\bar y)$ is an $L(\MB)$\=/formula defined by
$$
(\forall \, \bar{\bar t}\; ( \bigwedge_{i=1}^r\varphi^i_\Gamma(\bar{\bar t},\bar {\bar x}, \bar y) \:\longrightarrow\: \exists \,\bar {\bar z}\: (\bigwedge_{j=1}^h\vartheta^j_\Gamma(\bar{\bar z},\bar{\bar t},\bar{\bar x}, \bar y)) \,)\,)\,\wedge\, U^\wedge_\Gamma(\bar {\bar x}, \bar y), \quad \bar{\bar x}=(\bar x_1,\ldots,\bar x_m). 
$$
\end{lemma}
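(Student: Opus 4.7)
The plan is to verify the relation $\psi\Longleftrightarrow_\Gamma \psi^+$ directly from the definition of $\Longleftrightarrow_\Gamma$, applying the Reduction Theorem \emph{to the component subformulas} $\varphi^i$ and $\vartheta^j$ rather than to the whole $\psi$. Fix an interpretation $(\Gamma,\bar q,\mu_\Gamma)\colon \nsA \rightsquigarrow \nsB$ and consider a tuple $\bar{\bar b}=(\bar b_1,\ldots,\bar b_m)$. The trailing conjunct $U^\wedge_\Gamma(\bar{\bar x},\bar y)$ in $\psi^+$ makes the implication $\nsB\models\psi^+(\bar{\bar b},\bar q)\Rightarrow \bar{\bar b}\in U^\wedge_\Gamma(\nsB,\bar q)$ automatic, so it suffices to prove the equivalence $\nsA \models \psi(\mu_\Gamma(\bar{\bar b})) \iff \nsB \models \psi^+(\bar{\bar b},\bar q)$ for $\bar{\bar b}\in U^\wedge_\Gamma(\nsB,\bar q)$; write $\bar a:=\mu_\Gamma(\bar{\bar b})$.

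For the forward implication, I would assume $\nsA\models\psi(\bar a)$ and take an arbitrary $\bar{\bar t}\in \nsB$ with $\nsB\models\varphi^i_\Gamma(\bar{\bar t},\bar{\bar b},\bar q)$ for every $i$. The $U^\wedge_\Gamma$ conjunct built into each $\varphi^i_\Gamma$ forces $\bar{\bar t}\in U^\wedge_\Gamma(\nsB,\bar q)$, and the Reduction Theorem~\ref{le:interpr_corol} translates the $\varphi^i_\Gamma$ into $\nsA\models\varphi^i(\mu_\Gamma(\bar{\bar t}),\bar a)$. Applying the hypothesis $\nsA\models\psi(\bar a)$ at the point $\mu_\Gamma(\bar{\bar t})$ then produces $\bar z\in \nsA$ with $\nsA\models\vartheta^j(\bar z,\mu_\Gamma(\bar{\bar t}),\bar a)$ for all $j$. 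Lifting to any $\bar{\bar z}\in\mu_\Gamma^{-1}(\bar z)$ and invoking the Reduction Theorem once more on each $\vartheta^j$ yields $\nsB\models\vartheta^j_\Gamma(\bar{\bar z},\bar{\bar t},\bar{\bar b},\bar q)$, which is exactly the existential witness $\psi^+$ demands.

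The reverse implication is strictly symmetric: assuming $\nsB\models\psi^+(\bar{\bar b},\bar q)$ and fixing $\bar t\in \nsA^{|\bar t|}$ satisfying every $\varphi^i(\bar t,\bar a)$, I would lift it to $\bar{\bar t}\in\mu_\Gamma^{-1}(\bar t)$, push each $\varphi^i$ forward via the Reduction Theorem to obtain $\nsB\models\varphi^i_\Gamma(\bar{\bar t},\bar{\bar b},\bar q)$, invoke $\psi^+$ to extract $\bar{\bar z}\in \nsB$ with $\nsB\models\vartheta^j_\Gamma(\bar{\bar z},\bar{\bar t},\bar{\bar b},\bar q)$, and translate back to $\nsA\models\vartheta^j(\mu_\Gamma(\bar{\bar z}),\bar t,\bar a)$ for each $j$, closing the quantifier pattern of $\psi$.

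There is no real obstacle, only one bookkeeping nuance: the Reduction Theorem was proved for formulas in prenex form whose matrix is a disjunction of conjunctions of unnested atomic formulas, so a priori the $\varphi^i_\Gamma$ and $\vartheta^j_\Gamma$ are defined through some such normalizations of the subformulas. However Corollary~\ref{cor_form} guarantees that $\Gamma$\=/translation is well-defined modulo the equivalence $\longleftrightarrow_\Gamma$, so plugging $\varphi^i_\Gamma$ and $\vartheta^j_\Gamma$ directly into the skeleton of $\psi^+$ is unambiguous, and the argument goes through regardless of the specific normalizations chosen.
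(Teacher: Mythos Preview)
Your proposal is correct and follows essentially the same route as the paper: both arguments fix an interpretation, apply the Reduction Theorem to the subformulas $\varphi^i$ and $\vartheta^j$ separately, and chase the two implications by lifting/pushing witnesses through $\mu_\Gamma$. The only cosmetic difference is that the paper first invokes Corollary~\ref{cor1} to collapse the conjunctions $\bigwedge_i\varphi^i$ and $\bigwedge_j\vartheta^j$ to single formulas $\varphi$ and $\vartheta$, whereas you keep the conjunctions explicit; your remark about Corollary~\ref{cor_form} handling the normal-form bookkeeping is a reasonable addition that the paper leaves implicit.
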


\begin{proof}
Due to Corollary~\ref{cor1} the conjunctions $\bigwedge_{i=1}^r\varphi^i$ and $\bigwedge_{j=1}^h\vartheta^j$ can be replaced by single formulas $\varphi$ and $\vartheta$ without loss of generality. Fix an interpretation $(\Gamma,\bar q,\mu_\Gamma)$ of an $L(\MA)$\=/structure $\nsA$ into an $L(\MB)$\=/structure $\nsB$. 

Assume that $\bar a\in \nsA$ and $\nsA\models\psi(\bar a)$. Take any tuples $\bar {\bar b}\in \mu_\Gamma^{-1} (\bar a)$ and $\bar t_1,\ldots,\bar t_s\in \nsB$. If $\nsB\models  \varphi_\Gamma(\bar t_1,\ldots,\bar t_s,\bar {\bar b}, \bar q)$ then $\nsA\models  \varphi(\mu_\Gamma(\bar t_1),\ldots,\mu_\Gamma(\bar t_s),\bar a)$, therefore, $\nsA\models \vartheta(\bar c,\mu_\Gamma(\bar t_1),\ldots,\mu_\Gamma(\bar t_s), \bar a)$ for some $\bar c\in \nsA$. Thus, $\nsB\models \vartheta_\Gamma(\bar {\bar d},\bar t_1,\ldots,\bar t_s, \bar {\bar b},\bar q)$ with $\bar {\bar d}\in\mu_\Gamma^{-1}(\bar c)$, so $\nsB\models\psi^+(\bar {\bar b},\bar q)$.

Conversely, let $\nsB\models\psi^+(\bar {\bar b},\bar q)$ for some tuple $\bar {\bar b}\in \nsB$. Then $\bar {\bar b}\in U^\wedge_\Gamma(\nsB, \bar q)$. Suppose that $\nsA\models  \varphi(\bar t,\mu_\Gamma(\bar {\bar b}))$ for some tuple $\bar t\in \nsA$. In this case $\nsB\models  \varphi_\Gamma(\bar {\bar u},\bar {\bar b}, \bar q)$ with $\bar {\bar u}\in \mu_\Gamma^{-1}(\bar t)$. Therefore, $\nsB\models \vartheta_\Gamma(\bar {\bar d},\bar {\bar u}, \bar {\bar b},\bar q)$ for some tuple $\bar {\bar d}\in U^\wedge_\Gamma(\nsB,\bar q)$. Thus $\nsA\models \vartheta(\mu_\Gamma(\bar {\bar d}),\bar t, \mu_\Gamma(\bar {\bar b}))$, so $\nsA\models \psi(\mu_\Gamma(\bar {\bar b}))$, as required. 
\end{proof}

\begin{remark}
Note that every formula from the admissibility conditions $\AC_\Gamma$ is equivalent to a conjunction of formulas of type~\eqref{AC}.
\end{remark}

\section{Homotopic interpretations}

In this and the next sections, we will focus on interactions between several interpretations. Homotopic interpretations form ``parallel connections'', while invertible and bi-interpretations form ``sequential connections'' of interpretations.

\subsection{Homotopy with parameters}\label{subsec:homotopy}

Let 
\begin{gather*}
\Gamma_1 =  \{U_{\Gamma_1}(\bar x_0,\bar y), E_{\Gamma_1}(\bar x_1, \bar x_2,\bar y), Q_{\Gamma_1}(\bar x_1, \ldots,\bar x_{t_Q},\bar y) \mid Q \in L(\MA)\},\\
\Gamma_2 =  \{U_{\Gamma_2}(\bar v_0,\bar z), E_{\Gamma_2}(\bar v_1, \bar v_2,\bar z), Q_{\Gamma_2}(\bar v_1, \ldots,\bar v_{t_Q},\bar z) \mid Q \in L(\MA)\}
\end{gather*}
be codes, consisting of $L(\MB)$\=/formulas, where $|\bar x_i|=\dim\Gamma_1$, $|\bar v_i|=\dim\Gamma_2$, $|\bar y|=\dim_{\param}\Gamma_1$, $|\bar z|=\dim_{\param}\Gamma_2$.

\begin{definition}
Two interpretations $(\Gamma_1,\bar p_1)$ and $(\Gamma_2,\bar p_2)$ of an algebraic structure $\MA$ into an algebraic structure $\MB=\langle B;L(\MB)\rangle$ are called {\em homotopic} with parameters, if there is an $L(\MA)$\=/isomorphism $\lambda\colon\Gamma_1(\MB,\bar p_1)\to\Gamma_2(\MB,\bar p_2)$, definable with parameters in $\MB$ by a formula $\theta(\bar x_1,\bar x_2)$ in the language $L(\MB)\cup B$ (see Definition~\ref{def:definable_map}). If $\Gamma_1$, $\Gamma_2$ are absolute and $\lambda$ is $0$\=/definable, then $\Gamma_1$ and $\Gamma_2$ are called {\em absolutely homotopic}. If $\mu_{\Gamma_1}$ and $\mu_{\Gamma_2}$ are fixed coordinate maps such that $\lambda=\bar\mu^{-1}_{\Gamma_2}\circ\bar\mu_{\Gamma_1}$, i.\,e., the diagram 
\begin{equation*}
\begin{tikzcd}[column sep=large, row sep=large]
& \Gamma_1(\MB,\bar p_1) \ar[dl, "\bar{\mu}_{\Gamma_1}"'] 
\ar[dd, shift right, "\lambda"']\\
\MA  & \\
& \Gamma_2(\MB,\bar p_2) 
\ar[ul, "\bar{\mu}_{\Gamma_2}"] 
\ar[uu, shift right, "\lambda^{-1}"']
\end{tikzcd} 
\end{equation*} 
is commutative, then we call interpretations (coordinatizations)  $(\Gamma_1,\bar p_1,\mu_{\Gamma_1})$ and $(\Gamma_2,\bar p_2, \mu_{\Gamma_2})$ {\em strongly homotopic}.

We refer to the isomorphism $\lambda$ above as a {\em homotopy isomorphism}, and to formula $\theta$ as a  {\em homotopy}, or a {\em connector}, or a {\em connecting formula} between $(\Gamma_1,\bar p_1)$ and $(\Gamma_2,\bar p_2)$, and write $(\theta)\colon (\Gamma_1,\bar p_1)\to (\Gamma_2,\bar p_2)$ or  $(\theta)\colon \Gamma_1(\MB,\bar p_1)\to \Gamma_2(\MB,\bar p_2)$. In the case of a strong homotopy we write $(\theta)\colon (\Gamma_1,\bar p_1,\mu_{\Gamma_1})\to (\Gamma_2,\bar p_2,\mu_{\Gamma_2})$.  We denote by $\theta^{-1}(x_1,x_2)$ the formula $\theta(x_2,x_1)$, which defines the homotopy $(\theta^{-1})\colon \Gamma_2(\MB,\bar p_2)\to \Gamma_1(\MB,\bar p_1)$.
\end{definition}

The following is a useful observation. 

\begin{remark}
If codes $\Gamma_1$ and $\Gamma_2$ are equivalent (see Definition~\ref{def:equiv_codes}) and there exist interpretations $\MA\simeq\Gamma_1(\MB,\bar p)$ and $\MA\simeq\Gamma_2(\MB,\bar p)$ then they are homotopic with the connector $\theta(\bar x,\bar v,\bar p)=U_{\Gamma_1}(\bar x,\bar p)\wedge U_{\Gamma_2}(\bar v,\bar p)\wedge E_{\Gamma_1}(\bar x,\bar v,\bar p)$.
\end{remark}

\begin{example}
    Two absolute interpretations $\Gamma$ and $\Delta$ in Example~\ref{ex:UT1} that interpret $\UT_3(R)$ in $R$ are absolutely homotopic by means of a homotopy $(\theta)\colon\Gamma\to\Delta$, where $\theta(a,b,c,\alpha,\beta,\gamma)=(\alpha=a)\wedge(\beta=b)\wedge (\gamma=c-ab)$.
\end{example}

There are many known examples of homotopic interpretations, and only a  few of non-homotopic ones. For the latter, see Example~\ref{ex:G2} below. 

\begin{remark}
Let $(\theta) \colon \Gamma_1(\MB,\bar p_1)\to \Gamma_2(\MB,\bar p_2)$ be a homotopy, where $\theta = \theta(\bar x,\bar v, \bar p)$, $|\bar x|=\dim\Gamma_1$, $|\bar v|=\dim\Gamma_2$, and $\bar p$~--- parameters from $\MB$. It is possible to assume that $\bar p=(\bar p_1,\bar p_2)$ by introducing additional fictitious variables for parameters in $\theta$ or within the formulas of the codes $\Gamma_1$ and $\Gamma_2$. Since it might not always be preferable to modify the codes $\Gamma_1$ and $\Gamma_2$, we may choose to adjust only $\theta$. Therefore, the parameters in $\theta$  are assumed to take the form $(\bar p_1, \bar p_2, \bar r)$.
\end{remark}

\begin{remark}\label{rem:strong}
Let  $(\theta)\colon (\Gamma_1,\bar p_1)\to (\Gamma_2,\bar p_2)$ be a homotopy with a homotopy isomorphism $\lambda$. Then for any coordinate maps  $\mu_{\Gamma_1}$ and $\mu_{\Gamma_2}$ one can either replace $\mu_{\Gamma_1}$ by the composition $\mu_{\Gamma_1}^\prime = \bar \mu_{\Gamma_2} \circ \lambda\circ \bar{\mu}_{\Gamma_1}^{-1}\circ\mu_{\Gamma_1}$ or replace  $\mu_{\Gamma_2}$ by $\mu_{\Gamma_2}^\prime = \bar \mu_{\Gamma_1} \circ \lambda^{-1}\circ \bar{\mu}_{\Gamma_2}^{-1}\circ\mu_{\Gamma_2}$ such that $(\theta)\colon (\Gamma_1,\bar p_1,\mu_{\Gamma_1}^\prime)\to (\Gamma_2,\bar p_2,\mu_{\Gamma_2})$ and $(\theta)\colon (\Gamma_1,\bar p_1,\mu_{\Gamma_1})\to (\Gamma_2,\bar p_2,\mu_{\Gamma_2}^\prime)$ are strong homotopies. 
\end{remark}

\begin{remark}
    If an algebraic structure $\MA$ is rigid, then any interpretation $(\Gamma,\bar p)\colon \MA\rightsquigarrow\MB$ has a unique coordinate map $\mu_\Gamma$; therefore, for any homotopy  $(\theta)\colon (\Gamma_1,\bar p_1) \to (\Gamma_1,\bar p_1)$  of any interpretations $\MA \stackrel{\Gamma_1,\bar p_1}{\rightsquigarrow} \MB$, $\MA \stackrel{\Gamma_2,\bar p_2}{\rightsquigarrow} \MB$  with coordinate maps $\mu_{\Gamma_1}$ and $\mu_{\Gamma_2}$  one has that $(\theta)\colon (\Gamma_1,\bar p_1,\mu_{\Gamma_1}) \to (\Gamma_2,\bar p_2,\mu_{\Gamma_2})$ is a strong homotopy.
\end{remark}

\begin{remark}
There exists a homotopy $(\theta)$ between a self-interpretation $(\Gamma,\bar p)\colon \MA\rightsquigarrow\MA$ and the identical interpretation $\Id_{L(\MA)}\colon \MA\rightsquigarrow\MA$ if and only if there is a definable coordinate map $\mu_\Gamma\colon U_\Gamma(\MA,\bar p)\to A$. In this case the homotopy $(\theta)\colon (\Gamma,\bar p,\mu_\Gamma)\to(\Id_{L(\MA)},\emptyset,\mathrm{id}_\MA)$ is strong.
\end{remark}

\begin{definition}
We name an algebraic structure $\MA=\langle A; L(\MA)\rangle$ {\em i\=/rigid}, if every  self-interpretation $\MA\rightsquigarrow\MA$ is  homotopic to $\Id_{L(\MA)}$. 
\end{definition}

For example, the ring of integers $\Z$ is i\=/rigid~\cite[Lemma~2.7]{AKNS}.

Denote by 
\begin{itemize}
    \item $\Codes(\MA,\MB)$ the set of all interpretations $(\Gamma,\bar p)$ of $\MA$ in $\MB$;
    \item $\Codes^+(\MA,\MB)$ the set of all coordinatizations $(\Gamma,\bar p,\mu_\Gamma)$ of all interpretations  of $\MA$ in $\MB$;
    \item $\Codes_\abs(\MA,\MB)$ the set of all absolute interpretations $(\Gamma,\emptyset)$ of $\MA$ in $\MB$;
    \item $\Codes_\abs^+(\MA,\MB)$ the set of all coordinatizations $(\Gamma,\emptyset,\mu_\Gamma)$ of all  absolute interpretations  of $\MA$ in $\MB$;
    \item $\Codes_\reg(\MA,\MB)$ the set of all regular interpretations $(\Gamma,\phi)$ of $\MA$ in $\MB$.
\end{itemize}

\begin{lemma}\label{lemma:hom_par}
Let $\MA$ and $\MB$ be algebraic structures. Then the following holds:
\begin{enumerate}[label=\arabic*)]
    \item homotopy is an equivalence relation on the set $\Codes(\MA,\MB)$;
    \item strong homotopy is an equivalence relation on the set $\Codes^+(\MA,\MB)$;
    \item $0$\=/homotopy is an equivalence relation on the set $\Codes_{\abs}(\MA,\MB)$;
    \item strong $0$\=/homotopy is an equivalence relation on the set $\Codes_{\abs}^+(\MA,\MB)$. 
\end{enumerate}
\end{lemma}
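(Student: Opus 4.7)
The plan is to verify reflexivity, symmetry, and transitivity for each of the four relations. Items 3) and 4) follow from 1) and 2) by observing that every construction used to witness reflexivity, symmetry, and transitivity preserves parameter-freeness: if the input connectors use only the empty parameter tuple, so does the output. So it suffices to argue 1) and 2) carefully, noting parameter-freeness along the way.

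For reflexivity, given a coordinatization $(\Gamma,\bar p,\mu_\Gamma)$ of $\MA\rightsquigarrow\MB$, the identity isomorphism of $\Gamma(\MB,\bar p)$ is defined in $\MB$ by the formula $E_\Gamma(\bar x_1,\bar x_2,\bar p)$ from the code itself, and $\mathrm{id}=\bar\mu_\Gamma^{-1}\circ\bar\mu_\Gamma$, giving a strong self-homotopy. For symmetry, the text already records that $\theta^{-1}(\bar x_2,\bar x_1):=\theta(\bar x_1,\bar x_2)$ is a connector from $(\Gamma_2,\bar p_2)$ to $(\Gamma_1,\bar p_1)$; swapping variables introduces no new parameters and converts $\lambda$ into $\lambda^{-1}$, which in the strong case equals $\bar\mu_{\Gamma_1}^{-1}\circ\bar\mu_{\Gamma_2}$.

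For transitivity, given $(\theta_{12})\colon(\Gamma_1,\bar p_1)\to(\Gamma_2,\bar p_2)$ and $(\theta_{23})\colon(\Gamma_2,\bar p_2)\to(\Gamma_3,\bar p_3)$ with homotopy isomorphisms $\lambda_1,\lambda_2$, I would define the composite connector by
$$
\theta_{13}(\bar x,\bar w) \; := \; \exists\,\bar v\;\bigl(\,U_{\Gamma_2}(\bar v,\bar p_2)\,\wedge\,\theta_{12}(\bar x,\bar v)\,\wedge\,\theta_{23}(\bar v,\bar w)\,\bigr),
$$
where $|\bar v|=\dim\Gamma_2$. Its parameters are only those already present in $\theta_{12}$ and $\theta_{23}$, so $0$-definability is preserved in the absolute case. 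One then checks that $\theta_{13}$ is the preimage in $\MB$ of the graph of $\lambda_2\circ\lambda_1$ in the sense of Definition~\ref{def:definable_map}: if $\bar b_1\in U_{\Gamma_1}(\MB,\bar p_1)$ and $\lambda_2(\lambda_1(\bar b_1/{\sim_{\Gamma_1}}))=\bar b_3/{\sim_{\Gamma_3}}$, then any representative $\bar b_2$ of $\lambda_1(\bar b_1/{\sim_{\Gamma_1}})$ witnesses $\theta_{13}(\bar b_1,\bar b_3)$, and conversely any witness $\bar v$ exhibits the factorization through $\Gamma_2(\MB,\bar p_2)$. Well-definedness on quotients is inherited from $\theta_{12},\theta_{23}$, which respect the equivalence relations $\sim_{\Gamma_i}$ by being preimages of graphs. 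For strong homotopy the coordinate-map condition telescopes cleanly: $\lambda_2\circ\lambda_1=(\bar\mu_{\Gamma_3}^{-1}\circ\bar\mu_{\Gamma_2})\circ(\bar\mu_{\Gamma_2}^{-1}\circ\bar\mu_{\Gamma_1})=\bar\mu_{\Gamma_3}^{-1}\circ\bar\mu_{\Gamma_1}$.

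The only mildly nontrivial point is the transitivity step, specifically verifying that $\theta_{13}$ as defined above captures exactly the preimage of the graph of $\lambda_2\circ\lambda_1$, with neither extra nor missing tuples; this is the single spot requiring attention to the equivalence classes, but it is essentially a bookkeeping exercise once one uses the graph-preimage formulation of definability in Definition~\ref{def:definable_map}.
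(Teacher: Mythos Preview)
Your proposal is correct and matches the paper's approach: the paper simply writes ``Straightforward.'' and leaves the details to the reader, while you have spelled out exactly those details---reflexivity via the $E_\Gamma$-formula, symmetry via $\theta^{-1}$, and transitivity via existentially composing connectors through an intermediate representative in $U_{\Gamma_2}$. There is nothing to add.
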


\begin{proof} 
Straightforward.
\end{proof}

The following fact is a direct continuation of Remark~\ref{lem:AB1}.

\begin{remark}\label{lem:AB2}
Let $\MA$ and $\MB$ be algebraic structures. If $(\theta)\colon(\Gamma,\bar p,\mu_{\Gamma}) \to (\Delta, \bar q,\mu_{\Delta})$ is a  strong homotopy of interpretations of $\MA$ into $\MB$, then $(\theta)\colon (\Gamma_{A}, \emptyset,\mu_{\Gamma}) \to (\Delta_{A},\emptyset,\mu_{\Delta})$ is  a strong $0$\=/homotopy of interpretations of $\MA_A$ into $\MB_B$. Furthermore, if the language $L(\MA)$ is finite, then the inverse statement also holds. However, if a homotopy $(\theta)\colon (\Gamma,\bar p) \to (\Delta, \bar q)$ does not give a strong homotopy of the coordinatizations $(\theta)\colon (\Gamma,\bar p,\mu_{\Gamma}) \to (\Delta, \bar q,\mu_{\Delta})$ then  we cannot claim that $\theta$ defines a homotopy  $(\theta)\colon (\Gamma_{A}, \emptyset,\mu_{\Gamma}) \to (\Delta_{A},\emptyset,\mu_{\Delta})$.
\end{remark}

\begin{lemma}\label{lem:cor4}
Let $h\in \Aut(\MB)$ be a definable automorphism of $\MB$ and $\theta_\Aut(v_1,v_2)$ be formula in $L(\MB)\cup B$, which defines $h$. Then any interpretations $(\Gamma,\bar p_1)\colon \MA\rightsquigarrow \MB$ and $(\Gamma,\bar p_2)\colon \MA\rightsquigarrow \MB$ with common code $\Gamma$ and $h(\bar p_1)=\bar p_2$ are homotopic with connector
$$
\theta(\bar x_1,\bar x_2)=\exists\,\bar v^1\:\exists\,\bar v^2\;(\bigwedge_{i=1}^2(
U^\wedge_\Gamma(\bar x_i,\bar v^i,\bar p_i)\,\wedge\, E_\Gamma(\bar x_i,\bar v^i,\bar p_i))\wedge \bigwedge_{j=1}^n\theta_{\Aut}(v^1_j,v^2_j)\,), 
$$
where $\bar v^i=(v^i_1,\ldots,v^i_n)$,  $n=\dim\Gamma$, $i=1,2$.
\end{lemma}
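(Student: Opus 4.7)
The goal is to verify that the formula $\theta$ defines, in the sense of Definition~\ref{def:definable_map}, an $L(\MA)$\=/isomorphism $\bar h\colon \Gamma(\MB,\bar p_1)\to \Gamma(\MB,\bar p_2)$; this $\bar h$ will then serve as the homotopy isomorphism and $\theta$ as the required connector. I will first produce $\bar h$ abstractly from the automorphism $h$, then check that the specific formula $\theta$ describes exactly its graph.

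The key observation, established by induction on formula complexity, is that because $h\in\Aut(\MB)$ satisfies $h(\bar p_1)=\bar p_2$, for every $L(\MB)$\=/formula $\psi(\bar{\bar x},\bar y)$ and every tuple $\bar{\bar b}$ of $\MB$ of matching length,
$$
\MB\models\psi(\bar{\bar b},\bar p_1)\iff \MB\models\psi(h(\bar{\bar b}),\bar p_2).
$$
Applying this to $\psi=U_\Gamma$, $E_\Gamma$, and each $Q_\Gamma$ (for $Q\in L(\MA)$), I conclude that $h$ restricts to a bijection $h\colon U_\Gamma(\MB,\bar p_1)\to U_\Gamma(\MB,\bar p_2)$ which sends $\sim_{\Gamma,\bar p_1}$-classes onto $\sim_{\Gamma,\bar p_2}$-classes and respects the $\MB$\=/preimages of the graphs of all symbols of $L(\MA)$. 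Descending to quotients, the rule $\bar h\colon [\bar b]_{\sim_{\Gamma,\bar p_1}}\mapsto [h(\bar b)]_{\sim_{\Gamma,\bar p_2}}$ defines a well-defined bijection, and the preservation of each $Q_\Gamma$ makes it an $L(\MA)$\=/isomorphism $\Gamma(\MB,\bar p_1)\to\Gamma(\MB,\bar p_2)$.

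It remains to check that $\theta$ defines the preimage in $\MB$ of the graph of $\bar h$. For one direction, if $\MB\models\theta(\bar b_1,\bar b_2)$ with witnesses $\bar v^1,\bar v^2$, then $\bar b_i\in U_\Gamma(\MB,\bar p_i)$, $\bar b_i\sim_{\Gamma,\bar p_i}\bar v^i$, and the conjunction $\bigwedge_j\theta_{\Aut}(v^1_j,v^2_j)$ forces $\bar v^2=h(\bar v^1)$; hence $h(\bar b_1)\sim_{\Gamma,\bar p_2}h(\bar v^1)=\bar v^2\sim_{\Gamma,\bar p_2}\bar b_2$, i.e., $\bar h([\bar b_1]_{\sim_{\Gamma,\bar p_1}})=[\bar b_2]_{\sim_{\Gamma,\bar p_2}}$. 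Conversely, if $\bar b_i\in U_\Gamma(\MB,\bar p_i)$ and $\bar h([\bar b_1]_{\sim_{\Gamma,\bar p_1}})=[\bar b_2]_{\sim_{\Gamma,\bar p_2}}$, then taking $\bar v^1=\bar b_1$ and $\bar v^2=h(\bar b_1)$ satisfies every conjunct by reflexivity of $\sim_\Gamma$ and the hypothesis that $h(\bar b_1)\sim_{\Gamma,\bar p_2}\bar b_2$.

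The proof is largely bookkeeping; the only point requiring care is keeping the bijectivity on the quotients and the preservation of the operations and relations $Q_\Gamma$ coherently tracked through the admissibility conditions $\AC_\Gamma$. The auxiliary variables $\bar v^1,\bar v^2$ in $\theta$ are inserted precisely so that $h$ can be applied to a concrete representative of the $\sim_{\Gamma,\bar p_1}$\=/class of $\bar b_1$ rather than to $\bar b_1$ itself; reflexivity shows that taking $\bar v^1=\bar b_1$ always suffices, so the existential quantifiers do not weaken what is expressed.
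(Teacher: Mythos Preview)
Your proof is correct and is precisely the straightforward verification the paper has in mind; the paper's own proof consists of the single word ``Straightforward.'' You have simply written out the details: that an automorphism $h$ with $h(\bar p_1)=\bar p_2$ carries each formula $\psi(\cdot,\bar p_1)$ from $\Gamma$ to $\psi(\cdot,\bar p_2)$, hence induces an $L(\MA)$\=/isomorphism on the quotients, and that the given $\theta$ captures its graph via the witnesses $\bar v^1=\bar b_1$, $\bar v^2=h(\bar b_1)$.
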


\begin{proof}
Straightforward.
\end{proof}

In a later paper in this series, we will provide specific conditions, referred to as homotopy conditions, that assert the formula $\theta$ defines an isomorphism.

\subsection{Regular homotopy}\label{subsec:homotopy_reg}

A regular analog of homotopy is the following.

\begin{definition}\label{def:reg_hom}
Two regular interpretations $(\Gamma_1,\phi_1)$ and $(\Gamma_2,\phi_2)$ of an algebraic structure $\MA$ into an algebraic structure $\MB$ are called {\em regularly homotopic}, if there is an $L(\MB)$\=/formula $\theta(\bar x_1,\bar x_2, \bar y_1,\bar y_2,\bar w)$ (a {\em connector} or a {\em connecting formula}) and an $L(\MB)$\=/formula $\delta(\bar y_1,\bar y_2,\bar w)$ (a {\em descriptor} or, more formally, {\em homotopy descriptor}), $|\bar x_i|=\dim\Gamma_i$, $|\bar y_i|=\dim_{\param}\Gamma_i$, $i=1,2$, such that for any tuples of parameters $\bar p_1\in \phi_1(\MB)$, $\bar p_2\in \phi_2(\MB)$ there exists a tuple $\bar r\in\delta(\bar p_1,\bar p_2,\MB)$ (i.\,e., $\MB \models \delta(\bar p_1,\bar p_2,\bar r)$), moreover, for any such $\bar r$ the formula $\theta(\bar x_1,\bar x_2,\bar p_1,\bar p_2,\bar r)$ defines an 
$L(\MA)$\=/isomorphism $\lambda_{\bar p_1,\bar p_2,\bar r}\colon\Gamma_1(\MB,\bar p_1)\to\Gamma_2(\MB,\bar p_2)$. In this case, we call the pair $(\theta,\delta)$ {\em regular homotopy} between regular interpretations $(\Gamma_1,\phi_1)$ and $(\Gamma_2,\phi_2)$ and write $(\theta,\delta)\colon (\Gamma_1,\phi_1)\to(\Gamma_2,\phi_2)$, or $(\theta,\delta)\colon \Gamma_1(\MB,\phi_1)\to\Gamma_2(\MB,\phi_2)$. 
\end{definition}

When discussing a regular homotopy between regular interpretations, the prefix ``regular'' is often omitted when the context makes it clear. 

There are several particular but important cases of Definition~\ref{def:reg_hom} that we would like to discuss separately. 

\begin{definition}
[particular cases of Definition~\ref{def:reg_hom}]\label{def:reg_hom_without} 
{\quad}
\begin{enumerate}[label=\arabic*)]
\item {\bf Regular homotopy between absolute interpretations.} If one (or both) of $\Gamma_1$ and $\Gamma_2$ is absolute, then we define a regular homotopy between them by replacing each absolute interpretation $\Gamma_i$ by its simple parameter extension $\Gamma_i^\reg$ (see Subsection~\ref{subsec:reg_int}) and then proceeding as in Definition~\ref{def:reg_hom}. It is easy to see that the definition above is equivalent to the following: if $\Gamma_1$ and $\Gamma_2$ are absolute then they are regularly homotopic if and only if there are  $L(\MB)$\=/formulas $\theta(\bar x_1,\bar x_2, \bar w)$ and $\delta(\bar w)$, $|\bar x_i|=\dim\Gamma_i$,  $i=1,2$, such that $\delta(\MB)\ne \emptyset$ and for any tuple $\bar r\in\delta(\MB)$ the formula $\theta(\bar x_1,\bar x_2,\bar r)$ defines an isomorphism $\lambda_{\bar r}\colon\Gamma_1(\MB)\to\Gamma_2(\MB)$.
\item {\bf Regular homotopy without a descriptor.} We say that  $(\theta,\emptyset)\colon (\Gamma_1,\phi_1)\to(\Gamma_2,\phi_2)$ is a regular homotopy  {\em without a descriptor} or {\em descriptor-free}, if there is an $L(\MB)$\=/formula $\theta(\bar x_1,\bar x_2, \bar y_1,\bar y_2)$, $|\bar x_i|=\dim\Gamma_i$, $|\bar y_i|=\dim_{\param}\Gamma_i$, $i=1,2$, such that for any tuples of parameters $\bar p_1\in \phi_1(\MB)$, $\bar p_2\in \phi_2(\MB)$ the formula $\theta(\bar x_1,\bar x_2,\bar p_1,\bar p_2)$ defines an 
$L(\MA)$\=/isomorphism $\lambda_{\bar p_1,\bar p_2}\colon\Gamma_1(\MB,\bar p_1)\to\Gamma_2(\MB,\bar p_2)$. In this case, we also term the connector $\theta$ {\em descriptor-free}.
\item   {\bf Regular homotopy  with a removable descriptor.} We say that $(\Gamma_1,\phi_1)$ and $(\Gamma_2,\phi_2)$ are regularly homotopic {\em with a removable descriptor}, if they are regularly homotopic with a connector $\theta$ and a descriptor $\delta$, such that for any tuples $\bar p_1\in \phi_1(\MB)$, $\bar p_2\in \phi_2(\MB)$ and $\bar r_1,\bar r_2\in \delta(\bar p_1,\bar p_2,\MB)$  the isomorphisms $\lambda_{\bar p_1,\bar p_2,\bar r_1}$ and $\lambda_{\bar p_1,\bar p_2,\bar r_2}$ from Definition~\ref{def:reg_hom} are equal.
\item   {\bf Regular homotopy  with a split descriptor.} We say that a homotopy $(\theta,\delta)\colon (\Gamma_1,\phi_1)\to(\Gamma_2,\phi_2)$ has a split descriptor $\delta$ if $\delta$ does not have variables $\bar y_1$ and $\bar y_2$, i.\,e., $\delta $ is in the form $\delta(\bar w)$. 
\end{enumerate}
\end{definition}

\begin{remark}\label{rigid1}
Let $\MA$ be rigid, i.\,e., $\Aut(\MA) =e$, and $(\theta,\delta)\colon (\Gamma_1,\phi_1)\to(\Gamma_2,\phi_2)$ be a homotopy of regular interpretations $(\Gamma_1,\phi_1),(\Gamma_2,\phi_2)\colon \MA\rightsquigarrow\MB$. Then the descriptor $\delta$ is removable. Indeed, for any fixed tuples $\bar p_1\in \phi_1(\MB)$, $\bar p_2\in \phi_2(\MB)$ and $\bar r_1,\bar r_2\in \delta(\bar p_1,\bar p_2,\MB)$ the  isomorphisms $\lambda_{\bar p_1,\bar p_2,\bar r_1}$ and $\lambda_{\bar p_1,\bar p_2,\bar r_2}$ differ by an automorphism of $\Gamma_2(\MB,\bar p_2)$, which is isomorphic to $\MA$; since the group of automorphisms $\Aut(\MA)$ is trivial, one has that $\lambda_{\bar p_1,\bar p_2,\bar r_1}=\lambda_{\bar p_1,\bar p_2,\bar r_2}$, so descriptor $\delta$ is removable.
\end{remark}

\begin{remark} \label{rem:without-descriptor}
Two regular interpretations $(\Gamma_1,\phi_1)$ and $(\Gamma_2,\phi_2)$ of $\MA$ into $\MB$ are regularly homotopic without a descriptor if and only if they are regularly homotopic with a removable descriptor. Indeed, if $(\theta,\emptyset)\colon (\Gamma_1,\phi_1) \to (\Gamma_2,\phi_2)$ is a regular descriptor-free homotopy, then   $(\theta^{\reg},\reg)\colon (\Gamma_1,\phi_1) \to (\Gamma_2,\phi_2)$, where $\theta^{\reg}(\bar x_1,\bar x_2,\bar y_1,\bar y_2, w)=\theta(\bar x_1,\bar x_2,\bar y_1,\bar y_2)$ and  $\reg(w)=(w=w)$, is  a regular homotopy with a removable descriptor. Conversely, if $(\theta,\delta)\colon (\Gamma_1,\phi_1) \to (\Gamma_2,\phi_2)$ is a regular homotopy with a removable descriptor, then the formula $\theta^\abs(\bar x_1,\bar x_2,\bar y_1,\bar y_2)=\forall\,\bar w\:(\delta(\bar y_1,\bar y_2\bar w)\longrightarrow \theta(\bar x_1,\bar x_2,\bar y_1,\bar y_2,\bar w))$ provides a regular descriptor-free homotopy $(\theta^\abs,\emptyset)\colon (\Gamma_1,\phi_1) \to (\Gamma_2,\phi_2)$. 
\end{remark}

We refer to $(\theta^{\reg},\reg)$ as the {\em simple parameter extension} of $(\theta,\emptyset)$, and $(\theta^\abs,\emptyset)$ as the {\em simple absolutization} of the homotopy $(\theta,\delta)$, aligning with the terminology from Subsection~\ref{subsec:reg_int}.

\begin{remark}\label{rem:descriptor}
Let $(\theta,\delta)\colon (\Gamma_1,\phi_1) \to (\Gamma_2,\phi_2)$  be a regular homotopy of regular interpretations with a split descriptor $\delta(\bar w)$ and $(\Gamma_{i,\param},\phi_i\wedge\delta)$ be a split parameter extension of $(\Gamma_i,\phi_i)$, $i=1,2$ (see Definition~\ref{def:param_ext}). Then $(\theta,\emptyset)$ is a regular descriptor-free homotopy between  $(\Gamma_{1,\param},\phi_1\wedge\delta)$ and $(\Gamma_2,\phi_2)$, as well as between $(\Gamma_1,\phi_1)$ and $(\Gamma_{2,\param},\phi_2\wedge\delta)$. Similarly, if $\delta$ has the form $\delta(\bar y_1,\bar w)$ then $(\theta,\emptyset)$ is a regular descriptor-free homotopy between  $(\Gamma_{1,\param},\phi_1\wedge\delta)$ and $(\Gamma_2,\phi_2)$; if $\delta$ is of the form $\delta(\bar y_2,\bar w)$ then $(\theta,\emptyset)$ is a regular descriptor-free homotopy between    $(\Gamma_1,\phi_1)$ and $(\Gamma_{2,\param},\phi_2\wedge\delta)$. Note that the cases above occur when either  $(\Gamma_1,\phi_1)$ or $(\Gamma_2,\phi_2)$ is absolute.
\end{remark}

Unlike Remark~\ref{rem:without-descriptor}, in Remark~\ref{rem:descriptor}, we alter the interpretation $(\Gamma_i, \phi_i)$ to $(\Gamma_{i, \param}, \phi_i \wedge \delta)$, which can sometimes be inconvenient.

\begin{definition}
We name an algebraic structure $\MA=\langle A; L(\MA)\rangle$ {\em regularly i\=/rigid}, if every regular self-interpretation $\MA\rightsquigarrow\MA$ is  regularly homotopic to $\Id_{L(\MA)}$. 
\end{definition}

It can be shown that the ring of integers $\Z$ is regularly i\=/rigid (see Corollary~\ref{cor:reg_i-rigid}).

Unfortunately, regular homotopy is not an equivalence relation on the set $\Codes_{\reg}(\MA,\MB)$ of regular interpretations from $\MA$ to $\MB$, as it lacks reflexivity. However, it is symmetric and transitive, as demonstrated by the result below.

\begin{lemma}\label{le:codes_reg}
Let $\Codes_\reg(\MA,\MB)\ne \emptyset$, then regular homotopy is a symmetric and transitive relation on the set $\Codes_{\reg}(\MA,\MB)$. 
\end{lemma}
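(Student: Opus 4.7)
The plan is to establish symmetry and transitivity separately by constructing the required connectors and descriptors explicitly and then invoking the fact that, at each tuple of parameters in the descriptor, the relevant formula defines an honest $L(\MA)$-isomorphism between the corresponding coordinate structures. No appeal to reflexivity is needed (and indeed it can fail).

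\textbf{Symmetry.} Let $(\theta,\delta)\colon (\Gamma_1,\phi_1)\to(\Gamma_2,\phi_2)$ be a regular homotopy. Define
$$
\theta'(\bar x_2,\bar x_1,\bar y_2,\bar y_1,\bar w) \,:=\, \theta(\bar x_1,\bar x_2,\bar y_1,\bar y_2,\bar w),
\qquad
\delta'(\bar y_2,\bar y_1,\bar w)\,:=\,\delta(\bar y_1,\bar y_2,\bar w),
$$
i.e., swap the roles of the two blocks of object-variables and of the two parameter-blocks. For any $\bar p_2\in\phi_2(\MB)$ and $\bar p_1\in\phi_1(\MB)$, the descriptor condition for $(\theta,\delta)$ furnishes $\bar r\in \delta(\bar p_1,\bar p_2,\MB)=\delta'(\bar p_2,\bar p_1,\MB)$, and for any such $\bar r$ the formula $\theta(\bar x_1,\bar x_2,\bar p_1,\bar p_2,\bar r)$ defines an isomorphism $\lambda_{\bar p_1,\bar p_2,\bar r}\colon \Gamma_1(\MB,\bar p_1)\to\Gamma_2(\MB,\bar p_2)$. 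Its inverse $\lambda_{\bar p_1,\bar p_2,\bar r}^{-1}$ is itself an $L(\MA)$-isomorphism, and its graph is precisely the preimage of the graph of $\lambda_{\bar p_1,\bar p_2,\bar r}$ after swapping coordinates; that is, it is defined by $\theta'(\bar x_2,\bar x_1,\bar p_2,\bar p_1,\bar r)$. Hence $(\theta',\delta')\colon (\Gamma_2,\phi_2)\to(\Gamma_1,\phi_1)$ is a regular homotopy.

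\textbf{Transitivity.} Suppose $(\theta_{12},\delta_{12})\colon (\Gamma_1,\phi_1)\to(\Gamma_2,\phi_2)$ and $(\theta_{23},\delta_{23})\colon (\Gamma_2,\phi_2)\to(\Gamma_3,\phi_3)$ are regular homotopies. Write $\bar w=(\bar y_2,\bar w_{12},\bar w_{23})$ with fresh variables $\bar y_2$ of length $\dim_\param\Gamma_2$, and put
\begin{align*}
\delta_{13}(\bar y_1,\bar y_3,\bar w) &\,:=\, \phi_2(\bar y_2)\,\wedge\,\delta_{12}(\bar y_1,\bar y_2,\bar w_{12})\,\wedge\,\delta_{23}(\bar y_2,\bar y_3,\bar w_{23}), \\
\theta_{13}(\bar x_1,\bar x_3,\bar y_1,\bar y_3,\bar w) &\,:=\, \exists\,\bar x_2\,\bigl(U_{\Gamma_2}(\bar x_2,\bar y_2)\,\wedge\,\theta_{12}(\bar x_1,\bar x_2,\bar y_1,\bar y_2,\bar w_{12})\,\wedge\,\theta_{23}(\bar x_2,\bar x_3,\bar y_2,\bar y_3,\bar w_{23})\bigr).
\end{align*}
Fix arbitrary $\bar p_1\in\phi_1(\MB)$ and $\bar p_3\in\phi_3(\MB)$. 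Because $(\Gamma_2,\phi_2)$ is a regular interpretation, $\phi_2(\MB)\ne\emptyset$; pick $\bar p_2\in\phi_2(\MB)$. The two given homotopies then yield $\bar r_{12}\in\delta_{12}(\bar p_1,\bar p_2,\MB)$ and $\bar r_{23}\in\delta_{23}(\bar p_2,\bar p_3,\MB)$, so the tuple $\bar r=(\bar p_2,\bar r_{12},\bar r_{23})$ lies in $\delta_{13}(\bar p_1,\bar p_3,\MB)$. For any such $\bar r$, the formula $\theta_{12}(\bar x_1,\bar x_2,\bar p_1,\bar p_2,\bar r_{12})$ defines an isomorphism $\lambda_{12}\colon \Gamma_1(\MB,\bar p_1)\to\Gamma_2(\MB,\bar p_2)$, and $\theta_{23}(\bar x_2,\bar x_3,\bar p_2,\bar p_3,\bar r_{23})$ defines an isomorphism $\lambda_{23}\colon \Gamma_2(\MB,\bar p_2)\to\Gamma_3(\MB,\bar p_3)$. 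The composition $\lambda_{23}\circ\lambda_{12}$ is an $L(\MA)$-isomorphism $\Gamma_1(\MB,\bar p_1)\to\Gamma_3(\MB,\bar p_3)$, and the preimage of its graph in $\MB$ is exactly the set defined by $\theta_{13}(\bar x_1,\bar x_3,\bar p_1,\bar p_3,\bar r)$, where the existential quantifier ranges over any representative $\bar x_2$ in $U_{\Gamma_2}(\MB,\bar p_2)$ of the intermediate class $\lambda_{12}(\bar x_1/{\sim_{\Gamma_1}})$.

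\textbf{Main subtlety.} The only real bookkeeping is making sure that the existentially quantified intermediate tuple $\bar x_2$ really implements the composition of isomorphisms rather than some spurious relation. This is settled by the fact that $\theta_{12}$ and $\theta_{23}$ define, for the chosen parameters, graphs of \emph{functions} on quotient sets, so the disjointness of distinct $\sim_{\Gamma_2}$-classes and the well-definedness of $\lambda_{12}$ on $\Gamma_1(\MB,\bar p_1)$ force the existentially quantified $\bar x_2$ to pick out the correct $\sim_{\Gamma_2}$-class; the remaining verification that $(\theta_{13},\delta_{13})$ satisfies Definition~\ref{def:reg_hom} is then routine.
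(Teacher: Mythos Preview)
Your argument is correct and follows essentially the same route as the paper: symmetry by swapping variable blocks to pass to the inverse isomorphism, and transitivity by composing the two connectors via an existentially quantified intermediate tuple, with the new descriptor being the conjunction of $\phi_2$, $\delta_{12}$, and $\delta_{23}$. Your inclusion of $U_{\Gamma_2}(\bar x_2,\bar y_2)$ in $\theta_{13}$ is harmless (it is implied once $\theta_{12}$ defines an isomorphism into $\Gamma_2(\MB,\bar p_2)$), and the extra verification you provide is just a more explicit version of what the paper leaves to the reader.
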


\begin{proof} 
The symmetry of regular homotopy is easy. Suppose that regular interpretations $(\Gamma_1,\phi_1)$ and $(\Gamma_2,\phi_2)$ are regularly homotopic with connector $\theta_{12}(\bar x_1,\bar x_2,\bar y_1,\bar y_2,\bar w)$ and descriptor $\delta_{12}(\bar y_1,\bar y_2,\bar w)$, as well as $(\Gamma_2,\phi_2)$ and $(\Gamma_3,\phi_3)$ are regularly homotopic with connector $\theta_{23}(\bar x_2,\bar x_3,\bar y_2,\bar y_3,\bar v)$ and descriptor $\delta_{23}(\bar y_2,\bar y_3, \bar v)$. Then $(\Gamma_1,\phi_1)$ and $(\Gamma_3,\phi_3)$ are regularly homotopic with connector $\theta_{13}(\bar x_1,\bar x_3,\bar y_1,\bar y_3,\bar w,\bar v,\bar y_2)=\exists \,\bar x_2\:(\theta_{12}(\bar x_1,\bar x_2,\bar y_1,\bar y_2,\bar w)\wedge \theta_{23}(\bar x_2,\bar x_3,\bar y_2,\bar y_3,\bar v)\,)$  and descriptor $\delta_{13}(\bar y_1,\bar y_3,\bar w,\bar v,\bar y_2)=\delta_{12}(\bar y_1,\bar y_2,\bar w)\wedge\delta_{23}(\bar y_2,\bar y_3,\bar v)\wedge\phi_2(\bar y_2)$.  
\end{proof}

So, the following definitions arise naturally. 

\begin{definition}
A regular interpretation $(\Gamma,\phi)$ of $\MA$ into $\MB$  is
\begin{enumerate}[label=\arabic*)]
    \item {\em self-homotopic}, if there is a regular homotopy $(\theta,\delta)\colon (\Gamma,\phi)\to (\Gamma,\phi)$, called a {\em self-homotopy};  
    \item {\em isolated}, if there is no regular interpretation $(\Gamma_0,\phi_0)$ of $\MA$ into $\MB$, which is regularly homotopic to $(\Gamma,\phi)$.
\end{enumerate} 
\end{definition}

\begin{remark}\label{rem:abs_self-hom1}
  It is obvious that any absolute interpretation $\MA\simeq\Delta(\MB)$ is self-homotopic with connector $\theta(\bar x_1,\bar x_2)=U_\Delta(\bar x_1)\wedge U_\Delta(\bar x_2)\wedge E_\Delta(\bar x_1,\bar x_2)$ and without a descriptor.  
\end{remark}

Denote  by $\Codes_{\self}(\MA,\MB)$ the subset of all regular self-homotopic interpretations from $\Codes_{\reg}(\MA,\MB)$.

Lemma~\ref{le:codes_reg} has the following corollaries.

\begin{corollary}\label{cor:self_iso}
The following is true:
\begin{itemize}
    \item Every regular interpretation is either self-homotopic or isolated.
    \item Regular homotopy is an equivalence relation on $\Codes_{\self}(\MA,\MB)$. 
\end{itemize}
\end{corollary}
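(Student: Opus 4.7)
My plan is to derive both bullets directly from Lemma~\ref{le:codes_reg}, which gives symmetry and transitivity of regular homotopy on $\Codes_\reg(\MA,\MB)$; the only genuine content is unpacking the definitions of \emph{self-homotopic} and \emph{isolated}.

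For the first bullet, I would argue by contradiction (or rather by dichotomy). Fix $(\Gamma,\phi) \in \Codes_\reg(\MA,\MB)$ and suppose it is not isolated. By definition of isolated, there exists some regular interpretation $(\Gamma_0,\phi_0)$ of $\MA$ into $\MB$ and a regular homotopy $(\theta,\delta)\colon (\Gamma,\phi)\to(\Gamma_0,\phi_0)$. By the symmetry part of Lemma~\ref{le:codes_reg}, this yields a regular homotopy $(\theta',\delta')\colon (\Gamma_0,\phi_0)\to(\Gamma,\phi)$. Composing these two homotopies using the transitivity part of Lemma~\ref{le:codes_reg} (the explicit connector and descriptor produced in its proof work verbatim here), I obtain a regular homotopy $(\Gamma,\phi)\to(\Gamma,\phi)$, so $(\Gamma,\phi)$ is self-homotopic. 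Hence every regular interpretation falls into exactly one of the two classes.

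For the second bullet, I need to verify the three axioms on the restricted set $\Codes_\self(\MA,\MB)$. Symmetry and transitivity are inherited from Lemma~\ref{le:codes_reg} on the ambient set $\Codes_\reg(\MA,\MB)$: if $(\Gamma_1,\phi_1)$ and $(\Gamma_2,\phi_2)$ lie in $\Codes_\self(\MA,\MB)$ and are regularly homotopic, then the same symmetric/transitive closures already established inside $\Codes_\reg(\MA,\MB)$ witness the equivalence properties; one only needs to observe that transitivity does not take us outside $\Codes_\self(\MA,\MB)$, which is automatic since both endpoints already belong to that subset. Reflexivity on $\Codes_\self(\MA,\MB)$ is immediate by definition, since every element of $\Codes_\self(\MA,\MB)$ carries a self-homotopy by fiat.

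The only step that requires any thought at all is the composition argument in the first bullet: one must check that chaining a homotopy $(\Gamma,\phi)\to(\Gamma_0,\phi_0)$ with its symmetric partner $(\Gamma_0,\phi_0)\to(\Gamma,\phi)$ indeed produces a well-defined self-homotopy. But this is exactly the instance of Lemma~\ref{le:codes_reg}'s transitivity construction (using the existential connector $\theta_{13}$ and the augmented descriptor $\delta_{12}\wedge\delta_{23}\wedge\phi_0$), and no additional ingredients are needed. Consequently, the whole corollary reduces to a formal bookkeeping exercise on top of Lemma~\ref{le:codes_reg}, with no genuine obstacle.
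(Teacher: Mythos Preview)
Your proposal is correct and follows exactly the intended derivation: the paper states this as an immediate corollary of Lemma~\ref{le:codes_reg} without further proof, and your argument---using symmetry plus transitivity to produce a self-homotopy from any homotopy, and noting that reflexivity on $\Codes_\self(\MA,\MB)$ holds by definition---is precisely how one unpacks that. There is nothing to add.
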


Most natural examples of regular interpretations from algebra are self-homotopic. Below, we provide examples of regular isolated interpretations.

\begin{example}\label{ex:G2}
The regular interpretation $(\Gamma,\phi)$ of $\MA=\langle \Z;\emptyset\rangle$ into $\MB=\langle \MR;R\rangle$ from Example~\ref{ex:G1} is isolated. Indeed, if $\theta(x_1,x_2,z_1,\ldots,z_s)$ is a formula in $L(\MB)$, then for any tuple $(b_1,b_2,c_1,\ldots,c_s)\in \theta(\MB)$, if $b_1\not\in\{b_2,c_1,\ldots,c_s\}$, then the set $\{(b,b_2,c_1,\ldots,c_s)\in \Orb(b_1,b_2,c_1,\ldots,c_s), b\in \MB\}$ is infinite and, therefore,  
the set $\{b\in \MR\mid \MB\models\theta(b,b_2,c_1,\ldots,c_s)\}$ is not a singleton; it is infinite too. Anyway, $\theta$ cannot be a connector between $\Gamma(\MB,p_1)$ and $\Gamma(\MB,p_2)$ when parameters $p_1,p_2\in \MB$ are unequal.
\end{example}

\begin{example}[P.\,Gvozdevsky]\label{ex:G3}
Let $\MA = \langle \Q;< \rangle$ be a set of rational numbers $\Q$ with natural ordering $<$. Then there exists an isolated regular injective self-interpretation $(\Gamma,\phi)\colon \MA\rightsquigarrow\MA$. Here $\dim \Gamma=1$, 
$\dim_\param\Gamma=2$, and $U_\Gamma(x,y_1,y_2)=(y_1<x<y_2)$, $<_\Gamma(x_1,x_2)=(x_1<x_2)$, and $\phi(y_1,y_2)=(y_1<y_2)$. Since $\Th(\MA)$ has quantifier elimination, every formula $\theta(x_1,\ldots,x_m)$ in the language $\{<\}$ is equivalent to a disjunction of formulas of the type $\bigwedge_{i=1}^m(z_i=z_i^\prime)\wedge\bigwedge_{j=1}^l(v_j<v^\prime_j)$, where $z_i,z_i^\prime,v_j,v^\prime_j$ are in $\{x_1,\ldots,x_m\}$. Therefore, there is no formula $\theta$ in $\{<\}$ with parameters in $\Q$, which gives an isomorphism between $\langle (p_1,p_2),<\rangle$ and $\langle (q_1,q_2),<\rangle$, $p_1,p_2,q_1,q_2\in\Q$, if $(p_1,p_2)\ne(q_1,q_2)$.
\end{example}

Note that isolated regular interpretations $(\Gamma,\phi)\colon \MA\rightsquigarrow\MB$ in Examples~\ref{ex:G2} and~\ref{ex:G3} have the following additional property: for any unequal tuples $\bar p_1,\bar p_2\in \phi(\MB)$ interpretations $(\Gamma,\bar p_1)$ and $(\Gamma,\bar p_2)$ are not homotopic with parameters. So we write a new problem.

\begin{problem}
   Find an example of an isolated regular interpretation $(\Gamma,\phi)\colon \MA\rightsquigarrow\MB$, such that for any tuples $\bar p_1,\bar p_2\in \phi(\MB)$ interpretations $(\Gamma,\bar p_1)$ and $(\Gamma,\bar p_2)$ are homotopic with parameters.
\end{problem}

\begin{problem}
    Find an example of algebraic structures $\MA$ and $\MB$, such that
    there exists a regular interpretation $\MA\rightsquigarrow\MB$, but all such interpretations are isolated.
\end{problem}

The following facts are almost obvious, but we want to mention them.

\begin{lemma}
Let $(\Gamma,\phi)\colon\MA\rightsquigarrow\MB$ be a regular interpretation and $(\Gamma_\param,\phi\wedge\delta)$ be a parameter extension of $(\Gamma,\phi)$. Then the following holds:
\begin{enumerate}[label=(\arabic*)]
\item\label{item1:par_ext} if $(\Gamma,\phi)$ is self-homotopic, then $(\Gamma_\param,\phi\wedge\delta)$ is self-homotopic and it is regularly homotopic to $(\Gamma,\phi)$;
\item\label{item2:par_ext} if $(\Gamma_\param,\phi\wedge\delta)$ is self-homotopic and it is a split parameter extension of $(\Gamma,\phi)$, then $(\Gamma,\phi)$ is self-homotopic;
\item\label{item3:par_ext} if  $(\Gamma_\param,\phi\wedge\delta)$ is self-homotopic, then $(\Gamma,\phi^\prime)$ is self-homotopic, where $\phi^\prime(\bar y)=\exists\,\bar w\:(\phi(\bar y)\wedge\delta(\bar y,\bar w))$.
\end{enumerate}
\end{lemma}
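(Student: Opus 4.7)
All three parts reduce to the single underlying identity
\[
\Gamma_\param(\MB,\bar p,\bar q)=\Gamma(\MB,\bar p)
\]
(valid whenever either side is well-defined), which was noted just before Definition~\ref{def:param_ext}. This says the extra parameters $\bar w$ are genuine dummies, so any homotopy isomorphism between $\Gamma$-structures is automatically a homotopy isomorphism between the corresponding $\Gamma_\param$-structures, and vice versa. The only real work will be to arrange the new connector/descriptor so that the universal quantifier in Definition~\ref{def:reg_hom} (``for \emph{any} $\bar r\in\delta(\bar p_1,\bar p_2,\MB)$ the formula $\theta$ defines an isomorphism'') still delivers a genuine isomorphism after the rewiring; this means the dummy variables on which the \emph{result} depends must be pushed into the descriptor rather than existentially quantified inside the connector.

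\textbf{Part~\ref{item1:par_ext}.} Let $(\theta(\bar x_1,\bar x_2,\bar y_1,\bar y_2,\bar v),\,\epsilon(\bar y_1,\bar y_2,\bar v))$ be a self-homotopy of $(\Gamma,\phi)$. I would keep the same $\theta$ and $\epsilon$ but pad the argument lists with the new parameter tuples $\bar w_1,\bar w_2$ read as fictitious variables. Since $\Gamma_\param(\MB,\bar p_i,\bar q_i)=\Gamma(\MB,\bar p_i)$ for any $\bar q_i$ satisfying $\delta$, and $\bar p_i\in\phi(\MB)$ whenever $(\bar p_i,\bar q_i)\in(\phi\wedge\delta)(\MB)$, the pair $(\theta,\epsilon)$ (so padded) satisfies Definition~\ref{def:reg_hom} as a self-homotopy of $(\Gamma_\param,\phi\wedge\delta)$. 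The same padded pair, now with only the \emph{second} argument tuple extended by a dummy $\bar w_2$, witnesses a regular homotopy $(\Gamma,\phi)\to(\Gamma_\param,\phi\wedge\delta)$.

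\textbf{Part~\ref{item2:par_ext}.} Suppose $(\theta',\epsilon')$ is a self-homotopy of $(\Gamma_\param,\phi\wedge\delta)$ with $\delta=\delta(\bar w)$ split. Because $(\phi\wedge\delta)(\MB)\neq\emptyset$ and $\delta$ depends only on $\bar w$, the set $\delta(\MB)$ is nonempty. I propose
\[
\theta''(\bar x_1,\bar x_2,\bar y_1,\bar y_2,\bar v,\bar w)=\theta'(\bar x_1,\bar x_2,\bar y_1,\bar w,\bar y_2,\bar w,\bar v),
\]
\[
\epsilon''(\bar y_1,\bar y_2,\bar v,\bar w)=\delta(\bar w)\wedge\epsilon'(\bar y_1,\bar w,\bar y_2,\bar w,\bar v).
\]
Given $\bar p_1,\bar p_2\in\phi(\MB)$, any $\bar q\in\delta(\MB)$ makes $(\bar p_i,\bar q)\in(\phi\wedge\delta)(\MB)$; the assumed self-homotopy produces $\bar v$ with $\epsilon'((\bar p_1,\bar q),(\bar p_2,\bar q),\bar v)$, so $\epsilon''(\bar p_1,\bar p_2,\bar v,\bar q)$ holds. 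Conversely, whenever $\epsilon''(\bar p_1,\bar p_2,\bar v,\bar w)$ holds, $\theta'(\cdot,\cdot,(\bar p_1,\bar w),(\bar p_2,\bar w),\bar v)$ defines an isomorphism $\Gamma_\param(\MB,\bar p_1,\bar w)\to\Gamma_\param(\MB,\bar p_2,\bar w)$, which is the desired isomorphism $\Gamma(\MB,\bar p_1)\to\Gamma(\MB,\bar p_2)$.

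\textbf{Part~\ref{item3:par_ext}.} Here the same idea works, but now each $\bar p_i\in\phi'(\MB)$ comes with its own witness $\bar q_i$ of $\delta(\bar p_i,\bar q_i)$. Take
\[
\theta''(\bar x_1,\bar x_2,\bar y_1,\bar y_2,\bar v,\bar w_1,\bar w_2)=\theta'(\bar x_1,\bar x_2,\bar y_1,\bar w_1,\bar y_2,\bar w_2,\bar v),
\]
\[
\epsilon''(\bar y_1,\bar y_2,\bar v,\bar w_1,\bar w_2)=\delta(\bar y_1,\bar w_1)\wedge\delta(\bar y_2,\bar w_2)\wedge\epsilon'(\bar y_1,\bar w_1,\bar y_2,\bar w_2,\bar v).
\]
Existence of witnesses in $\epsilon''(\bar p_1,\bar p_2,\MB)$ follows by unpacking the definition of $\phi'$ and applying the self-homotopy of $(\Gamma_\param,\phi\wedge\delta)$; correctness follows as in Part~\ref{item2:par_ext}.

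\textbf{Main obstacle.} The only delicate point throughout is the universal clause in Definition~\ref{def:reg_hom}: the conjuncts $\delta(\bar w)$ and $\delta(\bar y_i,\bar w_i)$ must appear in the \emph{descriptor}, not be existentially quantified inside the connector, so that every tuple satisfying the descriptor actually furnishes a bona fide isomorphism. One must also remember that in Part~\ref{item3:par_ext} the new descriptor $\phi'$ trivially still yields a regular interpretation, since $\phi'(\MB)\subseteq\phi(\MB)$ and every $\bar p\in\phi'(\MB)$ pairs with some $\bar q$ for which $\Gamma_\param(\MB,\bar p,\bar q)=\Gamma(\MB,\bar p)\simeq\MA$.
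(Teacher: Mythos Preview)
Your proof is correct. The paper itself does not give a proof of this lemma: it prefaces the statement with ``The following facts are almost obvious, but we want to mention them'' and moves on without a proof environment. Your argument spells out exactly the natural verification the paper omits, based on the identity $\Gamma_\param(\MB,\bar p,\bar q)=\Gamma(\MB,\bar p)$, and you correctly handle the one nontrivial point---placing the constraints $\delta(\bar w)$, respectively $\delta(\bar y_i,\bar w_i)$, into the new descriptor rather than the connector, so that the universal clause of Definition~\ref{def:reg_hom} is satisfied.
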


There is more to say about regular self-homotopic interpretations; we leave it to subsequent papers.

\section{Invertibility and bi-interpretability}

If a structure $\MA$ is interpreted in a structure $\MB$, then $\MA$ inherits some properties from $\MB$. If they are interpreted in each other ({\em mutually interpreted}), then they have many properties in common, in particular, their elementary theories are reducible to each other via the translation maps (see Section~\ref{subsec:undecidability}), provided these interpretations are absolute or regular. Thus, mutual interpretability is a useful equivalence relation on structures. 
However, we can't say that mutually interpretable structures are ``really the same'' from a model theory viewpoint. Fortunately, there is a natural relation between algebraic structures, called bi-interpretability,  which is much stronger than mutual interpretability and, in the big picture, can be viewed as really similar in the model-theoretic framework.

Invertible and bi-interpretations are constructed based on the successive application of interpretations and their compositions. By ``invertible interpretation'', we refer to an interpretation $\MA\stackrel{\Gamma}{\rightsquigarrow}\MB$ that has an inverse interpretation $\MB\stackrel{\Delta}{\rightsquigarrow}\MA$, such that the composition of interpretations $\Gamma\circ\Delta$ is homotopic to the identity interpretation. More precisely, we obtain two concepts of invertibility (right and left) depending on which composition the additional conditions are imposed on: $\Gamma\circ\Delta$ or $\Delta\circ\Gamma$. If the conditions are fulfilled simultaneously for different inverses, $\Delta_1$ and $\Delta_2$, this is known as two-sided invertibility. If they are fulfilled for the same inverse, $\Delta$, then it is called bi-interpretability. These definitions typically have three levels: absolute, regular, and involving parameters.
 
Bi-interpretability, particularly regular bi-interpretability, poses certain challenges. Invertible interpretations are simpler to handle and can sometimes suffice for the problem at hand. In a forthcoming paper in our series, we'll show that a regular right-invertible interpretation of $\MA$ in $\MB$ suffices to algebraically characterize all structures $\nsA$ with $\nsA\equiv \MA$, based on the models of the theory $\Th(\MB)$. However, regular bi-interpretability helps in offering a clearer understanding of the overall picture.

We begin with compositions of interpretations.

\subsection{Compositions of interpretations}

It is known that the relation $\MA\rightsquigarrow\MB$ is transitive on algebraic structures (see, for example,~\cite{Hodges}). The proof of this fact is based on the notion of $\Gamma$\=/translation and composition of codes. 

\begin{definition} \label{def:code-composition}
Let $\Gamma\colon L(\MA)\to L(\MB)$ and $\Delta\colon L(\MB)\to L(\MC)$ be codes (either with parameters or absolute), $\Gamma=\{U_\Gamma,E_\Gamma,Q_\Gamma\mid Q\in L(\MA)\}$. Then the {\em composition} $\Gamma\circ\Delta\colon L(\MA)\to L(\MC)$ of the codes $\Gamma$ and $\Delta$ is the following code:
$$
\Gamma\circ \Delta = \{U_{\Gamma\circ\Delta}, E_{\Gamma\circ\Delta}, Q_{\Gamma\circ\Delta} \mid Q \in L(\MA)\}=\{(U_\Gamma)_\Delta, (E_\Gamma)_\Delta, (Q_\Gamma)_\Delta \mid Q \in L(\MA)\},
$$
where $\dim \Gamma\circ \Delta =\dim \Gamma\cdot\dim \Delta$ and $\dim_{\param}\Gamma\circ\Delta = \dim_{\param}\Gamma\cdot \dim\Delta+\dim_{\param}\Delta$.
\end{definition}

\begin{lemma}
Let $\Gamma\colon L(\MA)\to L(\MB)$ and $\Delta\colon L(\MB)\to L(\MC)$ be codes. The following conditions hold:
\begin{enumerate}[label=\arabic*)]
\item\label{le:int0}$\Id_{L(\MA)}\circ\Gamma\approx\Gamma\circ\Id_{L(\MB)}\approx\Gamma$, where $\approx$ is equivalence of codes (see Definition~\ref{def:equiv_codes});
\item If $\Gamma,\Delta$ are absolute, then $\Gamma\circ\Delta$ is absolute too; 
\item If $\Gamma,\Delta$ are injective, then $\Gamma\circ\Delta$ is injective too;
\item\label{le:int8} If $\Gamma,\Delta$ are Diophantine, then $\Gamma\circ\Delta$ is Diophantine too. 
\end{enumerate}
\end{lemma}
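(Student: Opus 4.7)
The plan is to verify each of the four claims directly from Definition~\ref{def:code-composition}, leveraging properties of the $\Gamma$-translation established in Subsection~\ref{subsec:tran} and the admissibility/normalization conventions. Parts (2)--(4) are essentially arithmetic or syntactic bookkeeping, while part (1) is the only one requiring a small computation.

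For (1), I would handle the two equivalences separately. The right identity $\Gamma\circ\Id_{L(\MB)}\approx\Gamma$ is almost immediate: every formula $\psi$ in the code $\Gamma$ is an $L(\MB)$-formula, and by Remark~\ref{rem:Id2} its $\Id_{L(\MB)}$-translation is logically equivalent to $\psi$ itself; hence on every $L(\MB)$-structure $\nsB$ and every tuple $\bar q$, the two codes define the same $L(\MA)$-structure, which is what Definition~\ref{def:equiv_codes} demands. For the left identity $\Id_{L(\MA)}\circ\Gamma\approx\Gamma$ I would unwind the $\Gamma$-translation of each formula of $\Id_{L(\MA)}$: for instance, $(x=x)_\Gamma$ expands to $E_\Gamma(\bar x,\bar x,\bar y)\wedge U_\Gamma(\bar x,\bar y)$, and using the normalization of Remark~\ref{rem:Id1} in which $U_\Gamma(\bar x,\bar y)=E_\Gamma(\bar x,\bar x,\bar y)$, this collapses to $U_\Gamma$. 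A similar computation for $E_{\Id}$, $c_{\Id}$, $f_{\Id}$, $R_{\Id}$ reproduces $E_\Gamma$, $c_\Gamma$, $f_\Gamma$, $R_\Gamma$ up to the harmless $U^\wedge_\Gamma$ conjuncts which are already built into those formulas by Remark~\ref{rem:Id1}. Thus the composition code defines the same $L(\MA)$-structure as $\Gamma$ on any $L(\MB)$-structure.

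The remaining parts are short. For (2), absoluteness means $\dim_\param\Gamma=\dim_\param\Delta=0$, and the formula $\dim_\param(\Gamma\circ\Delta)=\dim_\param\Gamma\cdot\dim\Delta+\dim_\param\Delta$ in Definition~\ref{def:code-composition} gives $0$. For (3), if $E_\Gamma$ is $\bigwedge_{i=1}^n(x_i=x_i')$ and $E_\Delta$ is $\bigwedge_{j=1}^m(v_j=v_j')$, then $(E_\Gamma)_\Delta$ translates each atomic equality $x_i=x_i'$ to $E_\Delta(\bar x_i,\bar x_i',\bar z)=\bigwedge_j(x_{ij}=x_{ij}')$, so on the domain $U^\wedge_{\Gamma\circ\Delta}$ the formula $E_{\Gamma\circ\Delta}$ is equivalent to $\bigwedge_{i,j}(x_{ij}=x_{ij}')$, and by Remark~\ref{rem:Id4} the code may be replaced by its $\inj$-variant without changing the interpretation. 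For (4), Remark~\ref{rem:Diophantine} says that the $\Delta$-translation of a Diophantine formula under a Diophantine code $\Delta$ is Diophantine; applying this to each formula of $\Gamma$ yields that $\Gamma\circ\Delta$ is Diophantine.

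The only mildly delicate point is part (1), where one must keep track of the extra $U^\wedge_\Gamma$ conjuncts produced by the definition of $\psi_\Gamma$ and confirm that they are absorbed by the standard normalization of the code. Everything else is a direct transcription of definitions.
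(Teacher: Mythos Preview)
Your proposal is correct and follows essentially the same approach as the paper, which simply points to Remarks~\ref{rem:Id1} and~\ref{rem:Id2} for part~\ref{le:int0} and to Remark~\ref{rem:Diophantine} for part~\ref{le:int8}, leaving parts (2) and (3) implicit. Your write-up supplies the details the paper omits, and your handling of the extra $U^\wedge_\Gamma$ conjuncts in (1) and of the $\inj$-replacement in (3) via Remark~\ref{rem:Id4} is the right way to close those bookkeeping gaps.
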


\begin{proof}
    For~\ref{le:int0} see Remarks~\ref{rem:Id1} and~\ref{rem:Id2}; for~\ref{le:int8} see Remark~\ref{rem:Diophantine}.
\end{proof}

The following is an important technical result on the transitivity of interpretations.

\begin{lemma}\label{le:int-transitivity}
Let $\MA=\langle A;L(\MA)\rangle$, $\MB=\langle B;L(\MB)\rangle$ and $\MC=\langle C;L(\MC)\rangle$ be algebraic structures and $\Gamma\colon L(\MA)\to L(\MB)$, $\Delta\colon L(\MB)\to L(\MC)$ be codes. Then the following holds:
\begin{enumerate}[label=(\arabic*)]
\item If $\MA\stackrel{\Gamma}{\rightsquigarrow}\MB$ and 
$\MB\stackrel{\Delta}{\rightsquigarrow}\MC$ then $\MA\stackrel{\Gamma\circ\Delta}{\rightsquigarrow}\MC$;
\item\label{item:reg} If $(\Gamma,\phi)\colon\MA \rightsquigarrow \MB$ and $(\Delta,\psi)\colon\MB \to \MC$ are regular interpretations then $(\Gamma \circ \Delta, \phi_\Delta\wedge\psi)\colon\MA \rightsquigarrow \MC$ is also a regular interpretation.
\end{enumerate}

Furthermore, the following conditions hold:
\begin{enumerate}[label=\roman*)]
    \item If $\bar p,\bar q$ are parameters and $\mu_\Gamma, \mu_\Delta$ are  coordinate maps of interpretations $\Gamma,\Delta$ then $(\bar{\bar p},\bar q)$, where $\bar{\bar p} \in \mu_\Delta^{-1}(\bar p)$, are parameters for $\Gamma\circ\Delta$; 
    \item\label{le:int2} For any coordinate map $\mu_\Delta$ of the interpretation $\MB\simeq\Delta(\MC,\bar q)$ and any tuple $\bar{\bar p} \in \mu_\Delta^{-1}(\bar p)$ the $L(\MA)$\=/structure $\Gamma\circ\Delta(\MC,\bar{\bar p},\bar q)$ is well-defined and isomorphic to $\MA$;
    \item The $L(\MA)$\=/structure $\Gamma\circ\Delta (\MC, \bar{\bar p},\bar q)$ does not depend on the choice of $\bar{\bar p} \in \mu_\Delta^{-1}(\bar p)$, when $\mu_\Delta$ is fixed; 
    \item $\mu_\Gamma\circ\mu_\Delta=\mu_{\Gamma}\circ\mu^n_{\Delta}\big|_{U_{\Gamma\circ\Delta}(\MC, \bar{\bar p},\bar q)}$ is a coordinate map of the interpretation $\MA\simeq {\Gamma\circ\Delta}(\MC,\bar{\bar p},\bar q)$ and any coordinate map $\mu_{\Gamma\circ\Delta}\colon U_{\Gamma\circ\Delta}(\MC, \bar{\bar p},\bar q)\to A$ has a form $\mu_{\Gamma1}\circ\mu_\Delta$ for a suitable coordinate map $\mu_{\Gamma1}$ of the interpretation $\MA\simeq\Gamma(\MB,\bar p)$, provided $\mu_\Delta$ is fixed.    
\end{enumerate}
\end{lemma}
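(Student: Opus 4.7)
My plan is to drive everything from a single observation: by Definition~\ref{def:code-composition}, the formulas of the composite code $\Gamma\circ\Delta$ are literally the $\Delta$-translations of the $L(\MB)$-formulas of $\Gamma$. Combined with the Reduction Theorem (Corollary~\ref{le:interpr_corol_ns}) applied to $\MB\simeq\Delta(\MC,\bar q)$, this transports every statement about $\Gamma(\MB,\bar p)$ inside $\MB$ to a statement about $\Gamma\circ\Delta(\MC,\bar{\bar p},\bar q)$ inside $\MC$, provided $\bar{\bar p}$ is a componentwise preimage of $\bar p$ under the coordinate map $\mu_\Delta$.

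In detail, I fix coordinatizations $(\Gamma,\bar p,\mu_\Gamma)$ of $\MA$ in $\MB$ and $(\Delta,\bar q,\mu_\Delta)$ of $\MB$ in $\MC$ and pick any $\bar{\bar p}$ in $\MC$ with $\mu_\Delta(\bar{\bar p})=\bar p$ blockwise. The length of $(\bar{\bar p},\bar q)$ equals $\dim_\param\Gamma\cdot\dim\Delta+\dim_\param\Delta=\dim_\param(\Gamma\circ\Delta)$, which settles~(i). For the well-definedness part of~(ii) I apply Corollary~\ref{le:interpr_corol_ns} to each $L(\MB)$-sentence in $\AC_\Gamma(\bar p)$~--- all of which hold in $\MB$ by Proposition~\ref{le:adm-cond}~--- to obtain their $\Delta$-translations in $\MC$ at $(\bar{\bar p},\bar q)$; by Remark~\ref{remark_Id_AC} (or a direct check, since each admissibility formula has the universal-over-existential shape covered by Lemma~\ref{psi+}), this family coincides, up to $\longleftrightarrow_\Delta$, with the admissibility conditions of $\Gamma\circ\Delta$, so $\Gamma\circ\Delta(\MC,\bar{\bar p},\bar q)$ is well-defined. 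For the isomorphism, a second pass of Corollary~\ref{le:interpr_corol_ns}~--- now applied to each formula $Q_\Gamma$ of the code~$\Gamma$~--- yields
\[
\MC\models (Q_\Gamma)_\Delta(\bar{\bar b}_1,\ldots,\bar{\bar b}_{t_Q},\bar{\bar p},\bar q)\iff \MB\models Q_\Gamma\bigl(\mu_\Delta(\bar{\bar b}_1),\ldots,\mu_\Delta(\bar{\bar b}_{t_Q}),\bar p\bigr),
\]
with $\mu_\Delta$ applied blockwise. Specialising to $U_\Gamma$, $E_\Gamma$ and then arbitrary $Q$ shows that the blockwise map $\mu_\Delta$ descends to an $L(\MA)$-isomorphism $\Gamma\circ\Delta(\MC,\bar{\bar p},\bar q)\to\Gamma(\MB,\bar p)$. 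Composing with $\bar\mu_\Gamma$ produces the coordinate map $\mu_\Gamma\circ\mu_\Delta^{\dim\Gamma}$ required by~(iv); any other coordinate map of the composition pulls back through $\bar\mu_\Gamma$ to some coordinate map $\mu_{\Gamma 1}$ of $\Gamma$, completing~(iv). Statement~(iii) is then immediate: two lifts $\bar{\bar p}_1,\bar{\bar p}_2\in\mu_\Delta^{-1}(\bar p)$ are componentwise $\sim_\Delta$-equivalent, and after the $\sim_\Delta$-closure convention of Remark~\ref{rem:Id1} applied to $\Delta$, every formula of $\Gamma\circ\Delta$ is invariant in its parameter slot under this equivalence.

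For the regular case~(2), assume $\MC\models(\phi_\Delta\wedge\psi)(\bar{\bar p},\bar q)$. Since $\bar q\in\psi(\MC)$ we get $\MB\simeq\Delta(\MC,\bar q)$; then Corollary~\ref{le:interpr_corol_ns} applied to $\phi\in L(\MB)$ turns $\MC\models\phi_\Delta(\bar{\bar p},\bar q)$ into $\MB\models\phi(\mu_\Delta(\bar{\bar p}))$, so $\mu_\Delta(\bar{\bar p})\in\phi(\MB)$ and $\MA\simeq\Gamma(\MB,\mu_\Delta(\bar{\bar p}))$. Applying~(1) with this choice of parameters yields $\MA\simeq\Gamma\circ\Delta(\MC,\bar{\bar p},\bar q)$. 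Non-emptiness of the descriptor is likewise immediate: for $\bar p_0\in\phi(\MB)$, $\bar q_0\in\psi(\MC)$, and any $\bar{\bar p}_0\in\mu_\Delta^{-1}(\bar p_0)$, the Reduction Theorem gives $\MC\models\phi_\Delta(\bar{\bar p}_0,\bar q_0)$, hence $(\bar{\bar p}_0,\bar q_0)\in(\phi_\Delta\wedge\psi)(\MC)$.

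I expect the only real obstacle to be careful bookkeeping~--- four dimensions are in play and parameters appear at two nested levels~--- together with the need to verify that the admissibility conditions of $\Gamma\circ\Delta$ agree with the $\Delta$-translations of those of $\Gamma$ up to $\longleftrightarrow_\Delta$ rather than literally. Once that alignment is pinned down, every clause of the lemma unfolds mechanically from a single invocation of the Reduction Theorem per formula of the code $\Gamma$.
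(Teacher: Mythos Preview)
Your proposal is correct and follows essentially the same route as the paper: both arguments hinge on applying the Reduction Theorem (in the form of Corollary~\ref{le:interpr_corol_ns}) to the formulas of the code $\Gamma$ via the interpretation $\MB\simeq\Delta(\MC,\bar q)$, invoke Lemma~\ref{psi+} to transport the admissibility conditions $\AC_\Gamma(\bar p)$ to $\AC_{\Gamma\circ\Delta}(\bar{\bar p},\bar q)$, and then verify that $\mu_\Gamma\circ\mu_\Delta$ is a coordinate map by running through $U_\Gamma$, $E_\Gamma$, and the $Q_\Gamma$'s. Your treatment is in fact slightly more explicit than the paper's in two places --- you spell out the non-emptiness of the descriptor $\phi_\Delta\wedge\psi$ and give a separate argument for~(iii) via $\sim_\Delta$-invariance --- whereas the paper leaves these as immediate consequences of the displayed equivalences.
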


\begin{proof}
Let $n=\dim\Gamma$ and $m=\dim\Delta$. By the Reduction Theorem~\ref{le:interpr_corol} for any tuples $\bar b_i$ from $B^n$ and any preimages $\bar{\bar c}_i\in\mu_{\Delta}^{-1}(\bar b_i)$, $\bar{\bar p}\in\mu_{\Delta}^{-1}(\bar p)$ one has 
\begin{equation}\label{eq:U}
\MB\models U_\Gamma (\bar b_0,\bar p) \iff \MC\models U_{\Gamma\circ\Delta}(\bar{\bar c}_0,\bar{\bar p},\bar q);
\end{equation}
\begin{equation}\label{eq:E}
\MB\models E_\Gamma (\bar b_1,\bar b_2,\bar p) \iff \MC\models E_{\Gamma\circ\Delta}(\bar{\bar c}_1,\bar{\bar c}_2, \bar{\bar p},\bar q);
\end{equation}
\begin{equation}\label{eq:Q}
\MB\models Q_\Gamma (\bar b_1,\ldots, \bar b_{t_Q},\bar p) \iff \MC\models Q_{\Gamma\circ\Delta}(\bar{\bar c}_1, \ldots,\bar{\bar c}_{t_Q}, \bar{\bar p},\bar q)
\end{equation}
for every $Q\in L(\MA)$. Here if $\bar{\bar c}_i\in C^{nm}$ and the right parts of~\eqref{eq:U}, \eqref{eq:E}, \eqref{eq:Q} are true, then $\bar{\bar c}_i \in \mu_{\Delta}^{-1}(B^n)$ and the left parts are true as well.

Furthermore, since $\MB\models\AC_\Gamma(\bar p)$, then, by Lemma~\ref{psi+}, one has $\MC\models\AC_{\Gamma\circ\Delta}(\bar{\bar p},\bar q)$. Thus $L(\MA)$\=/structure $\Gamma\circ \Delta(\MC,\bar{\bar p},\bar q)$ is well-defined and it does not depend on the choice of $\bar{\bar p}\in \mu_{\Delta}^{-1}(\bar p)$ provided $\mu_\Delta$ is fixed. Now we should  
check that $\MA\simeq\Gamma\circ \Delta(\MC,\bar{\bar p},\bar q)$.

Remind that $A_{\Gamma\circ\Delta}=U_{\Gamma\circ\Delta}(\MC,\bar{\bar p},\bar q)$. Consider the map $\mu_{\Gamma\circ\Delta}\colon A_{\Gamma\circ\Delta}\to A$, $\mu_{\Gamma\circ\Delta}=\mu_{\Gamma}\circ\mu_{\Delta}$ (more precisely, $\mu_{\Gamma\circ\Delta}=\mu_{\Gamma}\circ\mu^n_{\Delta}\big|_{A_{\Gamma\circ\Delta}}$). As $\mu_{\Gamma}$ is surjective and~\eqref{eq:U} holds, $\mu_{\Gamma\circ \Delta}$ is well-defined and also surjective. Because $\bar\mu_{\Gamma}\colon U_\Gamma(\MB,\bar p)/{\sim_\Gamma}\, \to A$ is bijective and~\eqref{eq:E} holds, $\bar\mu_{\Gamma\circ\Delta}\colon A_{\Gamma\circ\Delta}/{\sim_{\Gamma\circ\Delta}} \,\to A$ is well-defined and bijective too. Finally, since $\bar\mu_{\Gamma}$ is an $L(\MA)$\=/isomorphism and due to~\eqref{eq:Q} $\bar\mu_{\Gamma\circ\Delta}\colon \Gamma\circ \Delta(\MC,\bar{\bar p},\bar q) \to \MA$ is an $L(\MA)$\=/isomorphism as well. Thereby, $\MA$ is interpretable in $\MC$ with the code $\Gamma\circ \Delta$ and parameters $(\bar{\bar p},\bar q)$. 

To prove~\ref{item:reg}, suppose that $(\Gamma,\phi), (\Delta,\psi)$ are regular interpretations. Then any tuple $\bar r\in \phi_\Delta\wedge\psi(\MC)$ has a form $\bar r=(\bar{\bar p},\bar q)$, where $\bar q\in \psi(\MC)$, $\bar{\bar p}\in \phi_\Delta(\MC,\bar q)$. As $\bar q\in \psi(\MC)$ there exists an interpretation $\MB\simeq\Delta(\MC,\bar q)$. 
For any coordinate map $\mu_\Delta$ of the interpretation $\MB\simeq\Delta(\MC,\bar q)$ the tuple $\bar p=\mu_\Delta(\bar{\bar p})$ is in $\phi(\MB)$, by the Reduction Theorem~\ref{le:interpr_corol}. Thus there exists an interpretation $\MA\simeq\Gamma(\MB,\bar p)$. So the $L(\MA)$\=/structure $\Gamma\circ\Delta(\MC,\bar r)=\Gamma\circ\Delta(\MC,\bar {\bar p},\bar q)$ is well-defined and isomorphic to $\MA$, due to item~\ref{le:int2}. 
\end{proof}

\begin{remark}\label{rem:composition}
Note that by construction, if $\Gamma$ has parameters $\bar p$, the location of ${\Gamma\circ\Delta} (\MC, \bar{\bar p},\bar q)$ in $\MC$,  i.\,e., the set  $U_{\Gamma\circ\Delta}(\MC,\bar{\bar p},\bar q)\subseteq C^{\dim \Gamma\circ\Delta}$, depends on the choice of $\mu_\Delta$, as well as the relation $\sim_{\Gamma\circ\Delta}$,  the constants $c_{\Gamma\circ\Delta}$, the functions $f_{\Gamma\circ\Delta}$ and the predicates $R_{\Gamma\circ\Delta}$. On the other hand, the structure $\Gamma\circ\Delta (\MC, \bar{\bar p},\bar q)$ does not depend on the choice of $\mu_\Gamma$. So the algebraic structure $\Gamma\circ\Delta (\MC,  \bar{\bar p},\bar q)$ is uniquely defined if a coordinate map $\mu_\Delta$ is fixed. And if $\Gamma$ is an absolute code then the algebraic structure $\Gamma\circ\Delta (\MC, \bar{\bar p},\bar q)$ does not depend on $\mu_\Gamma$ and $\mu_\Delta$. 
\end{remark}

Here a natural question arises: will the interpretations $\Gamma\circ\Delta (\MC, \bar{\bar p}_1,\bar q)$ and $\Gamma\circ\Delta (\MC, \bar{\bar p}_2,\bar q)$ be homotopic, if $\bar{\bar p}_1\in \mu_{\Delta 1}^{-1}(\bar p)$, $\bar{\bar p}_2\in \mu_{\Delta 2}^{-1}$, and $\mu_{\Delta 1}, \mu_{\Delta 2}$ are coordinate maps of the interpretation $\MB\stackrel{\Delta,\bar q}{\rightsquigarrow}\MC$? The negative answer is given by the example below.

\begin{example}\label{ex:H}
Suppose that $A=B=\Q$, $L(\MA)=\{<,0\}$ and $L(\MB)=\{<\}$. So that $\MA$ is the rational numbers with linear order and constant $0$, while $\MB$ is the rational numbers with linear order only. We define codes of interpretations $\MA\stackrel{\Gamma}{\rightsquigarrow}\MB$ and 
$\MB\stackrel{\Delta}{\rightsquigarrow}\MA$ by $\Gamma=\{U_\Gamma(x,y)=(x=x), E_\Gamma(x_1,x_2,y)=(x_1=x_2),<_\Gamma(x_1,x_2,y)=(x_1<x_2), 0_\Gamma(x,y)=(x=y)\}$ and  
$\Delta=\{U_\Delta(x)=(x=x), E_\Delta(x_1,x_2)=(x_1=x_2),<_\Delta(x_1,x_2)=(x_1<x_2)\}$. Here, the interpretation $\Gamma$ has $0\in \MB$ as a parameter and $\Delta$ is absolute. We fix two coordinate maps $\mu_{\Delta 1}, \mu_{\Delta 2}$ of the interpretation $\MB\stackrel{\Delta}{\rightsquigarrow}\MA$, where $\mu_{\Delta 1}\colon x \to x$ and $\mu_{\Delta 2}\colon x \to x-1$. Then the interpretations $\Gamma\circ\Delta (\MA, 0)$ and $\Gamma\circ\Delta (\MA, 1)$ are not homotopic, since there is no $L(\MA)$\=/definable order preserving bijection $f\colon \Q\to \Q$ with $f(0)=1$. Indeed, assume the opposite: there exists such bijection $f$ and an $L(\MA)$\=/formula $\theta(x_1,x_2,c_1,\ldots,c_s)$ that defines the graph of $f$, where $c_1,\ldots,c_s\in\Q$. Then $f(0)> 0$, $f(f(0))>f(0)$ and so on, that is, there exists an infinite set $X\subseteq \Q$ such that $f(x)> x$ for all $x\in X$. Since order on $\Q$ is linear, dense and without endpoints, then formula $\theta$ is an finite disjunction of finite conjunctions of the types $x_i=x_j$, $x_i=c_j$, $x_i<x_j$, $x_i<c_j$ or $x_i>c_j$ (see~\cite[Theorem~3.1.3]{Marker}). Take any element $x\in X$ such that $x>c_i$ for all $i=1,\ldots,s$. One has $\MA\models \theta(x,f(x),c_1,\ldots,c_s)$, but at the same time, $\MA\models \theta(x,f(x)+1,c_1,\ldots,c_s)$, therefore, $\theta$ does not define the graph of the function $f$.
\end{example}

\begin{remark}\label{remark2}
Let $(\Gamma,\phi), (\Delta,\psi)$ be regular interpretations as in item~\ref{item:reg} of Lemma~\ref{le:int-transitivity}. As is was mentioned, for any tuple $(\bar {\bar p},\bar q)\in\phi_\Delta\wedge\psi(\MC)$ and any coordinate map $\mu_\Delta$ of the interpretation $\MB\simeq\Delta(\MC,\bar q)$ the image $\mu_\Delta({\bar{\bar p}})$ is in $\phi(\MB)$. And conversely, for any tuples of parameters $\bar p\in \phi(\MB)$ and $\bar q\in \psi(\MC)$ and any coordinate map $\mu_\Delta$ of the interpretation $\MB\simeq\Delta(\MC,\bar q)$, and any preimages $\bar{\bar p} \in \mu_\Delta^{-1}(\bar p)$ the tuple $(\bar {\bar p},\bar q)$ is in $\phi_\Delta\wedge\psi(\MC)$.  
\end{remark}

The following lemma says how the serial translation of formulas works if one has a series of interpretations $\MA\stackrel{\Gamma}{\rightsquigarrow}\MB$ and 
$\MB\stackrel{\Delta}{\rightsquigarrow}\MC$.

\begin{lemma}\label{ABC}
Let $(\Gamma,\bar p, \mu_\Gamma)$ be an interpretation of $\MA=\langle A;L(\MA)\rangle$ in $\MB$, and $(\Delta,\bar q, \mu_\Delta)$ be an interpretation of $\MB$ in $\MC$. Then for any formula $\varphi(x_1,\ldots,x_m)$ in $L(\MA)\cup A$ one has
$$
\varphi_{\Gamma\circ\Delta,\,(\bar{\bar p},\bar q),\,\mu_{\Gamma\circ\Delta}}(\MC)= (\varphi_{\Gamma,\bar p, \mu_\Gamma})_{\Delta,\bar q, \mu_\Delta}(\MC),
$$
where $\mu_{\Gamma\circ\Delta}=\mu_\Gamma\circ\mu_\Delta$, $\bar{\bar p}\in \mu_\Delta^{-1}(\bar p)$.
\end{lemma}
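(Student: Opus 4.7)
The plan is to derive both sides of the claimed set-equality from three separate applications of Lemma~\ref{cor2}, glued together by the chain rule for preimages. The key combinatorial input is that Lemma~\ref{le:int-transitivity} gives $\mu_{\Gamma\circ\Delta}=\mu_\Gamma\circ\mu_\Delta$ as a valid coordinate map of the composite interpretation, so on subsets one automatically has $\mu_{\Gamma\circ\Delta}^{-1}=\mu_\Delta^{-1}\circ\mu_\Gamma^{-1}$.

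First I will apply Lemma~\ref{cor2} to the composite coordinatization $(\Gamma\circ\Delta,(\bar{\bar p},\bar q),\mu_{\Gamma\circ\Delta})$ of $\MA\rightsquigarrow\MC$, with the formula $\varphi\in L(\MA)\cup A$, to obtain
$$
\varphi_{\Gamma\circ\Delta,(\bar{\bar p},\bar q),\mu_{\Gamma\circ\Delta}}(\MC)=\mu_{\Gamma\circ\Delta}^{-1}(\varphi(\MA)).
$$
Next I will apply Lemma~\ref{cor2} to $(\Gamma,\bar p,\mu_\Gamma)$ and $\varphi$, yielding $\mu_\Gamma^{-1}(\varphi(\MA))=\varphi_{\Gamma,\bar p,\mu_\Gamma}(\MB_B)$, so that $\mu_\Gamma^{-1}(\varphi(\MA))$ is exhibited as a subset of $B^{nm}$, where $n=\dim\Gamma$, definable in $\MB$ by a formula in $L(\MB)\cup B$. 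Finally I will apply Lemma~\ref{cor2} once more, this time to the interpretation $(\Delta,\bar q,\mu_\Delta)$ of $\MB$ in $\MC$, taking as input the $L(\MB)\cup B$-formula $\varphi_{\Gamma,\bar p,\mu_\Gamma}$ itself (with its $nm$ free variables regrouped into $m$ tuples of length $n$), to get
$$
\mu_\Delta^{-1}\bigl(\varphi_{\Gamma,\bar p,\mu_\Gamma}(\MB_B)\bigr)=(\varphi_{\Gamma,\bar p,\mu_\Gamma})_{\Delta,\bar q,\mu_\Delta}(\MC).
$$
Chaining these three identities via $\mu_{\Gamma\circ\Delta}^{-1}(\varphi(\MA))=\mu_\Delta^{-1}(\mu_\Gamma^{-1}(\varphi(\MA)))$ will then yield the lemma.

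The main point to watch out for is the treatment of constants in the third step. The formula $\varphi_{\Gamma,\bar p,\mu_\Gamma}$ carries as constants the chosen components of $\bar\mu_\Gamma^{-1}(a_i)\in B^n$ and of $\bar p$; applying the $(\Delta,\bar q,\mu_\Delta)$-translation replaces each such $b\in B$ by the fixed $\bar\mu_\Delta^{-1}(b)\in C^{\dim\Delta}$. Consequently the constants appearing in $(\varphi_{\Gamma,\bar p,\mu_\Gamma})_{\Delta,\bar q,\mu_\Delta}$ are preimages of the $a_i$ under $\mu_\Gamma\circ\mu_\Delta$ and preimages of $\bar p$ under $\mu_\Delta$; these are legitimate but possibly \emph{different} choices of $\bar\mu_{\Gamma\circ\Delta}^{-1}(a_i)$ and of $\bar{\bar p}$ than those fixed in $\varphi_{\Gamma\circ\Delta,(\bar{\bar p},\bar q),\mu_{\Gamma\circ\Delta}}$. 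The final clause of Lemma~\ref{cor2}, asserting that $\psi_{\Gamma,\bar p,\mu_\Gamma}(\MB_B)$ is independent of the choice of preimages, ensures that this discrepancy is invisible at the level of the defined sets, which is all that the lemma claims.
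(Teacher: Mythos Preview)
Your proof is correct and follows essentially the same route as the paper's: both arguments amount to three applications of the Reduction Theorem, one for each of the interpretations $(\Gamma\circ\Delta,(\bar{\bar p},\bar q),\mu_{\Gamma\circ\Delta})$, $(\Gamma,\bar p,\mu_\Gamma)$, and $(\Delta,\bar q,\mu_\Delta)$. The only difference is presentational: the paper does a direct element-chase with the Reduction Theorem at each step, whereas you invoke its packaged form Lemma~\ref{cor2} and work at the level of sets via the chain rule $\mu_{\Gamma\circ\Delta}^{-1}=\mu_\Delta^{-1}\circ\mu_\Gamma^{-1}$; your handling of the constants via the final clause of Lemma~\ref{cor2} is exactly what is needed and mirrors the paper's implicit use of the same independence.
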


\begin{proof}
We use the Reduction Theorem~\ref{le:interpr_corol} all the time. Take any elements $\bar{\bar c}_1,\ldots,\bar{\bar c}_m$ in $\MC$. If $\MC\models \varphi_{\Gamma\circ\Delta,(\bar{\bar p},\bar q),\mu_{\Gamma\circ\Delta}}(\bar{\bar c}_1,\ldots,\bar{\bar c}_m)$, then $\bar{\bar c}_1,\ldots,\bar{\bar c}_m\in U_{\Gamma\circ\Delta}(\MC,\bar{\bar p},\bar q)$. Let $\bar b_i=\mu_\Delta(\bar{\bar c}_i)$ and $a_i=\mu_\Gamma(\bar b_i)$, i.\,e., $a_i=\mu_{\Gamma\circ\Delta}(\bar{\bar c}_i)$, $i=1,\ldots,m$. 
Since $\MA\stackrel{\Gamma\circ\Delta}{\rightsquigarrow}\MC$ one has $\MA\models\varphi(a_1,\ldots,a_m)$. Therefore, $\MA\stackrel{\Gamma}{\rightsquigarrow}\MB$ gives $\MB\models\varphi_{\Gamma,\bar p, \mu_\Gamma}(\bar b_1,\ldots,\bar b_m)$, and $\MB\stackrel{\Delta}{\rightsquigarrow}\MC$ gives $\MC\models(\varphi_{\Gamma,\bar p, \mu_\Gamma})_{\Delta,\bar q, \mu_\Delta}(\bar{\bar c}_1,\ldots,\bar{\bar c}_m)$.

Conversely, if $\MC\models(\varphi_{\Gamma,\bar p, \mu_\Gamma})_{\Delta,\bar q, \mu_\Delta}(\bar{\bar c}_1,\ldots,\bar{\bar c}_m)$, then $\bar{\bar c}_1,\ldots,\bar{\bar c}_m$ are in $U_{\Delta}(\MC,\bar q)$ and $\MB\models\varphi_{\Gamma,\bar p, \mu_\Gamma}(\bar b_1,\ldots,\bar b_m)$ for $\bar b_i=\mu_\Delta(\bar{\bar c}_i)$,  $i=1,\ldots,m$. Therefore $\bar b_1,\ldots,\bar b_m\in U_\Gamma(\MB,\bar p)$ and $\MA\models\varphi(a_1,\ldots,a_m)$ for $a_i=\mu_\Gamma(\bar b_i)$, $i=1,\ldots,m$. And since $\bar{\bar c}_i\in \mu^{-1}_{\Gamma\circ\Delta}(a_i)$, $i=1,\ldots,m$, one has $\MC\models \varphi_{\Gamma\circ\Delta,(\bar{\bar p},\bar q),\mu_{\Gamma\circ\Delta}}(\bar{\bar c}_1,\ldots,\bar{\bar c}_m)$.
\end{proof}

While proving Lemma~\ref{le:int-transitivity}, we use Lemma~\ref{psi+} just in one direction. If we read its result in another direction, we get the following result.

\begin{lemma}\label{le:Aut3}
Let $\Gamma\colon L(\MA)\to L(\MB)$ and $\Delta\colon L(\MB)\to L(\MC)$ be codes as before, $\MC$ be an algebraic structure and $\bar q,\bar{\bar p}$ be tuples from $\MC$, $|\bar q|=\dim_{\param}\Delta$, $|\bar{\bar p}|=\dim\Delta\cdot\dim_{\param}\Gamma$, such that $\MC\models\AC_\Delta(\bar q)$ and $\MC\models\AC_{\Gamma\circ\Delta}(\bar{\bar p},\bar q)$, i.\,e., the $L(\MB)$\=/structure $\Delta(\MC,\bar q)$ and the $L(\MA)$\=/structure $\Gamma\circ\Delta(\MC,\bar{\bar p},\bar q)$ are well-defined. Then $\Delta(\MC,\bar q)\models\AC_\Gamma(\bar{\bar p}/{\sim_\Delta})$, so that the $L(\MA)$\=/structure $\Gamma(\Delta(\MC,\bar q),\bar{\bar p}/{\sim_\Delta})$ is well-defined and isomorphic to $\Gamma\circ\Delta(\MC,\bar{\bar p},\bar q)$ by means of the isomorphism $\bar\mu_{\Gamma\circ\Delta}\colon {\Gamma\circ\Delta}(\MC,\bar{\bar p},\bar q)\to \Gamma(\Delta(\MC,\bar q),\bar{\bar p}/{\sim_\Delta})$, such that
$$
\bar\mu_{\Gamma\circ\Delta}\colon \bar{\bar c}/{\sim_{\Gamma\circ\Delta}}\;\to\;(\bar{\bar c}/{\sim_\Delta})/{\sim_\Gamma}, \quad \bar{\bar c}\in U_{\Gamma\circ\Delta}(\MC,\bar{\bar p},\bar q).
$$
\end{lemma}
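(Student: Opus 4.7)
The overall strategy is to use the Reduction Theorem~\ref{le:interpr_corol} (in the form of Corollary~\ref{le:interpr_corol_ns} and Lemma~\ref{psi+}) applied to the code $\Delta$ in order to transfer every claim about the alleged interpretation $\Gamma(\Delta(\MC,\bar q),\bar{\bar p}/{\sim_\Delta})$ into an equivalent statement in $\MC$, which can then be read off from the hypotheses $\MC\models\AC_\Delta(\bar q)$ and $\MC\models\AC_{\Gamma\circ\Delta}(\bar{\bar p},\bar q)$.

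First I would verify that $\Delta(\MC,\bar q)\models\AC_\Gamma(\bar{\bar p}/{\sim_\Delta})$. Fix any formula $AC(\bar y)\in\AC_\Gamma$; as noted in the remark following Lemma~\ref{psi+}, each such $AC$ is equivalent to a conjunction of formulas of the form~\eqref{AC}, so Lemma~\ref{psi+} applied to $\Delta$ gives $AC\Longleftrightarrow_\Delta AC^+$, and hence, since $\Delta(\MC,\bar q)$ is well-defined,
$$\Delta(\MC,\bar q)\models AC(\bar{\bar p}/{\sim_\Delta})\iff \MC\models AC^+(\bar{\bar p},\bar q).$$
Unpacking $AC^+$ and using $U_{\Gamma\circ\Delta}=(U_\Gamma)_\Delta$, $E_{\Gamma\circ\Delta}=(E_\Gamma)_\Delta$, $Q_{\Gamma\circ\Delta}=(Q_\Gamma)_\Delta$ from Definition~\ref{def:code-composition} shows that $AC^+$ is syntactically the corresponding member of $\AC_{\Gamma\circ\Delta}$, possibly conjoined with a $U_\Delta^\wedge(\bar{\bar y},\bar z)$ guard coming from the tail of the $\Delta$-translation. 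That extra guard is automatically satisfied, since any witness of $AC_0\in\AC_{\Gamma\circ\Delta}$ forces $\bar{\bar p}\in U_\Delta^\wedge(\MC,\bar q)$ because $(U_\Gamma)_\Delta$ has $U_\Delta^\wedge$ built in. Thus $\MC\models\AC_{\Gamma\circ\Delta}(\bar{\bar p},\bar q)$ yields $\MC\models AC^+(\bar{\bar p},\bar q)$ for every $AC\in\AC_\Gamma$, and Proposition~\ref{le:adm-cond} then guarantees that $\Gamma(\Delta(\MC,\bar q),\bar{\bar p}/{\sim_\Delta})$ is a well-defined $L(\MA)$-structure.

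Next I would exhibit the isomorphism by checking that the map $\bar{\bar c}/{\sim_{\Gamma\circ\Delta}}\mapsto(\bar{\bar c}/{\sim_\Delta})/{\sim_\Gamma}$ is well-defined, bijective and preserves every $Q\in L(\MA)$. The key step is to apply Corollary~\ref{le:interpr_corol_ns} to the code $\Delta$ separately for each ingredient $U_\Gamma$, $E_\Gamma$, $Q_\Gamma$ of $\Gamma$. For instance, for any tuples $\bar{\bar c}_1,\bar{\bar c}_2\in U_{\Gamma\circ\Delta}(\MC,\bar{\bar p},\bar q)$ one has
$$\MC\models E_{\Gamma\circ\Delta}(\bar{\bar c}_1,\bar{\bar c}_2,\bar{\bar p},\bar q)\iff \Delta(\MC,\bar q)\models E_\Gamma(\bar{\bar c}_1/{\sim_\Delta},\bar{\bar c}_2/{\sim_\Delta},\bar{\bar p}/{\sim_\Delta}),$$
which gives well-definedness and injectivity. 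Surjectivity follows by pulling back any tuple $(\bar b_1/{\sim_\Delta},\ldots,\bar b_n/{\sim_\Delta})\in U_\Gamma(\Delta(\MC,\bar q),\bar{\bar p}/{\sim_\Delta})$, since the same Reduction-Theorem equivalence applied to $U_\Gamma$ shows $(\bar b_1,\ldots,\bar b_n)\in U_{\Gamma\circ\Delta}(\MC,\bar{\bar p},\bar q)$. Preservation of each constant, function or relation symbol $Q\in L(\MA)$ is handled identically, with $Q_\Gamma$ in place of $E_\Gamma$.

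The main obstacle is the syntactic bookkeeping in the first step: one has to compare the $\Delta$-translations of the formulas in $\AC_\Gamma$ with the formulas of $\AC_{\Gamma\circ\Delta}$ and keep careful track of the extra $U_\Delta^\wedge$ guards produced by the $\Delta$-translation, verifying that these guards introduce no genuinely new admissibility obligation beyond what is already contained in $\AC_{\Gamma\circ\Delta}$. Once that bookkeeping is done, everything else is a direct instance of the Reduction Theorem.
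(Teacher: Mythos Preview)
Your proposal is correct and shares the paper's first step almost verbatim: both apply Lemma~\ref{psi+} (in the ``other direction'' compared to its use in Lemma~\ref{le:int-transitivity}) to the interpretation $\Delta(\MC,\bar q)\stackrel{\Delta}{\rightsquigarrow}\MC$ in order to pull back $\MC\models\AC_{\Gamma\circ\Delta}(\bar{\bar p},\bar q)$ to $\Delta(\MC,\bar q)\models\AC_\Gamma(\bar{\bar p}/{\sim_\Delta})$, and your remark about the extra $U_\Delta^\wedge$ guard is exactly the content of the paper's observation that $\bar{\bar p}\in U_\Delta(\MC,\bar q)$.

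The only genuine difference is in the second step. You verify the isomorphism $\bar{\bar c}/{\sim_{\Gamma\circ\Delta}}\mapsto(\bar{\bar c}/{\sim_\Delta})/{\sim_\Gamma}$ by hand, applying Corollary~\ref{le:interpr_corol_ns} separately to $U_\Gamma$, $E_\Gamma$, and each $Q_\Gamma$. The paper instead observes that once $\Gamma(\Delta(\MC,\bar q),\bar{\bar p}/{\sim_\Delta})$ is well-defined, one has the chain of interpretations $\Gamma(\Delta(\MC,\bar q),\bar{\bar p}/{\sim_\Delta})\stackrel{\Gamma}{\rightsquigarrow}\Delta(\MC,\bar q)\stackrel{\Delta}{\rightsquigarrow}\MC$ with natural coordinate maps, so Lemma~\ref{le:int-transitivity} directly produces the interpretation $\Gamma(\Delta(\MC,\bar q),\bar{\bar p}/{\sim_\Delta})\stackrel{\Gamma\circ\Delta}{\rightsquigarrow}\MC$ with coordinate map $\mu_\Gamma\circ\mu_\Delta$; the claimed $\bar\mu_{\Gamma\circ\Delta}$ is then just the isomorphism that this coordinate map induces. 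The paper's route is shorter because the relevant bookkeeping (\eqref{eq:U}--\eqref{eq:Q}) was already done inside Lemma~\ref{le:int-transitivity}; your route reproves that bookkeeping, which is fine but redundant.
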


\begin{proof}
The result of Lemma~\ref{psi+}, applied to the interpretation $\Delta(\MC,\bar q)\stackrel{\Delta}{\rightsquigarrow}\MC$, gives that $\MC\models\AC_{\Gamma\circ\Delta}(\bar{\bar p},\bar q)$ entails that $\bar{\bar p}$ is in $U_\Delta(\MC,\bar q)$ and $\Delta(\MC,\bar q)\models\AC_\Gamma(\bar{\bar p}/{\sim_\Delta})$. Therefore, the $L(\MA)$\=/structure $\Gamma(\Delta(\MC,\bar q),\bar{\bar p}/{\sim_\Delta})$ is well-defined and one has the interpretation $\Gamma(\Delta(\MC,\bar q),\bar{\bar p}/{\sim_\Delta})\stackrel{\Gamma}{\rightsquigarrow}\Delta(\MC,\bar q)$. And by Lemma~\ref{le:int-transitivity}, there exists an interpretation $\Gamma(\Delta(\MC,\bar q),\bar{\bar p}/{\sim_\Delta})\stackrel{\Gamma\circ\Delta}{\rightsquigarrow}\MC$ and $L(\MA)$\=/structures $\Gamma(\Delta(\MC,\bar q),\bar{\bar p}/{\sim_\Delta})$ and ${\Gamma\circ\Delta}(\MC,\bar{\bar p},\bar q)$ are isomorphic. Let $\mu_{\Gamma\circ\Delta}$ be the coordinate map of this interpretation, which equals to the composition $\mu_\Gamma\circ\mu_\Delta$ of natural coordinate maps $\mu_\Gamma\colon U_{\Gamma}(\Delta(\MC,\bar q),\bar{\bar p}/{\sim_\Delta})\to U_{\Gamma}(\Delta(\MC,\bar q),\bar{\bar p}/{\sim_\Delta})/{\sim_\Gamma}$ and $\mu_\Delta\colon U_\Delta(\MC,\bar q)\to U_\Delta(\MC,\bar q)/{\sim_\Delta}$. So, $\mu_{\Gamma\circ\Delta}$ sends a tuple $\bar{\bar c}\in U_{\Gamma\circ\Delta}(\MC,\bar{\bar p},\bar q)$ to $(\bar{\bar c}/{\sim_\Delta})/{\sim_\Gamma}$, therefore, the corresponding isomorphism $\bar\mu_{\Gamma\circ\Delta}$ sends $\bar{\bar c}/{\sim_{\Gamma\circ\Delta}}$ to $(\bar{\bar c}/{\sim_\Delta})/{\sim_\Gamma}$, as required.
\end{proof}

\subsection{Invertible interpretations with parameters}\label{subsec:invertible_int}

The following definition uses the notion of composition of interpretations and notations from Lemma~\ref{le:int-transitivity}.

\begin{definition}
We say that an interpretation $(\Gamma,\bar p,\mu_\Gamma)$ of $\MA$ in $\MB$ has a {\em right inverse} if there exists an interpretation $(\Delta,\bar q,\mu_\Delta)$ of $\MB$ in $\MA$ such that the following conditions hold:
\begin{enumerate}[label=\arabic*)]
\item  the algebraic structure $\Gamma\circ\Delta(\MA,\bar{\bar p},\bar q)$ is uniquely defined (see Remark~\ref{rem:composition}) and isomorphic to $\MA$; 
\item the interpretation $(\Gamma\circ\Delta,\bar{\bar p},\bar q)$ is homotopic to the identical interpretation $\Id_{L(\MA)}$, i.\,e., there exists a definable coordinate map $\mu_\MA\colon U_{\Gamma \circ \Delta}(\MA,\bar{\bar p},\bar q) \to A$. 
\end{enumerate}
In this case we refer to $(\Delta,\bar q,\mu_\Delta)$ as a {\em right inverse for $(\Gamma,\bar p,\mu_\Gamma)$} and to $(\Gamma,\bar p,\mu_\Gamma)$ as a {\em left inverse for $(\Delta,\bar q,\mu_\Delta)$}.
\end{definition}

\begin{remark}
Note that if a right inverse $(\Delta, \bar{q}, \mu_\Delta)$ of $(\Gamma, \bar{p}, \mu_\Gamma)$ exists, it is not necessarily unique. Furthermore, if an interpretation $(\Delta^\ast,\bar q^\ast,\mu_{\Delta^\ast})$ of $\MB$ in $\MA$ is strongly homotopic to $(\Delta,\bar q,\mu_\Delta)$, then $(\Delta^\ast,\bar q^\ast,\mu_{\Delta^\ast})$ is also a right inverse of $(\Gamma,\bar p,\mu_\Gamma)$.  We will prove this in one of the subsequent papers of this series. 
\end{remark}

\begin{example}\label{ex:O3}
For a finitely generated ring $\Aint$ of algebraic integers, interpretations $\Gamma\colon\Z\rightsquigarrow\Aint$ and $\Delta\colon\Aint\rightsquigarrow\Z$ from Examples~\ref{ex:O1} and~\ref{ex:O2} are both right and left inverses to each other. Here $\Gamma$ and $\Delta$ are absolute, but the connector of the homotopy between $\Delta\circ\Gamma$ and $\Id_{L_\ring}$ has parameters.
\end{example}

\begin{definition} \label{de:invert}
An interpretation $(\Gamma,\bar p,\mu_\Gamma)$ of $\MA$ in $\MB$ is called {\em right-invertible} (correspondingly, {\em left-invertible}; {\em two-sided invertible}), if it has a right inverse $(\Delta,\bar q,\mu_\Delta)$ (correspondingly, a left inverse; a right inverse $(\Delta,\bar q,\mu_\Delta)$ and a left inverse $(\Delta^\ast,\bar q^\ast,\mu_{\Delta^\ast})$).
If $\Gamma, \Delta$ are absolute and $\mu_\MA\colon U_{\Gamma \circ \Delta}(\MA,\bar{\bar p},\bar q) \to A$  is $0$\=/definable, then we term $\Gamma$ {\em absolutely right-invertible}. We define similarly {\em absolutely left-invertible} and{\em absolutely two-sided invertible} interpretations. 
\end{definition}

Usually, by {\em invertible interpretation} we mean a right-invertible one.

\begin{remark}\label{rem:0}
Suppose that $(\Gamma,\bar p,\mu_\Gamma)$ is a right-invertible interpretation of $\MA$ into $\MB$ with a right inverse $(\Delta,\bar q,\mu_\Delta)$ and definable coordinate map $\mu_\MA\colon U_{\Gamma \circ \Delta}(\MA,\bar{\bar p},\bar q) \to A$. Recall that the location of $\Gamma\circ\Delta(\MA,\bar{\bar p},\bar q)$ in $\MA$ depends on $\mu_\Delta$ and does not depend on $\mu_\Gamma$ (see Remark~\ref{rem:composition}). If we change $\mu_\Delta$, the coordinate map $\mu_\MA$ may no longer be definable in $\MA$. At the same time, we may always replace $\mu_\Gamma$ with $\mu_{\Gamma 0}=\bar\mu_\MA\circ \bar \mu^{-1}_{\Gamma\circ \Delta}\circ \mu_\Gamma$, where $\mu_{\Gamma\circ\Delta}=\mu_\Gamma\circ\mu_\Delta$, and get that $\mu_{\Gamma 0}\circ\mu_\Delta= \mu_\MA$$\mu_\Gamma\circ\mu_\Delta$ is definable in $\MA$. 
\end{remark}

\begin{definition} 
An interpretation $(\Gamma,\bar p,\mu_\Gamma)$ of $\MA$ in $\MB$ is called {\em strongly right-invertible}, if it has a {\em strong right inverse}, i.\,e., a right inverse $(\Delta,\bar q,\mu_\Delta)$, such that the composition $\mu_\Gamma\circ\mu_\Delta$ is definable in $\MA$.
\end{definition}

\begin{remark}\label{rigid3}
So, if $(\Gamma, \bar p, \mu_\Gamma)$ is a right-invertible interpretation, then we can always redefine the coordinate map $\mu_\Gamma$ by a new one $\mu_{\Gamma0}$ and obtain a strongly right-invertible interpretation $(\Gamma, \bar p, \mu_{\Gamma 0})$. And if $\MA$ is rigid, then an interpretation $(\Gamma, \bar p, \mu_\Gamma)$ of $\MA$ in $\MB$ is right-invertible if and only if it is strongly right-invertible.
\end{remark}

Further, one has the following continuation of the results of Lemma~\ref{cor2} and Corollary~\ref{cor2.0}.

\begin{lemma}\label{lem:inv_def}
Suppose there exists a strongly right-invertible interpretation  $(\Gamma,\bar p,\mu_\Gamma)$ of $\MA=\langle A;L(\MA)\rangle$ into $\MB=\langle B;L(\MB)\rangle$. Then a subset $X\subseteq A^m$ is definable in $\MA$ if and only if the set $\mu_\Gamma^{-1}(X)\subseteq B^{m\cdot n}$ is definable in $\MB$, $n=\dim\Gamma$.
\end{lemma}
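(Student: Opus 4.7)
The ``only if'' direction is immediate from Lemma~\ref{cor2}: if $X\subseteq A^m$ is defined in $\MA$ by an $L(\MA)\cup A$-formula $\psi(x_1,\ldots,x_m)$, then $\mu_\Gamma^{-1}(X)\subseteq B^{mn}$ is defined in $\MB$ by the translation $\psi_{\Gamma,\bar p,\mu_\Gamma}$.

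For the converse, I will use the strong right inverse guaranteed by the hypothesis. Let $(\Delta,\bar q,\mu_\Delta)\colon\MB\rightsquigarrow\MA$ be a strong right inverse of $(\Gamma,\bar p,\mu_\Gamma)$, and set $\mu := \mu_\Gamma\circ\mu_\Delta$. By Lemma~\ref{le:int-transitivity}, $\mu$ is the coordinate map of the composed interpretation $\MA\simeq(\Gamma\circ\Delta)(\MA,\bar{\bar p},\bar q)$; in particular, it is a surjection from $U_{\Gamma\circ\Delta}(\MA,\bar{\bar p},\bar q)$ onto $A$. Strong right-invertibility says exactly that the graph of $\mu$ is definable in $\MA$, so let $\eta(\bar{\bar x},a)$ be an $L(\MA)\cup A$-formula defining it, with $|\bar{\bar x}|=n\cdot\dim\Delta$. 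Now assume $\mu_\Gamma^{-1}(X)\subseteq B^{mn}$ is defined in $\MB$ by an $L(\MB)\cup B$-formula $\chi(\bar x_1,\ldots,\bar x_m)$. Applying Lemma~\ref{cor2} to the interpretation $(\Delta,\bar q,\mu_\Delta)$ in the opposite direction, the set
\[
\mu_\Delta^{-1}(\mu_\Gamma^{-1}(X)) \;=\; \mu^{-1}(X) \;\subseteq\; A^{m\cdot n\cdot \dim\Delta}
\]
is defined in $\MA$ by the translation $\chi_{\Delta,\bar q,\mu_\Delta}(\bar{\bar x}_1,\ldots,\bar{\bar x}_m)$, a formula of $L(\MA)\cup A$.

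It then remains to descend from $\mu^{-1}(X)$ back to $X$ itself, which is exactly what the definable graph $\eta$ and surjectivity of $\mu$ allow. Explicitly,
\[
(a_1,\ldots,a_m)\in X \;\iff\; \MA\models \exists\bar{\bar x}_1\cdots\exists\bar{\bar x}_m\,\Bigl(\bigwedge_{i=1}^m \eta(\bar{\bar x}_i,a_i)\;\wedge\; \chi_{\Delta,\bar q,\mu_\Delta}(\bar{\bar x}_1,\ldots,\bar{\bar x}_m)\Bigr),
\]
which is an $L(\MA)\cup A$-definition of $X$ in $\MA$. I do not anticipate any real obstacle: the only subtle point is that the two coordinate maps $\mu_\Gamma$ and $\mu_\Delta$ must be precisely the ones whose composition has a definable graph, but this is built into the definition of ``strongly right-invertible,'' and it is exactly this hypothesis that substitutes for the absoluteness condition used in Corollary~\ref{cor2.0}.
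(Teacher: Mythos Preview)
Your proof is correct and follows essentially the same route as the paper: both use Lemma~\ref{cor2} for the forward direction, then for the converse pull $\mu_\Gamma^{-1}(X)$ back through $\mu_\Delta$ via Lemma~\ref{cor2} again and use the definable graph of $\mu_\Gamma\circ\mu_\Delta$ (your $\eta$, the paper's $\theta$) together with its surjectivity to recover $X$. Your displayed formula is exactly the paper's formula $\pi$, modulo notation.
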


\begin{proof}
Let $(\Delta,\bar q,\mu_\Delta)$ be a right inverse of $(\Gamma,\bar p,\mu_\Gamma)$, such that the composition $\mu_\Gamma\circ\mu_\Delta$ is definable in $\MA$ by a formula $\theta(\bar{\bar x},x,\bar{\bar p},\bar q,\bar r)$, $\bar r\in \MA$. By Lemma~\ref{cor2}, if $X$ is definable in $\MA$, then $Y=\mu_\Gamma^{-1}(X)$ is definable in $\MB$. And if $Y$ is definable in $\MB$ by a formula $\chi(\bar x_1,\ldots,\bar x_m,\bar b)$ in $L(\MB)\cup B$, $\bar b\in \MB$, then the set $Z=\mu_\Delta^{-1}(Y)$ is definable in $\MA$ by formula $\chi_\Delta(\bar{\bar x}_1,\ldots,\bar{\bar x}_m,\bar{\bar b},\bar q)$, $\bar{\bar b}\in \mu_\Delta^{-1}(\bar b)$. Therefore, $X$ is also definable by the formula 
$$
\pi(x_1,\ldots,x_m,\bar{\bar b},\bar{\bar p},\bar q,\bar r)=\exists \,\bar{\bar x}_1\:\ldots \exists \,\bar{\bar x}_m\:( \chi_\Delta(\bar{\bar x}_1,\ldots,\bar{\bar x}_m,\bar{\bar b},\bar q)\wedge \bigwedge\limits_{i=1}^m \theta(\bar{\bar x}_i,x_i,\bar{\bar p},\bar q,\bar r)\,).
$$
\end{proof}

\begin{corollary}
Suppose there exists a strongly absolutely right-invertible interpretation  $(\Gamma,\emptyset,\mu_\Gamma)$ of $\MA=\langle A;L(\MA)\rangle$ into $\MB=\langle B;L(\MB)\rangle$. Then a subset $X\subseteq A^m$ is $0$\=/definable in $\MA$ if and only if the set $\mu_\Gamma^{-1}(X)\subseteq B^{m\cdot n}$ is $0$\=/definable in $\MB$, $n=\dim\Gamma$.
\end{corollary}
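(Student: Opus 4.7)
The plan is to mirror the proof of Lemma~\ref{lem:inv_def} verbatim, replacing ``definable'' by ``$0$\=/definable'' at every step, and to use the absolute translation machinery (Corollary~\ref{cor2.0} in place of Lemma~\ref{cor2}) to ensure that no parameters sneak in. Let $(\Delta,\emptyset,\mu_\Delta)$ be a strong right inverse of $(\Gamma,\emptyset,\mu_\Gamma)$, so that both codes $\Gamma,\Delta$ are absolute and the composition $\mu_\Gamma\circ\mu_\Delta\colon U_{\Gamma\circ\Delta}(\MA)\to A$ is $0$\=/definable in $\MA$ by some $L(\MA)$\=/formula $\theta(\bar{\bar x},x)$ without parameters.

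For the forward direction, suppose $X\subseteq A^m$ is $0$\=/definable in $\MA$ by an $L(\MA)$\=/formula $\psi(x_1,\ldots,x_m)$. Since $\Gamma$ is absolute, its $\Gamma$\=/translation $\psi_\Gamma(\bar x_1,\ldots,\bar x_m)$ is again an $L(\MB)$\=/formula without parameter variables, and by Corollary~\ref{cor2.0} the set $\mu_\Gamma^{-1}(X)$ is defined in $\MB$ by $\psi_\Gamma$, hence $0$\=/definable.

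For the reverse direction, suppose $Y=\mu_\Gamma^{-1}(X)$ is $0$\=/definable in $\MB$ by some $L(\MB)$\=/formula $\chi(\bar x_1,\ldots,\bar x_m)$. Applying Corollary~\ref{cor2.0} to the absolute interpretation $(\Delta,\emptyset,\mu_\Delta)\colon\MB\rightsquigarrow\MA$, the set $\mu_\Delta^{-1}(Y)$ is $0$\=/definable in $\MA$ by the $\Delta$\=/translation $\chi_\Delta(\bar{\bar x}_1,\ldots,\bar{\bar x}_m)$, and again this formula contains no parameters because $\Delta$ is absolute. Now the equality $X=\mu_\Gamma\circ\mu_\Delta(\mu_\Delta^{-1}(\mu_\Gamma^{-1}(X)))$ lets us express $X$ as
\[
\pi(x_1,\ldots,x_m)\;=\;\exists\,\bar{\bar x}_1\ldots\exists\,\bar{\bar x}_m\;\Bigl(\chi_\Delta(\bar{\bar x}_1,\ldots,\bar{\bar x}_m)\,\wedge\,\bigwedge_{i=1}^m\theta(\bar{\bar x}_i,x_i)\Bigr),
\]
which is an $L(\MA)$\=/formula without parameters, so $X$ is $0$\=/definable in $\MA$.

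The only place one might worry is the reverse direction, where one must confirm that all three ingredients, namely $\chi$, $\chi_\Delta$, and $\theta$, can simultaneously be chosen parameter-free. The first is the hypothesis of $0$\=/definability of $Y$; the second follows because the translation $\chi\mapsto\chi_\Delta$ is syntactic and, for absolute $\Delta$, introduces no new parameter variables (Subsection~\ref{subsec:tran}); and the third is precisely the content of ``strongly absolutely right-invertible'', which forces $\mu_\Gamma\circ\mu_\Delta$ to be $0$\=/definable. Hence no obstacle arises, and the formula $\pi$ above witnesses the $0$\=/definability of $X$ in $\MA$.
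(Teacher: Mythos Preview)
Your proof is correct and follows exactly the approach the paper intends: the corollary is stated without proof in the paper precisely because it is the parameter-free specialization of Lemma~\ref{lem:inv_def}, and you have carried out that specialization carefully, checking that each of the three potential sources of parameters (the translation $\psi_\Gamma$, the translation $\chi_\Delta$, and the connector $\theta$) is parameter-free under the hypothesis of strong absolute right-invertibility.
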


The following fact is a direct continuation of Remarks~\ref{lem:AB1} and~\ref{lem:AB2}.

\begin{remark}\label{lem:AB3}
Let $(\Gamma,\bar p,\mu_{\Gamma})$ be a strongly right-invertible interpretation of $\MA=\langle A;L(\MA)\rangle$ into $\MB=\langle B;L(\MB)\rangle$ with strong right inverse  $(\Delta, \bar q,\mu_{\Delta})$. Then $(\Gamma_{A}, \emptyset,\mu_{\Gamma})$ is an absolutely strongly right-invertible interpretation of $\MA_A$ into $\MB_B$ with strong right inverse  $(\Delta_{B},\emptyset,\mu_{\Delta})$. If the languages $L(\MA)$ and $L(\MB)$ are finite, then the inverse statement also holds. 
\end{remark}

Continuing with Examples~\ref{ex:BS1} and~\ref{ex:BS2} about the metabelian Baumslag\,--\,Solitar group $\BS(1,k)$, $k>1$, we mention the following fact.

\begin{lemma}[\cite{BS, Khelif}]\label{ex:BS5}
The interpretation $\Gamma\colon\Z\rightsquigarrow \BS(1,k)$ with parameter $b=(0,1)$ from Example~\ref{ex:BS2} is a right inverse for the absolute interpretation $\Delta\colon\BS(1,k)\rightsquigarrow\Z$ from Example~\ref{ex:BS1}.   
\end{lemma}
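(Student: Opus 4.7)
The plan is to verify the conditions of Definition~\ref{de:invert}, taking $(\Delta,\emptyset,\mu_\Delta)$ as the original interpretation (so $\MA=G:=\BS(1,k)$ and $\MB=\Z$) and $(\Gamma,b,\mu_\Gamma)$ as the candidate right inverse. The task reduces to showing that the composed self-interpretation $\Delta\circ\Gamma\colon G\rightsquigarrow G$ given by Definition~\ref{def:code-composition} is well-defined, isomorphic to~$G$, and homotopic to~$\Id_{L_\group}$. Since $\Delta$ is parameter-free, the only parameter of the composition is~$b$.

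First, I would unpack the composition. Because $\dim\Delta\cdot\dim\Gamma=3\cdot 1=3$ and the formula $U_\Delta$ cuts out all of~$\Z^3$, its $\Gamma$\=/translation reduces to $U_\Gamma(x_1,y)\wedge U_\Gamma(x_2,y)\wedge U_\Gamma(x_3,y)$, so $U_{\Delta\circ\Gamma}(G,b)=\CG_G(b)^3=\langle b\rangle^3$. By Lemma~\ref{le:int-transitivity}, the structure $\Delta\circ\Gamma(G,b)$ is then well-defined and isomorphic to $G$, with natural coordinate map
$$
\mu_\Delta\circ\mu_\Gamma\colon\;(b^{n_1},b^{n_2},b^{n_3})\;\longmapsto\;(n_1,n_2,n_3)\;\longmapsto\;(n_1k^{n_2},n_3)\in G.
$$

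Second, I would produce an $L_\group$\=/formula $\theta(x_1,x_2,x_3,x,y)$ which, at $y=b$, defines the graph of this coordinate map; such a $\theta$ is a connector realizing a homotopy between $\Delta\circ\Gamma(G,b)$ and~$G$, which by Definition~\ref{de:invert} establishes the claim. Three ingredients enter: the formula~$\otimes$ from Example~\ref{ex:BS2}, supplying first-order multiplication $b^{n_1}\cdot b^{n_2}\leftrightarrow b^{n_1n_2}$ on~$\CG_G(b)$ with parameter~$b$; the defining relation $b^{-1}ab=a^k$ of~$G$; and the normal-form identity $(zk^i,m)=b^{-i}a^zb^{i+m}$. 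Combining them along the lines of~\cite{BS, Khelif}, one first-order encodes the passage $b^{n_1}\mapsto a^{n_1}\in[G,G]$ and then asserts $x=x_2^{-1}\cdot a^{n_1}\cdot x_2\cdot x_3$.

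The chief obstacle is precisely this last ingredient: the first-order recovery of $a^{n_1}\in[G,G]$ from $x_1=b^{n_1}\in\CG_G(b)$ using only group operations and the constant~$b$. This is the nontrivial arithmetic encoding in the metabelian Baumslag\,--\,Solitar group worked out in~\cite[Lemma~5]{BS} and~\cite{Khelif}; once it is available, the remaining verification is routine composition and translation bookkeeping, backed by Lemma~\ref{le:int-transitivity} and Lemma~\ref{ABC}.
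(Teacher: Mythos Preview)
Your proposal is correct and follows essentially the same route as the paper: both arguments show that $\Delta\circ\Gamma$ is homotopic to $\Id_{L_\group}$ by writing the coordinate map $(b^{z},b^{i},b^{m})\mapsto b^{-i}a^{z}b^{i}b^{m}=(zk^{i},m)$ via the normal form and invoking the first-order encoding from~\cite{BS,Khelif} that links $b^{n}\in\CG_G(b)$ with $a^{n}\in[G,G]$. The paper merely makes one point explicit that you leave implicit: the connector carries the additional parameter $a=(1,0)$ (not just $b$), and the relevant encoding is the formula~$\tau$ of~\cite[Lemma~6]{BS} rather than the multiplication formula~$\otimes$ of Lemma~5; your sketch should allow $a$ as a parameter in~$\theta$, which the definition of homotopy with parameters permits.
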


\begin{proof}
Indeed, a homotopy between $(\Delta\circ\Gamma,b)$ and $\Id_{L_\group}$ is given by the formula
$$
\theta_{\BS(1,k)}(z,i,m,\:x, \: a, b)\;=\;([i,b]=[m,b]=e)\:\wedge\:\tau(a,\,i \, x \, m^{-1} i^{-1},\, z,\,b),
$$
where $a=(1,0)$ and $\tau$ is a formula from~\cite[Lemma~6]{BS}. The formula $\theta_{\BS(1,k)}$ defines the composition $\mu_\Delta\circ\mu_\Gamma\colon (b^z,b^i,b^m) \to b^{-i}a^zb^ib^m=(zk^i,m)$, $z,i,m\in \Z$.
\end{proof}

\subsection{Regularly invertible interpretations}\label{subsec:reg_invertible_int}

A regular analog of invertible interpretation is the following.

\begin{definition}
We say that a regular interpretation $(\Gamma,\phi)\colon \MA\rightsquigarrow\MB$ has a {\em regular right inverse} if 
\begin{enumerate}[label=(\arabic*)]
\item there exists a regular interpretation $(\Delta,\psi)\colon \MB\rightsquigarrow\MA$ (a {\em regular right inverse for $(\Gamma,\phi)$}), such that
\item the regular interpretations $(\Gamma\circ\Delta,\phi_\Delta\wedge \psi)$  and $\Id_{L(\MA)}$ are regularly homotopic, i.\,e., there exist a connecting formula  $\theta(\bar u,x,\bar t,\bar w)$ and a formula-descriptor $\delta(\bar t,\bar w)$ in $L(\MA)$, where $|\bar u|=\dim\Gamma\cdot\dim\Delta$, $|\bar t|=\dim_{\param}\Gamma\cdot\dim\Delta+\dim_{\param}\Delta$, such that for every tuples of parameters $\bar a\in \phi_\Delta\wedge\psi(\MA)$ and $\bar r\in \delta(\bar a,\MA)$ (there exists such $\bar r$ for each $\bar a$) the formula $\theta(\bar u, x, \bar a, \bar r)$ defines an isomorphism $\Gamma \circ \Delta(\MA,\bar a) \simeq \MA$. 
\end{enumerate}
In this case, we refer to $(\Gamma,\phi)$ as a {\em regular left inverse} for $(\Delta,\psi)$. 
\end{definition}
 
\begin{definition}
A regular interpretation $(\Gamma,\phi)\colon \MA\rightsquigarrow\MB$ is called {\em regularly right-invertible} (correspondingly, {\em regularly left-invertible}; {\em regularly two-sided invertible}), if it has a regular right inverse (correspondingly, a regular left inverse; a regular right inverse and a regular left inverse).
\end{definition}

Recall that regular homotopy has a number of important special cases that are commonly encountered in practice. They are listed in Definition~\ref{def:reg_hom_without}. All of them also apply to regular invertibility.

\begin{remark}
As before, in the case of regularly right-invertible interpretation $(\Gamma,\phi)$ with right inverse $(\Delta,\psi)$, a connector $\theta$ and a descriptor $\delta$, for any tuples of parameters $\bar a=(\bar{\bar p},\bar q)\in \phi_\Delta\wedge\psi(\MA)$, $\bar r\in\delta(\bar a,\MA)$ and any coordinate map $\mu_\Delta\colon U_\Delta(\MA,\bar q)\to B$ there exists a coordinate map $\mu_\Gamma\colon U_\Gamma(\MB,\mu_\Delta(\bar{\bar p}))\to A$ such that the composition $\mu_\Gamma\circ\mu_\Delta$ is definable by the formula $\theta(\bar u,x, \bar a,\bar r)$.
\end{remark}

The following result is an analog of Lemma~\ref{lem:inv_def} for regularly invertible interpretations. Its premise, namely restrictions on sets $X$ and $Y$, is not exotic (see Remark~\ref{rem:WSOL-def}).

\begin{lemma}\label{lem:reg_inv_def}
Suppose there exists a regularly right-invertible interpretation $(\Gamma,\phi)$ of $\MA=\langle A;L(\MA)\rangle$ into $\MB=\langle B;L(\MB)\rangle$, $n=\dim\Gamma$; and $X\subseteq A^m$, $Y\subseteq B^{m\cdot n}$ are subsets, such that for any tuple $\bar p\in \phi(\MB)$ and any coordinate map $\mu_\Gamma$ of interpretation $(\Gamma,\bar p)\colon\MA\rightsquigarrow\MB$ one has
$Y=\mu_\Gamma^{-1}(X)$. Then $X$ is $0$\=/definable in $\MA$ if and only if $Y$ is $0$\=/definable in $\MB$.
\end{lemma}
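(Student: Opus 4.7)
The plan is to prove both directions using the Reduction Theorem~\ref{le:interpr_corol} together with the translations introduced in Subsection~\ref{subsec:tran}, supplemented by the composition machinery of Lemma~\ref{le:int-transitivity} for the backward direction.

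For the forward direction, suppose $X$ is $0$\=/definable in $\MA$ by a formula $\psi(x_1,\ldots,x_m)$ in $L(\MA)$. The plan is to use the $\Gamma$\=/translation $\psi_\Gamma(\bar x_1,\ldots,\bar x_m,\bar y)$ in $L(\MB)$ and then quantify out the parameters using $\phi$. By the Reduction Theorem, for every $\bar p\in\phi(\MB)$ and every coordinate map $\mu_\Gamma$ of $(\Gamma,\bar p)$ one has $\psi_\Gamma(\MB,\bar p)=\mu_\Gamma^{-1}(X)=Y$. Since $\phi(\MB)\neq\emptyset$, the $0$\=/definable formula
$$
\chi(\bar x_1,\ldots,\bar x_m)\;=\;\exists\,\bar y\,(\phi(\bar y)\,\wedge\,\psi_\Gamma(\bar x_1,\ldots,\bar x_m,\bar y)\,)
$$
defines $Y$ in $\MB$; the $\forall$\=/variant would do equally well.

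For the backward direction, suppose $Y$ is $0$\=/definable in $\MB$ by a formula $\chi(\bar x_1,\ldots,\bar x_m)$. Let $(\Delta,\psi)\colon\MB\rightsquigarrow\MA$ be a regular right inverse of $(\Gamma,\phi)$, with connecting formula $\theta(\bar u,x,\bar t,\bar w)$ and descriptor $\delta(\bar t,\bar w)$. Pick any $\bar a=(\bar{\bar p},\bar q)\in\phi_\Delta\wedge\psi(\MA)$ and any $\bar r\in\delta(\bar a,\MA)$. By Lemma~\ref{le:int-transitivity}(\ref{item:reg}) and the right-invertibility, for $\bar p=\mu_\Delta(\bar{\bar p})$ there is a coordinate map $\mu_\Gamma$ of $\MA\simeq\Gamma(\MB,\bar p)$ such that the composition $\mu_\Gamma\circ\mu_\Delta\colon U_{\Gamma\circ\Delta}(\MA,\bar a)\to A$ is defined by $\theta(\bar u,x,\bar a,\bar r)$. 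Applying the Reduction Theorem to $\chi$ via $\Delta$, together with the hypothesis $Y=\mu_\Gamma^{-1}(X)$, gives
$$
\chi_\Delta(\MA,\bar q)\;=\;\mu_\Delta^{-1}(Y)\;=\;\mu_\Delta^{-1}(\mu_\Gamma^{-1}(X))\;=\;(\mu_\Gamma\circ\mu_\Delta)^{-1}(X).
$$
Since $\mu_\Gamma\circ\mu_\Delta$ is surjective onto $A$, the set $X$ is the image of $\chi_\Delta(\MA,\bar q)$ under this definable map, so
$$
\psi_X(x)\;=\;\exists\,\bar{\bar p}\,\exists\,\bar q\,\exists\,\bar r\,\exists\,\bar u\,(\phi_\Delta(\bar{\bar p},\bar q)\,\wedge\,\psi(\bar q)\,\wedge\,\delta(\bar{\bar p},\bar q,\bar r)\,\wedge\,\chi_\Delta(\bar u,\bar q)\,\wedge\,\theta(\bar u,x,\bar{\bar p},\bar q,\bar r)\,)
$$
$0$\=/defines $X$ in $\MA$. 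The inclusion $\psi_X(\MA)\supseteq X$ uses surjectivity of $\mu_\Gamma\circ\mu_\Delta$; the converse uses that $\theta$ defines this map and that $\chi_\Delta(\bar u,\bar q)$ is equivalent to $\bar u\in(\mu_\Gamma\circ\mu_\Delta)^{-1}(X)$.

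The main obstacle is the delicate interplay between the choices of the coordinate maps $\mu_\Gamma$, $\mu_\Delta$, the preimage $\bar{\bar p}\in\mu_\Delta^{-1}(\bar p)$, and the descriptor parameter $\bar r$: the set $\mu_\Delta^{-1}(\mu_\Gamma^{-1}(X))$ a priori depends on each of these. The hypothesis that $Y=\mu_\Gamma^{-1}(X)$ for \emph{every} admissible $\mu_\Gamma$ is precisely what permits commuting the preimage under $\mu_\Gamma$ past $\mu_\Delta$ and identifying it with the definable composition encoded by $\theta$; this is why the hypothesis is imposed, and, as Remark~\ref{rem:WSOL-def} indicates, it is automatic for broad classes of sets, e.\,g., those absolutely definable in weak second-order logic.
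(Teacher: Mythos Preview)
Your argument is correct and matches the paper's proof essentially line for line: the forward direction via the $(\Gamma,\exists\,\phi)$\=/translation and the backward direction via $\chi_\Delta$ together with the connector $\theta$ over the existentially quantified parameters $(\bar{\bar p},\bar q,\bar r)$. The only slip is notational: in your displayed formula $\psi_X$ you collapsed the $m$ coordinates, writing $\theta(\bar u,x,\ldots)$ with a single $x$, whereas for $X\subseteq A^m$ you need $m$ blocks $\bar{\bar x}_1,\ldots,\bar{\bar x}_m$ and the conjunction $\bigwedge_{i=1}^m \theta(\bar{\bar x}_i,x_i,\bar{\bar p},\bar q,\bar r)$, exactly as the paper writes it.
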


\begin{proof}
Let $(\Delta,\psi)$ be a regular right inverse for $(\Gamma,\phi)$ and $(\theta,\delta)\colon (\Gamma\circ\Delta,\phi_\Delta\wedge\psi)\to\Id_{L(\MA)}$ be a regular homotopy. We use the Reduction Theorem~\ref{le:interpr_corol} and Lemma~\ref{le:interpr_corol_2}  all the time. 
If $X$ is $0$\=/definable in $\MA$ by a formula $\pi(x_1,\ldots,x_m)$ in $L(\MA)$ then $Y$ is $0$\=/definable in $\MB$ by the formula $\pi_{\Gamma,\exists\,\phi}(\bar x_1,\ldots,\bar x_m)$ in $L(\MB)$. Conversely, let $Y$ be absolutely definable by a formula $\chi(\bar x_1,\ldots,\bar x_m)$ in $L(\MB)$. Then $X$ is absolutely definable by the formula 
\begin{multline*}
   \pi(x_1,\ldots,x_m)=\exists\, \bar{\bar y}\:\exists \, \bar z\:\exists \,\bar w\:\exists \,\bar{\bar x}_1\:\ldots \exists \,\bar{\bar x}_m\:(\phi_\Delta(\bar{\bar y},\bar z)\wedge\psi(\bar z)\wedge\delta(\bar{\bar y},\bar z,\bar w)\wedge\\\wedge\chi_{\Delta}(\bar{\bar x}_1,\ldots,\bar{\bar x}_m,\bar z)\wedge \bigwedge\limits_{i=1}^m \theta(\bar{\bar x}_i,x_i,\bar{\bar y},\bar z,\bar w)\,). 
\end{multline*}

Indeed, let $\bar a=(a_1,\ldots,a_m)\in X$. Take any tuples $\bar p\in \phi(\MB)$ and $\bar q\in \psi(\MA)$. Fix a coordinate map $\mu_\Delta$ of the interpretation $(\Delta,\bar q)\colon\MB\rightsquigarrow\MA$ and a tuple $\bar{\bar p}\in \mu_\Delta^{-1}(\bar p)$. Then $\bar{\bar p}\in\phi_\Delta(\MA,\bar q)$. Let $\bar r\in \MA$ be a tuple, such that $\MA\models\delta(\bar{\bar p},\bar q,\bar r)$, and $\mu_\Gamma$ a coordinate map of the interpretation $(\Gamma,\bar p)\colon\MA\rightsquigarrow\MB$, such that the composition $\mu_\Gamma\circ\mu_\Delta$ is defined by the formula $\theta(\bar{\bar x}, x, \bar{\bar p},\bar q,\bar r)$. Then any tuple $\bar{\bar a}=(\bar a_1,\ldots,\bar a_m)\in \mu_\Gamma^{-1}(\bar a)$ is in $Y=\chi(\MB)$, therefore, any tuple $(\bar{\bar a}_1,\ldots,\bar{\bar a}_m)\in \mu_\Delta^{-1}(\bar{\bar a})$ is in $\chi_\Delta(\MA,\bar q)$. And since $\mu_\Gamma\circ\mu_\Delta(\bar{\bar a}_i)=\mu_\Gamma(\bar a_i)=a_i$, i.\,e., $\MA\models\theta(\bar {\bar a}_i,a_i,\bar{\bar p},\bar q,\bar r)$, $i=1,\ldots,m$, then one has $\bar a\in \pi(\MA)$.

Conversely, if $\bar a\in \pi(\MA)$, then there exist tuples $\bar q\in \psi(\MA)$, $\bar {\bar p}\in \phi_\Delta(\MA,\bar q)$, $\bar r\in \delta(\MA,\bar{\bar p},\bar q)$, $(\bar{\bar a}_1,\ldots,\bar{\bar a}_m)\in \chi_\Delta(\MA,\bar q)$, such that $\MA\models\theta(\bar {\bar a}_i,a_i,\bar{\bar p},\bar q,\bar r)$, $i=1,\ldots,m$. Fix a coordinate map $\mu_\Delta$ of the interpretation $(\Delta,\bar q)$. Then $\bar p=\mu_\Delta(\bar{\bar p})$ is in $\phi(\MB)$ and $\bar{\bar a}=(\bar a_1,\ldots,\bar a_m)=\mu_\Delta(\bar{\bar a}_1,\ldots,\bar{\bar a}_m)$ is in $Y=\chi(\MB)$. Let $\mu_\Gamma$ be a coordinate map of the interpretation $(\Gamma,\bar p)$, such that the composition $\mu_\Gamma\circ\mu_\Delta$ defines by the formula $\theta(\bar{\bar x}, x, \bar{\bar p},\bar q,\bar r)$. Then $\mu_\Gamma\circ\mu_\Delta(\bar{\bar a}_1,\ldots,\bar{\bar a}_m)=(a_1,\ldots,a_m)=\bar a$. Since $\mu_\Gamma\circ\mu_\Delta(\bar{\bar a}_1,\ldots,\bar{\bar a}_m)=\mu_\Gamma(\bar {\bar a})$ and $\bar{\bar a}\in Y$, then $\mu_\Gamma(\bar {\bar a})\in X$, i.\,e., one has $\bar a\in X$. This proves the lemma.
\end{proof}

Actually, the lemma above repeats the result~\cite[Lemma~4.9]{KhMS}. Note that the formulation of~\cite[Corollary~4.3]{KhMS} contains some inconsistencies; the intended statement there is precisely Lemma~\ref{lem:reg_inv_def}. 

\subsection{Bi-interpretability with parameters}

Let us fix two algebraic structures $\MA=\langle A;L(\MA)\rangle$ and $\MB=\langle B;L(\MB)\rangle$.

As we have already mentioned, a {\em bi-interpretation} between $\MA$ and $\MB$ is a pair of mutual interpretations $\MA\stackrel{\Gamma}{\rightsquigarrow}\MB$ and  $\MB\stackrel{\Delta}{\rightsquigarrow}\MA$, such that $\Delta$ is simultaneously a right and left inverse for $\Gamma$. But below, we will also give a stronger version of this concept. As before, we will use the notion of composition of interpretations and notations from Lemma~\ref{le:int-transitivity}. Let us remind again that when we have interpretations $\MA\stackrel{\Gamma,\bar p}{\rightsquigarrow}\MB$ and  $\MB\stackrel{\Delta,\bar q}{\rightsquigarrow}\MA$ the location of $\Gamma\circ\Delta(\MA,\bar{\bar p},\bar q)$ in $\MA$ depends on a coordinate map $\mu_\Delta$ of interpretation $\Delta$ and does not depend on a coordinate map $\mu_\Gamma$ of interpretation $\Gamma$ as well as the location of $\Delta\circ\Gamma(\MB,\bar{\bar q},\bar p)$ in $\MB$ depends on $\mu_\Gamma$ and does not depend on $\mu_\Delta$ (see Remark~\ref{rem:composition}).

\begin{definition}\label{def:bi}
Algebraic structures $\MA$ and $\MB$ are called {\em bi-interpretable} (with parameters) in each other if the conditions~\ref{bi1} and \ref{bi2} below hold:
\begin{enumerate}[label=(\arabic*)]
\item\label{bi1} there exists an interpretation $(\Gamma,\bar p)$ of $\MA$ into $\MB$ and an interpretation $(\Delta,\bar q)$ of $\MB$ into $\MA$, so the algebraic structures $\Gamma\circ\Delta(\MA,\bar{\bar p},\bar q)$ and  $\Delta\circ\Gamma(\MB,\bar{\bar q},\bar p)$  are well-defined and $\Gamma\circ\Delta(\MA,\bar{\bar p},\bar q)$ is isomorphic to $\MA$, while $\Delta\circ\Gamma(\MB,\bar{\bar q},\bar p)$ is isomorphic to $\MB$; 
\item\label{bi2} any of the following two equivalent conditions (see Lemma~\ref{le:36} below) holds:
\begin{enumerate}[label=(\roman*)]
\item\label{bi:item1} $(\Gamma\circ\Delta,\bar{\bar p},\bar q)$ is homotopic to $\Id_{L(\MA)}$, and $(\Delta\circ\Gamma,\bar{\bar q},\bar p)$ is homotopic to $\Id_{L(\MB)}$ for some coordinate maps $\mu_\Gamma$ and $\mu_\Delta$ of interpretations $\Gamma$ and $\Delta$ correspondingly, i.\,e., there exist definable coordinate maps $\mu_\MA\colon U_{\Gamma \circ \Delta}(\MA,\bar{\bar p},\bar q) \to A$ and $\mu_\MB\colon U_{\Delta \circ \Gamma}(\MB,\bar{\bar q},\bar p) \to B$;
\item\label{bi:item2} there exist coordinate maps $\mu_{\Gamma 0}$ and $\mu_\Delta$ of interpretations $\Gamma$ and $\Delta$ correspondingly such that the composition $\mu_{\Gamma 0}\circ\mu_\Delta\colon U_{\Gamma \circ \Delta}(\MA,\bar{\bar p},\bar q) \to A$ is definable in $\MA$ and there exist coordinate maps $\mu_\Gamma$ and $\mu_{\Delta 0}$ of interpretations $\Gamma$ and $\Delta$ correspondingly such that the composition $\mu_{\Delta 0}\circ\mu_\Gamma\colon U_{\Delta \circ \Gamma}(\MB,\bar{\bar q},\bar p) \to B$ is definable in $\MB$.
\end{enumerate}
\noindent 
We say that $\MA$ and $\MB$ are {\em strongly bi-interpretable} if they are bi-interpretable and the following strengthening of item~\ref{bi:item2} holds:
\begin{enumerate}[label=(\roman*), resume]
\item there exist coordinate maps $\mu_\Gamma$ and $\mu_\Delta$ of interpretations $\Gamma$ and $\Delta$ correspondingly such that the composition $\mu_\Gamma\circ\mu_\Delta\colon U_{\Gamma \circ \Delta}(\MA,\bar{\bar p},\bar q) \to A$ is definable in $\MA$ and the composition $\mu_\Delta\circ\mu_\Gamma\colon U_{\Delta \circ \Gamma}(\MB,\bar{\bar q},\bar p) \to B$ is definable in $\MB$.
\end{enumerate}
\end{enumerate}
Additionally, we  say that
\begin{itemize}
\item $\MA$ is {\em (strongly) half-absolutely bi-interpretable} with $\MB$ if the interpretation $\Gamma$ is absolute;
\item $\MA$ and $\MB$ are {\em quasi-absolutely (strongly) bi-interpretable}, if both the interpretations $\Gamma$, $\Delta$ are absolute;
\item $\MA$ and $\MB$ are {\em absolutely (strongly) bi-interpretable}, if the interpretations $\Gamma$, $\Delta$ are absolute and $\mu_\MA,\mu_\MB$ ($\mu_{\Gamma 0}\circ\mu_\Delta$, $\mu_{\Delta 0}\circ\mu_\Gamma$, $\mu_\Gamma\circ\mu_\Delta$, $\mu_\Delta\circ\mu_\Gamma$) are $0$\=/definable;
\item $\MA$ is {\em (strongly) half-injectively bi-interpretable} with $\MB$ if the interpretation $\Gamma$ is injective;
\item $\MA$ and $\MB$ are   {\em (strongly) injectively bi-interpretable}, if both the interpretations $\Gamma$ and $\Delta$ are injective. (Strong) injective bi-interpretability is also called {\em (strong) bi-definability}. 
\end{itemize}
\end{definition}

\begin{lemma} \label{le:36}
Suppose that $\MA\stackrel{\Gamma,\bar p}{\rightsquigarrow}\MB$ and $\MB\stackrel{\Delta,\bar q}{\rightsquigarrow}\MA$ are interpretations. Then the items~\ref{bi:item1} and~\ref{bi:item2} in Definition~\ref{def:bi} are equivalent.  
\end{lemma}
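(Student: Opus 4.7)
My plan is to show that conditions \ref{bi:item1} and \ref{bi:item2} of Definition~\ref{def:bi} are just two ways of saying the same thing, and to reduce the proof entirely to Lemma~\ref{le:int-transitivity}, in particular its part describing all coordinate maps of a composition.

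First I would unwind \ref{bi:item1}: by the observation (made just before Definition~\ref{def:reg_hom} in Subsection~\ref{subsec:homotopy}) that a self-interpretation is homotopic to the identical one exactly when it admits a definable coordinate map, the homotopy between $(\Gamma\circ\Delta,\bar{\bar p},\bar q)$ and $\Id_{L(\MA)}$ is equivalent to the existence of a definable map $\mu_{\MA}\colon U_{\Gamma\circ\Delta}(\MA,\bar{\bar p},\bar q)\to A$ which is a coordinate map of the interpretation $\MA\simeq\Gamma\circ\Delta(\MA,\bar{\bar p},\bar q)$ obtained from the chosen $\mu_{\Gamma}$ and $\mu_\Delta$; and analogously for $\mu_{\MB}$ on the $\MB$-side.

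For \ref{bi:item1}$\Rightarrow$\ref{bi:item2}, I would fix the coordinate map $\mu_\Delta$ used in \ref{bi:item1} and apply Lemma~\ref{le:int-transitivity}(iv), which asserts that every coordinate map of $\Gamma\circ\Delta$ with $\mu_\Delta$ fixed has the form $\mu_{\Gamma_1}\circ\mu_\Delta$ for a suitable coordinate map $\mu_{\Gamma_1}$ of $\MA\simeq\Gamma(\MB,\bar p)$. Applied to the definable $\mu_\MA$, this produces $\mu_{\Gamma 0}$ with $\mu_{\Gamma 0}\circ\mu_\Delta=\mu_\MA$, so $\mu_{\Gamma 0}\circ\mu_\Delta$ is definable in $\MA$. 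The same argument on the $\MB$-side, fixing $\mu_\Gamma$ from \ref{bi:item1} and applying the analogous statement of Lemma~\ref{le:int-transitivity}(iv) to $\Delta\circ\Gamma$, yields $\mu_{\Delta 0}$ with $\mu_{\Delta 0}\circ\mu_\Gamma=\mu_\MB$ definable in $\MB$, giving \ref{bi:item2}.

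For the converse \ref{bi:item2}$\Rightarrow$\ref{bi:item1}, I would simply observe that Lemma~\ref{le:int-transitivity}(iv) also says that $\mu_{\Gamma 0}\circ\mu_\Delta$ is itself a coordinate map of the interpretation $\MA\simeq\Gamma\circ\Delta(\MA,\bar{\bar p},\bar q)$ induced by the pair $(\mu_{\Gamma 0},\mu_\Delta)$; so the assumed definability of this composition provides (via the same observation on homotopies to $\Id_{L(\MA)}$) the required homotopy between $(\Gamma\circ\Delta,\bar{\bar p},\bar q)$ and $\Id_{L(\MA)}$, with the specific choice $\mu_\Gamma:=\mu_{\Gamma 0}$. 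Symmetrically, $\mu_{\Delta 0}\circ\mu_\Gamma$ delivers the homotopy on the $\MB$-side with $\mu_\Delta:=\mu_{\Delta 0}$. No true obstacle arises here, since \ref{bi:item1} and \ref{bi:item2} both permit the coordinate maps on the two sides to be chosen independently; the only point to keep an eye on is that in \ref{bi:item2} one must not insist on the same pair $(\mu_\Gamma,\mu_\Delta)$ serving both compositions, which is exactly why the definition writes $(\mu_{\Gamma 0},\mu_\Delta)$ and $(\mu_\Gamma,\mu_{\Delta 0})$ as two separate choices, paralleling the two homotopies in \ref{bi:item1}.
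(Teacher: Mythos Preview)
Your proposal is correct and follows essentially the same path as the paper. For (i)$\Rightarrow$(ii) the paper invokes Remark~\ref{rem:0}, which is the same construction you extract from Lemma~\ref{le:int-transitivity}(iv). For (ii)$\Rightarrow$(i) the paper is slightly more direct: it takes the \emph{single} pair $(\mu_\Gamma,\mu_\Delta)$ (the $\mu_\Delta$ from the first clause of (ii) and the $\mu_\Gamma$ from the second) and observes that, since the $\MA$-side structure depends only on $\mu_\Delta$ and the $\MB$-side only on $\mu_\Gamma$ (Remark~\ref{rem:composition}), both definable compositions $\mu_{\Gamma 0}\circ\mu_\Delta$ and $\mu_{\Delta 0}\circ\mu_\Gamma$ serve as the required $\mu_\MA$, $\mu_\MB$ for this one pair. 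Your claim that ``(i) permits independent choices'' is morally this same observation, but to make it fully rigorous you should say explicitly why: namely, that each homotopy in (i) depends on only one of the two coordinate maps, so any $\mu_\Delta$ working on the $\MA$-side can be freely paired with any $\mu_\Gamma$ working on the $\MB$-side to form the single pair (i) asks for.
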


\begin{proof}
Indeed, if the item~\ref{bi:item1} holds, then we use the argument from Remark~\ref{rem:0} to construct $\mu_{\Gamma 0}$ and $\mu_{\Delta 0}$ as needed. And vice versa, if the item~\ref{bi:item2} holds, then for the pair of coordinate maps $\mu_\Gamma,\mu_\Delta$ the interpretations $(\Gamma\circ\Delta,\bar{\bar p},\bar q)$ and $\Id_{L(\MA)}$, as well as $(\Delta\circ\Gamma,\bar{\bar q},\bar p)$ and $\Id_{L(\MB)}$, are homotopic.
\end{proof}

We denote bi-interpretation by $(\Gamma; \Delta)$, or by $(\Gamma, \bar p; \Delta, \bar q)$, or by $(\Gamma, \bar p, \mu_\Gamma; \Delta, \bar q, \mu_\Delta)$, sometimes by $(\Gamma,\bar p, \theta_\MB;\Delta,\bar q,\theta_\MA)$ or by $(\Gamma,\bar p,\mu_\Gamma, \theta_\MB;\Delta,\bar q,\mu_\Delta,\theta_\MA)$, where $\theta_\MA$ is a formula that defines $\mu_\MA$ and $\theta_\MB$ is a formula that defines $\mu_\MB$. Here notion $(\Gamma;\Delta)$ does not mean that interpretations $\Gamma,\Delta$ are absolute, but simply in this case, there is no need to focus on the parameters. If $\Gamma,\Delta$ are both absolute, we write also $(\Gamma,\emptyset;\Delta,\emptyset)$.

The following results show that sometimes invertible interpretations are in fact bi-interpretations.

\begin{lemma}\label{i-rigid}
Let $\MB$ be an i\=/rigid algebraic structure and $(\Gamma,\bar p,\mu_\Gamma)\colon\MA\rightsquigarrow\MB$ be a right-invertible interpretation with a right inverse $(\Delta,\bar q,\mu_\Delta)$. Then $(\Gamma,\bar p,\mu_\Gamma;\Delta,\bar q,\mu_\Delta)$ is a bi-interpretation between $\MA$ and $\MB$. 
\end{lemma}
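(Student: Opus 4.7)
The plan is to verify the two clauses of Definition~\ref{def:bi} directly: clause~\ref{bi1} follows from Lemma~\ref{le:int-transitivity}, while clause~\ref{bi2} will be extracted in the form~\ref{bi:item1} by combining the right-invertibility hypothesis on one side with the i\=/rigidity of $\MB$ on the other.

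First, since $\MA\stackrel{\Gamma,\bar p}{\rightsquigarrow}\MB$ and $\MB\stackrel{\Delta,\bar q}{\rightsquigarrow}\MA$, Lemma~\ref{le:int-transitivity} gives that both $\Gamma\circ\Delta(\MA,\bar{\bar p},\bar q)$ and $\Delta\circ\Gamma(\MB,\bar{\bar q},\bar p)$ are well-defined, isomorphic to $\MA$ and $\MB$ respectively; this takes care of clause~\ref{bi1}. Moreover, by hypothesis $(\Delta,\bar q,\mu_\Delta)$ is a right inverse of $(\Gamma,\bar p,\mu_\Gamma)$, which by definition supplies a definable coordinate map $\mu_\MA\colon U_{\Gamma\circ\Delta}(\MA,\bar{\bar p},\bar q)\to A$, i.\,e., the first half of condition~\ref{bi:item1}.

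For the second half, I would note that the composition $\Delta\circ\Gamma$ together with parameters $(\bar{\bar q},\bar p)$ yields a self-interpretation $(\Delta\circ\Gamma,(\bar{\bar q},\bar p))\colon\MB\rightsquigarrow\MB$. By the assumption that $\MB$ is i\=/rigid, this self-interpretation is homotopic to $\Id_{L(\MB)}$. Invoking the remark that a self-interpretation $(\Gamma,\bar p)\colon\MA\rightsquigarrow\MA$ is homotopic to $\Id_{L(\MA)}$ if and only if it admits a definable coordinate map $U_\Gamma(\MA,\bar p)\to A$, we obtain a definable coordinate map $\mu_\MB\colon U_{\Delta\circ\Gamma}(\MB,\bar{\bar q},\bar p)\to B$. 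This provides the second half of condition~\ref{bi:item1}, so by Lemma~\ref{le:36} clause~\ref{bi2} holds, and $(\Gamma,\bar p,\mu_\Gamma;\Delta,\bar q,\mu_\Delta)$ is a bi-interpretation.

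I do not expect a substantive obstacle here: everything reduces to unpacking definitions once one observes that $\Delta\circ\Gamma$ is precisely a self-interpretation of $\MB$, so i\=/rigidity applies. The only point that requires some care is that the homotopy in condition~\ref{bi:item1} of Definition~\ref{def:bi} is required only for \emph{some} pair of coordinate maps of $\Gamma$ and $\Delta$, so the definable $\mu_\MB$ obtained from i\=/rigidity is sufficient, even though it need not equal $\mu_\Delta\circ\mu_\Gamma$; a strong bi-interpretability conclusion would require an additional adjustment in the spirit of Remark~\ref{rem:0} applied on the $\MB$-side.
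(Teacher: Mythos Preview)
Your proof is correct and follows the same approach as the paper's: the right-invertibility hypothesis handles the $\Gamma\circ\Delta$ side, and i\=/rigidity of $\MB$ applied to the self-interpretation $(\Delta\circ\Gamma,\bar{\bar q},\bar p)$ handles the other. The paper's proof is just a two-sentence version of exactly this argument, so your more detailed verification of the clauses of Definition~\ref{def:bi} is a faithful expansion.
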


\begin{proof}
    From the conditions of the lemma $(\Gamma\circ\Delta,\bar{\bar p},\bar q)$ is homotopic to $\Id_{L(\MA)}$. And since $\MB$ is i\=/rigid, $(\Delta\circ\Gamma,\bar{\bar q},\bar p)$ is homotopic to $\Id_{L(\MB)}$.
\end{proof}

\begin{lemma}\label{rigid+i-rigid}
Let $\MB$ be a rigid and i\=/rigid algebraic structure and $(\Gamma,\bar p,\mu_\Gamma)\colon\MA\rightsquigarrow\MB$ be a strongly right-invertible interpretation with strong right inverse $(\Delta,\bar q,\mu_\Delta)\colon\MB\rightsquigarrow\MA$. Then $(\Gamma,\bar p,\mu_\Gamma;\Delta,\bar q,\mu_\Delta)$ is a strong bi-interpretation between $\MA$ and $\MB$. 
\end{lemma}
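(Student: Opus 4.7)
The plan is to show both halves of the strong bi-interpretation condition. The first half, namely that $\mu_\Gamma\circ\mu_\Delta\colon U_{\Gamma\circ\Delta}(\MA,\bar{\bar p},\bar q)\to A$ is definable in $\MA$, is immediate from the hypothesis that $(\Delta,\bar q,\mu_\Delta)$ is a \emph{strong} right inverse of $(\Gamma,\bar p,\mu_\Gamma)$; this is just the definition from Subsection~\ref{subsec:invertible_int}. Thus the entire task reduces to producing a formula in $L(\MB)$ that defines the specific composition $\mu_\Delta\circ\mu_\Gamma\colon U_{\Delta\circ\Gamma}(\MB,\bar{\bar q},\bar p)\to B$.

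The second step is to view the pair $(\Delta\circ\Gamma,\bar{\bar q},\bar p)$ as a self-interpretation of $\MB$. By Lemma~\ref{le:int-transitivity}, this composition is a well-defined interpretation $\MB\rightsquigarrow\MB$ whose natural coordinate map is precisely $\mu_\Delta\circ\mu_\Gamma$. Since $\MB$ is i-rigid, this self-interpretation must be homotopic to $\Id_{L(\MB)}$, which (by the remark just before the definition of i-rigidity) is equivalent to the existence of some definable coordinate map $\mu_\MB\colon U_{\Delta\circ\Gamma}(\MB,\bar{\bar q},\bar p)\to B$. So we obtain \emph{a} definable coordinate map, but not yet the one we need.

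The third and decisive step uses rigidity to identify these two coordinate maps. Any two coordinate maps of the same interpretation differ by an automorphism of the target structure, as noted in Subsection~\ref{subsec:code}. Since $\Aut(\MB)$ is trivial by rigidity, the coordinate map of $(\Delta\circ\Gamma,\bar{\bar q},\bar p)$ is \emph{unique}, and therefore $\mu_\MB=\mu_\Delta\circ\mu_\Gamma$. Hence $\mu_\Delta\circ\mu_\Gamma$ is definable in $\MB$ by the same formula that defines $\mu_\MB$, completing the proof.

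I do not expect any serious obstacle: the argument is essentially the one used for Lemma~\ref{i-rigid} (which already gives ordinary bi-interpretability from i-rigidity), upgraded by invoking rigidity at the very end to ensure that the \emph{specific} composition $\mu_\Delta\circ\mu_\Gamma$, rather than some arbitrary coordinate map, is the one definable in $\MB$. The only subtlety to be careful about is that strong right invertibility is stated in terms of the fixed $\mu_\Gamma$ and $\mu_\Delta$, so we must not alter these coordinate maps at any stage; rigidity is precisely what guarantees we don't have to.
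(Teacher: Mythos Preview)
Your proposal is correct and follows essentially the same argument as the paper's proof: use i-rigidity of $\MB$ to obtain some definable coordinate map for the self-interpretation $(\Delta\circ\Gamma,\bar{\bar q},\bar p)$, then invoke rigidity of $\MB$ to conclude that this coordinate map must equal $\mu_\Delta\circ\mu_\Gamma$. The paper's version is terser and leaves the first half (definability of $\mu_\Gamma\circ\mu_\Delta$ in $\MA$ from the strong-right-inverse hypothesis) implicit, whereas you state it explicitly, but the logical structure is identical.
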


\begin{proof}
Since $\MB$ is i\=/rigid, interpretations $\Delta\circ\Gamma(\MB,\bar{\bar q},\bar p)$ and $\Id_{L(\MB)}$ are homotopic. And since, $\MB$ is rigid, there exits a unique coordinate map $\mu_{\Delta\circ\Gamma}\colon U_{\Delta\circ\Gamma}(\MB,\bar{\bar q},\bar p)\to B$, so it is $\mu_\Delta\circ\mu_\Gamma$ and it is definable in $\MB$, as required. 
\end{proof}

\begin{corollary}\label{cor:rigid+i-rigid}
Let $\MB$ be a rigid and i\=/rigid algebraic structure and $(\Gamma,\bar p)\colon\MA\rightsquigarrow\MB$ be a right-invertible interpretation with right inverse $(\Delta,\bar q)$. Then $(\Gamma,\bar p;\Delta,\bar q)$ is a strong bi-interpretation between $\MA$ and $\MB$ for some coordinate maps $\mu_\Gamma$ and $\mu_\Delta$. 
\end{corollary}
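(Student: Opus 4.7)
The plan is to reduce Corollary~\ref{cor:rigid+i-rigid} directly to Lemma~\ref{rigid+i-rigid}. The only gap between the hypotheses of the corollary and those of the lemma is that the corollary assumes merely right-invertibility (no definability of any $\mu_\Gamma\circ\mu_\Delta$), whereas the lemma requires strong right-invertibility. So the whole task is to upgrade an arbitrary right-invertible pair to a strongly right-invertible pair by choosing suitable coordinate maps, and then quote the lemma.

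Concretely, I would start from any witnesses $\mu_\Gamma$ and $\mu_\Delta$ of the right-invertibility of $(\Gamma,\bar p)$, so that there is a definable coordinate map $\mu_\MA\colon U_{\Gamma\circ\Delta}(\MA,\bar{\bar p},\bar q)\to A$ of the composition $(\Gamma\circ\Delta,\bar{\bar p},\bar q)$. By Remark~\ref{rem:0} (which is exactly the content of the first half of Remark~\ref{rigid3}), I can leave $\mu_\Delta$ untouched and replace $\mu_\Gamma$ by
\[
\mu_{\Gamma 0}=\bar\mu_\MA\circ\bar\mu_{\Gamma\circ\Delta}^{-1}\circ\mu_\Gamma,
\]
and this new $\mu_{\Gamma 0}$ is again a coordinate map of the interpretation $\MA\simeq\Gamma(\MB,\bar p)$ such that the composition $\mu_{\Gamma 0}\circ\mu_\Delta$ coincides with $\mu_\MA$ and is therefore definable in $\MA$. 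Hence $(\Gamma,\bar p,\mu_{\Gamma 0})$ is strongly right-invertible with strong right inverse $(\Delta,\bar q,\mu_\Delta)$.

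Now I can apply Lemma~\ref{rigid+i-rigid} to the triples $(\Gamma,\bar p,\mu_{\Gamma 0})$ and $(\Delta,\bar q,\mu_\Delta)$: the hypotheses that $\MB$ is rigid and i-rigid are imported directly from the corollary, and the strong right-invertibility was just arranged. The conclusion of that lemma delivers a strong bi-interpretation $(\Gamma,\bar p,\mu_{\Gamma 0};\Delta,\bar q,\mu_\Delta)$, which is precisely the claim of the corollary with the coordinate maps $\mu_{\Gamma 0},\mu_\Delta$ in place of the unspecified $\mu_\Gamma,\mu_\Delta$.

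There is essentially no obstacle here; the only subtlety is bookkeeping. One must check that the redefinition $\mu_\Gamma\rightsquigarrow\mu_{\Gamma 0}$ affects only $\mu_\Gamma$ (and hence the composition $\mu_{\Gamma 0}\circ\mu_\Delta$ inside $\MA$), not the location of $\Delta\circ\Gamma(\MB,\bar{\bar q},\bar p)$ inside $\MB$, which by Remark~\ref{rem:composition} depends on $\mu_\Gamma$. This is harmless because Lemma~\ref{rigid+i-rigid} only needs the existence of \emph{some} coordinate map $\mu_{\Delta\circ\Gamma}$ of the composition and uses rigidity of $\MB$ together with i-rigidity to force it to be definable and equal to $\mu_\Delta\circ\mu_{\Gamma 0}$; so the argument goes through verbatim for the new triples.
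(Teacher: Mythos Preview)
Your proposal is correct and follows exactly the route the paper intends: the corollary is stated immediately after Lemma~\ref{rigid+i-rigid} and Remark~\ref{rigid3}, and is meant to be obtained by combining them---first use Remark~\ref{rigid3} (equivalently Remark~\ref{rem:0}) to replace $\mu_\Gamma$ by $\mu_{\Gamma 0}$ so that $(\Gamma,\bar p,\mu_{\Gamma 0})$ becomes strongly right-invertible with strong right inverse $(\Delta,\bar q,\mu_\Delta)$, and then invoke Lemma~\ref{rigid+i-rigid}. Your final paragraph correctly identifies and dismisses the only bookkeeping subtlety.
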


\begin{remark}
Assume that $\MA$ is half-absolute bi-interpretable with  $\MB$, i.\,e., in the notations of Definition~\ref{def:bi}, the interpretation $\Gamma$ is absolute. Then there exist coordinate maps $\mu_\Gamma$ of $\Gamma$, $\mu_\Delta$ of $\Delta$ and $\mu_\MA$ of $\Gamma\circ\Delta$ such that  $\mu_\MA\colon U_{\Gamma \circ \Delta}(\MA,\bar q) \to A$ and $\mu_\Delta\circ\mu_\Gamma\colon U_{\Delta \circ \Gamma}(\MB,\bar{\bar q}) \to B$ are definable in $\MA$ and $\MB$ correspondingly. To achieve this, we may have to redefine the coordinate map $\mu_\Delta$, like in Remark~\ref{rem:0}, but since the interpretation $\Gamma$ is absolute, this will not affect the layout of $L(\MA)$\=/structure $\Gamma\circ\Delta(\MA,{\bar q})$. Note that there is an inaccuracy in~\cite[Remark~4.1~c)]{KhMS}, where it is claimed that the strengthening as above can be done for arbitrary bi-interpretations. However, in general, we cannot do this, since changing any of the coordinate maps $\mu_\Gamma$ or $\mu_\Delta$ can make maps $U_{\Gamma \circ \Delta}(\MA,\bar{\bar p},\bar q) \to A$ or $U_{\Delta \circ \Gamma}(\MB,\bar{\bar q},\bar p) \to B$ non-definable (see Example~\ref{ex:H}).
\end{remark}

\begin{lemma}\label{rigid4}
   Suppose that $\MA$ is a rigid algebraic structure and $(\Gamma,\emptyset,\theta_\MB;\Delta,\bar q, \theta_\MA)$ is a half-absolute bi-interpretation of $\MA$ with $\MB$. Then $(\Gamma,\emptyset,\theta_\MB;\Delta,\bar q, \theta_\MA)$ is a strong half-absolute bi-interpretation.
\end{lemma}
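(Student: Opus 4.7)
The plan is to show that within the given half-absolute bi-interpretation one can replace $\mu_\Delta$ by a twist of it by a single automorphism of $\MB$ so that the two conditions in the strong definition of bi-interpretation hold simultaneously for the same pair $(\mu_\Gamma,\mu_\Delta)$. The two ingredients I will use repeatedly are: (a) since $\Gamma$ is absolute and $\MA$ is rigid, the coordinate map $\mu_\Gamma\colon U_\Gamma(\MB)\to A$ is unique; (b) since $\Gamma$ is absolute, the location of $\Gamma\circ\Delta(\MA,\bar q)$ in $\MA$ and the $L(\MA)$\=/structure on it do not depend on $\mu_\Delta$ (cf. Remark~\ref{rem:composition}).

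First I would verify a key rigidity statement: for every $\sigma\in\Aut(\MB)$ one has $\mu_\Gamma\circ\sigma=\mu_\Gamma$. Indeed, because $U_\Gamma$ and $\sim_\Gamma$ are $0$\=/definable, $\sigma$ restricts to a bijection on $U_\Gamma(\MB)$ that respects $\sim_\Gamma$ and induces an $L(\MA)$\=/automorphism $\bar\sigma$ of $\Gamma(\MB)$. Hence $\mu_\Gamma\circ\sigma$ is a coordinate map of the interpretation $(\Gamma,\emptyset)$, and by (a) it must equal $\mu_\Gamma$.

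Next I would show that $\mu_\Gamma\circ\mu_\Delta$ is defined by $\theta_\MA$ in $\MA$ for any coordinate map $\mu_\Delta$ of $(\Delta,\bar q)$. Both $\mu_\MA$ (defined by $\theta_\MA$) and $\mu_\Gamma\circ\mu_\Delta$ are coordinate maps $U_{\Gamma\circ\Delta}(\MA,\bar q)\to A$ of $\Gamma\circ\Delta(\MA,\bar q)\simeq\MA$; since $\MA$ is rigid they must coincide. Thus the first half of the strong condition is automatic and independent of the choice of $\mu_\Delta$.

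For the second half, fix $\bar{\bar q}\in\mu_\Gamma^{-1}(\bar q)$ and consider the two coordinate maps $\mu_\MB$ (defined by $\theta_\MB$) and $\mu_\Delta\circ\mu_\Gamma$ of the interpretation $\Delta\circ\Gamma(\MB,\bar{\bar q})\simeq\MB$. They differ by an automorphism: $\mu_\MB=\sigma\circ(\mu_\Delta\circ\mu_\Gamma)$ for some $\sigma\in\Aut(\MB)$. Set $\mu_\Delta'=\sigma\circ\mu_\Delta$, which is still a coordinate map of $(\Delta,\bar q)$. Then $\mu_\Delta'\circ\mu_\Gamma=\mu_\MB$ is defined by $\theta_\MB$, while $\mu_\Gamma\circ\mu_\Delta'=\mu_\Gamma\circ\sigma\circ\mu_\Delta=\mu_\Gamma\circ\mu_\Delta$ by the first step, and so is still defined by $\theta_\MA$. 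Hence $(\Gamma,\emptyset,\mu_\Gamma,\theta_\MB;\Delta,\bar q,\mu_\Delta',\theta_\MA)$ is a strong half-absolute bi-interpretation.

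The only subtlety I expect is that, unlike in the fully rigid case (Lemma~\ref{rigid+i-rigid}), $\MB$ need not be rigid, so $\mu_\Delta\circ\mu_\Gamma$ may fail to equal $\mu_\MB$ on the nose. The whole point is that the ambiguity lives on the $\MB$\=/side (an element of $\Aut(\MB)$), and rigidity of $\MA$ together with absoluteness of $\Gamma$ makes $\mu_\Gamma$ absorb any such $\sigma$ on its right, allowing a single rechoice of $\mu_\Delta$ to fix both sides at once.
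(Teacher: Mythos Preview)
Your proof is correct and follows essentially the same route as the paper's (very terse) argument: redefine $\mu_\Delta$ so that $\mu_\Delta\circ\mu_\Gamma$ equals the map defined by $\theta_\MB$, then invoke rigidity of $\MA$ to force $\mu_\Gamma\circ\mu_\Delta=\mu_\MA$. The paper compresses this into two sentences, relying on the remark preceding the lemma for the redefinition step; you unpack that redefinition explicitly as $\mu_\Delta'=\sigma\circ\mu_\Delta$. One small note: your step establishing $\mu_\Gamma\circ\sigma=\mu_\Gamma$ for $\sigma\in\Aut(\MB)$ is correct but redundant, since your step~3 already gives $\mu_\Gamma\circ\mu_\Delta'=\mu_\MA$ directly (because $\mu_\Delta'$ is itself a coordinate map and rigidity applies to any choice).
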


\begin{proof}
    Indeed, since $\Gamma$ is absolute, there exist coordinate maps $\mu_\Gamma$ and $\mu_\Delta$, such that $\mu_\Delta\circ\mu_\Gamma$ is definable in $\MB$. And since the group $\Aut(\MA)$ is trivial, $\mu_\Gamma\circ\mu_\Delta=\mu_\MA$ is definable in $\MA$.
\end{proof}

We will sometimes refer to an ordinary bi-interpretation as a {\em weak bi-interpretation} for contrast. Observe that the notion of a bi-interpretation defined in the books~\cite{Hodges} and~\cite{KhMS} is weak, but in the paper~\cite{AKNS} it is strong. There are many interesting applications of strong bi-interpretations that we cannot derive from the weak ones. At the same time, all the consequences of the weak bi-interpretation, known to us, are in fact consequences of weaker conditions, namely, of right or two-sided invertible interpretations. However, it is important to mention that right now we do not have examples of weak bi-interpretations of algebraic structures that are not strong, so we leave this as an open problem.

\begin{problem}
Find two algebraic structures $\MA$ and $\MB$, which are weakly bi-interpretable, but are not strongly bi-interpretable. 
\end{problem}

The following example shows that there are structures $\MA$ and $\MB$ such that $\MA$ is invertibly interpretable in $\MB$ but not bi-interpretable in $\MB$. 

\begin{example}
    The Heisenberg group $\UT_3(\Z)$ and $\Z$ are mutually interpretable in each other (see Examples~\ref{ex:UT1},~\ref{ex:UT2}). Since any interpretation $\Z\rightsquigarrow\Z$ is homotopic to $\Id_{L(\Z)}$~\cite[Lemma~2.7]{AKNS}, there exists an invertible interpretation $\Z\rightsquigarrow \UT_3(\Z)$. At the same time, there is no bi-interpretation between $\Z$ and $\UT_3(\Z)$~\cite{KhMS, Khelif, Nies2}. 
\end{example}

\subsection{Absolute bi-interpretability}\label{subsec:abs_bi}

Strong absolute bi-interpretation is one of the closest model-theoretic relations between algebraic structures $\MA$ and $\MB$. 

\begin{example}
Algebraic structures $\N$, $\Z$ and $\Q$ are absolutely strongly injectively bi-in\-ter\-pre\-table pairwise.    
\end{example}

\begin{example}
In~\cite{ABM} M.\,G.\,Amaglobeli, T.\,Z.\,Bokelavadze, 
A.\,G.\,Myasnikov showed that a nilpotent Lie algebra $L$ over a field $k$ of characteristic zero in the language of $k$\=/algebras and the Hall nilpotent $k$\=/group $G(L)$ in the language of $k$\=/groups are absolutely injectively strongly bi-interpretable in each other by equations.
\end{example}

Every $L(\MA)$\=/isomorphism $h\colon\MA\to\MA^\prime$ gives rise an absolute strong injective bi-interpretability $(\Id_{L(\MA)},\emptyset,h^{-1};\Id_{L(\MA)},\emptyset,h)$.

The following fact is a direct consequence of Remarks~\ref{lem:AB1} and~\ref{lem:AB3}.

\begin{lemma}\label{lem:bi}
Let $\MA$ and $\MB$ be algebraic structures. If $\MA$ and $\MB$ are strongly (injectively) bi-interpretable by means of $(\Gamma,\bar p,\mu_\Gamma;\Delta,\bar q,\mu_\Delta)$, then $\MA_A$ and $\MB_B$ are strongly (injectively) $0$\=/bi-interpretable by means of $(\Gamma_A,\emptyset,\mu_\Gamma;\Delta_B,\emptyset,\mu_\Delta)$. If the languages $L(\MA)$ and $L(\MB)$ are finite, then the inverse statement also holds. We cannot say the same for a weak bi-interpretation.
\end{lemma}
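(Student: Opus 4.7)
The plan is to reduce the lemma directly to Remarks~\ref{lem:AB1} and~\ref{lem:AB3}, handling the two interpretations of a bi-interpretation symmetrically. First, I would unpack the definition: the pair $(\Gamma,\bar p,\mu_\Gamma;\Delta,\bar q,\mu_\Delta)$ being a strong bi-interpretation amounts to two interpretations $\MA\stackrel{\Gamma,\bar p}{\rightsquigarrow}\MB$, $\MB\stackrel{\Delta,\bar q}{\rightsquigarrow}\MA$ such that (i) $\Delta$ is a strong right inverse of $\Gamma$, i.e.\ $\mu_\Gamma\circ\mu_\Delta$ is $\MA$\=/definable, and (ii) $\Gamma$ is a strong right inverse of $\Delta$, i.e.\ $\mu_\Delta\circ\mu_\Gamma$ is $\MB$\=/definable. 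Thus a strong bi-interpretation is exactly a pair of strongly right-invertible interpretations sharing the same coordinate maps.

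Given this observation, I would apply Remark~\ref{lem:AB3} twice. Applying it to $(\Gamma,\bar p,\mu_\Gamma)\colon\MA\rightsquigarrow\MB$ with strong right inverse $(\Delta,\bar q,\mu_\Delta)$ yields that $(\Gamma_A,\emptyset,\mu_\Gamma)\colon\MA_A\rightsquigarrow\MB_B$ is absolutely strongly right-invertible with strong right inverse $(\Delta_B,\emptyset,\mu_\Delta)$. Applying it again with the roles swapped gives that $(\Delta_B,\emptyset,\mu_\Delta)\colon\MB_B\rightsquigarrow\MA_A$ is absolutely strongly right-invertible with strong right inverse $(\Gamma_A,\emptyset,\mu_\Gamma)$. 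Together these two statements say precisely that $(\Gamma_A,\emptyset,\mu_\Gamma;\Delta_B,\emptyset,\mu_\Delta)$ is an absolute strong bi-interpretation of $\MA_A$ and $\MB_B$. Injectivity of $\Gamma_A$ and $\Delta_B$ is immediate from injectivity of $\Gamma$ and $\Delta$, since the equivalence relations $E_{\Gamma_A}=E_\Gamma$ and $E_{\Delta_B}=E_\Delta$ are unchanged by the code-enrichment; this handles the injective case.

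For the converse under the assumption that $L(\MA)$ and $L(\MB)$ are finite, I would again invoke the ``if finite, then inverse'' clause of Remark~\ref{lem:AB3} in both directions. The only point that requires a line of justification is that a formula in $L(\MB)\cup B$ defining $\mu_\Gamma\circ\mu_\Delta$ uses only finitely many constants from $B$, so it can be rewritten as a formula in $L(\MB)$ with those elements as parameters; symmetrically for $\MA$. Finiteness of the signatures ensures that the codes $\Gamma_A$ and $\Delta_B$ can be stripped back to $\Gamma$ and $\Delta$ with only finitely many new constants to re-introduce as parameters, reproducing the original strong bi-interpretation data.

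The final sentence, that the statement fails for weak bi-interpretability, is where the main subtlety lies, and I would simply refer to Remark~\ref{lem:AB2}: a weak bi-interpretation only asks for the compositions $\Gamma\circ\Delta$ and $\Delta\circ\Gamma$ to be \emph{homotopic} to the identity, without requiring that the homotopies be induced by the chosen coordinate maps $\mu_\Gamma,\mu_\Delta$ themselves. Hence the connector $\theta$ need not define $\mu_\Gamma\circ\mu_\Delta$, and one cannot read it as a formula in $L(\MB)\cup B$ witnessing definability of that composition; the expected absolute bi-interpretation of $\MA_A$ and $\MB_B$ with the original coordinate maps therefore need not exist. I would remark that the strong case avoids this obstacle precisely because the strong condition forces the homotopy to be exactly the one given by the composition of the coordinate maps, and that is the step which transports cleanly between $\MA\rightsquigarrow\MB$ and $\MA_A\rightsquigarrow\MB_B$.
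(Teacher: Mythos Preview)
Your proposal is correct and follows exactly the route the paper indicates: the paper states the lemma as ``a direct consequence of Remarks~\ref{lem:AB1} and~\ref{lem:AB3}'' without spelling out a proof, and your argument is precisely the natural unpacking of that remark~--- decompose the strong bi-interpretation into two strongly right-invertible interpretations and apply Remark~\ref{lem:AB3} in each direction, with Remark~\ref{lem:AB2} supplying the obstruction in the weak case.
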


Absolute interpretation is extremely rare. It is directly related to the following fact.

\begin{theorem}[Sec.~5.4, Ex.~8\,(b),~\cite{Hodges}]\label{th:Aut}
If algebraic structures $\MA$ and $\MB$ are absolutely bi-interpretable, then one has 
$$
\Aut(\MA)\simeq\Aut(\MB).
$$
\end{theorem}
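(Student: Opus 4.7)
The plan is to build natural group homomorphisms $\Psi\colon\Aut(\MA)\to\Aut(\MB)$ and $\Phi\colon\Aut(\MB)\to\Aut(\MA)$ from the two absolute interpretations and then use the $0$-definability of the coordinate maps $\mu_\MA$, $\mu_\MB$ to show that $\Phi\circ\Psi$ and $\Psi\circ\Phi$ are inner automorphisms, hence bijections. Throughout, let $(\Gamma,\emptyset,\mu_\Gamma,\theta_\MB;\Delta,\emptyset,\mu_\Delta,\theta_\MA)$ be the given absolute bi-interpretation, with $n=\dim\Gamma$ and $m=\dim\Delta$.

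For the definition of $\Psi$ (and symmetrically of $\Phi$), fix $\alpha\in\Aut(\MA)$ and let $\alpha^{(m)}$ denote its coordinatewise action on $A^m$. Since $\Delta$ is absolute, the formulas $U_\Delta$, $E_\Delta$, and every $Q_\Delta$ for $Q\in L(\MB)$ are $0$-definable in $\MA$; hence $\alpha^{(m)}$ preserves $U_\Delta(\MA)$ and $\sim_\Delta$, and it induces an automorphism of the $L(\MB)$-structure $\Delta(\MA)$. Transporting this through the isomorphism $\bar\mu_\Delta\colon\Delta(\MA)\to\MB$ yields an automorphism $\Psi(\alpha)\in\Aut(\MB)$ characterized by
$$
\Psi(\alpha)\circ\mu_\Delta \;=\; \mu_\Delta\circ\alpha^{(m)}\quad\text{on } U_\Delta(\MA).
$$
The chain of identities $\Psi(\alpha\beta)\circ\mu_\Delta = \mu_\Delta\circ(\alpha\beta)^{(m)} = \mu_\Delta\circ\alpha^{(m)}\circ\beta^{(m)} = \Psi(\alpha)\circ\Psi(\beta)\circ\mu_\Delta$, together with surjectivity of $\mu_\Delta$, shows that $\Psi$ is a group homomorphism; $\Phi$ is defined symmetrically.

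To analyze $\Phi\circ\Psi$, take $\alpha\in\Aut(\MA)$, pick $a\in A$, $\bar b\in\mu_\Gamma^{-1}(a)\subseteq U_\Gamma(\MB)$, and for each $i$ a tuple $\bar c_i\in\mu_\Delta^{-1}(b_i)$; set $\bar{\bar c}=(\bar c_1,\ldots,\bar c_n)$. By the Reduction Theorem~\ref{le:interpr_corol} applied to the formula $U_\Gamma$, one has $\bar{\bar c}\in U_{\Gamma\circ\Delta}(\MA)$. Unwinding the definitions of $\Phi$ and $\Psi$ yields
$$
\Phi(\Psi(\alpha))\bigl(\mu_{\Gamma\circ\Delta}(\bar{\bar c})\bigr) \;=\; \mu_{\Gamma\circ\Delta}(\alpha^{(mn)}\bar{\bar c}),
$$
where $\mu_{\Gamma\circ\Delta}$ denotes the restriction of $\mu_\Gamma\circ\mu_\Delta^{(n)}$ to $U_{\Gamma\circ\Delta}(\MA)$, which is a coordinate map of $\Gamma\circ\Delta$ by Lemma~\ref{le:int-transitivity}. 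Both $\mu_\MA$ and $\mu_{\Gamma\circ\Delta}$ are coordinate maps of the \emph{same} interpretation, so they differ by an automorphism: $\mu_\MA = \tau\circ\mu_{\Gamma\circ\Delta}$ for some $\tau\in\Aut(\MA)$. The $0$-definability of $\mu_\MA$ (via $\theta_\MA$) ensures that its graph is $\alpha^{(mn)}$-invariant, giving $\mu_\MA(\alpha^{(mn)}\bar{\bar c})=\alpha(\mu_\MA(\bar{\bar c}))$. Substituting and using the surjectivity of $\mu_\MA$ produces the identity $\Phi(\Psi(\alpha))=\tau^{-1}\alpha\tau$, so $\Phi\circ\Psi$ is conjugation by $\tau^{-1}$ on $\Aut(\MA)$. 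The symmetric argument shows $\Psi\circ\Phi$ is an inner automorphism of $\Aut(\MB)$. Both compositions are therefore bijective, so $\Phi$ and $\Psi$ themselves are bijective group homomorphisms, and we conclude $\Aut(\MA)\simeq\Aut(\MB)$.

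The main obstacle is the non-uniqueness of coordinate maps after composing $\Gamma$ and $\Delta$: although $\mu_\MA$ is assumed $0$-definable, the coordinate map $\mu_\Gamma\circ\mu_\Delta^{(n)}$ arising from the composition need not be. The discrepancy between the two, however, is a single automorphism of $\MA$, which contributes only an inner automorphism in the final identity and therefore does not affect bijectivity. This is precisely why the weak form of absolute bi-interpretability already suffices for the conclusion; no additional strengthening to a strong bi-interpretation is required.
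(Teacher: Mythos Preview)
The paper does not supply its own proof of this theorem; it simply cites it from Hodges~\cite[Sec.~5.4, Ex.~8(b)]{Hodges}. Your argument is correct and is essentially the standard one: each absolute interpretation induces a group homomorphism between the automorphism groups, and the $0$\=/definability of the coordinate map $\mu_\MA$ (resp.~$\mu_\MB$) forces the composite $\Phi\circ\Psi$ (resp.~$\Psi\circ\Phi$) to be conjugation by a fixed element of $\Aut(\MA)$ (resp.~$\Aut(\MB)$), hence bijective. Your closing remark is also to the point: only the weak form of absolute bi-interpretability is needed, because the discrepancy between $\mu_\MA$ and $\mu_\Gamma\circ\mu_\Delta$ is absorbed into an inner automorphism and does not affect bijectivity.
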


\begin{corollary}
Suppose that $\MA$ and $\MB$ are absolutely bi-interpretable. Then $\MA$ is rigid if and only if $\MB$ is rigid.  
\end{corollary}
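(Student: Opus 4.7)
The plan is to read this off directly from Theorem~\ref{th:Aut}. By hypothesis $\MA$ and $\MB$ are absolutely bi-interpretable, so the theorem immediately yields a group isomorphism $\Aut(\MA) \simeq \Aut(\MB)$. Recall that, by the definition given just before Subsection~\ref{subsec:def_int}, a structure $\MC$ is rigid precisely when $\Aut(\MC)$ is the trivial group. Since triviality of a group is preserved under isomorphism (a group isomorphic to the one-element group has one element), we conclude $\Aut(\MA) = \{e\}$ if and only if $\Aut(\MB) = \{e\}$, which is exactly the claim.

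There is really no obstacle here: all of the content is absorbed into Theorem~\ref{th:Aut}, and the corollary is simply a restatement of the theorem under the special hypothesis that one of the two automorphism groups is trivial. The only thing worth being careful about is that the theorem genuinely requires \emph{absolute} bi-interpretability; under bi-interpretability with parameters one can only expect $\Aut(\MA_A) \simeq \Aut(\MB_B)$, which is trivially true on both sides and says nothing about rigidity of $\MA$ or $\MB$ themselves. Thus the proof amounts to a single sentence invoking Theorem~\ref{th:Aut} and the definition of rigidity, with no additional calculations required.
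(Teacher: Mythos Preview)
Your proof is correct and matches the paper's approach exactly: the paper states this as an immediate corollary of Theorem~\ref{th:Aut} without giving a separate proof, and your argument (isomorphic automorphism groups preserve triviality) is precisely the intended reasoning. Your additional remark about why absolute bi-interpretability is essential is a helpful clarification, though not required for the proof itself.
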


\begin{corollary}
  If $\MA$ is rigid and $(\Gamma,\emptyset,\theta_\MB;\Delta,\emptyset, \theta_\MA)$ is an absolute bi-interpretation between $\MA$, then it is a strong absolute bi-interpretation.
\end{corollary}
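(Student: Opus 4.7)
The plan is to combine Theorem~\ref{th:Aut} with the uniqueness of coordinate maps under rigidity; once both $\MA$ and $\MB$ are known to be rigid, the compositions $\mu_\Gamma\circ\mu_\Delta$ and $\mu_\Delta\circ\mu_\Gamma$ have no freedom left and must agree with the already $0$\=/definable maps $\mu_\MA$ and $\mu_\MB$ supplied by $\theta_\MA$ and $\theta_\MB$.

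First, I would apply Theorem~\ref{th:Aut} to the given absolute bi-interpretation to conclude $\Aut(\MB)\simeq\Aut(\MA)=\{e\}$, so that $\MB$ is rigid as well. This is the key step that unlocks the strengthening: in the half-absolute analogue (Lemma~\ref{rigid4}) only $\MA$ is rigid, and absoluteness of $\Gamma$ alone is used to control the $\MB$\=/side; here, symmetry of the hypotheses together with Theorem~\ref{th:Aut} supplies the missing rigidity of $\MB$.

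Second, I would invoke the observation in Subsection~\ref{subsec:code} that any two coordinate maps of an interpretation $\MA\simeq\Gamma(\MB,\emptyset)$ differ by an automorphism of $\MA$; hence the rigidity of $\MA$ forces $\mu_\Gamma$ to be uniquely determined, and similarly the rigidity of $\MB$ forces $\mu_\Delta$ to be uniquely determined. By Lemma~\ref{le:int-transitivity}, for this unique pair, the map $\mu_\Gamma\circ\mu_\Delta$ restricted to $U_{\Gamma\circ\Delta}(\MA,\emptyset)$ is a coordinate map of the interpretation $\MA\simeq\Gamma\circ\Delta(\MA,\emptyset)$; by the same uniqueness principle applied to $\Gamma\circ\Delta$ (whose target $\MA$ is rigid), this composition must coincide with $\mu_\MA$, the $0$\=/definable coordinate map given by $\theta_\MA$ in the data of the bi-interpretation. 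Symmetrically $\mu_\Delta\circ\mu_\Gamma=\mu_\MB$ is $0$\=/definable in $\MB$ via $\theta_\MB$, which is exactly the clause needed by Definition~\ref{def:bi} for the bi-interpretation to be strong absolute.

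The main obstacle, such as it is, is conceptual rather than computational: one has to make sure the same pair $(\mu_\Gamma,\mu_\Delta)$ simultaneously witnesses $0$\=/definability on both the $\MA$\=/side and the $\MB$\=/side. In a general (weak) bi-interpretation this is not automatic, and a genuine adjustment of one of the coordinate maps (as in Remark~\ref{rem:0}) can destroy the definability of the other composition, as illustrated by Example~\ref{ex:H}. The double rigidity obtained from Theorem~\ref{th:Aut} eliminates this adjustment ambiguity, since there is only one admissible pair $(\mu_\Gamma,\mu_\Delta)$, and both compositions are therefore pinned down to the prescribed $0$\=/definable maps $\mu_\MA$ and $\mu_\MB$ at once.
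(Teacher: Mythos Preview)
Your proposal is correct and follows precisely the route the paper intends: the corollary is placed immediately after Theorem~\ref{th:Aut} and its consequence that rigidity transfers under absolute bi-interpretability, so the implicit argument is exactly your two-step one---first use Theorem~\ref{th:Aut} to make $\MB$ rigid as well, then invoke uniqueness of coordinate maps (as in the proof of Lemma~\ref{rigid4}) on both sides simultaneously to force $\mu_\Gamma\circ\mu_\Delta=\mu_\MA$ and $\mu_\Delta\circ\mu_\Gamma=\mu_\MB$.
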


\begin{corollary}
The ring of integers $\Z$ and the metabelian Baumslag\,--\,Solitar group $\BS(1,k)$, $k>1$, are not absolutely bi-interpretable. 
\end{corollary}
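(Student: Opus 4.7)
The plan is to invoke Theorem~\ref{th:Aut}, which asserts that absolutely bi-interpretable structures have isomorphic automorphism groups, and to show that $\Aut(\Z)$ and $\Aut(\BS(1,k))$ are not isomorphic. So the whole proof reduces to computing (or at least distinguishing) these two automorphism groups.

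First I would observe that $\Z$, viewed as a ring in the language $L_{\ring}$, is rigid: any ring automorphism must fix $0$ and $1$, and hence fix every integer by induction using the ring operations. Thus $\Aut(\Z)$ is the trivial group. This is precisely the content of rigidity of $\Z$ already noted in the paper after the definition of rigid structures.

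Next I would show $\Aut(\BS(1,k))$ is nontrivial, and in fact uncountable. The cleanest route is via inner automorphisms: it is standard (and easy to verify from the presentation $\langle a,b \mid b a b^{-1} = a^k\rangle$, or from the semidirect product description $\Z[1/k] \rtimes \Z$ in Example~\ref{ex:BS1}) that $\BS(1,k)$ has trivial center for $k>1$. Hence $\Inn(\BS(1,k)) \simeq \BS(1,k)/\ZG(\BS(1,k)) \simeq \BS(1,k)$, which is an infinite non-abelian group and therefore far from trivial. Consequently $\Aut(\BS(1,k)) \neq 1$.

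Combining these two facts, we have $\Aut(\Z) = 1 \not\simeq \Aut(\BS(1,k))$, so by Theorem~\ref{th:Aut} the structures $\Z$ and $\BS(1,k)$ cannot be absolutely bi-interpretable. The only nontrivial step is checking triviality of the center of $\BS(1,k)$ for $k>1$; this is routine via the semidirect product normal form $(zk^i,m)$ given in Example~\ref{ex:BS1}, where one checks directly that an element commuting with both $a=(1,0)$ and $b=(0,1)$ must be trivial, so no technical obstacle arises.
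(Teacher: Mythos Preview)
Your proof is correct and follows essentially the same approach as the paper: invoke Theorem~\ref{th:Aut} and contrast the trivial $\Aut(\Z)$ with the nontrivial $\Aut(\BS(1,k))$; the paper simply cites a reference for the latter, while you supply the standard argument via inner automorphisms and triviality of the center. One small correction: your aside that $\Aut(\BS(1,k))$ is ``in fact uncountable'' is false --- $\BS(1,k)$ is finitely generated, so its automorphism group is countable --- but this claim plays no role in your argument, which only needs nontriviality.
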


\begin{proof}
Indeed, the group $\Aut(\Z)$  is trivial, but $\Aut(\BS(1,k))$ is not (see, for example,~\cite[Lemma~2]{BS}).  
\end{proof}

\begin{corollary}
An arbitrary ring of algebraic integers $\Aint$ is mutually absolutely interpretable with $\Z$. However, $\Aint$ and $\Z$ are not necessarily absolutely bi-interpretable. Indeed, $\Z$ and the ring of Gaussian integers $\Z[i]$ are not absolutely bi-interpretable since $\Aut(\Z[i]) \neq e$.  
\end{corollary}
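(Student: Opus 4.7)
The plan is to split the proof into two independent assertions: (a) the mutual absolute interpretability $\Aint \rightsquigarrow \Z$ and $\Z \rightsquigarrow \Aint$, and (b) the claim that $\Z$ and $\Z[i]$ fail to be absolutely bi-interpretable. Part (a) is a bookkeeping matter that follows directly from earlier examples; the substantive content is in part (b), where the obstruction comes from Theorem~\ref{th:Aut} on the automorphism group as a bi-interpretation invariant.

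For part (a), I would simply cite Example~\ref{ex:O1}, which records that $\Z$ is injectively $0$\=/interpretable in any ring of algebraic integers $\Aint$ (in fact, Robinson's classical result), and Example~\ref{ex:O2}, which gives the converse absolute interpretability $\Aint \stackrel{}{\rightsquigarrow} \Z$ in the finitely generated case. I would note that the finite generation hypothesis is implicit here, since the reverse direction requires coding the finitely many generators and their minimal polynomials inside $\Z$.

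For part (b), the strategy is to specialize to $\Aint = \Z[i]$ and argue by contradiction using Theorem~\ref{th:Aut}: if $\Z$ and $\Z[i]$ were absolutely bi-interpretable, then $\Aut(\Z) \simeq \Aut(\Z[i])$. I would verify the two sides of this isomorphism separately. First, $\Aut(\Z) = \{e\}$, because any ring endomorphism of $\Z$ must fix $1$ and hence every integer; this is standard and already used in the paper when asserting the rigidity of $\Z$. Second, $\Aut(\Z[i])$ is nontrivial: complex conjugation $\sigma\colon a+bi \mapsto a-bi$ is a ring automorphism of $\Z[i]$ distinct from the identity (it sends $i$ to $-i$, and these are distinct in $\Z[i]$). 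Hence $\Aut(\Z) \not\simeq \Aut(\Z[i])$, contradicting Theorem~\ref{th:Aut}.

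I do not anticipate a real obstacle here, since all technical machinery is already in place; the only mild care point is exhibiting $\sigma \neq \mathrm{id}$ as a bona fide ring automorphism of $\Z[i]$ (which is immediate, but worth recording). If desired, one could also phrase the conclusion uniformly by observing that any ring of algebraic integers $\Aint$ with a nontrivial Galois action (equivalently, $\Aint \neq \Z$ when $\Aint$ is the ring of integers of a number field admitting a nontrivial automorphism over $\Q$) fails to be absolutely bi-interpretable with $\Z$ by exactly the same argument.
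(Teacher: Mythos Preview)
Your proposal is correct and follows essentially the same reasoning as the paper: the corollary is stated without a separate proof, the mutual absolute interpretability being recorded in Examples~\ref{ex:O1} and~\ref{ex:O2}, and the non-bi-interpretability of $\Z$ and $\Z[i]$ being justified in the statement itself by the clause ``since $\Aut(\Z[i]) \neq e$'', which is precisely your invocation of Theorem~\ref{th:Aut} together with the rigidity of $\Z$. Your explicit exhibition of complex conjugation as a nontrivial automorphism simply unpacks what the paper leaves implicit.
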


In practice, regular bi-interpretations are more common, which has many pleasant consequences.

\subsection{Regular bi-interpretability}

Now we introduce a regular analog of bi-interpretation.

\begin{definition}\label{def:reg}
Algebraic structures $\MA$ and $\MB$ are called {\em regularly bi-interpretable}, if 
\begin{enumerate}[label=(\Roman*)]
\item there exist a regular interpretation $(\Gamma,\phi)$ of $\MA$ in $\MB$ and a regular interpretation $(\Delta,\psi)$ of $\MB$ in $\MA$, such that 
\item $(\Gamma\circ\Delta, \phi_\Delta\wedge\psi)$ is regularly homotopic to $\Id_{L(\MA)}$, and $(\Delta\circ\Gamma,\psi_\Gamma\wedge \phi)$ is regularly homotopic to $\Id_{L(\MB)}$, i.\,e.,
\begin{enumerate}[label=(\alph*)]
\item there exist a connecting formula  $\theta_\MA(\bar u, x, \bar{\bar y},\bar z,\bar w)$ and a formula-descriptor $\delta_\MA(\bar{\bar y},\bar z,\bar w)$ in $L(\MA)$, where $|\bar u|={\dim\Gamma\cdot\dim\Delta}$, $|\bar{\bar y}|=\dim_\param \Gamma\cdot\dim\Delta$, $|\bar z|=\dim_\param\Delta$, such that $\MA\models\forall\,\bar{\bar y}\:\forall\,\bar z\:(\phi_\Delta(\bar{\bar y},\bar z)\wedge\psi(\bar z)\longrightarrow\exists\,\bar w\:\delta_\MA(\bar{\bar y},\bar z,\bar w))$ and for any tuples $\bar q\in \psi(\MA)$, $\bar{\bar p}\in \phi_\Delta(\MA,\bar q)$, $\bar r\in\delta_\MA(\bar{\bar p},\bar q,\MA)$  the formula $\theta_\MA(\bar u, x, \bar{\bar p},\bar q,\bar r)$ defines some coordinate map $U_{\Gamma\circ\Delta}(\MA,\bar{\bar p},\bar q)\to A$;
\item there exist connecting formula $\theta_\MB(\bar u, x, \bar{\bar z},\bar y, \bar v)$ and a formula-descriptor $\delta_\MB(\bar{\bar z},\bar y,\bar v)$ in $L(\MB)$, where $|\bar u|={\dim\Gamma\cdot\dim\Delta}$, $|\bar{\bar z}|=\dim_\param\Delta\cdot\dim\Gamma$, $|\bar y|=\dim_\param\Gamma$, such that $\MB\models\forall\,\bar{\bar z}\:\forall\,\bar y\:(\psi_\Gamma(\bar{\bar z},\bar y)\wedge\phi(\bar y)\longrightarrow\exists\,\bar v\:\delta_\MB(\bar{\bar z},\bar y,\bar v))$ and for any tuples $\bar p\in\phi(\MB)$, $\bar{\bar q}\in \psi_\Gamma(\MB,\bar p)$, $\bar s\in \delta_\MB(\bar{\bar q},\bar p,\MB)$ 
the formula $\theta_\MB(\bar u, x, \bar{\bar q},\bar p, \bar s)$ defines some coordinate map $U_{\Delta\circ\Gamma}(\MB,\bar{\bar q},\bar p)\to B$. 
\end{enumerate}
\end{enumerate}
\end{definition}

Regular bi-interpretation as above between $\MA$ and $\MB$ are denoted by $(\Gamma;\Delta)$, or by $(\Gamma,\phi;\Delta,\psi)$, or by $(\Gamma,\phi,\theta_\MB,\delta_\MB;\Delta,\psi,\theta_\MA,\delta_\MA)$.

In Definition~\ref{def:reg_hom_without} we describe several special cases of regular homotopy that commonly arise in research: (1)~when one (or both) of the interpretations is absolute, (2)~without a descriptor, (3)~with a removable descriptor, and (4)~with a split descriptor. Each of these variants applies equally to regular bi-interpretability. 

In practice, the two most important kinds of bi-interpretations are the strong bi-interpretation with parameters and the regular bi-interpretation (see Section~\ref{sec:app}).
Moreover, quite often a regular interpretation yields a strong bi-interpretation with parameters once suitable parameters are substituted. To account for bi-interpretations of this kind, we introduce the following definition.

\begin{definition}\label{def:st_reg}
We say that $\MA$ and $\MB$ are {\em strongly regularly bi-interpretable without  descriptors} (or {\em without homotopy descriptors}), if
\begin{enumerate}[label=(\Roman*)]
\item there exist a regular interpretation $(\Gamma,\phi)$ of $\MA$ in $\MB$ and a regular interpretation $(\Delta,\psi)$ of $\MB$ in $\MA$, such that 
\item\label{item:reg2} $(\Gamma\circ\Delta, \phi_\Delta\wedge\psi)$ is regularly homotopic to $\Id_{L(\MA)}$ without a descriptor, and $(\Delta\circ\Gamma,\psi_\Gamma\wedge \phi)$ is regularly homotopic to $\Id_{L(\MB)}$ without a descriptor, i.\,e.,
\begin{enumerate}[label=(\alph*)]
\item there exists a connecting formula $\theta_\MA(\bar u, x, \bar{\bar y},\bar z)$ in $L(\MA)$, where $|\bar u|={\dim\Gamma\cdot\dim\Delta}$, $|\bar{\bar y}|=\dim_\param \Gamma\cdot\dim\Delta$, $|\bar z|=\dim_\param\Delta$, such that for any tuples $\bar q\in \psi(\MA)$, $\bar{\bar p}\in \phi_\Delta(\MA,\bar q)$ the formula $\theta_\MA(\bar u, x, \bar{\bar p},\bar q)$ defines some coordinate map $U_{\Gamma\circ\Delta}(\MA,\bar{\bar p},\bar q)\to A$;
\item there exists connecting formula $\theta_\MB(\bar u, x, \bar{\bar z},\bar y)$ in $L(\MB)$, where $|\bar u|={\dim\Gamma\cdot\dim\Delta}$, $|\bar{\bar z}|=\dim_\param\Delta\cdot\dim\Gamma$, $|\bar y|=\dim_\param\Gamma$, such that for any tuples $\bar p\in\phi(\MB)$, $\bar{\bar q}\in \psi_\Gamma(\MB,\bar p)$ the formula $\theta_\MB(\bar u, x, \bar{\bar q},\bar p)$ defines some coordinate map $U_{\Delta\circ\Gamma}(\MB,\bar{\bar q},\bar p)\to B$; 
\end{enumerate}
\item\label{item:reg3} furthermore, for any 
tuples of parameters $\bar p\in \phi(\MB)$ and $\bar q\in \psi(\MA)$  there exists a pair of coordinate maps $(\mu_\Gamma,\mu_\Delta)$ for interpretations $(\Gamma,\bar p)$ and $(\Delta,\bar q)$, such that for any $\bar{\bar p}\in \mu^{-1}_\Delta(\bar p)$ and $\bar{\bar q}\in \mu^{-1}_\Gamma(\bar q)$ the coordinate maps ${\mu_\Gamma\circ\mu_\Delta\colon} U_{\Gamma\circ\Delta}(\MA,\bar{\bar p},\bar q)\to A$ and ${\mu_\Delta\circ\mu_\Gamma\colon} U_{\Delta\circ\Gamma}(\MB,\bar{\bar q},\bar p)\to B$ are defined in $\MA$ and $\MB$ correspondingly by the formulas $\theta_\MA(\bar u, x, \bar{\bar p},\bar q)$ and $\theta_\MB(\bar u, x, \bar{\bar q},\bar p)$.
\end{enumerate}
\end{definition}

We note that strong regular bi-interpretations without descriptors do indeed occur in a number of settings. However, we do not currently know of any consequences of this notion that are not already implied either by regular bi-interpretation or by strong bi-interpretation with parameters. There are several conceivable ways to extend strong regular bi-interpretation to incorporate descriptors, but their practical relevance is unclear, and at present, we have no examples in which a regular bi-interpretation can be obtained using only descriptors. For these reasons, we will not pursue this approach further in this article, leaving it for future research.

\begin{remark}
Note that item~\ref{item:reg3} in Definition~\ref{def:st_reg} does not imply item~\ref{item:reg2}. Indeed, the first statement in item~\ref{item:reg2} must hold for all tuples of parameters from the set $X = \phi_\Delta \wedge \psi(\MA)$. By Lemma~\ref{le:int-transitivity} and Remark~\ref{remark2}, we have
$$
X = \{(\bar{\bar p}, \bar q) \mid \bar q \in \psi(\MA),\ \bar p \in \phi(\MB),\ \bar{\bar p} \in \mu_\Delta^{-1}(\bar p)\},
$$
where $\mu_\Delta$ ranges over all coordinate maps of the interpretation $\MB \simeq \Delta(\MA, \bar q)$. Now denote by $\mu_\Delta^{\bar p, \bar q}$ the coordinate map $\mu_\Delta$ of the interpretation $\MB \simeq \Delta(\MA, \bar q)$ from item~\ref{item:reg3} corresponding to the pair of parameters $(\bar p, \bar q)$. The statement in item~\ref{item:reg3} must hold for all tuples of parameters from the set
$$
Y = \{(\bar{\bar p}, \bar q) \mid \bar q \in \psi(\MA),\ \bar p \in \phi(\MB),\ \bar{\bar p} \in (\mu_\Delta^{\bar p, \bar q})^{-1}(\bar p)\}.
$$
Thus $Y \subseteq X$, but it may happen that $Y \neq X$.
\end{remark}

Naturally, analogously to bi-interpretations with parameters, we define the notions of 
\begin{itemize}\setlength\itemsep{0em}
    \item {\em regular injective (strong) bi-interpretation} or {\em bi-definability};
    \item {\em regular half-injective (strong) bi-interpretation};
    \item {\em regular half-absolute (strong) bi-interpretation};
    \item {\em regular quasi-absolute (strong) bi-interpretation}.
\end{itemize}
We usually encounter all of them in practice.

\begin{example}
In~\cite{MS} A.\,G.\,Myasnikov and M.\,Sohrabi proved that the group $\SL_n(F)$ over a field $F$ is regularly strongly half-absolute injectively bi-interpretable (without descriptors) with the field $F$ if $n\geqslant 3$.
\end{example}

\begin{example}
Suppose that $G(R) = G_{\mathrm{ad}}(\Phi, R)$ is an adjoint Chevalley group of $\mathrm{rank} > 1$, $R$ is a commutative ring (with $\frac{1}{2}$
for the root systems $B_{\ell}$, $C_{\ell}$, $F_4$, $G_2$ and with $\frac{1}{3}$ for $G_2$). E.\,Bunina and P.\,Gvozdevsky have proved that the group $G(R)$ is regularly strongly half-absolute injectively bi-interpretable  (without descriptors) with the ring $R$~\cite{BuninaGv}.
\end{example}

\begin{example}
B.\,A.\,Berger and A.\,G.\,Myasnikov in~\cite{Berger} proved that for any natural $n\geqslant 3$ there exist a code $\Gamma$ with a formula-descriptor $\phi$ in the language $\{\subseteq\}$ and an absolute code $\Delta$ in the language of rings $\{+,\,\cdot\,\}$ that for any field $F$ and any linear space $V$ of dimension $n$ over $F$ give a regular injective strong half-absolute bi-interpretation of projective space $\mathrm{P}(V)$ in $F$  (without homotopy descriptors).
\end{example}

\begin{example}\label{ex:BS}
E.\,Daniyarova and A.\,Myasnikov show in~\cite{BS} that the non-abelian metabelian Baumslag\,--\,Solitar group $\BS(1,k)$, $k> 1$, is regularly strongly half-absolute bi-interpretable with ring of integers $\Z$ (without descriptors). At the same time, $\Z$ is regularly strongly half-injectively bi-interpretable with $\BS(1,k)$  (without descriptors).
\end{example}

\begin{example}
Any finitely generated ring of algebraic integers $\Aint$ is strongly half-absolute injectively bi-interpretable with $\Z$ (without descriptors). 
\end{example}

If algebraic structures $\MA$ and $\MB$ are strongly regularly bi-interpretable without descriptors, then we may use all conclusions from both regular bi-interpretability and strong bi-interpretation with parameters. Unlike bi-interpretation with parameters, we know results on regular bi-interpretation, which deepen the corresponding results on regularly invertible interpretations.

\begin{problem}
Find two algebraic structures $\MA$ and $\MB$, which are regularly bi-interpretable, but are not strongly regularly bi-interpretable. 
\end{problem}

Let us formulate and prove regular analogs of Lemmas~\ref{i-rigid} and~\ref{rigid+i-rigid}.

\begin{lemma}\label{reg_i-rigid}
Let $\MB$ be a regularly i\=/rigid algebraic structure and $(\Gamma,\phi)\colon\MA\rightsquigarrow\MB$ be regular right-invertible interpretation with right inverse $(\Delta,\psi)$. Then $(\Gamma,\phi;\Delta,\psi)$ is a regular bi-interpretation between $\MA$ and $\MB$. 
\end{lemma}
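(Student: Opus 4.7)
The plan is to verify the two clauses of Definition~\ref{def:reg} for the pair $(\Gamma,\phi;\Delta,\psi)$. Clause \ref{item:reg1}~--- the existence of the two regular interpretations $(\Gamma,\phi)\colon\MA\rightsquigarrow\MB$ and $(\Delta,\psi)\colon\MB\rightsquigarrow\MA$~--- is a direct hypothesis of the lemma. So the whole content of the proof lies in verifying clause \ref{item:reg2}, which decomposes into two separate regular homotopies: $(\Gamma\circ\Delta,\phi_\Delta\wedge\psi)\to\Id_{L(\MA)}$ on the $\MA$\=/side and $(\Delta\circ\Gamma,\psi_\Gamma\wedge\phi)\to\Id_{L(\MB)}$ on the $\MB$\=/side.

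The first of these homotopies is precisely what it means for $(\Delta,\psi)$ to be a regular right inverse of $(\Gamma,\phi)$, per the definition in Subsection~\ref{subsec:reg_invertible_int}, so it is supplied for free. For the second, the key observation is that $(\Delta\circ\Gamma,\psi_\Gamma\wedge\phi)$ is a regular \emph{self-interpretation} of $\MB$: by Definition~\ref{def:code-composition} the composition $\Delta\circ\Gamma$ is a code from $L(\MB)$ to $L(\MB)$, and by Lemma~\ref{le:int-transitivity}\ref{item:reg} the pair $(\Delta\circ\Gamma,\psi_\Gamma\wedge\phi)$ is a regular interpretation $\MB\rightsquigarrow\MB$. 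Now invoking the regular i\=/rigidity of $\MB$, every regular self-interpretation of $\MB$ is regularly homotopic to $\Id_{L(\MB)}$; applying this to $(\Delta\circ\Gamma,\psi_\Gamma\wedge\phi)$ yields the required homotopy.

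With both homotopies in hand, clause \ref{item:reg2} is fully established, completing the verification that $(\Gamma,\phi;\Delta,\psi)$ is a regular bi-interpretation between $\MA$ and $\MB$. There is no serious technical obstacle here~--- the lemma is essentially a direct unpacking of the definition of regular right-invertibility together with the definition of regular i\=/rigidity, glued by the transitivity statement of Lemma~\ref{le:int-transitivity}.
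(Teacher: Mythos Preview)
Your proof is correct and follows essentially the same approach as the paper's own proof: one homotopy comes from the right-inverse hypothesis, the other from applying regular i\=/rigidity of $\MB$ to the regular self-interpretation $(\Delta\circ\Gamma,\psi_\Gamma\wedge\phi)$. The paper's version is terser (it omits the explicit appeal to Lemma~\ref{le:int-transitivity}\ref{item:reg}), but your added detail is appropriate and the argument is the same.
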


\begin{proof}
    By the condition, $(\Gamma\circ\Delta, \phi_\Delta\wedge\psi)$ is regularly homotopic to $\Id_{L(\MA)}$. And since $\MB$ is regularly i\=/rigid, $(\Delta\circ\Gamma,\psi_\Gamma\wedge \phi)$ is regularly homotopic to $\Id_{L(\MB)}$.
\end{proof}

\begin{lemma}\label{reg_rigid+i-rigid}
Let $\MB$ be a rigid and regularly i\=/rigid algebraic structure and $(\Gamma,\emptyset)\colon\MA\rightsquigarrow\MB$ be an absolute regularly right-invertible interpretation with right inverse $(\Delta,\psi)$ and without homotopy descriptor. Then $(\Gamma,\emptyset;\Delta,\psi)$ is a regular strong bi-interpretation between $\MA$ and $\MB$ without homotopy descriptors. 
\end{lemma}

\begin{proof}
By Lemma~\ref{reg_i-rigid}, $(\Gamma,\emptyset;\Delta,\psi)$ is a regular bi-interpretation between $\MA$ and $\MB$. Let $(\theta_\MA,\emptyset)\colon (\Gamma\circ\Delta,\psi)\to \Id_{L(\MA)}$ and $(\theta_\MB,\emptyset)\colon (\Delta\circ\Gamma,\psi_\Gamma)\to \Id_{L(\MB)}$ be corresponding regular homotopies without homotopy descriptors. We need to check that condition~\ref{item:reg3} from Definition~\ref{def:st_reg} hold. Take any tuple of parameters $\bar q\in \psi(\MA)$ and some coordinate map $\mu_\Delta$ of the interpretation $(\Delta,\bar q)$. Suppose that $\mu_\Gamma$ is a coordinate map of $\Gamma$, such that the formula $\theta_\MA(\bar u,x,\bar q)$ defines the coordinate map $\mu_\Gamma\circ\mu_\Delta\colon U_{\Gamma\circ\Delta}(\MA,\bar q)\to A$. For any $\bar{\bar q}\in \mu^{-1}_\Gamma(\bar q)$ the formula $\theta_\MB(\bar u, x, \bar{\bar q})$ defines some coordinate map $\mu_{\Delta\circ\Gamma}\colon U_{\Delta\circ\Gamma}(\MB,\bar {\bar q})\to B$. Since $\MB$ is rigid, $\mu_{\Delta\circ\Gamma}=\mu_\Delta\circ\mu_\Gamma$, so the pair $(\mu_\Gamma,\mu_\Delta)$ is the desired one. 
\end{proof}

In Example~\ref{ex1} below, one can find a case of two algebraic structures, which are strongly bi-interpretable with parameters, but not regularly bi-interpretable.

\section{Applications of interpretations}\label{sec:app}

The different interpretability options described above allow for different conclusions to be drawn. Some of them are known, and others will be proved in subsequent papers in this series.

\subsection{Undecidability of elementary theory}\label{subsec:undecidability}

In this section, we assume that all the theories $T$ in a language $L$ that we consider are deductively closed, i.e., for any $L$\=/sentence $\psi$, if $T \vdash \psi$, then $\psi \in T$. Recall that a theory $T$ in a language $L$ is called \emph{decidable} if there exists an algorithm which, given any sentence $\psi$ of $L$, determines whether or not $\psi$ belongs to $T$. If no such algorithm exists, we say that $T$ is \emph{undecidable}. Moreover, $T$ is said to be \emph{hereditarily undecidable} if every subtheory $T_0 \subseteq T$ is undecidable. 

A.\,Tarski proved that the elementary theory $\Th(\Z)$ of the ring of integers $\Z$ is hereditarily undecidable (see~\cite{TRM}). Yu.\,Ershov et al. later demonstrated that every finitely axiomatizable undecidable theory is hereditarily undecidable~\cite[Corollary~3.4.1]{Ershov-Lavrov}.

The main result of this section is that interpretations with parameters transfer the property of hereditary undecidability. Originally, these results were central to studying interpretations of algebraic structures. Nowadays, there are several variations, and we mention here the one that is closest to our exposition. We also provide a proof adapted to our terminology and notation.

\begin{theorem}[{\cite[Ershov, Theorem~2, p.\,272]{Ershov}}]\label{le:undec-Z}
Suppose that the language $L(\MA)$ is finite and the first-order theory $\Th(\MA)$ is hereditarily undecidable. If $\MA$ is interpretable (with parameters) in $\MB$, then the first-order theory $\Th(\MB)$ is hereditarily undecidable too. 
\end{theorem}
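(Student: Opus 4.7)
The plan is to first reduce to the absolute case by turning the parameters into constants, then transfer hereditary undecidability to the expanded theory via the $\Gamma$-translation, and finally push it back down to $\Th(\MB)$. By Remark~\ref{lem:AB1}, introducing fresh constants $c_1,\ldots,c_k$ interpreted as the parameters $p_1,\ldots,p_k$ yields $\MB^+ = \langle B; L(\MB)\cup\{\bar c\}\rangle$, in which $\MA$ is \emph{absolutely} interpretable by the code $\Gamma^+$ obtained from $\Gamma$ by substituting $c_i$ for $y_i$. Thus it suffices to prove (i) $\Th(\MB^+)$ is hereditarily undecidable, and (ii) this property transfers from $\Th(\MB^+)$ down to $\Th(\MB)$.

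For (i), since $L(\MA)$ is finite, Remarks~\ref{single_AC} and~\ref{rem:comp-transl} produce a single sentence $AC_{\Gamma^+}\in\Th(\MB^+)$ encoding the admissibility conditions and a computable translation $\psi\mapsto\psi_{\Gamma^+}$ from $L(\MA)$-sentences to $L(\MB^+)$-sentences; by Lemma~\ref{le:interpr_corol_2} it satisfies $\MA\models\psi\iff\MB^+\models\psi_{\Gamma^+}$. Suppose, toward a contradiction, that some deductively closed $T\subseteq\Th(\MB^+)$ is decidable. Then the deductive closure $T'$ of $T\cup\{AC_{\Gamma^+}\}$ is also decidable (since $\phi\in T'\iff (AC_{\Gamma^+}\to\phi)\in T$) and still contained in $\Th(\MB^+)$. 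Define the pullback
\[
T^{*}\;=\;\{\,\psi\in L(\MA)\text{-sentence}\mid \psi_{\Gamma^+}\in T'\,\}.
\]
Being a computable preimage of $T'$, this is decidable, and by the Reduction Theorem it lies in $\Th(\MA)$. The critical point is deductive closure of $T^{*}$: if $\vdash \bigwedge_i\psi_i\to\psi$ in $L(\MA)$, then $\{AC_{\Gamma^+}\}\vdash \bigwedge_i\psi_{i,\Gamma^+}\to\psi_{\Gamma^+}$ (the standard ``faithfulness'' of interpretations), by induction on derivations using Corollaries~\ref{cor1} and~\ref{cor_form} together with routine checks of quantifier axioms relativized to $U_{\Gamma^+}$. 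Hence $T^{*}$ is a decidable deductively closed subtheory of $\Th(\MA)$, contradicting hereditary undecidability.

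For (ii), given a decidable deductively closed subtheory $T\subseteq\Th(\MB)$, define
\[
T^{+}\;=\;\{\,\psi(\bar c)\text{ in } L(\MB^+)\mid \forall \bar y\;\psi(\bar y)\in T\,\},
\]
where $\psi(\bar y)$ is obtained from $\psi(\bar c)$ by replacing each $c_i$ with a fresh variable $y_i$. Then $T^{+}$ is decidable via the computable reduction $\psi(\bar c)\mapsto\forall \bar y\;\psi(\bar y)$; it lies in $\Th(\MB^+)$ since $\MB\models\forall\bar y\;\psi(\bar y)$ forces $\MB^+\models\psi(\bar c)$; and it is deductively closed because from a pure-logic derivation $\vdash \bigwedge_i\psi_i(\bar c)\to\phi(\bar c)$, in which the $c_i$ occur only as logically inert constants, constant-to-variable substitution followed by universal generalization gives $\vdash\bigwedge_i\forall \bar y\;\psi_i(\bar y)\to \forall \bar y\;\phi(\bar y)$, whence $\forall \bar y\;\phi(\bar y)\in T$ and $\phi(\bar c)\in T^{+}$. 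This contradicts (i). The principal technical obstacle is the faithfulness statement used in (i); it is standard but somewhat pedantic, and can equivalently be obtained from G\"odel completeness together with the semantic content of the Reduction Theorem applied to arbitrary models of $T'\cup\{AC_{\Gamma^+}\}$, which avoids an explicit induction on derivations.
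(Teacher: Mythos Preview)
The paper does not supply its own proof of this theorem; it is quoted verbatim from Ershov with a citation, and the paper immediately proceeds to corollaries. So there is nothing to compare against on the paper's side.

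Your argument is the standard one and is essentially correct. A couple of remarks on presentation. In step~(i), the deductive closure of $T^{*}$ is cleanest to justify purely semantically, exactly as you sketch at the end: for any $L(\MB^{+})$-structure $\MC\models AC_{\Gamma^{+}}$, Corollary~\ref{le:interpr_corol_ns} gives a well-defined $\Gamma^{+}(\MC)$ and the equivalence $\MC\models\psi_{\Gamma^{+}}\iff\Gamma^{+}(\MC)\models\psi$ for sentences $\psi$; then completeness yields $AC_{\Gamma^{+}}\vdash \bigwedge_i\psi_{i,\Gamma^{+}}\to\psi_{\Gamma^{+}}$ whenever $\{\psi_i\}\vdash\psi$. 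I would drop the allusion to an induction on derivations and Corollaries~\ref{cor1}, \ref{cor_form}, which are about equivalence modulo $\AC_\Gamma$ on the $\MB$-side and do not by themselves give a syntactic proof transformation. In step~(ii), your deductive-closure argument is fine once stated semantically (for any $L(\MB)$-model $\MC$ of $\{\forall\bar y\,\psi_i(\bar y)\}$ and any interpretation of $\bar c$, $\MC^{+}\models\psi_i(\bar c)$, hence $\MC^{+}\models\phi(\bar c)$, so $\MC\models\forall\bar y\,\phi(\bar y)$), but the phrase ``constant-to-variable substitution followed by universal generalization'' is a bit glib since the constants occur in the hypotheses; the semantic route avoids that quibble.
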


\begin{proof}
Suppose that $\MA\simeq\Gamma(\MB,\bar p)$. Since the language $L(\MA)$ is finite, then there exists an $L(\MB)$\=/formula $AC_\Gamma(\bar y)$, such that for any $L(\MB)$\=/structure $\nsB$ and any tuple $\bar q\in \nsB$ an $L(\MA)$\=/structure $\Gamma(\nsB,\bar q)$ is well-defined if and only if $\nsB\models AC_\Gamma(\bar q)$ (see Remark~\ref{single_AC}). Further we use the $(\Gamma,AC_\Gamma)$\=/translation $\psi\to\psi_{\Gamma,\forall\, AC_\Gamma}$ of sentences in $L(\MA)$ into sentences in $L(\MB)$ (see Definition~\ref{def:reg_transl}).
    
Take any theory $T_B$ in the language $L(\MB)$, such that $T_B\subseteq \Th(\MB)$. We need to show that $T_B$ is undecidable. Let $T_A$ be the set of all sentences $\psi$ in $L(\MA)$, such that $\psi_{\Gamma,\forall\, AC_\Gamma}\in T_B$. Denote by $\K$ the class of all algebraic structures in the language $L(\MA)$ of the form $\Gamma(\nsB,\bar q)$, where $\nsB\in \Mod(T_B)$ and $\bar q\in AC_\Gamma(\nsB)$. It is obvious that $\K\ne\emptyset $ since $\MA\in \K$. Let us check that $T_A=\Th(\K)$. Due to Corollary~\ref{le:interpr_corol_ns}, for any $L(\MA)$\=/sentence $\psi$ one has $\Gamma(\nsB,\bar q)\models \psi$ if and only if $\nsB\models\psi_\Gamma(\bar q)$. Thus $\psi\in T_A$ if and only if $\psi_{\Gamma,\forall\, AC_\Gamma}\in T_B$ if and only if $\nsB\models \psi_{\Gamma,\forall\, AC_\Gamma}$ for every algebraic structure $\nsB\in \Mod(T_B)$ if and only if $\nsB\models \psi_\Gamma(\bar q)$ for every $\nsB\in \Mod(T_B)$ and $\bar q\in AC_\Gamma(\nsB)$ if and only if $\Gamma(\nsB,\bar q)\models\psi$ for every $\nsB\in \Mod(T_B)$ and $\bar q\in AC_\Gamma(\nsB)$ if and only if $\psi \in \Th(\K)$. So, $T_A=\Th(\K)$, i.\,e., $T_A$ is a theory in $L(\MA)$ and $\MA\in\Mod(T_A)$, hence $T_A\subseteq \Th(\MA)$. If $T_B$ is decidable, then the translation $\psi\to\psi_{\Gamma,\forall\, AC_\Gamma}$ give an algorithm to answer whether or not $\psi \in T_A$, which states decidability of $T_A$. Since $\Th(\MA)$ is hereditarily undecidable, $T_A$ is undecidable; therefore, $T_B$ is undecidable as well.
\end{proof}

\begin{corollary}
   In Section~\ref{subsec:def_int}, we have mentioned that the ring $\Z$ can be interpreted in several mathematical structures: a ring of algebraic integers $\Aint$, a number field $K$, any finitely generated infinite integral domain, any finitely generated infinite field $F$ with $\mathrm{char} \, F \neq 2$, and a finitely generated non-virtually abelian solvable group $G$. As a result, each of these structures has a hereditarily undecidable elementary theory. 
\end{corollary}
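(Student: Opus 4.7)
The plan is to argue by contrapositive: assume some subtheory $T_0\subseteq \Th(\MB)$ is decidable, and manufacture from it a decidable subtheory of $\Th(\MA)$, thereby contradicting hereditary undecidability of $\Th(\MA)$. The bridge between the two theories is the $\Gamma$\=/translation together with the Reduction Theorem.

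Fix an interpretation $(\Gamma,\bar p)\colon\MA\rightsquigarrow\MB$ with $k=\dim_{\param}\Gamma$. Since $L(\MA)$ is finite, the translation $\psi\mapsto\psi_\Gamma$ from sentences of $L(\MA)$ to $L(\MB)$\=/formulas in free variables $\bar y=(y_1,\ldots,y_k)$ is computable (Remark~\ref{rem:comp-transl}), and the admissibility conditions collapse to a single $L(\MB)$\=/formula $AC_\Gamma(\bar y)$ (Remark~\ref{single_AC}). By the Reduction Theorem~\ref{le:interpr_corol} one has
\[
\MA\models\psi \iff \MB\models\psi_\Gamma(\bar p)\quad\text{for every sentence }\psi\in L(\MA).
\]
To absorb parameters into a language, I would pass to the expanded signature $L^\ast=L(\MB)\cup\{c_1,\ldots,c_k\}$ and the structure $\MB^\ast=\MB_{\bar p}$ as in Remark~\ref{lem:AB1}. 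The map $\psi\mapsto\psi_\Gamma(\bar c)$ is then a computable reduction of $\Th(\MA)$ to $\Th(\MB^\ast)$.

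Now given a decidable subtheory $T_0\subseteq \Th(\MB)$, let $T_0^\ast$ be the $L^\ast$\=/deductive closure of $T_0\cup\{AC_\Gamma(\bar c)\}$. Because $L^\ast\setminus L(\MB)$ consists of finitely many new constants, $T_0$ is decidable in $L(\MB)$, and $AC_\Gamma(\bar c)$ is a single sentence, $T_0^\ast$ is decidable in $L^\ast$; it is contained in $\Th(\MB^\ast)$ since $\MB^\ast$ models both $T_0$ and $AC_\Gamma(\bar c)$. Define
\[
T_0^{\MA}=\{\psi\in\mathrm{Sent}(L(\MA)):\psi_\Gamma(\bar c)\in T_0^\ast\}.
\]
Computability of $\psi\mapsto\psi_\Gamma(\bar c)$ together with decidability of $T_0^\ast$ gives decidability of $T_0^{\MA}$. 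Inclusion $T_0^{\MA}\subseteq \Th(\MA)$ follows from the Reduction Theorem: if $\psi_\Gamma(\bar c)\in T_0^\ast\subseteq\Th(\MB^\ast)$ then $\MB\models\psi_\Gamma(\bar p)$, hence $\MA\models\psi$.

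The main obstacle is verifying that $T_0^{\MA}$ is deductively closed in $L(\MA)$. This reduces to the translation-preserves-consequence fact: if $\psi_1,\ldots,\psi_n\vdash\phi$ in $L(\MA)$, then $AC_\Gamma(\bar c),(\psi_1)_\Gamma(\bar c),\ldots,(\psi_n)_\Gamma(\bar c)\vdash\phi_\Gamma(\bar c)$ in $L^\ast$. By G\"odel's completeness theorem it suffices to verify this semantically: for any $L^\ast$\=/model $\nsB$ of the left-hand side, we have $\nsB\models AC_\Gamma(\bar c^{\nsB})$, so by Proposition~\ref{le:adm-cond} the structure $\Gamma(\nsB,\bar c^{\nsB})$ is a well-defined $L(\MA)$\=/structure, which by Corollary~\ref{le:interpr_corol_ns} satisfies each $\psi_i$, hence satisfies $\phi$, hence $\nsB\models\phi_\Gamma(\bar c^{\nsB})$ again by Corollary~\ref{le:interpr_corol_ns}. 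Once this ``deduction-preservation'' step is secured, $T_0^{\MA}$ is the promised decidable subtheory of $\Th(\MA)$, contradicting hereditary undecidability and closing the proof.
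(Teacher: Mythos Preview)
Your argument is correct, but it does considerably more work than the paper intends for this corollary. In the paper, the corollary is an immediate application of the preceding Theorem~\ref{le:undec-Z} (cited to Ershov, without proof): since Tarski's result gives that $\Th(\Z)$ is hereditarily undecidable and $L(\Z)$ is finite, and since Section~\ref{subsec:def_int} supplies interpretations (with parameters) of $\Z$ into each of the listed structures, Theorem~\ref{le:undec-Z} yields the conclusion in one line.

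What you have written is, in effect, a self-contained proof of Theorem~\ref{le:undec-Z} itself, built out of the paper's own machinery (the $\Gamma$\=/translation, Remark~\ref{single_AC}, Proposition~\ref{le:adm-cond}, and Corollary~\ref{le:interpr_corol_ns}). The contrapositive construction of a decidable subtheory $T_0^{\MA}$ via $T_0^\ast$, and the semantic verification that translation preserves consequence modulo $AC_\Gamma(\bar c)$, are exactly the ingredients of the classical proof. This is a genuine alternative route: the paper treats the transfer principle as a black box from the literature, while you derive it from first principles using the Reduction Theorem. Your approach buys self-containment and illustrates concretely how the admissibility conditions and the translation interact; the paper's approach is shorter and keeps the focus on the application. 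One small point: to finish the corollary as stated you should also invoke explicitly that $\Th(\Z)$ is hereditarily undecidable (Tarski) and that the listed interpretations exist, but this is understood from the context you set up.
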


Sometimes, interpretations transfer undecidability itself, but for this, regular interpretability is required. 

\begin{theorem}\label{co:interp1} 
Suppose that $\MA$ and $\MB$ are algebraic structures in computable languages and $\MA$ is regularly interpretable in $\MB$ by means of a computable code $\Gamma$. If the first-order theory $\Th(\MB)$ is decidable, then $\Th(\MA)$ is also decidable. Correspondingly, if $\Th(\MA)$ is undecidable, then  $\Th(\MB)$ is undecidable as well.
\end{theorem}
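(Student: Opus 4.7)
The plan is to reduce the decision problem for $\Th(\MA)$ to the decision problem for $\Th(\MB)$ via the $(\Gamma,\phi)$-translation developed in Subsection~\ref{subsec:tran}. The two inputs we need are that the translation is an effective map on sentences and that it preserves truth across the regular interpretation, both of which are already at hand.

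First, I would fix a computable G\"odel numbering for the formulas of $L(\MA)$ and $L(\MB)$ (both languages being computable by hypothesis) and fix the descriptor $\phi(\bar y)$ of the regular interpretation $(\Gamma,\phi)\colon\MA\rightsquigarrow\MB$. Given a sentence $\psi$ in $L(\MA)$, I would produce its $\Gamma$-translation $\psi_\Gamma(\bar y)$ and then form the sentence
$$
\psi_{\Gamma,\forall\phi} \;=\; \forall\,\bar y\:(\phi(\bar y)\longrightarrow \psi_\Gamma(\bar y)\,).
$$
By Remarks~\ref{rem:comp-transl} and~\ref{rem:comp-transl-2}, since $L(\MA)$, $L(\MB)$, and the code $\Gamma$ are all computable, the map $\psi\mapsto \psi_{\Gamma,\forall\phi}$ is a computable function from sentences of $L(\MA)$ to sentences of $L(\MB)$.

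Second, I would invoke Lemma~\ref{le:interpr_corol_2}, which for the regular interpretation $(\Gamma,\phi)$ gives the equivalence
$$
\MA\models \psi \;\iff\; \MB\models \psi_{\Gamma,\forall\phi}
$$
for every sentence $\psi$ of $L(\MA)$. Therefore, the characteristic function of $\Th(\MA)$ is obtained by composing the computable map $\psi\mapsto\psi_{\Gamma,\forall\phi}$ with the characteristic function of $\Th(\MB)$. Assuming $\Th(\MB)$ is decidable, this composition is computable, so $\Th(\MA)$ is decidable. The contrapositive gives the ``correspondingly'' clause: if $\Th(\MA)$ is undecidable, then $\Th(\MB)$ is undecidable as well.

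There is no substantive obstacle, since the Reduction Theorem has already done the semantic work and the computability of the translation has already been noted. The only point worth checking with care is that the translation is genuinely computable uniformly in $\psi$, which amounts to the observation that (i) replacing $\psi$ by an equivalent prenex form with unnested matrix is effective (see the discussion preceding the definition of $\psi\to\psi_\Gamma$), and (ii) the substitutions $x_i=x_j\mapsto E_\Gamma$, $f(\cdot)=x_0\mapsto f_\Gamma$, $R(\cdot)\mapsto R_\Gamma$, $x_i=c\mapsto c_\Gamma$, together with the conjunction of the universe guards $U_\Gamma$, can all be carried out effectively from the code $\Gamma$ by its computability. Thus the whole reduction is uniformly computable, which completes the argument.
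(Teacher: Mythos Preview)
Your proposal is correct and follows exactly the paper's approach: the paper's proof is the one-liner ``The result is directly deduced from Lemma~\ref{le:interpr_corol_2},'' and you have simply unpacked this by making explicit both the semantic equivalence from that lemma and the computability of the translation $\psi\mapsto\psi_{\Gamma,\forall\phi}$ recorded in Remarks~\ref{rem:comp-transl} and~\ref{rem:comp-transl-2}.
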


\begin{proof}
The result is directly deduced from Lemma~\ref{le:interpr_corol_2}. 
\end{proof}

\begin{example}
For any field $F$ and any linear space $V$ of dimension $\geqslant3$ over field $F$ there exists a regular interpretation $F\rightsquigarrow \mathrm{P}(V)$ of field $F$ into projective space $\mathrm{P}(V)$ over $V$~\cite{Berger}. Therefore, if the elementary theory $\Th(F)$ is undecidable (for instance, if $F=\Q$), then the elementary theory $\Th(\mathrm{P}(V))$ is undecidable too.
\end{example}

It is time to consider an example of interpretability with parameters that exclude regularity.

\begin{example}\label{ex1}
Suppose that $c$ is a real number whose decimal notation is not computable. The algebraic structure $\langle \MR; +, \cdot, c\rangle$ is interpretable in $\langle \MR; +, \cdot\rangle$ with parameter $c$, but not regularly interpretable in it, because the elementary theory of the second structure is decidable, but the first is not. Algebraic structures $\langle \MR; +, \cdot, c\rangle$ and $\langle \MR; +, \cdot\rangle$ are also strongly bi-interpretable with parameters in each other, but not regularly bi-interpretable.
\end{example}

\subsection{Diophantine problem and \texorpdfstring{e\=/interpretations}{}}\label{subsec:Diophantine}

Diophantine interpretations are one of the main tools to prove decidability/undecidability of the Diophantine problem in algebraic structures.

Recall that the {\em Diophantine problem} for a given algebraic structure $\MA=\langle A; L(\MA)\rangle$ asks if there exists an algorithm that decides whether or not a given finite system of Diophantine equations, i.\,e., of atomic formulas in $L(\MA)\cup A$, has a solution in $\MA$; in other words, if there is an algorithm that for a given Diophantine sentence $\psi$ of the language $L(\MA)\cup A$ decides whether or not $\MA\models \psi$. Yu.\,Matiyasevich, following the work of M.\,Davis, H.\,Putnam, and J.\,Robinson, proved that the ring $\Z$ has an undecidable Diophantine problem~\cite{Matijasevich}, thus solving the Tenth Hilbert Problem.

\begin{lemma}
Suppose that $\MA$ and $\MB$ are algebraic structures in computable languages and $\MA$ is e\=/interpretable (with parameters) in $\MB$ by means of a computable Diophantine code $\Gamma$.
Then if the Diophantine problem in $\MB$ is decidable, then the Diophantine problem in $\MA$ is also decidable. Correspondingly, if the Diophantine problem in $\MA$ is undecidable, then the Diophantine problem in $\MB$ is undecidable as well. Moreover, if the languages of $\MA$ and $\MB$ are finite, then there is a polynomial-time reduction of the Diophantine problem in $\MA$ to the Diophantine problem in $\MB$.
\end{lemma}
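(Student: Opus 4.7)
The plan is to push a Diophantine sentence from $\MA$ across the interpretation via the $(\Gamma,\bar p,\mu_\Gamma)$-translation of Subsection~\ref{subsec:tran}, exploiting the fact that for a Diophantine code this translation preserves the Diophantine form. Given a Diophantine sentence $\psi$ of $L(\MA)\cup A$, written in the form $\exists\,\bar z\;\bigwedge_{i=1}^{r}\alpha_i(\bar z,a_1,\dots,a_s)$ with $a_j\in A$ and $\alpha_i$ atomic in $L(\MA)$, I would first abstract its constants by setting $\psi'(x_1,\dots,x_s) = \exists\,\bar z\;\bigwedge_{i=1}^{r}\alpha_i(\bar z,x_1,\dots,x_s)$, a Diophantine $L(\MA)$-formula with $\psi = \psi'(a_1,\dots,a_s)$.

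Next I would form the $\Gamma$-translation $\psi'_\Gamma(\bar x_1,\dots,\bar x_s,\bar y)$. By Remark~\ref{rem:Diophantine}, $\psi'_\Gamma$ is a Diophantine $L(\MB)$-formula, since both $\psi'$ and $\Gamma$ are Diophantine. By the Reduction Theorem~\ref{le:interpr_corol}, for any choice of preimages $\bar b_j \in \mu_\Gamma^{-1}(a_j)$ one has
$$\MA \models \psi \;\Longleftrightarrow\; \MB \models \psi'_\Gamma(\bar b_1, \dots, \bar b_s, \bar p),$$
and the right-hand side is a Diophantine sentence in $L(\MB) \cup B$. Hence decidability of the Diophantine problem in $\MB$ yields decidability in $\MA$, and its contrapositive gives the undecidability transfer from $\MA$ to $\MB$.

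For effectiveness, and for the polynomial-time strengthening, I would invoke Remark~\ref{rem:comp-transl}: under the standing computability hypotheses the map $\psi' \mapsto \psi'_\Gamma$ is computable; the parameter tuple $\bar p$ is fixed data of the interpretation; and each input constant $a_j$ is replaced by a preimage $\bar b_j$ via a fixed computable section of $\mu_\Gamma$ consistent with the input encoding of elements of $A$. When $L(\MA)$ and $L(\MB)$ are finite, Remark~\ref{rem:comp-transl} moreover gives that the length of $\psi'_\Gamma$ is linearly bounded in the length of $\psi'$, and the substitution of $\bar p, \bar b_1, \dots, \bar b_s$ is linear too, so the whole reduction runs in linear (hence polynomial) time. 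The only genuinely delicate point, which I would be careful to formalize, is the convention by which elements of $A$ enter the input to the Diophantine problem of $\MA$ and how their preimages in $B^n$ are produced; this is absorbed into the standing notion of a computable interpretation and presents no substantive difficulty.
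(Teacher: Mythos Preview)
Your proposal is correct and follows essentially the same route as the paper, which simply cites the Reduction Theorem~\ref{le:interpr_corol} together with Remarks~\ref{rem:comp-transl} and~\ref{rem:Diophantine}; you have unpacked exactly these ingredients. The only point you add beyond the paper's one-line proof is the discussion of how input constants $a_j\in A$ are replaced by preimages $\bar b_j$, which the paper leaves implicit.
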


\begin{proof}
It follows from the Reduction Theorem~\ref{le:interpr_corol} and Remarks~\ref{rem:comp-transl}, \ref{rem:Diophantine}.
\end{proof}

Using e\=/interpretation J.\,Denef proved that the ring of polynomials in one variable $K[x]$ over an integral domain $K$ has undecidable Diophantine problem~\cite{Denef1, Denef2}; O.\,Kharlampovich and A.\,Myasnikov proved that the Diophantine problem is undecidable over
\begin{enumerate}[label=(\arabic*)]
    \item free associative algebras $A$ over a field $K$,
    \item free Lie algebras $L$ of rank $r\geqslant 3$ a field $K$,
    \item group algebra $K(G)$ of a torsion-free hyperbolic or a toral relatively hyperbolic
group $G$ over a field $K$~\cite{KM_Diophantine1,KM_Diophantine2}.
\end{enumerate}

The Diophantine problem in commutative rings (not necessarily unitary) was studied in detail in~\cite{GMO}, and for finite-dimensional and similar algebras/rings in~\cite{GMO1}. In~\cite{MS} it was shown that the Diophantine problem in the classical matrix groups $\GL_n(R), \SL_n(R), \mathrm{T}_n(R), \UT_n(R)$, $n \geqslant 3$, over associative unitary rings $R$ is polynomial time equivalent (more precisely, Karp equivalent) to the Diophantine problem in $R$. In the case of $\SL_n(R)$, the ring $R$ is assumed to be commutative. Similar results hold for $\mathrm{PGL}_n(R)$ and $\mathrm{PGL}_n(R)$, provided $R$ has no zero divisors (for $\mathrm{PGL}_n(R)$, the ring $R$ is not assumed to be commutative). These results were generalized for Chevalley groups in~\cite{BMP}. The Diophantine problem in solvable groups was intensively studied in algorithmic group theory~\cite{GMO2}; we refer to the paper~\cite{MS} for the latest results and a survey on the known ones. 

It is important to note that all the aforementioned results were obtained through Diophantine interpretations of $\Z$ within the structures being examined.

\subsection{Non-standard models and the first-order classification problem}\label{subsec:ns}

Recall that the {\em first-order classification problem for an algebraic structure $\MA$} is to find an ``algebraic description'' of all models of elementary theory $\Th(\MA)$, i.\,e., of all algebraic structures $\nsA$ in the language $L(\MA)$ first-order
equivalent to $\MA$, $\nsA\equiv\MA$. 

\begin{theorem}[expected in the next paper]\label{th:sk_el}
Let an infinite algebraic structure $\MA$ be interpretable with parameters in an algebraic structure $\MB$, $\MA \simeq \Gamma(\MB,\bar p)$. Then every algebraic structure $\nsA\equiv\MA$ is elementarily embeddable in an algebraic structure $\Gamma(\nsB,\bar q)$ for some $\nsB\equiv\MB$ and $\bar q\in \nsB$, such that $|\nsA|\leqslant |\nsB|\leqslant |\nsA|+|L(\MB)|+\aleph_0$.
\end{theorem}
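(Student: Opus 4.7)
The idea is a standard compactness-plus-Löwenheim–Skolem construction, where the ``bridge'' between $\nsA$ and the future $\nsB$ is built from the $\Gamma$-translation of the elementary diagram of $\nsA$. More precisely, let $n = \dim \Gamma$ and $k = \dim_\param \Gamma$. Expand $L(\MB)$ by a fresh $k$-tuple of constants $\bar q$ and, for each element $a\in \nsA$, by a fresh $n$-tuple of constants $\bar c_a$. In this expanded language $L'$ form the theory
\[
T' \;=\; \Th(\MB) \,\cup\, \AC_\Gamma(\bar q) \,\cup\, \{\, U_\Gamma(\bar c_a,\bar q) \mid a\in \nsA \,\} \,\cup\, \{\, \psi_\Gamma(\bar c_{a_1},\ldots,\bar c_{a_m},\bar q) \mid \nsA\models\psi(a_1,\ldots,a_m)\,\},
\]
where in the last clause $\psi$ ranges over all $L(\MA)$-formulas and $(a_1,\ldots,a_m)$ over matching tuples from $\nsA$. (If the set $\AC_\Gamma$ is not finite, we include all of its members as sentences about $\bar q$.) Any model of $T'$ will yield what we need: the map $\iota\colon a\mapsto \bar c_a/{\sim_\Gamma}$ will be an elementary embedding $\nsA \hookrightarrow \Gamma(\nsB,\bar q)$ by Corollary~\ref{le:interpr_corol_ns} (the Reduction Theorem), applied in both directions (note that the negation of $\psi$ is itself a formula, so the ``only if'' direction is encoded too).

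Next, I would show $T'$ is consistent via compactness. A finite fragment mentions only finitely many elements $a_1,\ldots,a_s\in\nsA$ and finitely many formulas, which can be packaged into a single formula $\Phi(x_1,\ldots,x_s)$ with $\nsA\models \Phi(a_1,\ldots,a_s)$. Since $\nsA\equiv\MA$, we have $\MA\models\exists\bar x\,\Phi$, so there exist $a'_1,\ldots,a'_s\in \MA$ with $\MA\models \Phi(a'_1,\ldots,a'_s)$. Using the given interpretation $\MA\simeq\Gamma(\MB,\bar p)$, pick any $\bar b_i\in \mu_\Gamma^{-1}(a'_i)\subseteq U_\Gamma(\MB,\bar p)$. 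Then $\MB$, expanded by interpreting $\bar q$ as $\bar p$ and each $\bar c_{a_i}$ as $\bar b_i$ (and the remaining new constants arbitrarily), satisfies the finite fragment: $\AC_\Gamma(\bar p)$ holds by Proposition~\ref{le:adm-cond}, and the translated formulas hold by the Reduction Theorem~\ref{le:interpr_corol}. Hence $T'$ has a model $\nsB^+$; its reduct $\nsB$ to $L(\MB)$ satisfies $\Th(\MB)$, and the constants give $\bar q\in \nsB$ and tuples $\bar c_a\in \nsB^n$ with the required properties. The map $\iota$ is then an elementary embedding by Corollary~\ref{le:interpr_corol_ns}.

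Finally, for the cardinality bound I would invoke downward Löwenheim–Skolem–Tarski: the language $L'$ has cardinality $|L(\MB)|+|\nsA|\cdot n + k + \aleph_0 = |L(\MB)|+|\nsA|+\aleph_0$, so we may take $\nsB^+$ (and thus $\nsB$) of cardinality at most $|L(\MB)|+|\nsA|+\aleph_0$. For the lower bound, note that distinct $a\ne a'$ in $\nsA$ force $\bar c_a\not\sim_\Gamma \bar c_{a'}$ (apply the construction to the $L(\MA)$-formula $\neg(x_1=x_2)$), so the quotient $U_\Gamma(\nsB,\bar q)/{\sim_\Gamma}$ has at least $|\nsA|$ elements; since $\MA$ is infinite so is $\nsA$, hence $\nsB$ is infinite and $|\nsB|=|\nsB^n|\geqslant |\nsA|$.

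The only real subtlety I anticipate is bookkeeping the admissibility conditions $\AC_\Gamma(\bar y)$ when $L(\MA)$ is infinite: then $\AC_\Gamma$ is an infinite schema rather than a single sentence, but this is harmless because each individual formula in $\AC_\Gamma$ is a sentence in $L(\MB)$ (with the free variables $\bar y$ replaced by the new constants $\bar q$), and compactness accommodates infinitely many such axioms. Everything else is routine verification using the Reduction Theorem and its corollaries already established in Subsection~\ref{subsec:reduction}.
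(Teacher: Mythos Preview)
The paper does not prove this theorem here; it is explicitly deferred to a forthcoming paper in the series (note the bracket ``[expected in the next paper]'' in the theorem heading), so there is no proof to compare against.

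That said, your argument is the standard and correct one, and almost certainly what the authors have in mind: compactness applied to $\Th(\MB)\cup\AC_\Gamma(\bar q)$ together with the $\Gamma$-translation of the elementary diagram of $\nsA$ yields an $\nsB\equiv\MB$ with a tuple $\bar q$ and representatives $\bar c_a$; the Reduction Theorem (Corollary~\ref{le:interpr_corol_ns}) then shows $a\mapsto\bar c_a/{\sim_\Gamma}$ is an elementary embedding; and downward L\"owenheim--Skolem in the expanded language gives the upper cardinality bound, while injectivity of $\iota$ gives the lower bound. The handling of infinite $L(\MA)$ via the schema $\AC_\Gamma$ is fine. One cosmetic point: in the compactness step, say explicitly that the finitely many sentences of $\Th(\MB)$ and of $\AC_\Gamma(\bar q)$ appearing in a given finite fragment are satisfied in $(\MB,\bar p)$ trivially and by Proposition~\ref{le:adm-cond} respectively, so that only the translated diagram sentences require the argument via $\nsA\equiv\MA$ that you give.
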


If the interpretation $\MA \rightsquigarrow \MB$ is regular, then the statement of Theorem~\ref{th:sk_el} can be strengthened.

\begin{theorem}[expected in the next paper] \label{pr:reg-int-elem-equiv}
Let $(\Gamma,\phi)\colon\MA\rightsquigarrow\MB$ be a regular interpretation and $\nsB$ be an $L(\MB)$\=/structure such that $\nsB\equiv\MB$. Then 
\begin{enumerate}[label=(\arabic*)]
\item for every tuple $\bar q \in \phi(\nsB)$ the algebraic structure $\Gamma(\nsB,\bar q)$ is well-defined and one has $\Gamma(\nsB,\bar q) \equiv \MA$;
\item moreover, if interpretation $(\Gamma,\phi)$ is self-homotopic then for all $\bar q_1,\bar q_2\in \phi(\nsB)$ one has $\Gamma(\nsB,\bar q_1)\simeq\Gamma(\nsB,\bar q_2)$, i.\,e., there exists an $L(\MA)$\=/structure $\nsA\equiv\MA$, such that $\nsA\simeq\Gamma(\nsB,\phi)$.
\end{enumerate}
\end{theorem}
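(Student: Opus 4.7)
\textbf{Proof proposal for Theorem~\ref{pr:reg-int-elem-equiv}.}

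For part (1), the plan is to combine the admissibility conditions machinery from Subsection~\ref{subsec:AC} with the Reduction Theorem. Since $(\Gamma,\phi)\colon\MA\rightsquigarrow\MB$ is a regular interpretation, the Corollary to Proposition~\ref{le:adm-cond} gives $\MB\models\AC_{\Gamma,\phi}$; as $\AC_{\Gamma,\phi}$ is a set of $L(\MB)$-sentences and $\nsB\equiv\MB$, we also have $\nsB\models\AC_{\Gamma,\phi}$. Hence for every $\bar q\in\phi(\nsB)$ we get $\nsB\models\AC_\Gamma(\bar q)$, and Proposition~\ref{le:adm-cond} guarantees that $\Gamma(\nsB,\bar q)$ is well-defined. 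To show $\Gamma(\nsB,\bar q)\equiv\MA$, I take an arbitrary $L(\MA)$-sentence $\psi$ and chain the following equivalences: by Lemma~\ref{le:interpr_corol_2}, $\MA\models\psi$ iff $\MB\models\psi_{\Gamma,\forall\phi}$; since $\psi_{\Gamma,\forall\phi}$ is an $L(\MB)$-sentence and $\nsB\equiv\MB$, this is equivalent to $\nsB\models\psi_{\Gamma,\forall\phi}$; unpacking the definition, this says $\nsB\models\psi_\Gamma(\bar q)$ for every $\bar q\in\phi(\nsB)$, which by Corollary~\ref{le:interpr_corol_ns} is equivalent to $\Gamma(\nsB,\bar q)\models\psi$ for every such $\bar q$. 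In particular $\Gamma(\nsB,\bar q)\equiv\MA$ for any fixed $\bar q\in\phi(\nsB)$.

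For part (2), the idea is to express self-homotopy as a first-order property of $\MB$ and transfer it to $\nsB$. By assumption there exist $L(\MB)$-formulas $\theta(\bar x_1,\bar x_2,\bar y_1,\bar y_2,\bar w)$ and $\delta(\bar y_1,\bar y_2,\bar w)$ witnessing a self-homotopy $(\theta,\delta)\colon(\Gamma,\phi)\to(\Gamma,\phi)$. The key observation is that the statement ``for all $\bar p_1,\bar p_2\in\phi(\MB)$ there exists $\bar r\in\delta(\bar p_1,\bar p_2,\MB)$, and for every such $\bar r$ the formula $\theta(\bar x_1,\bar x_2,\bar p_1,\bar p_2,\bar r)$ defines an $L(\MA)$-isomorphism $\Gamma(\MB,\bar p_1)\to\Gamma(\MB,\bar p_2)$'' is expressible by a set $\mathcal{H}_{\theta,\delta}$ of $L(\MB)$-sentences. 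Namely, $\mathcal{H}_{\theta,\delta}$ contains the sentence $\forall\bar y_1\,\forall\bar y_2\,(\phi(\bar y_1)\wedge\phi(\bar y_2)\to\exists\bar w\,\delta(\bar y_1,\bar y_2,\bar w))$, together with the ``homotopy conditions'' stating that, under the hypotheses $\phi(\bar y_1)\wedge\phi(\bar y_2)\wedge\delta(\bar y_1,\bar y_2,\bar w)$, the set defined by $\theta$ is the graph of a map $U_\Gamma(\nsB,\bar y_1)/{\sim_\Gamma}\to U_\Gamma(\nsB,\bar y_2)/{\sim_\Gamma}$ that is well-defined, total, injective, surjective, and preserves the interpretations of each symbol $Q\in L(\MA)$; each of these properties is a universal-existential statement built from the formulas of $\Gamma$ and $\theta,\delta$, completely analogous to the formulas in $\AC_\Gamma$ of Subsection~\ref{subsec:AC}.

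Since $\MB\models\mathcal{H}_{\theta,\delta}$ and $\nsB\equiv\MB$, we obtain $\nsB\models\mathcal{H}_{\theta,\delta}$. Applied to $\bar q_1,\bar q_2\in\phi(\nsB)$, this yields some $\bar r\in\nsB$ with $\nsB\models\delta(\bar q_1,\bar q_2,\bar r)$, and the formula $\theta(\bar x_1,\bar x_2,\bar q_1,\bar q_2,\bar r)$ then defines an $L(\MA)$-isomorphism $\lambda\colon\Gamma(\nsB,\bar q_1)\to\Gamma(\nsB,\bar q_2)$; combined with part (1), this produces a single structure $\nsA\simeq\Gamma(\nsB,\phi)$ with $\nsA\equiv\MA$. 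The main technical point on which the argument rests is the explicit list of homotopy conditions announced in Subsection~\ref{subsec:homotopy} ``to be given in a later paper''; once those conditions are written down as a schema of $L(\MB)$-sentences, the proof is a straightforward transfer via $\nsB\equiv\MB$ together with Corollary~\ref{le:interpr_corol_ns}. I would therefore expect the bulk of the work in a complete write-up to consist in verifying that this schema is indeed first-order and genuinely captures ``$\theta$ defines an $L(\MA)$-isomorphism'', rather than in the model-theoretic transfer itself.
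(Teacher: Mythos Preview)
The paper does not actually prove this theorem; it is one of the results announced as ``expected in the next paper'', so there is no proof in the present text to compare your proposal against. Your argument is correct and is exactly the one the paper's machinery is designed to support: part~(1) follows from the admissibility conditions $\AC_{\Gamma,\phi}$ being $L(\MB)$-sentences together with Lemma~\ref{le:interpr_corol_2} and Corollary~\ref{le:interpr_corol_ns} (the biconditional for a fixed $\bar q$ is obtained by running the forward direction on both $\psi$ and $\neg\psi$, which you use implicitly), and part~(2) is precisely the transfer of the first-order ``homotopy conditions'' that the paper announces after Lemma~\ref{lem:cor4} but defers to a later installment. Your closing assessment is accurate: the only substantive work left is writing down that schema $\mathcal{H}_{\theta,\delta}$ explicitly and checking it says what it should, and when $L(\MA)$ is infinite this is a schema rather than a single sentence, which is harmless for the transfer via $\nsB\equiv\MB$.
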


In the assumptions of Theorems~\ref{th:sk_el} and~\ref{pr:reg-int-elem-equiv}, algebraic structures of the kind $\Gamma(\nsB,\bar q)$ are called {\em non-standard models of $\MA$} (relative to $\MB$). Regular bi-interpretability between $\MA$ and $\MB$ allows us to describe models of the elementary theory $\Th(\MA)$ by means of models of the elementary theory $\Th(\MB)$; that is, in this case, all models of $\Th(\MA)$ are non-standard relative to $\MB$ and the first-order classification problem for $\MA$ is solved.

\begin{theorem}[expected in the next paper]\label{th:equiv1}
Let $\MA$ and $\MB$ be regularly bi-interpretable in each other, so $\MA \simeq \Gamma(\MB,\phi)$ and $\MB\simeq\Delta(\MA,\psi)$. Then for any $L(\MA)$\=/structure $\nsA$ the following statements are equivalent:
\begin{enumerate}[label=(\arabic*)]
\item $\MA\equiv\nsA$;
\item $\nsA\simeq\Gamma(\nsB,\phi)$ for some $L(\MB)$\=/structure $\nsB\equiv \MB$.  Moreover, for any such $\nsB$ one has  $\nsB\simeq\Delta(\nsA,\psi)$, so  $\nsB$ is defined uniquely up to an isomorphism. 
\end{enumerate}
\end{theorem}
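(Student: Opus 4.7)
The plan is to prove both directions of the equivalence using Theorem \ref{pr:reg-int-elem-equiv} (which transfers regular interpretations to elementarily equivalent models), combined with Lemma \ref{le:Aut3}, which identifies the composed interpretation $\Gamma\circ\Delta(\cdot,\bar{\bar p},\bar q)$ with the iterated interpretation $\Gamma(\Delta(\cdot,\bar q),\bar{\bar p}/{\sim_\Delta})$, and symmetrically for $\Delta\circ\Gamma$. As a preliminary, I would verify self-homotopy of both $(\Gamma,\phi)$ and $(\Delta,\psi)$, which is implicitly required to make sense of the shorthand $\Gamma(\nsB,\phi)$ and $\Delta(\nsA,\psi)$ in the statement (so that different parameter choices give isomorphic structures). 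This is the main technical step: it does not follow immediately from the obvious self-homotopy of $(\Gamma\circ\Delta,\phi_\Delta\wedge\psi)$ and $(\Delta\circ\Gamma,\psi_\Gamma\wedge\phi)$ obtained from symmetry and transitivity of Lemma \ref{le:codes_reg} applied to the bi-interpretation homotopies, but requires unwinding those compositions via Lemma \ref{le:Aut3} and transporting the resulting definable isomorphisms across $\Delta$ (resp. $\Gamma$) to obtain connectors in $L(\MB)$ (resp. $L(\MA)$).

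The direction (2)$\Rightarrow$(1) is then immediate: given $\nsA\simeq\Gamma(\nsB,\phi)$ with $\nsB\equiv\MB$, pick any $\bar p\in\phi(\nsB)$; Theorem \ref{pr:reg-int-elem-equiv}(1) yields $\Gamma(\nsB,\bar p)\equiv\MA$, so $\nsA\equiv\MA$.

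For (1)$\Rightarrow$(2), assume $\nsA\equiv\MA$. Since $\exists\bar z\,\psi(\bar z)$ holds in $\MA$, it holds in $\nsA$, so I pick $\bar q\in\psi(\nsA)$ and set $\nsB:=\Delta(\nsA,\bar q)$; by Theorem \ref{pr:reg-int-elem-equiv}, $\nsB\equiv\MB$. To produce the required $\bar p\in\phi(\nsB)$ with $\nsA\simeq\Gamma(\nsB,\bar p)$, fix a coordinate map $\mu_\Delta\colon U_\Delta(\nsA,\bar q)\to\nsB$, pick $\bar p\in\phi(\nsB)$, and choose a lift $\bar{\bar p}\in\mu_\Delta^{-1}(\bar p)$; the Reduction Theorem \ref{le:interpr_corol} gives $\bar{\bar p}\in\phi_\Delta(\nsA,\bar q)$, so $(\bar{\bar p},\bar q)\in\phi_\Delta\wedge\psi(\nsA)$. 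The bi-interpretation provides a regular homotopy $(\theta_\MA,\delta_\MA)\colon(\Gamma\circ\Delta,\phi_\Delta\wedge\psi)\to\Id_{L(\MA)}$ in $\MA$; this is expressed by first-order sentences in $L(\MA)$ (existence of witnesses $\bar r$ satisfying $\delta_\MA$, plus the clause that $\theta_\MA$ defines an isomorphism), which transfer to $\nsA$. Selecting such $\bar r$, the formula $\theta_\MA(\bar u,x,\bar{\bar p},\bar q,\bar r)$ defines an isomorphism $\Gamma\circ\Delta(\nsA,\bar{\bar p},\bar q)\to\nsA$; combining with the identification $\Gamma\circ\Delta(\nsA,\bar{\bar p},\bar q)\simeq\Gamma(\nsB,\bar p)$ from Lemma \ref{le:Aut3} yields $\nsA\simeq\Gamma(\nsB,\bar p)$.

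For the uniqueness of $\nsB$, suppose $\nsA\simeq\Gamma(\nsB_0,\bar p)$ for some $\nsB_0\equiv\MB$. The symmetric argument in $\nsB_0$, using $(\theta_\MB,\delta_\MB)\colon(\Delta\circ\Gamma,\psi_\Gamma\wedge\phi)\to\Id_{L(\MB)}$ together with Lemma \ref{le:Aut3}, produces $\nsB_0\simeq\Delta\circ\Gamma(\nsB_0,\bar{\bar q},\bar p)\simeq\Delta(\Gamma(\nsB_0,\bar p),\bar q_0)\simeq\Delta(\nsA,\bar q_0)$ for suitable parameters; self-homotopy of $(\Delta,\psi)$ then identifies $\Delta(\nsA,\bar q_0)$ with $\nsB=\Delta(\nsA,\bar q)$, giving $\nsB_0\simeq\nsB$. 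As noted, the main obstacle is the preliminary self-homotopy verification, which governs how the bi-interpretation homotopies descend from the compositions to their factors; the remaining work is a disciplined transfer of the first-order clauses defining the regular homotopies across the elementary equivalences $\nsA\equiv\MA$ and $\nsB\equiv\MB$, with the admissibility conditions $\AC_{\Gamma,\phi}$ and $\AC_{\Delta,\psi}$ ensuring every intermediate structure is well-defined.
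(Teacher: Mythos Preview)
The paper does not prove this theorem: it is explicitly labeled ``[expected in the next paper]'' and only stated, so there is no proof in the present paper against which to compare your proposal. The same applies to Theorem~\ref{pr:reg-int-elem-equiv}, on which you rely.

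That said, your outline is sound and matches what one would expect the deferred proof to look like: the direction (2)$\Rightarrow$(1) is indeed immediate from Theorem~\ref{pr:reg-int-elem-equiv}(1), and for (1)$\Rightarrow$(2) the natural move is exactly to set $\nsB:=\Delta(\nsA,\bar q)$ and then use the transferred homotopy $(\theta_\MA,\delta_\MA)$ together with Lemma~\ref{le:Aut3} to identify $\Gamma(\nsB,\bar p)$ with $\Gamma\circ\Delta(\nsA,\bar{\bar p},\bar q)\simeq\nsA$. The uniqueness argument via the symmetric use of $(\theta_\MB,\delta_\MB)$ is likewise the expected route.

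You are right to flag the self-homotopy of $(\Gamma,\phi)$ and $(\Delta,\psi)$ as the genuine technical point, and right that it does not drop out of Lemma~\ref{le:codes_reg} applied to the compositions. Your sketch (unwind via Lemma~\ref{le:Aut3} and transport the definable isomorphism across the other interpretation) is the correct idea, but as written it is only a plan: you have not exhibited the connector formula in $L(\MB)$ that uniformly links $\Gamma(\MB,\bar p_1)$ to $\Gamma(\MB,\bar p_2)$ for arbitrary $\bar p_1,\bar p_2\in\phi(\MB)$. Concretely, one builds it by choosing $\bar{\bar q}\in\psi_\Gamma(\MB,\bar p_1)$, using $\theta_\MB$ to identify $\Delta\circ\Gamma(\MB,\bar{\bar q},\bar p_1)$ with $\MB$, and then reapplying $\Gamma(\cdot,\bar p_2)$ through this identification; the resulting formula has the extra parameters $\bar{\bar q}$ and the $\delta_\MB$-witnesses absorbed into the descriptor. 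Writing this out carefully (and checking that the descriptor condition in Definition~\ref{def:reg_hom} is met for \emph{all} pairs $\bar p_1,\bar p_2$) is where the work lies, and you should expect it to occupy most of the eventual proof.
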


There is also an analogue of the theorem above for the case of regular invertible interpretation, but we omit it here.

\begin{example}
Let $R$ be an associative commutative ring with $1$. It was shown in~\cite{MS} that the special linear group $\SL_n(R)$, $n \geqslant 3,$ is regularly bi-interpretable with $R$. It follows from Theorem~\ref{th:equiv1} that for any group $H$ the following equivalence holds: 
$$
H\equiv \SL_n(R) \; \Longleftrightarrow\;  H \simeq \SL_n(\widetilde{R}) \ \mbox{ for some ring } \ \widetilde{R} \equiv R.
$$
\end{example}

\begin{example}
E.\,Bunina and P.\,Gvozdevsky in~\cite{BuninaGv} proved several results on regular interpretability of Chevalley groups with a ring $R$ as above. Hence, Theorem~\ref{th:equiv1} again provides a tool for solving the first-order classification problem for such groups. 
\end{example}

\begin{example}
Many groups are known to be regularly bi-interpretable with $\Z$: free me\-ta\-be\-lian~\cite{KMfreemetab}, Baumslag\,--\,Solitar groups~\cite{BS, Khelif}, wreath products $\Z \wr \Z$~\cite{Khelif}, and more generally, wreath products $\Z^m \wr \Z^n$~\cite{KMO}, etc. For all of them Theorem~\ref{th:equiv1} solves the first-order classification problem.
\end{example}

When we study non-standard models of $\MA$ by means of a regular interpretation $(\Gamma,\phi)\colon \MA\rightsquigarrow\MB$ into an absolutely rich algebraic structure $\MB$ (for example $\Z$, see Subsection~\ref{subsec:rich}), the explicit form of the interpretation code $\Gamma$ fades into the background, since the following result is true. 

\begin{theorem} [expected in the next paper]
Let a finitely generated structure $\MA$ in a finite language  $L(\MA)$ be regularly interpretable in an absolutely rich algebraic structure $\MB$ in two ways, as $\Gamma_1(\MB,\phi_1)$ and $\Gamma_2(\MB,\phi_2)$. If for an $L(\MB)$\=/structure $\nsB$ one has $\nsB \equiv \MB$, then the algebraic structures 
$\MA_1\simeq\Gamma_1(\nsB,\phi_1)$, $\MA_2\simeq\Gamma_2(\nsB,\phi_2)$ are  well-defined and isomorphic $\MA_1\simeq\MA_2$.
\end{theorem}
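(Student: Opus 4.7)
The plan is to construct, inside $\MB$, a regular homotopy $(\theta,\delta)\colon (\Gamma_1,\phi_1)\to(\Gamma_2,\phi_2)$ whose connector and descriptor are $L(\MB)$\=/formulas, and then to transport this homotopy to $\nsB$ by elementary equivalence. The crucial use of absolute richness of $\MB$ is that it collapses the quantification over $L(\MA)$\=/terms — \emph{a priori} a weak second-order notion — into a single first-order $L(\MB)$\=/formula. Once such a homotopy is in hand, specializing $\Gamma_1=\Gamma_2$ and $\phi_1=\phi_2$ also yields the well-definedness claim.

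Fix a finite generating tuple $\bar a=(a_1,\dots,a_n)$ of $\MA$. Since $L(\MA)$ is finite, $L(\MA)$\=/terms $t(x_1,\dots,x_n)$ admit an effective G\"odel numbering, and for each $i\in\{1,2\}$ the recursive evaluation of a term $t$ through the code $\Gamma_i$ on a tuple $\bar{\bar a}_i$ — built from $\bar{\bar a}_i$ via the code formulas $c_{\Gamma_i},f_{\Gamma_i}$ along a finite sequence of intermediate $n$\=/tuples — is a weak second-order statement in $\MB$. Absolute richness of $\MB$ (cf.~Remark~\ref{rem:WSOL-def} and~\cite[Theorem~4.7]{KhMS}) turns this statement into a first-order $L(\MB)$\=/formula
$$
\theta(\bar x_1,\bar x_2,\bar y_1,\bar y_2,\bar{\bar a}_1,\bar{\bar a}_2)
$$
which holds in $\MB$ iff some $L(\MA)$\=/term $t$ satisfies $E_{\Gamma_i}(\bar x_i,t^{\Gamma_i}(\bar{\bar a}_i,\bar y_i),\bar y_i)$ for both $i=1,2$. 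Let $\delta(\bar y_1,\bar y_2,\bar{\bar a}_1,\bar{\bar a}_2)$ be the $L(\MB)$\=/formula asserting that $\bar{\bar a}_i\in U_{\Gamma_i}(\cdot,\bar y_i)^n$ for $i=1,2$ and that $\theta(\cdot,\cdot,\bar y_1,\bar y_2,\bar{\bar a}_1,\bar{\bar a}_2)$ defines an $L(\MA)$\=/isomorphism between $\Gamma_1(\cdot,\bar y_1)$ and $\Gamma_2(\cdot,\bar y_2)$; the latter unfolds into a finite conjunction — possible because $L(\MA)$ is finite — of first-order $L(\MB)$\=/sentences expressing functionality, well-definedness on $\sim_{\Gamma_i}$\=/classes, injectivity, surjectivity, and preservation of every symbol $Q\in L(\MA)$ through the code formulas $Q_{\Gamma_i}$.

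Next, verify that $(\theta,\delta)$ is a regular homotopy in $\MB$: for any $\bar p_i\in\phi_i(\MB)$, fix coordinate maps $\mu_{\Gamma_i}$ and pick $\bar{\bar a}_i=\bar\mu_{\Gamma_i}^{-1}(\bar a)$. Because $\bar a$ generates $\MA$, every $a\in A$ equals $t(\bar a)$ for some $L(\MA)$\=/term $t$, so the unique $L(\MA)$\=/isomorphism $\bar\mu_{\Gamma_2}^{-1}\circ\bar\mu_{\Gamma_1}\colon\Gamma_1(\MB,\bar p_1)\to\Gamma_2(\MB,\bar p_2)$ sends $[t^{\Gamma_1}(\bar{\bar a}_1)]_{\sim_{\Gamma_1}}$ to $[t^{\Gamma_2}(\bar{\bar a}_2)]_{\sim_{\Gamma_2}}$; by the Reduction Theorem~\ref{le:interpr_corol}, this is precisely the map defined by $\theta(\cdot,\cdot,\bar p_1,\bar p_2,\bar{\bar a}_1,\bar{\bar a}_2)$. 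Hence $\MB$ satisfies
$$
\Psi\;:=\;\forall\bar y_1\,\forall\bar y_2\,\bigl(\phi_1(\bar y_1)\wedge\phi_2(\bar y_2)\to\exists\bar{\bar a}_1\,\exists\bar{\bar a}_2\,\delta(\bar y_1,\bar y_2,\bar{\bar a}_1,\bar{\bar a}_2)\bigr),
$$
together with the admissibility conditions $\AC_{\Gamma_i,\phi_i}$. Since $\nsB\equiv\MB$, both transfer: $\phi_i(\nsB)\ne\emptyset$ and $\Gamma_i(\nsB,\bar q_i)$ is a well-defined $L(\MA)$\=/structure for every $\bar q_i\in\phi_i(\nsB)$, and for any such pair $\bar q_1,\bar q_2$ there exist witnesses $\bar{\bar a}_1^{\ast},\bar{\bar a}_2^{\ast}$ in $\nsB$ such that $\theta(\cdot,\cdot,\bar q_1,\bar q_2,\bar{\bar a}_1^{\ast},\bar{\bar a}_2^{\ast})$ defines an $L(\MA)$\=/isomorphism $\Gamma_1(\nsB,\bar q_1)\simeq\Gamma_2(\nsB,\bar q_2)$. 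Taking $\Gamma_1=\Gamma_2$ and $\phi_1=\phi_2$ then yields the well-definedness of $\Gamma_i(\nsB,\phi_i)$ up to isomorphism.

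The main obstacle is the construction of $\theta$: one must faithfully encode the recursive evaluation of an $L(\MA)$\=/term through the code $\Gamma_i$ as a weak second-order formula over $\MB$ and then invoke absolute richness in a uniform way in the parameters $\bar y_i,\bar{\bar a}_i$. After that, the argument is pleasantly robust, because ``$\theta$ defines an $L(\MA)$\=/isomorphism'' is a first-order assertion about $\MB$ and transfers to $\nsB$ verbatim — in particular, we do \emph{not} need the non-standard witnesses $\bar{\bar a}_i^{\ast}$ to generate the non-standard structures $\Gamma_i(\nsB,\bar q_i)$, which they typically will not.
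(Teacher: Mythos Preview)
The paper does not prove this theorem; it is explicitly deferred to a later paper in the series. However, the paper does announce Theorem~\ref{reg_th18} (also deferred), which asserts exactly the key lemma you need: any two regular interpretations of a finitely generated $\MA$ in a finite language into an absolutely rich $\MB$ are regularly homotopic. Your proposal is essentially a sketch of Theorem~\ref{reg_th18} followed by the transfer of the homotopy conditions to $\nsB$ via elementary equivalence, which is precisely how the paper's framework is set up to deduce the present statement (combine Theorem~\ref{reg_th18} with Theorem~\ref{pr:reg-int-elem-equiv}(2) for the well-definedness, then transfer the first-order homotopy sentence $\Psi$).

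Your approach is correct and matches the paper's intended route. The use of absolute richness to collapse ``there exists an $L(\MA)$\=/term $t$ such that $\bar x_i$ represents $t(\bar{\bar a}_i)$ in $\Gamma_i$'' into a first-order $L(\MB)$\=/formula is the right idea: one codes a parallel term-computation as a finite sequence of pairs of $n_i$\=/tuples in $B$, each step being a generator pair, a constant pair, or obtained from earlier pairs by simultaneous application of some $f_{\Gamma_1},f_{\Gamma_2}$; quantifying over such finite sequences is weak second-order, hence first-order expressible by richness. Your packaging of the isomorphism conditions into $\delta$ is standard (and indeed first-order since $L(\MA)$ is finite), and your final remark --- that the non-standard witnesses $\bar{\bar a}_i^{\ast}$ need not generate $\Gamma_i(\nsB,\bar q_i)$, only that the first-order content of $\delta$ transfers --- is exactly the point that makes the argument go through without any control over $\nsB$.
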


\subsection{Stability and saturation}\label{subsec:stable}

Let $\MA=\langle A; L(\MA)\rangle$ be an algebraic structure and $X\subseteq A$ be a subset. The {\em Stone space} $S^{\MA}_m(X)$ consists of all complete $m$\=/types over $X$, i.\,e., of all complete types $\tp(x_1,\ldots,x_m)$ of formulas in $L(\MA)\cup X$, which realize in some model of $\Th_X(\MA)$ (in other words, $\tp(x_1,\ldots,x_m)$ is finitely satisfiable in $\MA_X$)~\cite{Marker}. Let $\kappa$ be an infinite cardinal. Algebraic structure $\MA$ is called {\em $\kappa$\=/saturated} if for any $X\subseteq A$, $|X|<\kappa$, any integer $m$ every complete type $\tp(x_1,\ldots,x_m)\in S_m^{\MA}(X)$ is realized in $\MA$. An algebraic structure $\MA$ in a countable language $L(\MA)$ is called {\em $\kappa$\=/stable} if for any algebraic structure $\nsA\equiv\MA$, $\nsA=\langle \nsuA; L(\MA)\rangle$, and any subset $X\subseteq \nsuA$ if $|X|=\kappa$ then $|S^{\nsA}_m(X)|=\kappa$ for all integers $m$. If $\MA$ is $\omega$\=/stable, then it is $\kappa$\=/stable for any infinite cardinal $\kappa$~\cite[Theorem~4.2.18]{Marker}.

Note that for any interpretation $(\Gamma,\bar p,\mu_\Gamma)\colon\MA\rightsquigarrow\MB$, a subset $X\subseteq A$ and a type $\tp(x_1,\ldots,x_m)\in S_m^{\MA}(X)$, by the Reduction Theorem~\ref{le:interpr_corol}, there exists a type $\tp_\Gamma(\bar x_1,\ldots,\bar x_m)\in S_{m\cdot\dim\Gamma}^{\MB}(\bar\mu^{-1}_\Gamma(X)\cup \bar p)$, such that $\{\psi_{\Gamma,\bar p,\mu_\Gamma}\mid \psi\in \tp(x_1,\ldots,x_m)\}\subseteq \tp_\Gamma(\bar x_1,\ldots,\bar x_m)$; and for every non-equal types $\tp_1(x_1,\ldots,x_m), \tp_2(x_1,\ldots,x_m)\in S_m^{\MA}(X)$ any such $\tp_{1,\Gamma}(x_1,\ldots,x_m)$ and $\tp_{2,\Gamma}(x_1,\ldots,x_m)$ are also non-equal; in particularly, $|S_{m\cdot\dim\Gamma}^{\MB}(\bar\mu^{-1}_\Gamma(X)\cup \bar p)|\geqslant |S_m^{\MA}(X)|$.

\begin{theorem}[{\cite[Exercise~4.5.23]{Marker}}]
Let $\MA$ be interpretable in $\MB$ (with parameters). For any infinite cardinal $\kappa$ if $\MB$ is $\kappa$\=/saturated then $\MA$ is $\kappa$\=/saturated too.
\end{theorem}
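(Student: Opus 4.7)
The plan is to translate a type over $\MA$ to a type over $\MB$ via the $(\Gamma,\bar p,\mu_\Gamma)$\=/translation, invoke $\kappa$\=/saturation of $\MB$ to realize the translated type, and then push the realization back through $\mu_\Gamma$ to obtain a realization in $\MA$. The Reduction Theorem~\ref{le:interpr_corol} will be used in both directions.

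First, fix a coordinatization $(\Gamma,\bar p,\mu_\Gamma)$ of the interpretation $\MA\simeq\Gamma(\MB,\bar p)$, and for each $a\in A$ fix a preimage $\bar\mu_\Gamma^{-1}(a)\in U_\Gamma(\MB,\bar p)$. Let $n=\dim\Gamma$ and $k=\dim_\param\Gamma$. Given $X\subseteq A$ with $|X|<\kappa$ and a complete type $p(x_1,\ldots,x_m)\in S^\MA_m(X)$, define
$$q(\bar x_1,\ldots,\bar x_m)=\{U_\Gamma(\bar x_i,\bar p)\mid 1\leqslant i\leqslant m\}\cup\{\psi_{\Gamma,\bar p,\mu_\Gamma}(\bar x_1,\ldots,\bar x_m)\mid \psi\in p\},$$
a set of formulas in $L(\MB)$ with parameters drawn from the set $Y=\{\bar\mu_\Gamma^{-1}(a)\mid a\in X\}\cup\{\bar p\}\subseteq B$. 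Since $n$ and $k$ are finite and $\kappa$ is infinite, $|Y|\leqslant |X|\cdot n+k<\kappa$.

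Next, I would verify that $q$ is finitely satisfiable in $\MB$. Given a finite subset $\{\psi_1,\ldots,\psi_N\}\subseteq p$, by finite satisfiability of $p$ in $\MA$ there exists $(a_1,\ldots,a_m)\in A^m$ with $\MA\models\psi_j(a_1,\ldots,a_m)$ for all $j$. Choose any $\bar b_i\in\mu_\Gamma^{-1}(a_i)$; then $\bar b_i\in U_\Gamma(\MB,\bar p)$ and, by the Reduction Theorem, $\MB\models\psi_{j,\Gamma,\bar p,\mu_\Gamma}(\bar b_1,\ldots,\bar b_m)$ for each $j$. Hence $q$ extends to some complete type in $S^\MB_{mn}(Y)$, and by $\kappa$\=/saturation of $\MB$ it is realized by a tuple $(\bar b_1,\ldots,\bar b_m)$ with each $\bar b_i\in U_\Gamma(\MB,\bar p)$. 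Setting $a_i=\mu_\Gamma(\bar b_i)\in A$, the Reduction Theorem applied in the opposite direction yields $\MA\models\psi(a_1,\ldots,a_m)$ for every $\psi\in p$, so $(a_1,\ldots,a_m)$ realizes $p$ in $\MA$.

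There is no genuine obstacle here: the argument is a direct application of the Reduction Theorem. The only point requiring care is the bookkeeping of the parameter set $Y\subseteq B$, which must be verified to have size $<\kappa$; this is immediate since one fixes a single preimage $\bar\mu_\Gamma^{-1}(a)$ per element $a\in X$ and $n,k$ are finite. As an alternative, one could absorb the parameters via Remark~\ref{lem:AB1} and work with the absolute interpretation $\MA_A\rightsquigarrow\MB_B$, but the preimage-based route above is the most transparent.
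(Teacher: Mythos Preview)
Your argument is correct and is precisely the detailed unpacking of the paper's one-line proof, which simply states that the result ``directly follows from the Reduction Theorem~\ref{le:interpr_corol}.'' You have supplied exactly the bookkeeping (parameter set size, finite satisfiability, pushing the realization back through $\mu_\Gamma$) that the paper leaves implicit.
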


\begin{proof}
It directly follows from the Reduction Theorem~\ref{le:interpr_corol}. 
\end{proof}

\begin{theorem}[{\cite[Exercise~4.5.23]{Marker}}]
Let $L(\MA)$ and $L(\MB)$ be countable languages and algebraic structures $\MA$ be interpretable in  $\MB$ (with parameters). For any infinite cardinal $\kappa$ if $\MB$ is  $\kappa$\=/stable then $\MA$ is $\kappa$\=/stable too.
\end{theorem}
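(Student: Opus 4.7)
The plan is to lift $\kappa$-stability from $\MB$ to $\MA$ via the type-space injection already noted in the paragraph preceding the statement. Let $(\Gamma,\bar p,\mu_\Gamma)\colon \MA\rightsquigarrow \MB$ be the given interpretation, set $n=\dim\Gamma$ and $k=\dim_\param\Gamma$, and fix a model $\nsA\equiv \MA$ together with a set $X\subseteq \nsuA$ of cardinality $\kappa$. The goal is to show $|S^\nsA_m(X)|\leqslant \kappa$; the reverse inequality is immediate because already the $\kappa$ distinct $1$-types of elements of $X$ sit inside $S^\nsA_1(X)$.

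First I would apply Theorem~\ref{th:sk_el} to produce an algebraic structure $\nsB\equiv \MB$, a tuple $\bar q\in \nsB$ with $\nsB\models \AC_\Gamma(\bar q)$ so that $\Gamma(\nsB,\bar q)$ is well-defined, and an elementary embedding $\iota\colon \nsA\preceq \Gamma(\nsB,\bar q)$ of $L(\MA)$-structures. Identifying $X$ with $\iota(X)$, the elementary embedding yields the equality $S^\nsA_m(X)=S^{\Gamma(\nsB,\bar q)}_m(X)$, since for every $L(\MA)\cup X$-formula the structures $\nsA_X$ and $\Gamma(\nsB,\bar q)_X$ satisfy the same sentences, and so the maximal finitely satisfiable sets of $L(\MA)\cup X$-formulas in $m$ variables coincide.

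Next I would fix a coordinate map $\mu^\nsB_\Gamma$ for the interpretation $\Gamma(\nsB,\bar q)\stackrel{\Gamma,\bar q}{\rightsquigarrow}\nsB$ and set $Y=(\mu^\nsB_\Gamma)^{-1}(X)\cup\{q_1,\ldots,q_k\}$. The Reduction Theorem~\ref{le:interpr_corol}, in the form spelled out in the paragraph preceding the statement, gives an injection $S^{\Gamma(\nsB,\bar q)}_m(X)\hookrightarrow S^\nsB_{m\cdot n}(Y)$, sending a complete type $p(x_1,\ldots,x_m)$ to any complete type over $Y$ extending $\{\psi_{\Gamma,\bar q,\mu^\nsB_\Gamma}\mid \psi\in p\}$; distinct $p$'s produce inconsistent translated sets, so the assignment is injective on type spaces. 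Because $|X|=\kappa$ and both $n$ and $k$ are finite, one has $|Y|=\kappa$.

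Finally, $\kappa$-stability depends only on the complete theory in a countable language, so $\nsB\equiv \MB$ is itself $\kappa$-stable and hence $|S^\nsB_{m\cdot n}(Y)|=\kappa$. Chaining the steps gives $|S^\nsA_m(X)|=|S^{\Gamma(\nsB,\bar q)}_m(X)|\leqslant |S^\nsB_{m\cdot n}(Y)|=\kappa$, completing the proof. There is no substantive obstacle here: the only delicate point is the identification $S^\nsA_m(X)=S^{\Gamma(\nsB,\bar q)}_m(X)$, which relies on Theorem~\ref{th:sk_el} furnishing an \emph{elementary} embedding rather than an arbitrary one; the remaining steps are the injection already explicitly described in the paper combined with invariance of $\kappa$-stability under elementary equivalence.
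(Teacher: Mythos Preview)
Your proposal is correct and follows essentially the same route as the paper's proof: both invoke Theorem~\ref{th:sk_el} to elementarily embed $\nsA$ into some $\Gamma(\nsB,\bar q)$ with $\nsB\equiv\MB$, then apply the type-space injection from the preceding paragraph and the $\kappa$-stability of $\nsB$ to bound $|S^{\nsA}_m(X)|$. The only cosmetic difference is that the paper writes the chain of inequalities in a single line using the fixed-representative notation $\bar\mu_\Gamma^{-1}(g(X))$ (ensuring the parameter set in $\nsB$ has size exactly $\kappa$), whereas your $(\mu^\nsB_\Gamma)^{-1}(X)$ reads as the full preimage; since you only need any set of size $\kappa$ containing representatives and $\bar q$, this is harmless.
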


\begin{proof}
Suppose that $\MA\simeq\Gamma(\MB,\bar p)$, $\kappa$ is an infinite cardinal, and $\MB$ is $\kappa$\=/stable. Take an arbitrary algebraic structure  $\nsA\equiv\MA$, $\nsA=\langle \nsuA; L(\MA)\rangle$, and a subset $X\subseteq \nsuA$ of cardinality $\kappa$. By Theorem~\ref{th:sk_el}, there exist algebraic structure $\nsB\equiv\MB$ and a tuple $\bar q\in\nsB$, such that $\nsA$ is elementarily embeddable in $\Gamma(\nsB,\bar q)$. Denote by $g\colon\nsA\to \Gamma(\nsB,\bar q)$ the corresponding elementary embedding and by $\mu_\Gamma$ a coordinate map of the interpretation $\Gamma(\nsB,\bar q)\rightsquigarrow \nsB$. Then $|\bar\mu^{-1}_\Gamma(g(X))\cup \bar q)|=|X|=\kappa$ and $|S_m^{\nsA}(X)|\leqslant|S_m^{\Gamma(\nsB,\bar q)}(g(X))|\leqslant|S_{m\cdot\dim\Gamma}^{\nsB}(\bar\mu^{-1}_\Gamma(g(X))\cup \bar q)|=\kappa$, hence $|S^{\nsA}_m(X)|=\kappa$ and $\MA$ is $\kappa$\=/stable.
\end{proof}

\subsection{Isotypeness, definability by types, primarity, homogeneity, atomicity}\label{subsec:types}

In 2012, B.\,Plotkin introduced the notion of isotypic algebraic structures~\cite{Plotkin10}, which plays an important part in logical geometries and logical equivalence of algebraic structures.

\begin{definition}
Algebraic structures $\MA$ and $\nsA$ in a language $L$ are called {\em isotypic}, if for every integer $n\geqslant 1$ every type $\tp(x_1,\ldots,x_n)$ in $L$ that is realized in $\MA$ is also realized in $\nsA$ and vice versa.     
\end{definition}

Note that, if $\MA$ and $\nsA$ are isotypic, then, in particular, $\MA\equiv\nsA$. Thus, Theorem~\ref{th:equiv1} gets the following upgrade at the level of isotypeness.

\begin{theorem}[expected in the next paper]
Let $\MA$ and $\MB$ be regularly bi-interpretable in each other, so $\MA \simeq \Gamma(\MB,\phi)$ and $\MB\simeq\Delta(\MA,\psi)$. Then if an $L(\MB)$\=/structure $\nsB$ is isotypic to $\MB$, then  the structure $\nsA\simeq\Gamma(\nsB,\phi)$ is isotypic to $\MA$. And conversely, if an $L(\MA)$\=/structure $\nsA$ is isotypic to $\MA$, then $\nsB\simeq\Delta(\nsA,\psi)$ is isotypic to $\MB$.
\end{theorem}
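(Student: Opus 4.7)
The plan is to transfer types between $\MA$ and $\MB$ (and between $\nsA$ and $\nsB$) using the Reduction Theorem applied to the $\Gamma$\=/translation, while handling parameters via the descriptor $\phi$. A preliminary step is needed: one must observe that a regular bi-interpretation automatically makes both $(\Gamma,\phi)$ and $(\Delta,\psi)$ self-homotopic, so that $\Gamma(\nsB,\phi)$ is a well-defined algebraic structure (in the sense of Theorem~\ref{pr:reg-int-elem-equiv}). To see this for $(\Gamma,\phi)$, pick $\bar p_1,\bar p_2\in\phi(\MB)$ and $\bar q\in\psi(\MA)$; fix a coordinate map $\mu_\Delta$ of $(\Delta,\bar q)$ and lifts $\bar{\bar p}_i\in\mu_\Delta^{-1}(\bar p_i)$. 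The homotopy $(\Gamma\circ\Delta,\phi_\Delta\wedge\psi)\to\Id_{L(\MA)}$ gives definable isomorphisms $\Gamma\circ\Delta(\MA,\bar{\bar p}_i,\bar q)\to\MA$, while Lemma~\ref{le:Aut3} identifies $\Gamma\circ\Delta(\MA,\bar{\bar p}_i,\bar q)$ with $\Gamma(\MB,\bar p_i)$ through $\bar\mu_\Delta$. Composing produces a definable isomorphism $\Gamma(\MB,\bar p_1)\to\Gamma(\MB,\bar p_2)$, so $(\Gamma,\phi)$ is self-homotopic; the same formulas witness self-homotopy in every $\nsB\equiv\MB$.

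Next, the forward direction. Suppose $\nsB$ is isotypic to $\MB$, and let a type $p(x_1,\ldots,x_n)$ be realized in $\MA$ by $\bar a=(a_1,\ldots,a_n)$. Fix $\bar p\in\phi(\MB)$, a coordinate map $\mu_\Gamma$ for $(\Gamma,\bar p)$, and preimages $\bar b_j\in\mu_\Gamma^{-1}(a_j)$. By the Reduction Theorem~\ref{le:interpr_corol}, the $L(\MB)$\=/type realized in $\MB$ by $(\bar b_1,\ldots,\bar b_n,\bar p)$ contains
\[
\{\phi(\bar y)\}\cup\{U_\Gamma(\bar x_j,\bar y):j\leqslant n\}\cup\{\chi_\Gamma(\bar x_1,\ldots,\bar x_n,\bar y):\chi\in p\}.
\]
By isotypy, this type is realized in $\nsB$ by some $(\bar b'_1,\ldots,\bar b'_n,\bar p')$; in particular $\bar p'\in\phi(\nsB)$, so $\Gamma(\nsB,\bar p')$ is well-defined, and applying Corollary~\ref{le:interpr_corol_ns} to each $\chi\in p$ shows that $\bar a':=(\bar b'_1/{\sim_\Gamma},\ldots,\bar b'_n/{\sim_\Gamma})$ realizes $p$ in $\Gamma(\nsB,\bar p')$. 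Because of self-homotopy, $\Gamma(\nsB,\bar p')\simeq\nsA$, hence $p$ is realized in $\nsA$.

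For the reverse direction, suppose $p$ is realized in $\nsA$ by $\bar a'$. Fix any $\bar p'\in\phi(\nsB)$, identify $\nsA$ with $\Gamma(\nsB,\bar p')$ via self-homotopy, and pick preimages $\bar b'_j\in\mu_{\Gamma,\bar p'}^{-1}(a'_j)$. The Reduction Theorem again yields that $(\bar b'_1,\ldots,\bar b'_n,\bar p')$ realizes in $\nsB$ the $L(\MB)$\=/type listed above. Isotypy supplies a realization $(\bar b_1,\ldots,\bar b_n,\bar p)$ in $\MB$; then $\bar p\in\phi(\MB)$ and the tuple $\bar a:=(\bar b_j/{\sim_\Gamma})$ realizes $p$ in $\Gamma(\MB,\bar p)\simeq\MA$. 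This proves the first assertion; the second (starting from $\nsA$ isotypic to $\MA$ and producing $\nsB\simeq\Delta(\nsA,\psi)$ isotypic to $\MB$) is verbatim the same argument with the roles of $\Gamma,\phi$ and $\Delta,\psi$ exchanged, using the other half of the bi-interpretation.

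The main obstacle I anticipate is the self-homotopy step: Definition~\ref{def:reg} only tells us that $\Gamma\circ\Delta\sim\Id_{L(\MA)}$ regularly, and one must carefully thread a coordinate map $\mu_\Delta$, choose lifts $\bar{\bar p}_i$, and invoke Lemma~\ref{le:Aut3} so that the resulting definable isomorphisms truly connect $\Gamma(\MB,\bar p_1)$ with $\Gamma(\MB,\bar p_2)$ by a single $L(\MB)$\=/formula with existentially quantified auxiliary parameters $\bar q,\bar{\bar p}_1,\bar{\bar p}_2$. Everything else is a bookkeeping exercise with the translation $\chi\mapsto\chi_\Gamma$ and the fact that the formulas $\phi(\bar y)$ and $U_\Gamma(\bar x_j,\bar y)$ must be included in the transferred type to guarantee that the realization in $\nsB$ lives in a legitimate domain of interpretation.
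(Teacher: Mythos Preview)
The paper does not contain a proof of this theorem: it is explicitly marked ``expected in the next paper'', so there is no reference argument to compare your proposal against. I can only assess internal correctness.

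Your type-transfer argument via the $\Gamma$-translation and the Reduction Theorem (both directions) is correct and is the natural approach. The real issue is the preliminary step you yourself flag: showing that $\Gamma(\nsB,\bar p')$ is independent of $\bar p'\in\phi(\nsB)$ up to isomorphism.

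Your argument for this has a gap. You invoke only the homotopy $(\Gamma\circ\Delta,\phi_\Delta\wedge\psi)\to\Id_{L(\MA)}$, composing
\[
\Gamma(\MB,\bar p_1)\xrightarrow{\ \bar\mu_\Delta^{-1}\ }\Gamma\circ\Delta(\MA,\bar{\bar p}_1,\bar q)\xrightarrow{\ \theta_\MA\ }\MA\xrightarrow{\ \theta_\MA^{-1}\ }\Gamma\circ\Delta(\MA,\bar{\bar p}_2,\bar q)\xrightarrow{\ \bar\mu_\Delta\ }\Gamma(\MB,\bar p_2).
\]
The inner arrows are definable in $\MA$, but the outer ones use the abstract isomorphism $\bar\mu_\Delta$, which is not definable anywhere. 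The auxiliary parameters $\bar q,\bar{\bar p}_1,\bar{\bar p}_2$ you propose to existentially quantify live in $\MA$, not in $\MB$, so this does not yield an $L(\MB)$-formula and hence does not establish self-homotopy of $(\Gamma,\phi)$. More to the point, when you rerun the construction in $\nsB$ you must replace $\MA$ by some $\nsA_1=\Gamma(\nsB,\bar p'_1)$ and pick $\bar q'\in\psi(\nsA_1)$; to lift $\bar p'_2\in\phi(\nsB)$ to $\bar{\bar p}'_2\in\mu_\Delta^{-1}(\bar p'_2)$ you need $\Delta(\nsA_1,\bar q')\simeq\nsB$, and this is exactly what the \emph{other} half of the bi-interpretation supplies. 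Using $\theta_\MB$ (which transfers to $\nsB$ since $\nsB\equiv\MB$) one gets a definable-in-$\nsB$ identification $\Delta\circ\Gamma(\nsB,\bar{\bar q}',\bar p'_1)\cong\nsB$, and only then can the $\Gamma$-translated $\theta_\MA$ be applied inside $\Gamma(\nsB,\bar p'_1)$ to finish. So the fix is simply to use both connectors $\theta_\MA$ and $\theta_\MB$, not just one; once you do, the rest of your outline goes through.
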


Actually, the regular bi-interpretability of $\MA$ and $\MB$ is not necessary for transferring the property of isotypeness between $\MA$ and $\MB$.  This can be achieved through a weaker property as described in the result below.

\begin{theorem}[expected in the next paper]
Let $\MA \simeq \Gamma(\MB,\phi)$ be a regular self-homotopic interpretation. Then if an $L(\MB)$\=/structure $\nsB$ is isotypic to $\MB$, then $\nsA\simeq\Gamma(\nsB,\phi)$ is isotypic to $\MA$. 
\end{theorem}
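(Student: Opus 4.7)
The plan is to reduce isotypeness of $\nsA$ and $\MA$ to that of $\nsB$ and $\MB$ by translating types back and forth via the Reduction Theorem~\ref{le:interpr_corol}. First, since isotypic structures are \emph{a fortiori} elementarily equivalent, one has $\nsB \equiv \MB$, so Theorem~\ref{pr:reg-int-elem-equiv}(2) applied to the self-homotopic regular interpretation $(\Gamma,\phi)$ guarantees that $\Gamma(\nsB,\bar q)$ is a well-defined $L(\MA)$\=/structure for every $\bar q \in \phi(\nsB)$ and that all such structures are pairwise isomorphic to a single $\nsA$ with $\nsA \equiv \MA$. This is what makes the notation $\nsA \simeq \Gamma(\nsB,\phi)$ meaningful in the first place.

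For the forward direction, fix a complete type realized in $\MA$, say $\tp_\MA(\bar a)$ with $\bar a=(a_1,\ldots,a_m)$. Choose $\bar p \in \phi(\MB)$, a coordinate map $\mu_\Gamma$ of $(\Gamma,\bar p)\colon \MA \rightsquigarrow \MB$ and representatives $\bar b_i \in \mu_\Gamma^{-1}(a_i)$. By the Reduction Theorem, the complete $L(\MB)$\=/type
$$
T(\bar x_1,\ldots,\bar x_m,\bar y) \;:=\; \tp_\MB\bigl((\bar b_1,\ldots,\bar b_m,\bar p)\bigr)
$$
contains $\phi(\bar y)$ and $\psi_\Gamma(\bar x_1,\ldots,\bar x_m,\bar y)$ for every $\psi \in \tp_\MA(\bar a)$ (in particular, each conjunct $U_\Gamma(\bar x_i,\bar y)$ is in $T$, by construction of $\psi_\Gamma$). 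Since $\nsB$ is isotypic to $\MB$, $T$ is realized in $\nsB$ by some $(\bar b_1',\ldots,\bar b_m',\bar p')$; then automatically $\bar p' \in \phi(\nsB)$ and $\bar b_i' \in U_\Gamma(\nsB,\bar p')$, and by the self-homotopic clause $\Gamma(\nsB,\bar p') \simeq \nsA$. Applying Corollary~\ref{le:interpr_corol_ns} inside $\nsB$, the tuple $\bar a' := (\bar b_1'/{\sim_\Gamma},\ldots,\bar b_m'/{\sim_\Gamma})$ in $\nsA$ satisfies every $\psi \in \tp_\MA(\bar a)$, so by completeness $\tp_{\nsA}(\bar a') = \tp_\MA(\bar a)$, and the type is realized in $\nsA$.

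The converse direction is entirely symmetric: given $\bar a' \in \nsA$ and a coordinatization $\nsA \simeq \Gamma(\nsB,\bar p')$ (which exists for any $\bar p' \in \phi(\nsB)$ by self-homotopy), pull back $\tp_{\nsA}(\bar a')$ to $\tp_{\nsB}((\bar b',\bar p'))$, realize it in $\MB$ by isotypeness as $(\bar b,\bar p)$ with $\bar p \in \phi(\MB)$, and push forward via Corollary~\ref{le:interpr_corol_ns} to get a realization of $\tp_{\nsA}(\bar a')$ in $\Gamma(\MB,\bar p) \simeq \MA$. The one point that genuinely requires caution, and the only place the self-homotopic hypothesis is essential, is precisely the identification of $\Gamma(\nsB,\bar q)$ with the unique $\nsA$ regardless of the choice of $\bar q \in \phi(\nsB)$: without self-homotopy, the parameter tuple $\bar p'$ produced by realizing the translated type could land on a different isomorphism class than the one nominally attached to $\nsA$, and the whole setup would collapse. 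Every other step is a bookkeeping application of the Reduction Theorem.
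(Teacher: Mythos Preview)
The paper explicitly defers the proof of this theorem to a subsequent paper in the series, so there is no in-paper argument to compare against. Your proof is correct and is the natural one given the machinery developed here: pull a type realized in $\MA$ back to a complete $L(\MB)$\=/type of $(\bar b,\bar p)$ via the Reduction Theorem, transfer it by isotypeness of $\MB$ and $\nsB$, and push forward into $\Gamma(\nsB,\bar p')\simeq\nsA$ via Corollary~\ref{le:interpr_corol_ns}; the converse is genuinely symmetric since regularity alone already guarantees $\Gamma(\MB,\bar p)\simeq\MA$ for whatever $\bar p\in\phi(\MB)$ the realized type produces. Your diagnosis of where self-homotopy is essential --- ensuring that the parameter tuple $\bar p'\in\phi(\nsB)$ produced by realizing the translated type lands on a copy of the single $\nsA$ rather than on a possibly non-isomorphic $\Gamma(\nsB,\bar q)$ --- is exactly right, and is the content of Theorem~\ref{pr:reg-int-elem-equiv}(2). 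The only caveat is that Theorem~\ref{pr:reg-int-elem-equiv} is itself deferred, so your argument inherits that dependency; within the present paper's framework this is unavoidable.
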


In 2018, A.\,Myasnikov and N.\,Romanovskii further developed the idea of isotypness~\cite{MR}. They introduced the following definition.

\begin{definition}
An algebraic structure $\MA=\langle A; L(\MA)\rangle$ is {\em definable by types}, if every algebraic structure $\nsA$ in the language $L(\MA)$ that is isotypic to $\MA$ is isomorphic to $\MA$. 
\end{definition}

\begin{theorem}[expected in the next paper]
Let $\MA$ and $\MB$ be regularly bi-interpretable in each other. Then $\MA$ is definable by types if and only if $\MB$ is definable by types.
\end{theorem}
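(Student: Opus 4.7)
The plan is to deduce the statement from two results stated just before it: the theorem asserting that regular bi-interpretability transfers isotypeness, and Theorem~\ref{th:equiv1} describing models of $\Th(\MA)$ as $\Gamma(\nsB,\phi)$ for $\nsB\equiv\MB$. Together these give a one-line argument; the only work is to chain them correctly.

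By symmetry it suffices to show that if $\MB$ is definable by types, then so is $\MA$. Take any $L(\MA)$-structure $\nsA$ isotypic to $\MA$. First, isotypeness implies elementary equivalence, so $\nsA\equiv\MA$. Then Theorem~\ref{th:equiv1}, applied to the regular bi-interpretation $(\Gamma,\phi;\Delta,\psi)$, produces a (unique up to isomorphism) $L(\MB)$-structure $\nsB\equiv\MB$ with $\nsA\simeq\Gamma(\nsB,\phi)$; moreover, $\nsB\simeq\Delta(\nsA,\psi)$.

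Next, I would invoke the preceding theorem on transfer of isotypeness under regular bi-interpretation, applied to $\Delta$: since $\nsA$ is isotypic to $\MA$, the structure $\nsB\simeq\Delta(\nsA,\psi)$ is isotypic to $\MB$. The hypothesis that $\MB$ is definable by types then forces $\nsB\simeq\MB$. Substituting back and using that interpretations preserve isomorphism of the interpreting structure gives
\[
\nsA\;\simeq\;\Gamma(\nsB,\phi)\;\simeq\;\Gamma(\MB,\phi)\;\simeq\;\MA,
\]
which is exactly the conclusion that $\MA$ is definable by types. The reverse implication follows by swapping the roles of $\MA$ and $\MB$ and of $\Gamma$ and $\Delta$.

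There is no real obstacle: the proof is a formal consequence of the two cited theorems, and the only thing to be careful about is ensuring that the interpretation $\Gamma$ applied to isomorphic structures produces isomorphic outputs, which is built into the notion of interpretation (isomorphisms of $\nsB$ send $U_\Gamma(\nsB,\phi)$ to $U_\Gamma(\nsB',\phi)$ compatibly with $\sim_\Gamma$ and with all interpreted symbols). If anywhere care is needed, it is in checking that the descriptor $\phi$ poses no issue: because $\nsB\simeq\MB$ and regular interpretations are invariant under isomorphism of the ambient structure, the resulting $\Gamma(\nsB,\phi)$ is well-defined and isomorphic to $\Gamma(\MB,\phi)\simeq\MA$.
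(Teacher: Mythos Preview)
The paper does not actually prove this theorem: it is one of several results announced with the tag ``expected in the next paper'' and is stated without proof. So there is no paper proof to compare against.

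Your argument is correct and is the natural one: you chain the isotypeness-transfer theorem (also announced without proof just above) with Theorem~\ref{th:equiv1} to conclude. The logic is sound; note only that both ingredients you invoke are themselves deferred results in this paper, so your proof is conditional on them. One small point: in step~5 you apply the isotypeness-transfer theorem in the direction ``$\nsA$ isotypic to $\MA$ implies $\Delta(\nsA,\psi)$ isotypic to $\MB$'', which is exactly the ``conversely'' clause of that theorem, so the citation is apt.
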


Recall that an algebraic structure $\MA=\langle A; L(\MA)\rangle$ is {\em prime} if it elementarily embeds in any model of $\Th(\MA)$ (any algebraic structure $\nsA\equiv\MA$); $\MA$ is {\em atomic}, if every complete type $\tp(\bar a)$, $\bar a\in \MA$, is {\em principal} (or {\em isolated}), i.\,e., there exists a formula $\phi\in \tp(\bar a)$, such that $\Th(\MA)\cup\{\phi\}\vdash \tp(\bar a)$; and a countable algebraic structure $\MA$ is {\em homogeneous} if for any integer $n\geqslant 1$ and every two tuples $\bar a,\bar b\in A^n$ if $\tp(\bar a)=\tp(\bar b)$ then there is an automorphism $h\in\Aut(\MA)$, such that $h(\bar a)=\bar b$ (the latter is an equivalent form of the definition of homogeneity for countable structures~\cite[Proposition~4.2.13]{Marker}). It is known that an algebraic structure $\MA$ in a countable language $L(\MA)$ is prime if and only if it is countable and atomic, provided that $\Th(\MA)$ has infinite models~\cite[Theorem~4.2.8]{Marker}. Furthermore, if $\MA$ is countable and atomic, then it is homogeneous~\cite[Lemma~4.2.14]{Marker}. It is easy to see that algebraic structures $\Z$ and $\N$ are prime, therefore, atomic and homogeneous.

\begin{theorem}[expected in the next paper]
Let $\MA \simeq \Gamma(\MB,\phi)$ be a regular right invertible interpretation of $\MA$ in $\MB$. If $\MB$ is prime, then $\MA$ is prime too.
\end{theorem}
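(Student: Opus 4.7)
Let $(\Delta,\psi)\colon \MB\rightsquigarrow\MA$ be a regular right inverse of $(\Gamma,\phi)$, and let $(\theta_\MA,\delta_\MA)$ witness the regular homotopy between $(\Gamma\circ\Delta,\phi_\Delta\wedge\psi)$ and $\Id_{L(\MA)}$. Fix an arbitrary $L(\MA)$-structure $\nsA$ with $\nsA\equiv\MA$. I will build an elementary embedding $\MA\to\nsA$ by following a round trip: descend from $\nsA$ to a model of $\Th(\MB)$, invoke primeness of $\MB$ there, and lift the resulting embedding back through $\Gamma$, exploiting the invertibility data to identify the final structure with $\nsA$.

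First, since $\exists\bar z\:\psi(\bar z)\in\Th(\MA)$, there is some $\bar q\in\psi(\nsA)$. By Theorem~\ref{pr:reg-int-elem-equiv} applied to $(\Delta,\psi)$, the structure $\nsB:=\Delta(\nsA,\bar q)$ is well-defined and satisfies $\nsB\equiv\MB$. Because $\MB$ is prime, pick an elementary embedding $h\colon \MB\to\nsB$. Next, choose any $\bar p\in\phi(\MB)$; then $h(\bar p)\in\phi(\nsB)$ because $h$ is elementary. A standard consequence of the Reduction Theorem~\ref{le:interpr_corol} is that $h$ induces an elementary embedding
\[
\hat h\colon \Gamma(\MB,\bar p)\;\longrightarrow\;\Gamma(\nsB,h(\bar p)),\qquad \bar b/{\sim_\Gamma}\;\mapsto\;h^n(\bar b)/{\sim_\Gamma},
\]
since $\Gamma(\MB,\bar p)\models\chi(\bar b_1/{\sim},\ldots,\bar b_m/{\sim})$ iff $\MB\models\chi_\Gamma(\bar b_1,\ldots,\bar b_m,\bar p)$ iff $\nsB\models\chi_\Gamma(h^n(\bar b_1),\ldots,h^n(\bar b_m),h(\bar p))$ iff $\Gamma(\nsB,h(\bar p))\models\chi(h^n(\bar b_1)/{\sim},\ldots)$.

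Now the key identification: by Lemma~\ref{le:Aut3} applied inside $\nsA$, for any coordinate map $\mu_\Delta$ of the interpretation $\nsB\simeq\Delta(\nsA,\bar q)$ and any $\bar{\bar p}\in\mu_\Delta^{-1}(h(\bar p))$ one has
\[
\Gamma(\nsB,h(\bar p))\;\simeq\;\Gamma\circ\Delta(\nsA,\bar{\bar p},\bar q).
\]
The tuple $(\bar{\bar p},\bar q)$ lies in $\phi_\Delta\wedge\psi(\nsA)$ by Remark~\ref{remark2}. The statement ``for every $(\bar{\bar y},\bar z)\in\phi_\Delta\wedge\psi$ there exists $\bar w$ with $\delta_\MA(\bar{\bar y},\bar z,\bar w)$, and for every such $\bar w$ the formula $\theta_\MA(\bar u,x,\bar{\bar y},\bar z,\bar w)$ defines an $L(\MA)$-isomorphism between the structure coded by $\Gamma\circ\Delta$ with these parameters and the ambient structure'' is a first-order sentence of $L(\MA)$ (conjoining the admissibility conditions $\AC_{\Gamma\circ\Delta}$ with statements that $\theta_\MA$ is everywhere defined, single-valued, injective, surjective, and preserves each constant, function and relation symbol of $L(\MA)$); it holds in $\MA$ by regular right invertibility and therefore transfers to $\nsA$ via $\nsA\equiv\MA$. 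Picking a witness $\bar r\in\nsA$ gives an isomorphism $\Gamma\circ\Delta(\nsA,\bar{\bar p},\bar q)\simeq \nsA$. Composing:
\[
\MA\;\simeq\;\Gamma(\MB,\bar p)\;\stackrel{\hat h}{\hookrightarrow}\;\Gamma(\nsB,h(\bar p))\;\simeq\;\Gamma\circ\Delta(\nsA,\bar{\bar p},\bar q)\;\simeq\;\nsA,
\]
an elementary embedding $\MA\hookrightarrow\nsA$. Countability of $\MA$, required for primeness in the usual sense, follows because $\MB$ is countable (being prime in a countable language) and $\MA\simeq \Gamma(\MB,\bar p)$ is a quotient of a subset of $\MB^{\dim\Gamma}$.

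The main obstacle, and the place where regular right invertibility (rather than mere mutual interpretability) is essential, is the third isomorphism in the display above: we must know that the connector $\theta_\MA$ witnesses an isomorphism uniformly for every admissible choice of parameters, so that the corresponding first-order sentence holds in $\MA$ and can be transported to $\nsA$ by elementary equivalence. With only interpretations (no homotopy data), $\Gamma\circ\Delta(\nsA,\bar{\bar p},\bar q)$ would merely be elementarily equivalent to $\nsA$, which is far weaker than the isomorphism needed to splice the three arrows into a single embedding.
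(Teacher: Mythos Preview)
The paper does not prove this theorem; it is explicitly labeled ``expected in the next paper'' and only a reference to a related result in \cite{KhMS} is given. So there is no paper-side proof to compare against.

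Your argument is correct and is the natural one: descend via the right inverse $(\Delta,\psi)$ to obtain $\nsB=\Delta(\nsA,\bar q)\equiv\MB$, use primeness of $\MB$ to get an elementary embedding $h\colon\MB\hookrightarrow\nsB$, push it through $\Gamma$ to an elementary embedding $\Gamma(\MB,\bar p)\hookrightarrow\Gamma(\nsB,h(\bar p))$, identify the target with $\Gamma\circ\Delta(\nsA,\bar{\bar p},\bar q)$ via Lemma~\ref{le:Aut3}, and finally use that the homotopy condition ``$\theta_\MA$ defines an isomorphism for every admissible parameter tuple'' is first-order, hence transfers from $\MA$ to $\nsA$. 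Two minor remarks: first, your appeal to Theorem~\ref{pr:reg-int-elem-equiv} cites another deferred result, but the specific instance you need (well-definedness and $\Delta(\nsA,\bar q)\equiv\MB$) follows immediately from $\AC_{\Delta,\psi}$ and the Reduction Theorem, so there is no circularity. Second, the countability comment at the end is unnecessary under the paper's definition of \emph{prime}, which requires only elementary embeddability into every model and carries no cardinality clause; you may simply omit it.
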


It was claimed in~\cite[Lemma~4.16]{KhMS} that a similar result holds for bi-interpretations with parameters, though it was proven only for regular bi-interpretations, so whether this is true for bi-interpretations with parameters is, in fact, an open question.

As we have seen, regular bi-interpretations offer several advantages. Next, we will explore the implications of bi-interpretation with parameters.

\subsection{Richness, i-rigidity and elimination of imaginaries}\label{subsec:rich}

Recall the notion of rich algebraic structures and how it is transferred in bi-interpretations (see~\cite{KhMS}).

\begin{definition}
An algebraic structure $\MA=\langle A;L(\MA)\rangle$ is \emph{rich} (correspondingly, \emph{absolutely rich}) if every formula $\phi(x_1, \ldots,x_n)$ of the weak second-order logic in  $L(\MA)$ is equivalent over $\MA$  to some first-order language formula $\phi^\ast(x_1, \ldots,x_n)$ in $L(\MA)\cup A$ (correspondingly, in $L(\MA)$).
\end{definition}

Algebraic structure $\Z$ is absolutely rich~\cite[Lemma~4.14]{KhMS}.

\begin{theorem}[{\cite[Theorem~4.7]{KhMS}}]\label{th11}
Let structures $\MA$ and $\MB$ of finite languages be bi-interpretable with parameters in each other. Then $\MA$ is rich if and only if $\MB$ is rich.
\end{theorem}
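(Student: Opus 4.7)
The plan is to establish the direction ``$\MB$ rich $\Rightarrow$ $\MA$ rich'' by a three-step translation; the reverse direction is symmetric. Given a weak second-order formula $\phi(x_1,\ldots,x_n)$ in $L(\MA)$, I would first $\Gamma$-translate it to a weak second-order formula in $L(\MB)$; then apply richness of $\MB$ to replace that by an equivalent first-order formula in $L(\MB)\cup B$; then $\Delta$-translate back to a first-order formula in $L(\MA)\cup A$; and finally use the connector $\theta_\MA$ definable in $\MA$ (coming from the bi-interpretation) to eliminate the intermediate variables and produce a first-order formula $\phi^\ast(x_1,\ldots,x_n)$ in $L(\MA)\cup A$ equivalent to $\phi$ over $\MA$.

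Fix a bi-interpretation $(\Gamma,\bar p,\mu_\Gamma;\Delta,\bar q,\mu_\Delta)$ between $\MA$ and $\MB$, and choose the coordinate maps by item~\ref{bi:item2} of Definition~\ref{def:bi} so that the composition $\mu_\Gamma\circ\mu_\Delta\colon U_{\Gamma\circ\Delta}(\MA,\bar{\bar p},\bar q)\to A$ is defined in $\MA$ by some first-order formula $\theta_\MA(\bar u,x,\bar{\bar p},\bar q)$. The first step is to extend the $\Gamma$-translation of Subsection~\ref{subsec:tran} to weak second-order formulas: first-order subformulas are treated by the usual recursion through the code $\Gamma$, while a weak second-order quantifier $\exists X\subseteq_{\mathrm{fin}} A^k$ is sent to ``$\exists Y\subseteq_{\mathrm{fin}} (B^n)^k$ with $Y\subseteq U_\Gamma(\MB,\bar p)^k$ and membership interpreted modulo $\sim_\Gamma^k$'', where $n=\dim\Gamma$. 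A structural induction on $\phi$, using the Reduction Theorem~\ref{le:interpr_corol} for the first-order cases and the bijection induced by $\mu_\Gamma^k$ between finite subsets of $A^k$ and finite $\sim_\Gamma^k$-saturated subsets of $U_\Gamma(\MB,\bar p)^k$, yields the weak second-order Reduction Theorem: $\MA\models\phi(a_1,\ldots,a_n)$ iff $\MB\models\phi_\Gamma(\bar b_1,\ldots,\bar b_n,\bar p)$ for any representatives $\bar b_i\in\mu_\Gamma^{-1}(a_i)$.

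By richness of $\MB$, the weak second-order formula $\phi_\Gamma(\bar x_1,\ldots,\bar x_n,\bar p)$ is equivalent over $\MB$ to some first-order formula $\psi(\bar x_1,\ldots,\bar x_n,\bar p,\bar b)$ in $L(\MB)\cup B$. Applying the first-order $\Delta$-translation and substituting $\bar{\bar p}\in\mu_\Delta^{-1}(\bar p)$, $\bar{\bar b}\in\mu_\Delta^{-1}(\bar b)$, together with the parameters $\bar q$, yields a first-order formula $\psi_\Delta(\bar{\bar x}_1,\ldots,\bar{\bar x}_n,\bar{\bar p},\bar{\bar b},\bar q)$ in $L(\MA)\cup A$. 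Set
$$
\phi^\ast(x_1,\ldots,x_n)\;=\;\exists\,\bar{\bar x}_1\,\ldots\,\exists\,\bar{\bar x}_n\;\Bigl(\,\bigwedge_{i=1}^{n}\theta_\MA(\bar{\bar x}_i,x_i,\bar{\bar p},\bar q)\,\wedge\,\psi_\Delta(\bar{\bar x}_1,\ldots,\bar{\bar x}_n,\bar{\bar p},\bar{\bar b},\bar q)\,\Bigr).
$$
The equivalence $\MA\models\phi(\bar a)\iff\MA\models\phi^\ast(\bar a)$ follows by chasing definitions: any witness $\bar{\bar x}_i$ with $\MA\models\theta_\MA(\bar{\bar x}_i,a_i,\bar{\bar p},\bar q)$ satisfies $\mu_\Gamma(\mu_\Delta(\bar{\bar x}_i))=a_i$, and then Lemma~\ref{ABC} combined with the weak second-order Reduction Theorem established above gives that $\psi_\Delta$ at $(\bar{\bar x}_1,\ldots,\bar{\bar x}_n)$ holds in $\MA$ iff $\psi$ holds in $\MB$ at $(\mu_\Delta(\bar{\bar x}_1),\ldots,\mu_\Delta(\bar{\bar x}_n),\bar p,\bar b)$, iff $\phi_\Gamma$ holds at those arguments, iff $\phi(\bar a)$ holds in $\MA$.

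The main obstacle is the weak second-order extension of the Reduction Theorem, because finiteness is not a first-order notion and must be tracked through the translation. A clean implementation encodes a finite subset of $A^k$ by a tuple of variable but finite length (modulo permutation and repetition), for which the first-order $\Gamma$-translation applies directly; the inductive step for set quantifiers then relies only on the admissibility conditions $\AC_\Gamma$ from Subsection~\ref{subsec:AC} ensuring that a finite $\sim_\Gamma^k$-saturated subset of $U_\Gamma(\MB,\bar p)^k$ is an adequate stand-in for a finite subset of $A^k$. Here the hypothesis that $L(\MA)$ and $L(\MB)$ are finite is essential so that the admissibility conditions (and hence the inductive hypothesis) are finitely axiomatized. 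All remaining steps are routine given the machinery of Subsections~\ref{subsec:reduction} and~\ref{subsec:tran}.
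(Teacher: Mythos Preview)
The paper does not supply its own proof of this theorem; it is stated with a citation to \cite[Theorem~4.7]{KhMS} and no proof environment follows. So there is nothing in the present paper to compare your argument against line by line.

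That said, your outline is the standard argument from~\cite{KhMS} and is essentially correct: translate a weak second-order $L(\MA)$-formula to a weak second-order $L(\MB)$-formula via an extended $\Gamma$-translation, invoke richness of $\MB$ to drop to first order, push back through $\Delta$, and use the definable coordinate map $\mu_\Gamma\circ\mu_\Delta$ (equivalently the connector $\theta_\MA$) to collapse the $\Gamma\circ\Delta$-variables to single variables in $\MA$. Only the right-inverse half of the bi-interpretation is used for each direction, which is consistent with the paper's remark that Theorems~\ref{th11} and~\ref{th12} in fact hold under weaker hypotheses.

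Two small points worth tightening. First, your appeal to item~\ref{bi:item2} of Definition~\ref{def:bi} is fine, but note that the definition only guarantees \emph{some} pair $(\mu_{\Gamma 0},\mu_\Delta)$ with definable composite, not that an arbitrarily chosen pair works; you should fix that pair from the outset rather than speak of ``the'' coordinate maps. Second, your justification for the finite-language hypothesis is slightly off: the admissibility conditions $\AC_\Gamma$ being a single formula is convenient, but the real reason finiteness of the languages matters in~\cite{KhMS} is the particular encoding of weak second-order logic via the list superstructure, where the translation of set quantifiers must be a single formula rather than a scheme. Your informal description of the second-order Reduction step (finite $\sim_\Gamma$-saturated subsets of $U_\Gamma(\MB,\bar p)^k$ standing in for finite subsets of $A^k$) is the right picture, but a clean write-up should specify the exact variant of weak second-order logic in play and verify that the translation of a set-membership atom and of a finite-set quantifier is itself a weak second-order formula in $L(\MB)$.
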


\begin{theorem}[{\cite[Lemma 4.15]{KhMS}}]\label{th12}
Let a structure $\MA$ in a finite language $L(\MA)$ be regularly half-absolute bi-interpretable in a structure $\MB$ in a finite language $L(\MB)$. Then if $\MB$ is absolutely rich, then $\MA$ is also absolutely rich.
\end{theorem}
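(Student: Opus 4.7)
The plan is to take an arbitrary weak second-order formula $\phi(x_1,\ldots,x_n)$ in $L(\MA)$ and produce an absolute first-order $L(\MA)$-formula $\phi^\dagger(\bar x)$ equivalent to $\phi$ over $\MA$, by routing through $\MB$ where weak second-order already collapses to first-order. Concretely: first form the $\Gamma$-translation $\phi_\Gamma(\bar x_1,\ldots,\bar x_n)$. Because $\Gamma$ is absolute, $\phi_\Gamma$ is a weak second-order $L(\MB)$-formula without parameters, and an appropriate extension of the Reduction Theorem~\ref{le:interpr_corol} to weak second-order (which is routine since a finite set of elements of $\MA=\Gamma(\MB)$ lifts to a finite set of $\sim_\Gamma$-classes of $n$-tuples, and the $\Gamma$-translation extends to set-quantifiers by the same syntactic recipe) gives $\MA\models\phi(\bar a)\iff \MB\models\phi_\Gamma(\bar{\bar b})$ for any $\bar{\bar b}\in \mu_\Gamma^{-1}(\bar a)$. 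By absolute richness of $\MB$, there is a first-order $L(\MB)$-formula $\phi^\ast(\bar x_1,\ldots,\bar x_n)$ without parameters with $\MB\models\forall\bar{\bar x}\,(\phi_\Gamma\leftrightarrow\phi^\ast)$.

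Next I would push $\phi^\ast$ back to $\MA$ using the inverse regular interpretation $\MB\simeq\Delta(\MA,\psi)$ and the homotopy data of the half-absolute bi-interpretation. Let $(\Delta,\psi)$, connector $\theta_\MA(\bar u, x,\bar z,\bar w)$ and descriptor $\delta_\MA(\bar z,\bar w)$ be as in Definition~\ref{def:reg}; since $\Gamma$ is absolute, the $\bar{\bar y}$-slot for $\Gamma$'s parameters is empty. Form the $\Delta$-translation $(\phi^\ast)_\Delta(\bar{\bar{\bar x}}_1,\ldots,\bar{\bar{\bar x}}_n,\bar z)$; by Lemma~\ref{le:interpr_corol_2} we have, for any $\bar q\in\psi(\MA)$ and $\bar{\bar b}_i\in\mu_\Delta^{-1}(\bar{\bar x}_i)$, that $\MB\models\phi^\ast(\bar{\bar b}_1,\ldots,\bar{\bar b}_n)\iff \MA\models(\phi^\ast)_\Delta(\bar{\bar{\bar b}}_1,\ldots,\bar{\bar{\bar b}}_n,\bar q)$. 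I then define
\begin{multline*}
\phi^\dagger(\bar x)\;=\;\exists\,\bar q\,\exists\,\bar r\,\exists\,\bar{\bar{\bar x}}_1\cdots\exists\,\bar{\bar{\bar x}}_n\,\Bigl(\psi(\bar q)\,\wedge\,\delta_\MA(\bar q,\bar r)\,\wedge\\ \wedge\,(\phi^\ast)_\Delta(\bar{\bar{\bar x}}_1,\ldots,\bar{\bar{\bar x}}_n,\bar q)\,\wedge\,\bigwedge_{i=1}^{n}\theta_\MA(\bar{\bar{\bar x}}_i,x_i,\bar q,\bar r)\Bigr),
\end{multline*}
which is a first-order $L(\MA)$-formula without parameters (all constants come from the admissible formulas $\psi$, $\delta_\MA$, $\theta_\MA$, $(\phi^\ast)_\Delta$, each absolute in its own language).

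To verify $\MA\models\phi(\bar a)\iff\MA\models\phi^\dagger(\bar a)$, fix any $\bar q\in\psi(\MA)$ and any $\bar r\in\delta_\MA(\bar q,\MA)$; by the bi-interpretation there is a coordinate map $\mu_\Delta$ of $\MB\simeq\Delta(\MA,\bar q)$ and a coordinate map $\mu_\Gamma$ of $\MA\simeq\Gamma(\MB)$ such that $\mu_\Gamma\circ\mu_\Delta$ is the map defined by $\theta_\MA(\,\cdot\,,\,\cdot\,,\bar q,\bar r)$ on $U_{\Gamma\circ\Delta}(\MA,\bar q)$. Chasing $\bar{\bar{\bar x}}_i\mapsto\bar{\bar x}_i=\mu_\Delta(\bar{\bar{\bar x}}_i)\mapsto a_i=\mu_\Gamma(\bar{\bar x}_i)$ through the chain of Reduction-Theorem equivalences above yields $\phi(\bar a)\leftrightarrow\phi^\dagger(\bar a)$ in $\MA$.

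The main obstacle is the bookkeeping at the weak second-order level, namely giving a clean statement of the Reduction Theorem for weak second-order $\Gamma$-translations (quantifiers over finite sets of elements of $\Gamma(\MB)$ must be uniformly encoded as quantifiers over finite sets of representative $n$-tuples in $\MB$, modulo $\sim_\Gamma$). Once this is laid out, absoluteness of $\Gamma$ ensures that no $\MB$-constants appear in $\phi_\Gamma$ (so absolute richness of $\MB$ delivers a parameter-free $\phi^\ast$), and the regular bi-interpretation data $(\psi,\delta_\MA,\theta_\MA)$ absorbs the parameters $\bar q,\bar r$ under existential quantification without introducing any $\MA$-constants. This is precisely where half-absoluteness is used: if instead $\Gamma$ carried a descriptor $\phi(\bar y)$, the translation $\phi_\Gamma$ would carry extra $\MB$-parameters and richness of $\MB$ would produce an $L(\MB)$-formula with parameters, blocking the absoluteness of $\phi^\dagger$.
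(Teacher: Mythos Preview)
The paper does not supply its own proof of this theorem; it is stated with a citation to \cite[Lemma~4.15]{KhMS} and no argument is given. So there is nothing in the paper to compare against line by line.

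That said, your argument is correct and is exactly the approach one expects, and it is essentially the weak-second-order upgrade of the paper's own Lemma~\ref{lem:reg_inv_def}. Your formula $\phi^\dagger$ is the specialization of the formula $\pi$ in that lemma's proof to the half-absolute case (the slot $\bar{\bar y}$ for $\Gamma$'s parameters collapses), and your verification follows the same two-direction chase through $\mu_\Gamma$ and $\mu_\Delta$. The one ingredient you need beyond what is proved in the paper is the weak-second-order Reduction Theorem for $\Gamma$-translations; the paper only alludes to this in Remark~\ref{rem:WSOL-def} (again citing~\cite{KhMS}). You are right that this is the only real obstacle and that it is routine: finite sets of elements of $\Gamma(\MB)$ are coded by finite tuples of $n$-tuples from $U_\Gamma(\MB)$, and set-membership translates via $E_\Gamma$. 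Once that is in hand, the absoluteness of $\Gamma$ guarantees that $\phi_\Gamma$ is parameter-free, absolute richness of $\MB$ gives a parameter-free first-order $\phi^\ast$, and the regular data $(\psi,\delta_\MA,\theta_\MA)$ absorbs the remaining parameters under existential quantification, exactly as you say. Your closing remark on why half-absoluteness is essential is also on point.
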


We would like to mention that Theorems~\ref{th11} and~\ref{th12} hold in a more general context; we will discuss this in a future paper of the series.

\begin{theorem}[expected in the next paper]\label{th18}
    Suppose that $\MA$ is finitely generated algebraic structure in a finite language $L(\MA)$ and $\MB$ is a rich algebraic structure. Then any interpretations with parameters $(\Gamma_1,\bar p_1),(\Gamma_2,\bar p_2)\colon \MA\rightsquigarrow \MB$ are homotopic.
\end{theorem}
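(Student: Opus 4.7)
The plan is to build an explicit homotopy isomorphism $\lambda\colon \Gamma_1(\MB,\bar p_1)\to\Gamma_2(\MB,\bar p_2)$ out of the finite generation of $\MA$, show that its graph is defined by a weak second-order formula over $\MB$, and then appeal to richness to replace that weak second-order formula by a first-order one in $L(\MB)\cup B$. The role of finite generation is to replace the a priori abstract set-theoretic isomorphism $\bar\mu_{\Gamma_2}^{-1}\circ \bar\mu_{\Gamma_1}$ by a recursively described correspondence: there is always a \emph{term} in $L(\MA)$ that witnesses which element of $\MA$ is coded by a given $\sim_{\Gamma_j}$-class.

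Concretely, I fix a finite generating set $a_1,\dots,a_m$ of $\MA$, coordinate maps $\mu_{\Gamma_1},\mu_{\Gamma_2}$, and tuples $\bar b_i^{(j)}\in \mu_{\Gamma_j}^{-1}(a_i)$ for $i=1,\dots,m$, $j=1,2$. Since the $a_i$ generate $\MA$, every element $a\in A$ admits the form $a=t^{\MA}(a_1,\dots,a_m)$ for some $L(\MA)$-term $t(y_1,\dots,y_m)$. Therefore the natural isomorphism $\lambda=\bar\mu_{\Gamma_2}^{-1}\circ\bar\mu_{\Gamma_1}$ satisfies $\lambda([\bar x]_{\sim_{\Gamma_1}})=[\bar v]_{\sim_{\Gamma_2}}$ if and only if there exists a term $t(y_1,\dots,y_m)$ in $L(\MA)$ such that $[\bar x]_{\sim_{\Gamma_1}}=t^{\Gamma_1(\MB,\bar p_1)}\bigl([\bar b_1^{(1)}],\dots,[\bar b_m^{(1)}]\bigr)$ and simultaneously $[\bar v]_{\sim_{\Gamma_2}}=t^{\Gamma_2(\MB,\bar p_2)}\bigl([\bar b_1^{(2)}],\dots,[\bar b_m^{(2)}]\bigr)$. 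This is the working description of $\lambda$ that I will translate into logic.

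Next I express existence of such a term and its two simultaneous evaluations as a weak second-order formula over $\MB$ with the parameters $\bar p_1,\bar p_2$ and the tuples $\bar b_i^{(j)}$. Because $L(\MA)$ is finite, codes of $L(\MA)$-terms form a recursive subset of $\N$, and the evaluation of a term of code $e$ in an interpretation $\Gamma_j(\MB,\bar p_j)$ can be traced along its parse tree by a finite sequence of tuples in $B^{n_j}$, where $n_j=\dim\Gamma_j$: one tuple per subterm. Correctness of such a sequence (each stage is produced from earlier stages by applying the appropriate $c_{\Gamma_j}$, $f_{\Gamma_j}$ formulas in agreement with the symbol at that node of the parse tree) is a first-order condition on the sequence, verified uniformly by using $U_{\Gamma_j}$, $E_{\Gamma_j}$ and $Q_{\Gamma_j}$ from the code. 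Asserting the existence of a term and of two such correct evaluation sequences ending at $\bar x$ and $\bar v$ respectively is therefore a weak second-order statement. Applying the richness of $\MB$, this statement is equivalent to some first-order formula $\theta(\bar x,\bar v)$ in $L(\MB)\cup B$, which defines the graph of $\lambda$ in $\MB$ and so serves as the required homotopy connector $(\theta)\colon(\Gamma_1,\bar p_1)\to(\Gamma_2,\bar p_2)$.

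The main obstacle is the bookkeeping in the second step: one has to verify carefully that finite parse trees, term codes, and two parallel evaluation sequences in $B^{n_1}$ and $B^{n_2}$ can all be simultaneously quantified over within weak second-order logic on $\MB$ in a way that respects the admissibility conditions of the codes $\Gamma_1,\Gamma_2$ and the equivalence relations $\sim_{\Gamma_1},\sim_{\Gamma_2}$. Once this coding is set up, the conversion to a first-order connector via richness, together with verification that the resulting $\theta$ defines an $L(\MA)$-isomorphism (well-definedness modulo $\sim_{\Gamma_j}$, surjectivity, and preservation of the symbols $Q\in L(\MA)$), is routine and reduces in each case to the Reduction Theorem applied to the two interpretations.
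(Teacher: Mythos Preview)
The paper does not actually prove this theorem: it is explicitly marked ``[expected in the next paper]'' and stated as an announcement only, so there is no proof in the present paper to compare against.

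Your outline is sound and is almost certainly the intended argument (it is the natural use of the richness machinery of~\cite{KhMS} that the paper is built on). Fixing generators $a_1,\dots,a_m$, preimages $\bar b_i^{(j)}$, and describing the graph of $\lambda=\bar\mu_{\Gamma_2}^{-1}\circ\bar\mu_{\Gamma_1}$ as ``there is a term $t$ such that $\bar x$ codes $t(\bar b^{(1)})$ in $\Gamma_1$ and $\bar v$ codes $t(\bar b^{(2)})$ in $\Gamma_2$'' is exactly the right idea, and the passage to a first-order connector via richness is the point of the hypothesis on~$\MB$.

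Two small things to tighten when you write it out. First, make explicit which version of weak second-order logic you are using: in~\cite{KhMS} it is a two-sorted setup with a number sort carrying arithmetic and finite-sequence predicates $t(\bar s,i,x)$, so G\"odel codes of $L(\MA)$-terms and step-by-step evaluation along a parse tree are genuinely available; without that number sort your ``recursive subset of $\N$'' remark has no home inside the logic. Second, note that the parameters $\bar p_1,\bar p_2,\bar b_i^{(j)}$ enter the resulting first-order $\theta$ as constants from $B$, which is exactly what ``homotopic with parameters'' permits; this is why the (non-absolute) richness hypothesis suffices here, in contrast to Theorem~\ref{reg_th18} where absolute richness is needed.
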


\begin{corollary}
    Every finitely generated rich algebraic structure in a finite language is i\=/rigid.
\end{corollary}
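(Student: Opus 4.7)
The corollary will be a direct consequence of Theorem~\ref{th18}. My plan is the following.

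Let $\MA$ be a finitely generated rich algebraic structure in a finite language $L(\MA)$. I would apply Theorem~\ref{th18} in the special case $\MB = \MA$. The hypotheses of the theorem are met: $\MA$ is finitely generated in a finite language (as a structure in the role of $\MA$ in Theorem~\ref{th18}) and $\MA$ is rich (as a structure in the role of $\MB$ in Theorem~\ref{th18}), so the conclusion gives that any two interpretations with parameters of $\MA$ into $\MA$ are homotopic.

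Now take an arbitrary self-interpretation $(\Gamma,\bar p)\colon \MA \rightsquigarrow \MA$. Recall from Subsection~\ref{subsec:def_int} that the identical interpretation $(\Id_{L(\MA)},\emptyset)\colon \MA \rightsquigarrow \MA$ is a self-interpretation (viewed as an absolute, hence in particular a parameter-based, interpretation). Applying Theorem~\ref{th18} to the pair $(\Gamma_1,\bar p_1) = (\Gamma, \bar p)$ and $(\Gamma_2,\bar p_2) = (\Id_{L(\MA)},\emptyset)$ yields that $(\Gamma,\bar p)$ is homotopic to $\Id_{L(\MA)}$. Since $(\Gamma,\bar p)$ was arbitrary, $\MA$ is i-rigid by the definition of i-rigidity in Subsection~\ref{subsec:homotopy}.

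No obstacle is anticipated here since the corollary is essentially a one-line application of Theorem~\ref{th18}; the only point to keep straight is that the identical interpretation, although absolute, is subsumed under the notion of interpretation with parameters (with an empty tuple of parameters), which was explicitly remarked upon after Definition~\ref{de:interpretation}.
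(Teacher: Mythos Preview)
Your proposal is correct and follows exactly the intended route: the paper states this corollary immediately after Theorem~\ref{th18} without proof, and the only reasonable reading is the one you give---apply Theorem~\ref{th18} with $\MB=\MA$ and compare an arbitrary self-interpretation to $\Id_{L(\MA)}$. Your remark that the absolute interpretation $\Id_{L(\MA)}$ counts as an interpretation with parameters (with empty parameter tuple) is the one small point worth noting, and you have handled it correctly.
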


\begin{theorem}[expected in the next paper]\label{reg_th18}
Suppose that $\MA$ is a finitely generated algebraic structure in a finite language $L(\MA)$ and $\MB$ is an absolutely rich algebraic structure.
    Then any regular interpretations $(\Gamma_1,\phi_1),(\Gamma_2,\phi_2)\colon \MA\rightsquigarrow \MB$ are regularly homotopic. In particular, every regular interpretation $(\Gamma,\phi)\colon \MA\rightsquigarrow\MB$ is self-homotopic.
\end{theorem}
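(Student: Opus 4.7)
The plan is to lift Theorem~\ref{th18} from fixed parameters to a uniform description, using the finite generation of $\MA$ together with the absolute richness of $\MB$. The strategy is to construct, once and for all, a single pair $(\theta,\delta)$ of $L(\MB)$\=/formulas (with no added constants from $B$) that encodes, simultaneously for all admissible $\bar p_1\in\phi_1(\MB)$ and $\bar p_2\in\phi_2(\MB)$, a homotopy isomorphism between $\Gamma_1(\MB,\bar p_1)$ and $\Gamma_2(\MB,\bar p_2)$ whose existence is already guaranteed by Theorem~\ref{th18} (which applies since absolute richness implies richness).

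First I would fix a finite generating set $a_1,\dots,a_k$ of $\MA$, set $n_j=\dim\Gamma_j$, and introduce parameter variables $\bar w=(\bar c_1,\dots,\bar c_k,\bar d_1,\dots,\bar d_k)$ with $|\bar c_i|=n_1$, $|\bar d_i|=n_2$; the intended meaning is that $\bar c_i$ represents $a_i$ in $\Gamma_1(\MB,\bar y_1)$ and $\bar d_i$ represents $a_i$ in $\Gamma_2(\MB,\bar y_2)$. Since $L(\MA)$ is finite, I would formulate a weak second-order $L(\MB)$\=/predicate $\Theta(\bar x_1,\bar x_2,\bar y_1,\bar y_2,\bar w)$ asserting: ``$\bar x_j\in U_{\Gamma_j}(\MB,\bar y_j)$ for $j=1,2$, and there exists a term $t(z_1,\dots,z_k)$ of $L(\MA)$ whose value at $(\bar c_1,\dots,\bar c_k)$, computed inductively in $\Gamma_1(\MB,\bar y_1)$ via the formulas of $\Gamma_1$, is $\sim_{\Gamma_1}$\=/equivalent to $\bar x_1$, and analogously for the $\Gamma_2$\=/side on $(\bar d_1,\dots,\bar d_k)$ and $\bar x_2$''; here the quantification over terms is coded via finite sequences in $\MB$. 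By absolute richness, $\Theta$ is equivalent over $\MB$ to a genuine first-order $L(\MB)$\=/formula $\theta(\bar x_1,\bar x_2,\bar y_1,\bar y_2,\bar w)$. In a parallel manner, I would define a WSO formula $\Delta(\bar y_1,\bar y_2,\bar w)$ stating ``$\bar c_i\in U_{\Gamma_1}(\MB,\bar y_1)$, $\bar d_i\in U_{\Gamma_2}(\MB,\bar y_2)$, and the formula $\theta(\bar x_1,\bar x_2,\bar y_1,\bar y_2,\bar w)$ defines the graph of an $L(\MA)$\=/isomorphism $\Gamma_1(\MB,\bar y_1)\to\Gamma_2(\MB,\bar y_2)$'' (this last clause is a finite $L(\MA)$\=/conjunction of bijectivity and operation-preservation checks, each expressible via WSO over $\MB$); absolute richness again yields a first-order $L(\MB)$\=/equivalent $\delta$.

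To verify $(\theta,\delta)\colon(\Gamma_1,\phi_1)\to(\Gamma_2,\phi_2)$ is a regular homotopy in the sense of Definition~\ref{def:reg_hom}, take arbitrary $\bar p_1\in\phi_1(\MB)$ and $\bar p_2\in\phi_2(\MB)$. By Theorem~\ref{th18}, the interpretations $(\Gamma_1,\bar p_1)$ and $(\Gamma_2,\bar p_2)$ are homotopic, yielding an $L(\MA)$\=/isomorphism $\lambda\colon \Gamma_1(\MB,\bar p_1)\to\Gamma_2(\MB,\bar p_2)$. Choosing any $\bar c_i\in\bar\mu_{\Gamma_1,\bar p_1}^{-1}(a_i)$ and $\bar d_i\in\bar\mu_{\Gamma_2,\bar p_2}^{-1}(\lambda(a_i))$ produces a witness $\bar r=(\bar c_1,\dots,\bar d_k)$ with $\MB\models\delta(\bar p_1,\bar p_2,\bar r)$. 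Conversely, by the very definition of $\delta$, every such witness yields a $\theta$\=/defined isomorphism. The self-homotopy claim follows by specializing to $(\Gamma_1,\phi_1)=(\Gamma_2,\phi_2)=(\Gamma,\phi)$.

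The main obstacle will be the rigorous WSO\=/to\=/FO reduction. We must carefully formulate ``there exists a term $t$ in $L(\MA)$'' and its inductive evaluation in the quotients $\Gamma_j(\MB,\bar y_j)$ as bona fide weak second-order $L(\MB)$\=/predicates — coding terms as finite sequences, verifying that the inductive evaluation scheme fits the WSO framework, and confirming that absolute richness then eliminates all auxiliary parameters, leaving only the declared free variables $\bar x_1,\bar x_2,\bar y_1,\bar y_2,\bar w$. A secondary subtlety, worth recording, is that the correctness clause of $\delta$ implicitly uses the free variables $\bar y_1,\bar y_2,\bar w$ as ``parameters'' inside a second layer of $L(\MA)$\=/axiomatic checks, so the WSO encoding must be arranged so that the richness translation preserves these dependencies without sneaking in further constants.
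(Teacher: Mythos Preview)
The paper does not actually prove this theorem: it is explicitly labelled ``expected in the next paper'' and no argument is given, so there is no paper-proof to compare against. Your proposal therefore stands on its own merits.

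Your overall architecture is sound and is very likely close to what the authors intend: leverage Theorem~\ref{th18} for the existence of \emph{some} homotopy at each pair $(\bar p_1,\bar p_2)$, then use finite generation of $\MA$ to pin down the isomorphism by the images of a fixed generating tuple, and finally invoke absolute richness of $\MB$ to turn the uniform WSO description into a single first-order $L(\MB)$-formula without constants. The descriptor $\delta$ you describe is essentially the ``$\theta$ defines an isomorphism'' clause, which is a first-order scheme once $\theta$ is first-order (since $L(\MA)$ is finite), so that part is cleaner than you suggest.

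The one place to be careful is exactly where you flag it: the WSO encoding of ``there exists an $L(\MA)$-term $t$ whose parallel evaluations at $(\bar c_i)$ and $(\bar d_i)$ land on $\bar x_1$ and $\bar x_2$''. Weak second-order logic over $\MB$ quantifies over finite tuples of \emph{elements of $B$}, not over syntactic objects, so you cannot literally quantify over terms. The standard fix is to quantify instead over a pair of finite evaluation sequences $(\bar s_1,\dots,\bar s_m)$ in $U_{\Gamma_1}$ and $(\bar s_1',\dots,\bar s_m')$ in $U_{\Gamma_2}$ together with a combinatorial ``tag'' sequence recording, for each step, which generator or which function symbol (and which earlier positions) is used, and to require that both sequences follow the \emph{same} tag sequence. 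Since $L(\MA)$ is finite, the tags range over a fixed finite set and can be coded by, say, a bounded tuple of the generators themselves; verifying that this fits the WSO framework of~\cite{KhMS} (to which the paper refers for richness) is routine but must be written out. Once that is done, your argument goes through.
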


\begin{corollary}\label{cor:reg_i-rigid}
    Every finitely generated absolutely rich algebraic structure in a finite language is regularly i\=/rigid.
\end{corollary}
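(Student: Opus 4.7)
The plan is to derive this corollary as a direct application of Theorem~\ref{reg_th18} by specializing it to the case $\MB=\MA$. Since $\MA$ is, by hypothesis, a finitely generated absolutely rich algebraic structure in a finite language, the hypotheses of Theorem~\ref{reg_th18} are satisfied with both the source and the target taken to be $\MA$: the requirement on the ``source'' structure (finitely generated, finite language) and on the ``target'' structure (absolutely rich) are both met by the single structure $\MA$.

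The argument then proceeds as follows. I would first recall the definition: $\MA$ is regularly i\=/rigid if every regular self-interpretation $\MA\rightsquigarrow\MA$ is regularly homotopic to $\Id_{L(\MA)}$. Let $(\Gamma,\phi)\colon\MA\rightsquigarrow\MA$ be an arbitrary regular self-interpretation. To compare $(\Gamma,\phi)$ with the absolute interpretation $\Id_{L(\MA)}$ in the setting of regular homotopy, I would use the framework established in item~(1) of Definition~\ref{def:reg_hom_without}: a regular homotopy between a regular interpretation and an absolute one is, by convention, obtained by replacing the absolute interpretation with its simple parameter extension $(\Id_{L(\MA)}^{\reg},\reg)$, which is a bona fide regular interpretation of $\MA$ in $\MA$.

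Now apply Theorem~\ref{reg_th18} to the two regular interpretations $(\Gamma,\phi)$ and $(\Id_{L(\MA)}^{\reg},\reg)$ of $\MA$ in $\MA$: the theorem yields that they are regularly homotopic. Unfolding the convention above, this precisely says that $(\Gamma,\phi)$ is regularly homotopic to $\Id_{L(\MA)}$. Since $(\Gamma,\phi)$ was arbitrary, $\MA$ is regularly i\=/rigid.

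There is essentially no obstacle here beyond bookkeeping: the content is entirely contained in Theorem~\ref{reg_th18}, and the only minor point is the formal identification of $\Id_{L(\MA)}$ with its simple parameter extension when setting up the regular homotopy, which is handled by the definitional conventions of Subsection~\ref{subsec:homotopy_reg}. The interesting mathematics lies upstream, in the proof of Theorem~\ref{reg_th18} itself (which is deferred to a subsequent paper); this corollary is the natural self-application of that theorem and serves to justify the earlier remark that $\Z$, being finitely generated, absolutely rich, and in a finite language, is regularly i\=/rigid.
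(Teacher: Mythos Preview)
Your proposal is correct and is precisely the intended derivation: the paper states this as an immediate corollary of Theorem~\ref{reg_th18}, obtained by taking $\MB=\MA$ and comparing an arbitrary regular self-interpretation with $\Id_{L(\MA)}$ (via its simple parameter extension). Your handling of the absolute/regular bookkeeping through Definition~\ref{def:reg_hom_without} is exactly right.
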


\begin{theorem}[expected in the next paper]
    Suppose that $\MA$ and $\MB$ are strongly bi-interpretable in each other. Then $\MA$ is i\=/rigid if and only if $\MB$ is i\=/rigid.
\end{theorem}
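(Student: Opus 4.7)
My plan is as follows. By the inherent symmetry of the statement, it suffices to assume $\MA$ is i\=/rigid and show $\MB$ is i\=/rigid; the converse follows by swapping the roles of $\MA,\MB$ and of $\Gamma,\Delta$. Fix a strong bi-interpretation $(\Gamma,\bar p,\mu_\Gamma;\Delta,\bar q,\mu_\Delta)$; the strong condition supplies that $\mu_\Delta\circ\mu_\Gamma$ is definable in $\MB$, so the self-interpretation $\Delta\circ\Gamma$ of $\MB$ is homotopic to $\Id_{L(\MB)}$. Let $(\Sigma,\bar r,\mu_\Sigma)\colon\MB\rightsquigarrow\MB$ be an arbitrary self-interpretation; the goal is to produce a homotopy from $\Sigma$ to $\Id_{L(\MB)}$. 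Throughout, I write $\sim$ for the homotopy relation on interpretations.

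The central construction is the \emph{sandwich} $\Xi := \Gamma\circ\Sigma\circ\Delta$, which, by iterating Lemma~\ref{le:int-transitivity}, is a well-defined self-interpretation of $\MA$. By i\=/rigidity of $\MA$, $\Xi\sim\Id_{L(\MA)}$. The idea will be to left-compose with $\Delta$ and right-compose with $\Gamma$ and compare the outcome with a twofold application of $\Delta\circ\Gamma\sim\Id_{L(\MB)}$. On one side,
$$\Delta\circ\Xi\circ\Gamma \;\sim\; \Delta\circ\Id_{L(\MA)}\circ\Gamma \;\approx\; \Delta\circ\Gamma \;\sim\; \Id_{L(\MB)};$$
on the other, associativity of code composition rewrites the sandwich as $(\Delta\circ\Gamma)\circ\Sigma\circ(\Delta\circ\Gamma)$, and applying $\Delta\circ\Gamma\sim\Id_{L(\MB)}$ twice gives $\Delta\circ\Xi\circ\Gamma\sim\Sigma$. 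Combining the two chains yields $\Sigma\sim\Id_{L(\MB)}$, as required.

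The essential tool needed for both sides of this comparison is a functoriality lemma that I will have to establish first: the composition of interpretations respects homotopy on each side, i.\,e., whenever $\Xi_1\sim\Xi_2$ and $\Phi$ is a composable interpretation, one has both $\Xi_1\circ\Phi\sim\Xi_2\circ\Phi$ and $\Phi\circ\Xi_1\sim\Phi\circ\Xi_2$. For right-composition: if $\theta$ is the connector of the homotopy $\Xi_1\sim\Xi_2$ (living in the target language of $\Xi_1,\Xi_2$), then its $\Phi$\=/translation $\theta_\Phi$ will define the induced isomorphism between the identified structures $\Xi_i\circ\Phi(\cdot)\simeq\Xi_i(\Phi(\cdot))$; this is a direct consequence of the Reduction Theorem~\ref{le:interpr_corol} and Lemma~\ref{le:Aut3}. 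For left-composition: applying the isomorphism $\lambda$ defined by $\theta$ coordinate-wise on $\dim\Phi$\=/tuples furnishes a connector for the composed interpretations, namely the conjunction of $\dim\Phi$ copies of $\theta$ on the respective components, which remains a definable formula in the appropriate language.

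The hard part will be the parameter bookkeeping in this functoriality lemma and in the sandwich. Lemma~\ref{le:int-transitivity} requires choosing preimages such as $\bar{\bar p}\in\mu_\Delta^{-1}(\bar p)$ when forming compositions, and one must verify that the homotopies coming from i\=/rigidity of $\MA$ and from the strong bi-interpretation are mutually compatible with the preimage choices made in forming the triple composition $\Gamma\circ\Sigma\circ\Delta$ and the sandwich $\Delta\circ\Xi\circ\Gamma$. No new ideas beyond the translation machinery of Subsection~\ref{subsec:tran} are required, but the notation will be intricate. With the functoriality lemma in place, the sandwich argument above proves that $\MB$ is i\=/rigid, and by symmetry the equivalence follows.
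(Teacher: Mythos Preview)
The paper announces this theorem but defers its proof to a subsequent paper in the series (it is marked ``expected in the next paper''), so there is no proof in the present text against which to compare your attempt. Your sandwich/conjugation argument is the natural approach and is essentially correct: the functoriality of composition with respect to homotopy (your two claims, for left- and right-composition) is exactly the right tool, and both follow from the Reduction Theorem~\ref{le:interpr_corol} together with the explicit identification of Lemma~\ref{le:Aut3}, as you indicate.

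The parameter bookkeeping you flag is indeed the delicate point, and you should make one thing more explicit. Both sides of your comparison---establishing $\Delta\circ\Xi\circ\Gamma\sim\Id_{L(\MB)}$ via i\=/rigidity of $\MA$, and rewriting $\Delta\circ\Xi\circ\Gamma=(\Delta\circ\Gamma)\circ\Sigma\circ(\Delta\circ\Gamma)\sim\Sigma$ via $\Delta\circ\Gamma\sim\Id_{L(\MB)}$---must refer to the \emph{same} interpretation of $\MB$ in $\MB$, meaning the same parameter tuple for the five-fold composition. This is precisely where the ``strong'' hypothesis is used: it ensures that the definable homotopy isomorphism $\mu_\MB\colon\Delta\circ\Gamma(\MB,\bar{\bar q},\bar p)\to\MB$ equals $\mu_\Delta\circ\mu_\Gamma$, so that the preimage of $\bar r$ (the parameters of $\Sigma$) taken when forming the composition $\Sigma\circ(\Delta\circ\Gamma)$ coincides with the one implicitly used when you collapse $\Delta\circ\Gamma$ to $\Id_{L(\MB)}$ in your left-composition step. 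Without this alignment, the second chain terminates at $(\Sigma,\bar r')$ for some $\bar r'$ possibly different from $\bar r$, which is not what you need. (It appears one can still recover the weak case by first replacing $\mu_\Delta$, via Remark~\ref{rem:strong}, by a coordinate map $\mu_{\Delta}'$ satisfying $\mu_\MB=\mu_{\Delta}'\circ\mu_\Gamma$, and then running your argument; but if you want to claim the theorem under weak bi-interpretability you should spell this out, since it is not automatic.)
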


\begin{corollary}\label{th19}
Suppose that $\MA$ and $\MB$ are strongly bi-interpretable in each other and $\MA$ is i\=/rigid. Then for any interpretations $(\Gamma,\bar p)\colon \MA\rightsquigarrow\MB$ and $(\Delta,\bar q)\colon \MB\rightsquigarrow \MA$  $(\Gamma,\bar p;\Delta,\bar q)$ is a bi-interpretation between $\MA$ and $\MB$ .
\end{corollary}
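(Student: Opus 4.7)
The plan is to reduce this to two straightforward applications of i-rigidity, one on each side. Before unpacking arbitrary $(\Gamma,\bar p)$ and $(\Delta,\bar q)$, I would first invoke the immediately preceding theorem (transfer of i-rigidity under strong bi-interpretability) to upgrade the hypothesis: since $\MA$ and $\MB$ are strongly bi-interpretable, i-rigidity of $\MA$ yields i-rigidity of $\MB$. Note that after this step, the specific strong bi-interpretation between $\MA$ and $\MB$ disappears from the argument; its sole role is to cascade i-rigidity.

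Next, take arbitrary interpretations $(\Gamma,\bar p)\colon\MA\rightsquigarrow\MB$ and $(\Delta,\bar q)\colon\MB\rightsquigarrow\MA$, and fix coordinate maps $\mu_\Gamma$, $\mu_\Delta$. By Lemma~\ref{le:int-transitivity}, for any tuples $\bar{\bar p}\in\mu_\Delta^{-1}(\bar p)$ and $\bar{\bar q}\in\mu_\Gamma^{-1}(\bar q)$, the compositions $(\Gamma\circ\Delta,\bar{\bar p},\bar q)$ and $(\Delta\circ\Gamma,\bar{\bar q},\bar p)$ are well-defined self-interpretations $\MA\rightsquigarrow\MA$ and $\MB\rightsquigarrow\MB$, isomorphic to $\MA$ and $\MB$ respectively. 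This immediately verifies condition~\ref{bi1} of Definition~\ref{def:bi}.

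For condition~\ref{bi2}, I would apply i-rigidity on each side. Since $\MA$ is i-rigid, the self-interpretation $(\Gamma\circ\Delta,\bar{\bar p},\bar q)$ is homotopic to $\Id_{L(\MA)}$; by the characterization recorded in the paper (a self-interpretation is homotopic to the identical interpretation precisely when a definable coordinate map to the underlying set exists), this yields a definable $\mu_\MA\colon U_{\Gamma\circ\Delta}(\MA,\bar{\bar p},\bar q)\to A$. Symmetrically, i-rigidity of $\MB$ gives a definable $\mu_\MB\colon U_{\Delta\circ\Gamma}(\MB,\bar{\bar q},\bar p)\to B$. This is exactly condition~\ref{bi:item1} of Definition~\ref{def:bi}(2), so $(\Gamma,\bar p;\Delta,\bar q)$ is a bi-interpretation.

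There is really no hard step here, given the preceding theorem on transfer of i-rigidity; the argument is purely a matter of chasing definitions. The only point to flag is what is \emph{not} being asserted: we produce only a weak bi-interpretation, not a strong one, because we have no mechanism to force the single composition $\mu_\Gamma\circ\mu_\Delta$ (rather than two compositions built from possibly distinct coordinate maps) to be definable in $\MA$, and symmetrically on the $\MB$ side. This is precisely why the conclusion of the corollary drops the word ``strong'' relative to the hypothesis, and why a separate argument (as in Lemma~\ref{rigid+i-rigid}) is needed when one wants strength.
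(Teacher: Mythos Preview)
Your proposal is correct and follows exactly the approach the paper intends: the corollary is stated immediately after the theorem on transfer of i\=/rigidity under strong bi-interpretability, and the argument is precisely to use that theorem to get i\=/rigidity of $\MB$, then apply i\=/rigidity on each side to the composed self-interpretations (compare also Lemma~\ref{i-rigid}, which handles one direction). Your closing remark about why the conclusion is only a weak bi-interpretation is also apt and matches the paper's care in distinguishing the two notions.
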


\begin{theorem}[expected in the next paper]
Suppose that $\MA$ and $\MB$ are regularly bi-interpretable in each other. Then $\MA$ is regularly i\=/rigid if and only if $\MB$ is regularly i\=/rigid.
\end{theorem}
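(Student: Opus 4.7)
By symmetry it suffices to show one direction: assuming $\MA$ is regularly i\=/rigid, I will deduce that $\MB$ is regularly i\=/rigid. Fix the regular bi-interpretation data $(\Gamma,\phi)\colon\MA\rightsquigarrow\MB$, $(\Delta,\psi)\colon\MB\rightsquigarrow\MA$, together with regular homotopies $(\theta_\MA,\delta_\MA)\colon (\Gamma\circ\Delta,\phi_\Delta\wedge\psi)\to\Id_{L(\MA)}$ and $(\theta_\MB,\delta_\MB)\colon (\Delta\circ\Gamma,\psi_\Gamma\wedge\phi)\to\Id_{L(\MB)}$.

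Let $(\Lambda,\chi)\colon\MB\rightsquigarrow\MB$ be an arbitrary regular self-interpretation of $\MB$. My plan is to build from it a regular self-interpretation of $\MA$, use the hypothesis on $\MA$, then push the resulting homotopy back through $\Gamma$ and $\Delta$. Concretely, consider the triple composition
\[
\Theta \;=\; \Gamma\circ\Lambda\circ\Delta \colon \MA\rightsquigarrow\MA,
\]
which by iterated application of Lemma~\ref{le:int-transitivity}\,\ref{item:reg} is a regular self-interpretation of $\MA$ with descriptor $\phi_{\Lambda\circ\Delta}\wedge\chi_\Delta\wedge\psi$. By the assumed regular i\=/rigidity of $\MA$, there is a regular homotopy $\Theta\sim\Id_{L(\MA)}$.

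The substantive step is then to show that regular homotopy is preserved by pre- and post-composition with a fixed regular interpretation; this is a functoriality statement that I would prove as a lemma before the theorem. Granting it, apply post-composition with $\Delta$ and pre-composition with $\Gamma$ to the homotopy above (via the code operations of Definition~\ref{def:code-composition}) to obtain
\[
\Delta\circ\Theta\circ\Gamma \;\sim\; \Delta\circ\Id_{L(\MA)}\circ\Gamma \;\approx\; \Delta\circ\Gamma.
\]
Now expand the left-hand side by associativity of code composition: $\Delta\circ\Theta\circ\Gamma = (\Delta\circ\Gamma)\circ\Lambda\circ(\Delta\circ\Gamma)$. Apply the same functoriality lemma to the hypothesis $\Delta\circ\Gamma\sim\Id_{L(\MB)}$, twice, to conclude
\[
(\Delta\circ\Gamma)\circ\Lambda\circ(\Delta\circ\Gamma) \;\sim\; \Id_{L(\MB)}\circ\Lambda\circ\Id_{L(\MB)} \;\approx\; \Lambda.
\]
Combining this with the preceding display and using transitivity (Lemma~\ref{le:codes_reg}), together with $\Delta\circ\Gamma\sim\Id_{L(\MB)}$, yields $\Lambda\sim\Id_{L(\MB)}$, as required.

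The main obstacle is the functoriality lemma: given $(\Gamma_1,\phi_1),(\Gamma_2,\phi_2)\colon\MA\rightsquigarrow\MB$ regularly homotopic via $(\theta,\delta)$, and a regular interpretation $(\Sigma,\sigma)\colon\MB\rightsquigarrow\MC$, one must produce an explicit regular homotopy between $(\Gamma_1\circ\Sigma,(\phi_1)_\Sigma\wedge\sigma)$ and $(\Gamma_2\circ\Sigma,(\phi_2)_\Sigma\wedge\sigma)$, and symmetrically for pre-composition $\Sigma\colon\MC\rightsquigarrow\MA$ with $\Sigma\circ\Gamma_i$. For post-composition this is essentially the $\Sigma$\=/translation $\theta_\Sigma$ of the connector $\theta$ applied tuple-wise, with descriptor obtained by translating $\delta$ along $\Sigma$ and conjoining with $\sigma$; the Reduction Theorem~\ref{le:interpr_corol} (in the form of Lemma~\ref{ABC}) guarantees that the resulting formula defines the right isomorphism of the composed structures over every admissible choice of parameters. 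For pre-composition the connector is obtained more directly by relativizing to the universe of $\Sigma$. Once this is in place, all remaining manipulations are bookkeeping of parameter tuples and their descriptors, which is why I expect the real work to be concentrated in that lemma rather than in the theorem itself.
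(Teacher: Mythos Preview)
The paper does not actually prove this theorem here; it is announced with the label ``expected in the next paper'' and no argument is given. So there is nothing to compare your proposal against directly, and I can only assess it on its own merits.

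Your categorical strategy is sound in outline, but the functoriality lemma you isolate is stated too optimistically. Consider post-composition: if $(\Gamma_1,\phi_1)\sim(\Gamma_2,\phi_2)$ and $(\Sigma,\sigma)\colon\MB\rightsquigarrow\MC$, the regular homotopy you must produce between $\Gamma_1\circ\Sigma$ and $\Gamma_2\circ\Sigma$ has to work for \emph{independent} parameter tuples $(\bar{\bar p}_1,\bar c_1)$ and $(\bar{\bar p}_2,\bar c_2)$, as in Definition~\ref{def:reg_hom}. Your sketch (``$\Sigma$\=/translate the connector $\theta$'') gives an isomorphism only when $\bar c_1=\bar c_2$, i.e., when both compositions are interpreted inside the \emph{same} copy $\Sigma(\MC,\bar c)$. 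When $\bar c_1\neq\bar c_2$ you additionally need a definable isomorphism $\Sigma(\MC,\bar c_1)\simeq\Sigma(\MC,\bar c_2)$, uniformly in the parameters~--- that is, $(\Sigma,\sigma)$ must be self-homotopic. The same obstruction appears for pre-composition.

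This is not fatal to your argument, because in your application the fixed interpretation $\Sigma$ is always $\Gamma$ or $\Delta$ from the given bi-interpretation, and these \emph{can} be shown to be self-homotopic: for instance, for $\bar p_1,\bar p_2\in\phi(\MB)$ one obtains a definable isomorphism $\Gamma(\MB,\bar p_1)\simeq\Gamma(\MB,\bar p_2)$ by passing through $\Gamma\circ\Delta\circ\Gamma$ and using the two connectors $(\theta_\MA)_\Gamma$ and $\theta_\MB$. But this step is itself non-trivial bookkeeping of the same flavour as the theorem, and you should prove it separately (or fold it into a correctly stated functoriality lemma with a self-homotopy hypothesis on $\Sigma$) rather than absorb it into the phrase ``relativizing to the universe of $\Sigma$''.
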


\begin{corollary}
If there exists an absolute regularly right-invertible interpretation of $\MA$ in $\Z$, then $\MA$ and $\Z$ are regularly strongly bi-interpretable in each other and $\MA$ is regularly i\=/rigid.
\end{corollary}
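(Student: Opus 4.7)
The plan is to derive this as a direct consequence of two results stated just above: Lemma~\ref{reg_rigid+i-rigid} (on strong bi-interpretability from absolute right-invertibility when the target is rigid and regularly i-rigid) and the transfer theorem for regular i-rigidity under regular bi-interpretation. The only real work is verifying that $\Z$ satisfies the hypotheses of Lemma~\ref{reg_rigid+i-rigid}, which is entirely standard but worth spelling out.

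First, I would observe that $\Z$ is rigid, since $\Aut(\Z) = \{e\}$, and that $\Z$ is regularly i\=/rigid. The latter is an immediate consequence of Corollary~\ref{cor:reg_i-rigid}: the ring $\Z$ is finitely generated in a finite language and absolutely rich, so every regular self-interpretation $\Z \rightsquigarrow \Z$ is regularly homotopic to $\Id_{L_{\ring}}$.

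Next, let $(\Gamma, \emptyset) \colon \MA \rightsquigarrow \Z$ be the given absolute regular interpretation and let $(\Delta, \psi) \colon \Z \rightsquigarrow \MA$ be a regular right inverse. Since $\Z$ is both rigid and regularly i\=/rigid, Lemma~\ref{reg_rigid+i-rigid} applies directly and yields that $(\Gamma, \emptyset; \Delta, \psi)$ is a regular strong bi-interpretation between $\MA$ and $\Z$. This establishes the first claim.

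Finally, for regular i\=/rigidity of $\MA$, I would invoke the stated theorem that under a regular bi-interpretation the property of being regularly i\=/rigid transfers between the two structures. Since $\Z$ is regularly i\=/rigid and regularly strongly bi-interpretable with $\MA$, it follows that $\MA$ is regularly i\=/rigid as well. The main thing to be careful about is not confusing the directions: the hypothesis only supplies right-invertibility of the interpretation $\MA \rightsquigarrow \Z$, but because $\Z$ is regularly i\=/rigid, the other half of the bi-interpretation (homotopy between $\Delta \circ \Gamma$ and $\Id_{L(\Z)}$) is automatic, and rigidity of $\Z$ upgrades the bi-interpretation from weak to strong via the uniqueness of the coordinate map on the $\Z$-side.
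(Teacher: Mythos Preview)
Your proposal is correct and follows exactly the intended route: the paper states this as an immediate corollary of Lemma~\ref{reg_rigid+i-rigid} (applied with $\MB=\Z$, which is rigid and, by Corollary~\ref{cor:reg_i-rigid}, regularly i\=/rigid) together with the transfer theorem for regular i\=/rigidity under regular bi-interpretation. Your explanation of why the half-absolute right-invertibility suffices---regular i\=/rigidity of $\Z$ supplies the missing homotopy and rigidity of $\Z$ forces strongness---is exactly the content of the proof of Lemma~\ref{reg_rigid+i-rigid}.
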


The concept of elimination of imaginaries (without parameters) is well-known in model theory (see~\cite{Hodges, Poizat}). We consider here its natural generalization,~--- elimination of imaginaries with parameters, and show that both of them behave well with respect to bi-interpretations.

\begin{definition}
Algebraic structure $\MA=\langle A;L(\MA)\rangle$ has {\em uniform elimination of imaginaries (without parameters)}, if for any natural number $n$ and any $0$\=/definable equivalence relation $\sim$ on $A^n$ there exists a formula $\psi(\bar x, \bar y)$ of the language $L(\MA)$ with free variables $x_1,\ldots,x_n,y_1,\ldots,y_m$, such that for any tuple $\bar a\in A^n$ there exists a unique tuple $\bar c\in A^m$ with 
$$
\bar a\sim \bar a^\prime \iff \MA\models\psi(\bar a^\prime, \bar c), \quad \bar a^\prime\in A^n,
$$
i.\,e., $\psi(A^n,\bar c)$ is the equivalence class of the element $\bar a$. We say that an algebraic structure $\MA$ has {\em uniform elimination of imaginaries with parameters}, if the algebraic structure $\MA_A$ has elimination of imaginaries without parameters.
\end{definition}

It is easy to see that the algebraic structure $\Z$ has uniform elimination of imaginaries without parameters.

\begin{theorem}[expected in the next paper]
If an algebraic structure $\MA$ has uniform elimination of imaginaries without parameters, then it has uniform elimination of imaginaries with parameters. 
\end{theorem}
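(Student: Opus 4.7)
The plan is to reduce uniform EI with parameters in $\MA$ to uniform EI without parameters. Unpacking the definition, I need to show that $\MA_A=\langle A;L(\MA)\cup A\rangle$ has uniform EI without parameters. Fix a $0$-definable equivalence relation $\sim$ on $A^n$ in $\MA_A$; it is defined in $\MA$ by $\varphi(\bar x_1,\bar x_2,\bar d)$ for some $L(\MA)$-formula $\varphi(\bar x_1,\bar x_2,\bar y)$ with $|\bar y|=k$ and a parameter tuple $\bar d\in A^k$. The strategy is to package $\sim$ into a single $0$-definable equivalence relation on $A^{n+k}$, apply the hypothesis to obtain a $0$-definable canonical-code formula, and then specialize $\bar y$ to the constants $\bar d$ in $\MA_A$.

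Concretely, let $E(\bar y)$ be the $L(\MA)$-formula asserting that $\varphi(\cdot,\cdot,\bar y)$ defines an equivalence relation on $A^n$; note $\MA\models E(\bar d)$. I will define $\approx$ on $A^{n+k}$ by $(\bar x_1,\bar y_1)\approx(\bar x_2,\bar y_2)$ iff $\bar y_1=\bar y_2$ and either ($E(\bar y_1)\wedge\varphi(\bar x_1,\bar x_2,\bar y_1)$) or ($\neg E(\bar y_1)\wedge\bar x_1=\bar x_2$). This is $0$-definable in $\MA$ and is manifestly reflexive, symmetric, and transitive, with equivalence class $[(\bar a,\bar d)]_\approx=[\bar a]_\sim\times\{\bar d\}$ for every $\bar a\in A^n$. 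Applying the assumed uniform EI without parameters to $\approx$ yields an $L(\MA)$-formula $\psi'(\bar x,\bar y,\bar z)$ with $|\bar z|=m$ such that for each $(\bar a,\bar b)\in A^{n+k}$ there exists a unique $\bar c\in A^m$ with $\psi'(A^{n+k},\bar c)=[(\bar a,\bar b)]_\approx$.

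I then propose the $L(\MA)\cup A$-formula
\[
\psi(\bar x,\bar z)\ :=\ \psi'(\bar x,\bar d,\bar z)\,\wedge\,\forall\,\bar u\,\forall\,\bar v\,\bigl(\psi'(\bar u,\bar v,\bar z)\to \bar v=\bar d\bigr)
\]
as the EI-formula for $\sim$ in $\MA_A$. Verification is a short computation: taking $\bar c:=\bar c_{(\bar a,\bar d)}$, the unique code provided by $\psi'$, one has $\psi'(A^{n+k},\bar c)=[\bar a]_\sim\times\{\bar d\}\subseteq A^n\times\{\bar d\}$, so both conjuncts are satisfied and $\psi(A^n,\bar c)=[\bar a]_\sim$. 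Conversely, if some $\bar c'\in A^m$ also satisfies $\psi(A^n,\bar c')=[\bar a]_\sim$, the second conjunct forces $\psi'(A^{n+k},\bar c')\subseteq A^n\times\{\bar d\}$, while the first conjunct pins its $\bar d$-slice to $[\bar a]_\sim$; hence $\psi'(A^{n+k},\bar c')=[(\bar a,\bar d)]_\approx$ and the uniqueness of the original EI gives $\bar c'=\bar c$.

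The main obstacle in this plan is precisely uniqueness, and it dictates the shape of $\psi$. The naive choice $\psi(\bar x,\bar z):=\psi'(\bar x,\bar d,\bar z)$ would yield existence but fail uniqueness, since two distinct codes $\bar c\ne\bar c'$ could produce the same $\bar d$-slice while disagreeing at other values of $\bar y$. The second conjunct above uses $\bar d$ as a constant tuple from $L(\MA)\cup A$ to restrict attention to those codes whose $\psi'$-extent lies entirely inside the slice $\bar y=\bar d$; only under this restriction is the bijection between $\sim$-classes and canonical codes restored, and uniqueness transports from $\approx$ back to $\sim$.
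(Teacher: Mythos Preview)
The paper does not include a proof of this theorem; it is labelled ``expected in the next paper'', so there is nothing to compare against. Your argument is correct and is the standard reduction: carry the parameter tuple along to get a $0$\=/definable equivalence relation $\approx$ on $A^{n+k}$, apply parameter-free EI there to obtain $\psi'$, and then specialize back to the slice $\bar y=\bar d$ using the constants available in $\MA_A$.

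Your care with the uniqueness clause is exactly right and worth highlighting. The naive candidate $\psi(\bar x,\bar z):=\psi'(\bar x,\bar d,\bar z)$ would only guarantee existence, since the definition of uniform EI does not require that \emph{every} $\bar c$ cut out an $\approx$\=/class; there could be stray tuples $\bar c'$ whose $\bar d$\=/slice happens to equal $[\bar a]_\sim$ while $\psi'(A^{n+k},\bar c')$ spills outside $A^n\times\{\bar d\}$. Your second conjunct rules these out: if $\psi(A^n,\bar c')=[\bar a]_\sim$ is nonempty, then the second conjunct must hold for $\bar c'$, so $\psi'(A^{n+k},\bar c')\subseteq A^n\times\{\bar d\}$; combined with the first conjunct this forces $\psi'(A^{n+k},\bar c')=[\bar a]_\sim\times\{\bar d\}=[(\bar a,\bar d)]_\approx$, and the uniqueness clause of EI for $\approx$ then gives $\bar c'=\bar c$.
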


\begin{theorem}[expected in the next paper]
Suppose that $\MA$ is strongly half-injectively bi-interpretable with $\MB$ (with parameters), i.\,e., interpretation $\MA\rightsquigarrow\MB$ is injective. If $\MA$ has uniform elimination of imaginaries with parameters, then $\MB$ has uniform elimination of imaginaries with parameters too. 
\end{theorem}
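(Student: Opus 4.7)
The plan is to reduce the statement to the absolute case and then explicitly construct in $\MB$ a coding formula for each $0$-definable equivalence relation by transporting the $\MA$-side coding formula through the bi-interpretation.

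First, by Lemma~\ref{lem:bi} the strong half-injective bi-interpretability of $\MA$ and $\MB$ with parameters yields a strong half-injective absolute $0$-bi-interpretability of $\MA_A$ and $\MB_B$ via $(\Gamma_A,\emptyset,\mu_\Gamma;\Delta_B,\emptyset,\mu_\Delta)$, with $\Gamma_A$ still injective. Since uniform elimination of imaginaries with parameters in $\MA$ is exactly $0$-uniform elimination in $\MA_A$, and analogously for the goal in $\MB$, I may from now on assume that $\MA$, $\MB$ are strongly $0$-bi-interpretable via an injective absolute $\Gamma\colon \MA\rightsquigarrow\MB$ and an absolute $\Delta\colon \MB\rightsquigarrow\MA$, with $\theta_\MA$, $\theta_\MB$ defining the definable coordinate maps $\mu_\Gamma\circ\mu_\Delta$ on $U_{\Gamma\circ\Delta}(\MA)$ and $\mu_\Delta\circ\mu_\Gamma$ on $U_{\Delta\circ\Gamma}(\MB)$ respectively, and that $\MA$ has $0$-uniform elimination of imaginaries.

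Now let $\sim$ be a $0$-definable equivalence relation on $B^n$ defined by a formula $E$ in $L(\MB)$. By the Reduction Theorem~\ref{le:interpr_corol}, the $\Delta$-translation $E_\Delta$ defines in $\MA$ an equivalence relation $\sim_\Delta$ on $U_\Delta(\MA)^n$ with $\bar{\bar a}\sim_\Delta \bar{\bar a}'$ iff $\mu_\Delta(\bar{\bar a})\sim\mu_\Delta(\bar{\bar a}')$. Extend $\sim_\Delta$ to a $0$-definable equivalence relation $\sim_\Delta^{\ast}$ on all of $A^{n\cdot\dim\Delta}$ by making tuples outside $U_\Delta(\MA)^n$ equivalent only to themselves (Remark~\ref{rem:Id3} style). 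By hypothesis, there is a formula $\psi(\bar u,\bar v)$ in $L(\MA)$, with $|\bar v|=m$, witnessing $0$-elimination of imaginaries for $\sim_\Delta^{\ast}$: for every $\bar{\bar a}\in A^{n\cdot\dim\Delta}$ there is a unique $\bar c\in A^m$ such that $\psi(\,\cdot\,,\bar c)$ defines the $\sim_\Delta^{\ast}$-class of $\bar{\bar a}$.

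Define the candidate coding formula in $L(\MB)$
$$
\chi(\bar x,\bar y)\;=\;\exists\,\bar{\bar u}_1\cdots\exists\,\bar{\bar u}_n\,\Bigl(\bigwedge_{i=1}^n \theta_\MB(\bar{\bar u}_i,x_i)\,\wedge\,\psi_\Gamma(\bar{\bar u}_1,\ldots,\bar{\bar u}_n,\bar y)\Bigr),
$$
where $\psi_\Gamma$ is the $\Gamma$-translation of $\psi$, so that $|\bar y|=m\cdot\dim\Gamma$. The $\theta_\MB$-conjuncts assert that each $\bar{\bar u}_i$ lies in $U_{\Delta\circ\Gamma}(\MB)$ and $\mu_\Delta(\mu_\Gamma(\bar{\bar u}_i))=x_i$, so $(\mu_\Gamma(\bar{\bar u}_1),\ldots,\mu_\Gamma(\bar{\bar u}_n))$ is a preimage of $\bar x$ under $\mu_\Delta$, while $\psi_\Gamma$ asserts that $\mu_\Gamma(\bar y)\in A^m$ is the $\psi$-code of that preimage. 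Invoking the Reduction Theorem, the surjectivity of $\mu_\Delta\colon U_\Delta(\MA)\to B$, and the bijectivity of $\mu_\Gamma\colon U_\Gamma(\MB)\to A$ (from injectivity of $\Gamma$), I would verify that (i) for each $\bar b\in B^n$ there exists a unique $\bar y\in B^{m\cdot\dim\Gamma}$ with $\chi(\bar b,\bar y)$, and (ii) $\chi(\bar b_1,\bar y)$ and $\chi(\bar b_2,\bar y)$ hold for the same $\bar y$ iff $\bar b_1\sim\bar b_2$, yielding uniform elimination of imaginaries for $\sim$ in $\MB$.

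The main obstacle is the bookkeeping across two composed interpretations: the extension $\sim_\Delta\to\sim_\Delta^{\ast}$ must not corrupt the coding (handled because the existential quantifier over preimages filters out singleton classes living outside $U_\Delta(\MA)^n$), and the choice of preimage tuple $\bar{\bar u}$ must not affect the resulting $\bar y$ (this follows from well-definedness of $\psi$ on $\sim_\Delta^{\ast}$-classes). The injectivity of $\Gamma$ is essential at the final transport step: it is what lets the $\MA$-side code $\bar c\in A^m$ lift to a \emph{unique} $\MB$-side tuple $\bar y=\mu_\Gamma^{-1}(\bar c)\in B^{m\cdot\dim\Gamma}$, as demanded by the definition of elimination of imaginaries; without injectivity the code would only be an $E_\Gamma$-class of tuples rather than a tuple, and the conclusion would fail.
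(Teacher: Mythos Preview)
The paper does not actually contain a proof of this theorem: it is explicitly marked as ``expected in the next paper'', so there is no proof in the present paper to compare your argument against.

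That said, your overall strategy is the natural one and is essentially correct. The reduction via Lemma~\ref{lem:bi} to the absolute case is exactly the right move, and the transport construction --- pull back $\sim$ along $\mu_\Delta$, code the resulting equivalence relation in $\MA$, then push the code forward through the injective $\mu_\Gamma$ --- is the standard argument. It uses the hypotheses in the expected places: strongness is what makes $\theta_\MB$ define $\mu_\Delta\circ\mu_\Gamma$ for the \emph{same} pair $(\mu_\Gamma,\mu_\Delta)$ you use on the $\MA$-side, and half-injectivity of $\Gamma$ is what lets a code $\bar c\in A^m$ lift to a single tuple $\bar y\in B^{m\cdot\dim\Gamma}$ rather than an $E_\Gamma$-class.

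There is one technical point you gloss over in verifying claim~(i). The paper's definition of uniform elimination of imaginaries only guarantees that for each $\bar{\bar a}$ there is a \emph{unique} $\bar c$ with $\psi(\,\cdot\,,\bar c)=[\bar{\bar a}]_{\sim_\Delta^\ast}$; it does \emph{not} say that every $\bar c'$ with $\MA\models\psi(\bar{\bar a},\bar c')$ must have $\psi(\,\cdot\,,\bar c')$ equal to that class. So there may be ``junk'' parameters $\bar c'$ for which $\psi(\,\cdot\,,\bar c')$ is not a $\sim_\Delta^\ast$-class yet still meets $\mu_\Delta^{-1}(\bar b)$, and then $\bar y'=\mu_\Gamma^{-1}(\bar c')$ would also satisfy $\chi(\bar b,\bar y')$, breaking~(i). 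The fix is routine: replace $\psi(\bar u,\bar v)$ by the first-order formula asserting ``$\psi(\,\cdot\,,\bar v)$ is exactly the $\sim_\Delta^\ast$-class of $\bar u$''. This new formula still witnesses elimination of imaginaries for $\sim_\Delta^\ast$ but now defines the graph of the coding function, so $\psi(\bar{\bar a},\bar c)$ holds only for the intended code. With this cleaned-up $\psi$, your claims~(i) and~(ii) go through exactly as you outline.
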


\begin{theorem}[expected in the next paper]
Suppose that $\MA$ is strongly half-injectively absolutely bi-interpretable with $\MB$. If $\MA$ has uniform elimination of imaginaries without parameters, then $\MB$ has uniform elimination of imaginaries without parameters. 
\end{theorem}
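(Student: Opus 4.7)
The plan is to pull the $0$-definable equivalence relation $\sim$ from $\MB$ over to $\MA$ via the interpretation $\Delta$, apply uniform elimination of imaginaries in $\MA$ to obtain a canonical $L(\MA)$-code for the pulled-back classes, and then push the code back to $\MB$ via the injective interpretation $\Gamma$, using strong absolute bi-interpretability to glue the two sides together inside $L(\MB)$ without parameters.

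First I fix a strong absolute half-injective bi-interpretation $(\Gamma,\emptyset,\mu_\Gamma;\Delta,\emptyset,\mu_\Delta)$ with $\dim\Gamma=n_1$, $\dim\Delta=n_2$, and let $\theta_\MB(\bar u,x)\in L(\MB)$ be a $0$-definition of $\mu_\Delta\circ\mu_\Gamma\colon U_{\Delta\circ\Gamma}(\MB)\to B$. Injectivity of $\Gamma$ gives that $\mu_\Gamma\colon U_\Gamma(\MB)\to A$ is a bijection. Given $\sim$ on $B^n$ defined by $\phi(\bar v_1,\bar v_2)\in L(\MB)$, I define $\approx$ on $A^{nn_2}$ by the $0$-definable $L(\MA)$-formula
\[
\chi(\bar x,\bar x')\;:=\;\bigl(U_\Delta^\wedge(\bar x)\wedge U_\Delta^\wedge(\bar x')\wedge\phi_\Delta(\bar x,\bar x')\bigr)\vee(\bar x=\bar x').
\]
A short case analysis confirms $\chi$ is an equivalence relation, and by the Reduction Theorem~\ref{le:interpr_corol} applied to $\Delta$ it restricts to the $\mu_\Delta^n$-pullback of $\sim$ on $U_\Delta^n(\MA)$ and to the identity on the complement.

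Next I apply uniform EI without parameters in $\MA$ to $\approx$, obtaining $\psi^\MA(\bar x,\bar y)\in L(\MA)$ with $|\bar x|=nn_2$ and $|\bar y|=m$ such that every $\approx$-class has a unique canonical code $\bar c\in A^m$. I then form the candidate $L(\MB)$-code formula
\[
\psi^\MB(v_1,\ldots,v_n,\bar w_1,\ldots,\bar w_m)\;:=\;\exists\,\bar u_1\ldots\exists\,\bar u_n\;\Bigl[\,\bigwedge_{i=1}^{n}\theta_\MB(\bar u_i,v_i)\wedge\psi^\MA_\Gamma(\bar u_1,\ldots,\bar u_n,\bar w_1,\ldots,\bar w_m)\,\Bigr],
\]
where each $\bar u_i$ has length $n_1n_2$ (decomposed into $n_2$ tuples of length $n_1$ when substituted into $\psi^\MA_\Gamma$) and each $\bar w_j$ has length $n_1$. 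The intended reading is that $\bar u_i$ encodes, via $\Gamma$, a $\Delta$-preimage of $v_i$, and $\psi^\MA_\Gamma$ is the $\Gamma$-pullback of the $\MA$-side canonical code formula.

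To conclude, for a $\sim$-class $C\subseteq B^n$ let $\bar c\in A^m$ be the canonical code of the $\approx$-class that pulls back $C$, and set $\bar{\bar d}=(\mu_\Gamma^{-1}(c_1),\ldots,\mu_\Gamma^{-1}(c_m))\in U_\Gamma(\MB)^m$. Using surjectivity of $\mu_\Delta$, the Reduction Theorem for $\Gamma$, and the defining property of $\theta_\MB$, one checks that $\psi^\MB(\bar b,\bar{\bar d})$ holds iff $\bar b\in C$. The main obstacle is uniqueness of $\bar{\bar d}$, and this is precisely where half-injectivity is used: any $\bar{\bar d}'$ for which $\psi^\MB(\cdot,\bar{\bar d}')$ is non-empty must lie in $U_\Gamma(\MB)^m$ because $\psi^\MA_\Gamma$ embeds the clause $U_\Gamma^\wedge$; then $\mu_\Gamma^m(\bar{\bar d}')$ codes the same $\approx$-class as $\bar c$, so $\mu_\Gamma^m(\bar{\bar d}')=\bar c$ by EI in $\MA$, and finally $\bar{\bar d}'=\bar{\bar d}$ by injectivity of $\mu_\Gamma$. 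Without injectivity of $\Gamma$ this last step collapses, since distinct $\MB$-tuples could encode the same $\MA$-element. Absoluteness of $\Gamma,\Delta$, the $0$-definability of $\theta_\MB$ provided by strong absolute bi-interpretability, and the parameter-free form of EI in $\MA$ together ensure that $\psi^\MB$ is a pure $L(\MB)$-formula without parameters.
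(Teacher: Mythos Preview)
The paper does not prove this theorem; it is explicitly announced as ``expected in the next paper'', so there is no argument here to compare against. Your overall strategy---pull the $0$-definable relation to $\MA$ via $\Delta$, apply EI there, and push the canonical code back to $\MB$ through the injective $\Gamma$ using the $0$-definable $\theta_\MB$---is the natural one, and the existence half (that your $\bar{\bar d}=\mu_\Gamma^{-1}(\bar c)$ works) is fine.

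The gap is in uniqueness. You assert that if $\psi^\MB(B^n,\bar{\bar d}')=C$ then $\bar c':=\mu_\Gamma^m(\bar{\bar d}')$ ``codes the same $\approx$-class as $\bar c$'', but this does not follow. Unwinding your formula gives
\[
\psi^\MB(B^n,\bar{\bar d}')=\mu_\Delta^n\bigl(\psi^\MA(A^{nn_2},\bar c')\cap U_\Delta^n(\MA)\bigr),
\]
so the hypothesis only says that the $\mu_\Delta^n$-image of $\psi^\MA(\cdot,\bar c')\cap U_\Delta^n$ equals $C$. The paper's definition of uniform EI guarantees, for each $\approx$-class, a unique $\bar c$ with $\psi^\MA(\cdot,\bar c)$ equal to that class; it says nothing about $\psi^\MA(\cdot,\bar c')$ for other $\bar c'$. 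Such a fiber could be an arbitrary subset of $A^{nn_2}$---not $\approx$-saturated, not contained in $U_\Delta^n$---whose image under $\mu_\Delta^n$ still happens to be $C$, and then nothing forces $\bar c'=\bar c$.

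The fix is a standard normalization: before translating to $\MB$, replace $\psi^\MA(\bar x,\bar y)$ by its conjunction with the $L(\MA)$-clauses $\forall\bar x'\forall\bar x''\,[(\psi^\MA(\bar x',\bar y)\wedge\psi^\MA(\bar x'',\bar y))\to\chi(\bar x',\bar x'')]$ and $\forall\bar x'\forall\bar x''\,[(\psi^\MA(\bar x',\bar y)\wedge\chi(\bar x',\bar x''))\to\psi^\MA(\bar x'',\bar y)]$. After this, every nonempty $\psi^\MA(\cdot,\bar c')$ is a single $\approx$-class. Then either that class lies outside $U_\Delta^n$ (forcing $\psi^\MB(\cdot,\bar{\bar d}')=\emptyset\ne C$), or it equals $(\mu_\Delta^n)^{-1}(C)$, whence $\bar c'=\bar c$ by the uniqueness clause of EI in $\MA$ and $\bar{\bar d}'=\bar{\bar d}$ by injectivity of $\mu_\Gamma$, exactly as you intended.
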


\subsection{Logical categories}\label{subsec:cat}

Another consequence of the bi-interpretation with parameters is the equivalence of logical categories that come from B.\,Plotkin's works~\cite{Plotkin6, Plotkin10, Plotkin5}. We will not give all the necessary definitions here, since this would be too long, but instead we mention one principal result and refer to~\cite{Th_int2} for details. Let us just say here that by $\PLS(\MA)$ we denote the category of all projective logical sets over an algebraic structure $\MA$ and by $\PDS(\MA)$ we denote the full subcategory of projective definable sets. {\em Logical sets} are actually the same as type-definable sets, and {\em projective sets} are their quotient sets by definable equivalence relations.

\begin{theorem}[expected in the next paper]
Let $\MA$ and $\MB$ be strongly bi-interpretable with parameters. Then the categories $\PLS(\MA)$ and $\PLS(\MB)$ of projective logical sets over $\MA$ and $\MB$ are equivalent, and their subcategories $\PDS(\MA)$ and $\PDS(\MB)$ of projective definable sets are equivalent too.
\end{theorem}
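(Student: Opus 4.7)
The plan is to build functors $F_\Gamma\colon \PLS(\MA)\to\PLS(\MB)$ and $F_\Delta\colon \PLS(\MB)\to\PLS(\MA)$ induced by the interpretation codes, and then exhibit natural isomorphisms $F_\Delta\circ F_\Gamma\cong \mathrm{Id}_{\PLS(\MA)}$ and $F_\Gamma\circ F_\Delta\cong \mathrm{Id}_{\PLS(\MB)}$ using the two definable coordinate compositions supplied by the strong bi-interpretation. First, I fix a strong bi-interpretation $(\Gamma,\bar p,\mu_\Gamma,\theta_\MB;\Delta,\bar q,\mu_\Delta,\theta_\MA)$, so that $\mu_\Gamma\circ\mu_\Delta$ is defined in $\MA$ by $\theta_\MA$ and $\mu_\Delta\circ\mu_\Gamma$ is defined in $\MB$ by $\theta_\MB$.

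Next, I define $F_\Gamma$ on objects: a projective logical set $(X,\sim_X)$ over $\MA$, with $X\subseteq A^n$ type-definable and $\sim_X$ a definable equivalence, is sent to $(\mu_\Gamma^{-1}(X),\sim^\Gamma_X)\subseteq B^{n\cdot\dim\Gamma}$, where $\sim^\Gamma_X$ is the equivalence whose defining formula is the $\Gamma$-translation of the one for $\sim_X$ (augmented by $U_\Gamma$). By the Reduction Theorem~\ref{le:interpr_corol}, applied formula-by-formula to the type defining $X$, this is indeed a type-definable set, and $\sim^\Gamma_X$ is a definable equivalence relation on it; by Lemma~\ref{cor2} these data agree with the honest preimage under $\mu_\Gamma$. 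On morphisms, a morphism in $\PLS(\MA)$ is a type-definable map compatible with the equivalences, and I apply $\Gamma$-translation to its defining set of formulas to obtain $F_\Gamma(f)$; functoriality is routine. The functor $F_\Delta$ is constructed symmetrically.

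Third, I produce the natural isomorphism $\eta\colon F_\Delta\circ F_\Gamma\Rightarrow \mathrm{Id}_{\PLS(\MA)}$. By Lemmas~\ref{le:int-transitivity} and~\ref{ABC}, the composition $F_\Delta\circ F_\Gamma$ sends $(X,\sim_X)$ to $((\mu_\Gamma\circ\mu_\Delta)^{-1}(X),\sim^{\Gamma\circ\Delta}_X)$ sitting inside $U_{\Gamma\circ\Delta}(\MA,\bar{\bar p},\bar q)$. The graph of $\mu_\Gamma\circ\mu_\Delta$ is definable in $\MA$ by $\theta_\MA$, and it induces a bijection of equivalence classes onto $(X/{\sim_X})$; taking this graph (restricted to the preimage of $X$) as $\eta_{(X,\sim_X)}$ yields an isomorphism in $\PLS(\MA)$. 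Naturality in $(X,\sim_X)$ reduces, via the Reduction Theorem and Lemma~\ref{ABC}, to the identity $\mu_{\Gamma\circ\Delta}^{-1}(f(a)) = (\Gamma\circ\Delta$-translation of $f)(\mu_{\Gamma\circ\Delta}^{-1}(a))$ for every morphism $f$. The natural isomorphism $F_\Gamma\circ F_\Delta\cong \mathrm{Id}_{\PLS(\MB)}$ is built dually from $\theta_\MB$. Finally, since $\Gamma$- and $\Delta$-translation send single formulas to single formulas (Remark~\ref{rem:comp-transl}), both functors restrict to $\PDS(\MA)\simeq \PDS(\MB)$.

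The principal obstacle is twofold. First, one must verify that the construction is independent of the choices involved (the coordinate maps $\mu_\Gamma,\mu_\Delta$, the representatives $\bar\mu_\Gamma^{-1}(a)$, the prenex normal forms used to define $\Gamma$-translation): this is handled by Corollary~\ref{cor_form} and Lemma~\ref{cor2}, but requires careful bookkeeping on projective (quotient) objects. Second and more seriously, naturality of $\eta$ requires that the witnessing isomorphisms $F_\Delta F_\Gamma(X)\to X$ are all induced by a single definable map in $\MA$, uniformly in $X$; this is exactly where weak bi-interpretation would fail, because without the strong compatibility $\mu_\Gamma\circ\mu_\Delta$ being definable, each object-wise isomorphism would exist but would not assemble into a natural transformation. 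The strong bi-interpretation data $\theta_\MA$ and $\theta_\MB$ are precisely what is needed to overcome this obstacle.
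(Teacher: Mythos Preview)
The paper does not contain a proof of this theorem: it is explicitly labelled ``expected in the next paper'', and the surrounding text states that the necessary definitions of $\PLS$ and $\PDS$ are not given here but deferred to~\cite{Th_int2}. There is therefore nothing in the present paper to compare your argument against.

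That said, your outline is the natural one and matches what one would expect the deferred proof to do: transport objects and morphisms via $(\Gamma,\bar p,\mu_\Gamma)$- and $(\Delta,\bar q,\mu_\Delta)$-translation, and use the single definable graph $\theta_\MA$ (resp.\ $\theta_\MB$) of $\mu_\Gamma\circ\mu_\Delta$ (resp.\ $\mu_\Delta\circ\mu_\Gamma$) as the component of the natural isomorphism at every object. Your identification of the key point---that strong bi-interpretability is needed precisely so that the object-wise isomorphisms assemble into a \emph{natural} transformation---is exactly right and is consistent with the paper's emphasis on the distinction between weak and strong bi-interpretability. Two cautions: first, since the paper does not define the morphisms of $\PLS$, your claim that they are ``type-definable maps compatible with the equivalences'' is an assumption you should flag, and the precise definition in~\cite{Th_int2} may involve definable relations rather than maps; second, your verification that $F_\Gamma$ is well-defined on the level of isomorphism classes of projective objects (independence of the chosen representative of $\sim_X$ and of the choices $\bar\mu_\Gamma^{-1}(a)$) is asserted but not carried out, and this bookkeeping on quotients is where most of the actual work in such a proof lies.
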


\subsection{Bi-interpretation with \texorpdfstring{$\Z$}{} and QFA}\label{subsec:Z}

The notion of quasi-finitely axiomatizability was introduced in 1986 by G.\,Ahlbrandt and M.\,Ziegler~\cite{AhZ}. For groups this notion was studied by F.\,Oger, G.\,Sabbagh~\cite{Oger, OS} and A.\,Nies~\cite{Nies1, Nies2}.

\begin{definition}
A finitely generated algebraic structure $\MA$ in a language $L(\MA)$ is called {\em QFA} ({\em quasi-finitely
axiomatizable}), if there exists a sentence $\psi\in \Th(\MA)$ such that for any finitely generated $L(\MA)$\=/structure $\nsA$ with $\psi\in \Th(\nsA)$ one has $\MA\simeq\nsA$. 
\end{definition}

For example, the ring of integers $\Z$ is QFA; the metabelian Baumslag\,--\,Solitar group $\BS(1,k)$, $k>1$, the Heisenberg group $\UT_3(\Z)$, the  restricted wreath product $\Z_p\!\wr\Z$ for a prime $p$ are QFA~\cite{Nies1}.

\begin{theorem}[\cite{Khelif}, {\cite[Theorem 7.14]{Nies2}}]\label{th17}
Suppose that $\MA$ is a finitely generated algebraic structure in a finite language $L(\MA)$, which is bi-interpretable with $\Z$. Then $\MA$ is QFA.
\end{theorem}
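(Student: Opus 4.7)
The plan is to construct a QFA sentence $\psi_\MA \in \Th(\MA)$ that encodes the bi-interpretation with $\Z$ together with a QFA sentence for $\Z$ translated into $L(\MA)$, and then to show that any finitely generated model of $\psi_\MA$ must be isomorphic to $\MA$.

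Fix the bi-interpretation $(\Gamma,\bar p,\mu_\Gamma,\theta_\Z;\Delta,\bar q,\mu_\Delta,\theta_\MA)$ between $\MA$ and $\Z$; let $\sigma_\Z$ be Nies's QFA sentence for the ring $\Z$; and, using absolute richness of $\Z$, let $\pi(\bar y)$ be an $L_\ring$-formula with $\pi(\Z)=\{\bar p\}$. Since $L(\MA)$ and $L_\ring$ are finite, each admissibility set $\AC_\Delta$ and $\AC_{\Gamma\circ\Delta}$ can be packaged as a single $L(\MA)$-sentence by Remark~\ref{single_AC}. I would define $\psi_\MA$ as the sentence asserting the existence of tuples $\bar q^\ast,\bar{\bar p}^\ast,\bar r,\bar s$ in $\nsA$ such that $AC_\Delta(\bar q^\ast)$ and $AC_{\Gamma\circ\Delta}(\bar{\bar p}^\ast,\bar q^\ast)$ hold (so that $\nsZ:=\Delta(\nsA,\bar q^\ast)$ and $\Gamma\circ\Delta(\nsA,\bar{\bar p}^\ast,\bar q^\ast)$ are well-defined), the $\Delta$-translations $(\sigma_\Z)_{\Delta,\bar q^\ast}$ and $(\pi)_{\Delta,\bar q^\ast}([\bar{\bar p}^\ast])$ both hold, the formula $\theta_\MA(\cdot,\cdot,\bar{\bar p}^\ast,\bar q^\ast,\bar r)$ defines an $L(\MA)$-isomorphism $\Gamma\circ\Delta(\nsA,\bar{\bar p}^\ast,\bar q^\ast)\to\nsA$, and an analogous $\Delta$-translated condition (with witnesses $\bar s$) using $\theta_\Z$ encodes the other half of the bi-interpretation inside $\nsZ$. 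All conjuncts are first-order in $L(\MA)$, and by construction $\MA\models\psi_\MA$, with witnesses drawn from the original data.

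Suppose now that $\nsA$ is finitely generated and $\nsA\models\psi_\MA$, and fix witnesses. Then $\nsZ\models\sigma_\Z$ and the $\theta_\MA$-conjunct yields $\nsA\simeq\Gamma(\nsZ,\bar p^\ast)$ with $\bar p^\ast:=[\bar{\bar p}^\ast]\in\nsZ^{\dim_\param\Gamma}$. Once I know that $\nsZ$ is finitely generated as a ring, the QFA property of $\Z$ gives $\nsZ\simeq\Z$; the condition $(\pi)_{\Delta,\bar q^\ast}([\bar{\bar p}^\ast])$ forces any such isomorphism to send $\bar p^\ast$ to $\bar p$, and hence $\nsA\simeq\Gamma(\Z,\bar p)\simeq\MA$, completing the proof.

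The essential obstacle is to promote the finite generation of $\nsA$ to finite generation of the interpreted $\nsZ$. Pick generators $a_1,\ldots,a_k$ of $\nsA$ with representatives $\bar c_i\in\nsZ^{\dim\Gamma}$ under $\nsA\simeq\Gamma(\nsZ,\bar p^\ast)$, and let $S$ be the finite set of all coordinates of $\bar c_1,\ldots,\bar c_k,\bar p^\ast,\bar{\bar q}^\ast,\bar s$ regarded as elements of $\nsZ$. By the Reduction Theorem~\ref{le:interpr_corol}, every $L(\MA)$-term in the $a_i$ has a representative in $\nsZ^{\dim\Gamma}$ obtainable from $S$ using $L_\ring$-operations read off from $\Gamma$'s code; then, via the $\theta_\Z$-conjunct of $\psi_\MA$, every element of $\nsZ$ is the $\theta_\Z$-image of a tuple built from such representatives, so $\nsZ$ lies in the $L_\ring$-definable closure of $S$ in $\nsZ$. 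Turning this definable closure into genuine ring generation, so that $\sigma_\Z$ can be applied, is the hard part, and it must be carried out using the specific form of Nies's QFA sentence (see~\cite{Khelif,Nies1,Nies2}); this is the substantive appeal to Khelif--Nies technology that closes the argument.
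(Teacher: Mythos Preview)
The paper does not prove Theorem~\ref{th17} --- it cites~\cite{Khelif,Nies2} and points to an alternative argument in~\cite[Theorem~4.9]{KhMS} --- so your proposal must be assessed on its own merits.

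Your overall architecture is the standard one and the first two-thirds are fine. The genuine gap is exactly where you place it: passing from ``$\nsZ$ equals the $L_\ring$-definable closure of a finite set $S$'' to ``$\nsZ$ is finitely generated as a ring''. This implication is simply false, and no choice of $\sigma_\Z$ repairs it. Since $\Th(\Z)$ has definable Skolem functions (via the definable well-order $0,1,-1,2,-2,\ldots$), for any nonstandard element $c$ in a countable model of $\Th(\Z)$ the definable closure $M$ of $\{c\}$ is an elementary substructure; thus $M\models\sigma_\Z$ (indeed $M\models\Th(\Z)$, so $M$ satisfies \emph{every} first-order condition you could impose on $\nsZ$, including all the translated bi-interpretation conditions), $M$ is the definable closure of a single element, yet $M\not\simeq\Z$ and $M$ is not a finitely generated ring. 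Your deferral to ``the specific form of Nies's QFA sentence'' therefore cannot close the gap along the route you sketch: any such sentence lies in $\Th(\Z)$ and is already satisfied by $M$.

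What your argument underuses is the hypothesis that $\nsA$ is finitely generated: you invoke it only to produce the representatives $\bar c_i$ and then pass to definable closure, discarding the genuinely non-first-order content of finite generation. The cited proofs work differently. The route the paper highlights immediately after stating the theorem is that of~\cite{KhMS}: bi-interpretability with $\Z$ makes $\MA$ rich (Theorem~\ref{th11}), and one then proves directly that finitely generated rich structures in a finite language are QFA, never passing through ``$\nsZ$ is a finitely generated ring'' at all. If you want to stay closer to the Nies line, you must explain concretely how finite generation of $\nsA$ (not merely definable-closure finiteness in $\nsZ$) forces $\nsZ\simeq\Z$; a bare citation is not a proof.
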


The result of Theorem~\ref{th17} was proved differently in the article~\cite[Theorem 4.9]{KhMS}, and in a more general formulation.

Recall that $\Z$ is rigid and i\=/rigid, so Lemma~\ref{rigid+i-rigid} holds for $\Z$. 

The following theorem summarizes a whole bunch of results related to bi-interpretation with $\Z$. 

\begin{theorem}
Let $\MA$ be a finitely generated algebraic structure in a finite language $L(\MA)$ and there exists a right invertible interpretation $(\Gamma,\emptyset)$ of $\MA$ in $\Z$ with right inverse $(\Delta,\bar q)$. Then 
\begin{enumerate}[label=\arabic*)]\setlength\itemsep{0em}
\item $(\Gamma,\emptyset;\Delta,\bar q)$ is a strong bi-interpretation between $\MA$ and $\Z$ for some coordinate maps $\mu_\Gamma$ and $\mu_\Delta$;
\item the elementary theory $\Th(\MA)$ is hereditarily undecidable;
\item $\MA$ is rich;
\item $\MA$ is QFA;
\item $\MA$ is i\=/rigid;
\item if $\Delta$ is injective then $\MA$ has elimination of imaginaries with parameters.
\end{enumerate}
\end{theorem}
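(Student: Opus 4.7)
The plan is to assemble the six conclusions by invoking the machinery collected in the preceding sections, with the entire argument pivoting on two standard properties of $\Z$: it is rigid (noted in Subsection~\ref{subsec:def_sets}) and it is i\=/rigid (noted in Subsection~\ref{subsec:reg_int}, following~\cite[Lemma~2.7]{AKNS}). I would address item~(1) first, since several of the remaining items presuppose bi-interpretability rather than merely right-invertibility: Corollary~\ref{cor:rigid+i-rigid} applies verbatim to the pair $(\Z,\MA)$, so the given right-invertible $(\Gamma,\emptyset)\colon\MA\rightsquigarrow\Z$ with right inverse $(\Delta,\bar q)$ upgrades to a strong bi-interpretation $(\Gamma,\emptyset;\Delta,\bar q)$ for suitably chosen coordinate maps $\mu_\Gamma,\mu_\Delta$.

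With item~(1) in place, the remaining items fall in turn. For~(2), the right inverse $(\Delta,\bar q)\colon\Z\rightsquigarrow\MA$ interprets $\Z$ with parameters in $\MA$; since $L(\Z)$ is finite and $\Th(\Z)$ is Tarski's archetype of a hereditarily undecidable theory, Theorem~\ref{le:undec-Z} transfers hereditary undecidability to $\Th(\MA)$. For~(3), $\Z$ is absolutely rich, hence rich; because $L(\MA)$ and $L(\Z)$ are finite and $\MA,\Z$ are bi-interpretable with parameters by~(1), Theorem~\ref{th11} transfers richness to $\MA$. For~(4), Theorem~\ref{th17} applies directly: a finitely generated structure in a finite language bi-interpretable with $\Z$ is QFA. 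For~(5), once $\MA$ is known to be rich from~(3), the Corollary following Theorem~\ref{th18} (every finitely generated rich structure in a finite language is i\=/rigid) gives i\=/rigidity of $\MA$. For~(6), $\Z$ has uniform elimination of imaginaries without parameters, hence with parameters by the transfer theorem in Subsection~\ref{subsec:rich}; if in addition $\Delta$ is injective, the bi-interpretation from~(1) is half-injective in the direction $\Z\rightsquigarrow\MA$, and the corresponding transfer theorem in Subsection~\ref{subsec:rich} moves uniform elimination of imaginaries with parameters from $\Z$ to $\MA$.

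The main obstacle is item~(1): right-invertibility is formally defined on coordinatizations, while Corollary~\ref{cor:rigid+i-rigid} only promises strongness for some replacement pair of coordinate maps, not necessarily the ones initially supplied. The point to verify is that rigidity of $\Z$ forces uniqueness of $\mu_\Delta$ on the $\Z$\=/side, which combined with i\=/rigidity identifies $\mu_\Delta\circ\mu_\Gamma$ with the unique homotopy to $\Id_{L(\Z)}$ and hence makes it definable in $\Z$; dually, $\mu_\Gamma\circ\mu_\Delta$ becomes definable in $\MA$ after replacing $\mu_\Gamma$ as in Remark~\ref{rem:0}. Once this bookkeeping is settled, items~(2)--(6) reduce to direct applications of the cited results, so no genuinely new argument is required beyond the one encapsulated in Corollary~\ref{cor:rigid+i-rigid}.
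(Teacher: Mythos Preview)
Your proposal is correct and matches the paper's intended argument: the theorem is presented in the paper as a summary without an explicit proof environment, and the sentence immediately preceding it (``Recall that $\Z$ is rigid and i\=/rigid, so Lemma~\ref{rigid+i-rigid} holds for $\Z$'') signals that item~(1) comes from Corollary~\ref{cor:rigid+i-rigid}, while items~(2)--(6) are direct invocations of Theorem~\ref{le:undec-Z}, Theorem~\ref{th11}, Theorem~\ref{th17}, the corollary to Theorem~\ref{th18}, and the elimination-of-imaginaries transfer theorems in Subsection~\ref{subsec:rich}, exactly as you have laid out.
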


Now let's take a look at regular bi-interpretation with $\Z$. Since $\Z$ is absolutely rich, then Theorem~\ref{reg_th18} holds for $\Z$, and by Corollary~\ref{cor:reg_i-rigid}, $\Z$ is regularly i\=/rigid, so Lemma~\ref{reg_rigid+i-rigid} holds for $\Z$ too. 

The simplest way to establish a regular bi-interpretation with $\Z$ is given by the following theorem.

\begin{theorem}[expected in the next paper]\label{th23}
Suppose that $\MA$ is a finitely generated algebraic structure in a finite language $L(\MA)$, and there exists a right invertible absolute interpretation $\Gamma\colon \MA\rightsquigarrow\Z$ with right inverse $(\Delta,\bar q)\colon\Z\rightsquigarrow\MA$. If there is a regular interpretation $(\Delta,\psi)\colon \MA\rightsquigarrow\Z$, such that $\bar q\in \psi(\MA)$, then there exists a parameter extension $(\Gamma_\param,\psi\wedge\delta)$ of $(\Delta,\psi)$, such that $(\Gamma,\emptyset;\Delta_\param,\psi\wedge\delta)$ is a strong regular bi-interpretation between $\MA$ and $\Z$ (without descriptors).
\end{theorem}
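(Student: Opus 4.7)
The plan is to bootstrap from the given data in three stages, using that $\Z$ is rigid, i\=/rigid, and regularly i\=/rigid, together with the fact (to be derived) that $\MA$ is rich.

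\emph{Stage 1 (richness of $\MA$).} By Remark~\ref{rigid3}, the absolute right-invertible interpretation $(\Gamma,\emptyset)$ admits coordinate maps making it strongly right-invertible with strong right inverse $(\Delta,\bar q,\mu_\Delta)$. Lemma~\ref{rigid+i-rigid} then yields a strong bi-interpretation with parameters $(\Gamma,\emptyset,\mu_\Gamma;\Delta,\bar q,\mu_\Delta)$ between $\MA$ and $\Z$, and Theorem~\ref{th11} gives that $\MA$ is rich. This richness of $\MA$ is the key tool for the uniformity arguments below.

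\emph{Stage 2 (the $\Z$\=/side homotopy).} By Corollary~\ref{cor:reg_i-rigid}, $\Z$ is regularly i\=/rigid, so the regular self-interpretation $(\Delta\circ\Gamma,\psi_\Gamma)$ of $\Z$ is regularly homotopic to $\Id_{L(\Z)}$ via some pair $(\theta_\Z,\delta_\Z)$ of $L(\Z)$\=/formulas. Since the parameter extension $\Delta_\param$ augments $\Delta$ only by fictitious variables $\bar w$, the composition $\Delta_\param\circ\Gamma$ inherits this homotopy verbatim, regardless of how $\delta$ and $\bar w$ are chosen on the $\MA$\=/side.

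\emph{Stage 3 (the $\MA$\=/side homotopy and the choice of $\delta$).} Let $\theta_\MA(\bar u,x,\bar q,\bar r)$ be the given connector for $(\Gamma\circ\Delta,\bar q)\sim\Id_{L(\MA)}$. Introduce a fictitious tuple $\bar w$ and take $\delta(\bar y,\bar w)$ in $L(\MA)$ to be the first-order formula asserting that a candidate connector $\theta_\MA^{\ast}(\bar u,x,\bar y,\bar w)$ (to be constructed) defines an $L(\MA)$\=/isomorphism $\Gamma\circ\Delta(\MA,\bar y)\to\MA$; its expressibility follows from the admissibility conditions together with the standard bijectivity and homomorphism conditions. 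For $\bar y=\bar q$, the given data $\bar r$ encodes a witness $\bar w$. For general $\bar y\in\psi(\MA)$, richness of $\MA$ (Stage 1) together with Theorem~\ref{th18} applied to the interpretations $(\Delta,\bar y),(\Delta,\bar q)\colon\Z\rightsquigarrow\MA$ produces a parameter-defined homotopy between them; composing with the fixed $\theta_\MA(\cdot,\cdot,\bar q,\bar r)$ yields an isomorphism $\Gamma\circ\Delta(\MA,\bar y)\to\MA$ definable in $\MA$ with parameters.

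\emph{The main obstacle} is packaging these $\bar y$\=/dependent isomorphisms into a single first-order formula $\theta_\MA^{\ast}$ parameterized uniformly by $\bar w$: Theorem~\ref{th18} provides, for each $\bar y$, a connector whose shape may a priori depend on $\bar y$, whereas Definition~\ref{def:reg} demands a single formula working uniformly across $\psi(\MA)$. Richness of $\MA$ resolves this because the existence of such a connector is captured by a weak second-order condition on $(\bar y,\bar q)$ that richness translates into a first-order condition with additional parameters, which are absorbed into $\bar w$. Finally, the strong condition of Definition~\ref{def:st_reg} follows from rigidity of $\Z$ combined with Remark~\ref{rem:strong}: once $\mu_\Gamma\circ\mu_\Delta$ is defined by $\theta_\MA^{\ast}$, the map $\mu_\Delta\circ\mu_\Gamma$ on the $\Z$\=/side is uniquely determined and must coincide with the one defined by $\theta_\Z$.
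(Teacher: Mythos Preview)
The paper does not contain a proof of Theorem~\ref{th23}: the statement is explicitly marked ``expected in the next paper'' and no argument is given in the present text. There is therefore nothing to compare your proposal against.

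On its own terms, your outline follows a plausible route and correctly identifies the main ingredients available in the paper (rigidity and regular i\=/rigidity of $\Z$, transfer of richness via Theorem~\ref{th11}, and the need to uniformize the $\MA$\=/side connector). The weakest link is Stage~3: you invoke richness to convert a weak second-order existential over connectors into a single first-order formula $\theta_\MA^{\ast}(\bar u,x,\bar y,\bar w)$, but richness as stated in the paper translates weak second-order formulas over \emph{elements} of $\MA$, not over formulas or definable maps; you would need to argue that the relevant family of candidate isomorphisms is itself uniformly parameterized by tuples in $\MA$ before richness applies. Without that step, Theorem~\ref{th18} gives you only an existential over formula-shapes, which is not first-order. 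Since the authors defer the proof, it is not possible to say whether their intended argument handles this point the same way or via a different mechanism (for instance, a direct construction exploiting that $\MA$ is finitely generated and $L(\MA)$ is finite).
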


The following theorem brings together a collection of results concerning regular bi-interpretation with $\mathbb{Z}$.

\begin{theorem}
Let $\MA$ be a finitely generated algebraic structure in a finite language $L(\MA)$, and there exists a regularly right invertible interpretation $(\Gamma,\emptyset)\colon\MA\rightsquigarrow \Z$ with a regular right inverse $(\Delta,\psi)$. Then 
\begin{enumerate}[label=\arabic*)]\setlength\itemsep{0em}
\item $(\Gamma,\emptyset;\Delta,\psi)$ is a strong regular bi-interpretation between $\MA$ and $\Z$;
\item $\MA$ is absolutely rich;
\item $\MA$ is definable by types;
\item $\MA$ is prime, atomic, and homogeneous;
\item $\MA$ is  regularly i\=/rigid;
\item an $L(\MA)$\=/structure $\nsA$ is elementarily equivalent to $\MA$ if and only if $\nsA\simeq\Gamma(\nsZ)$ for some $\nsZ\equiv\Z$.
\end{enumerate}
\end{theorem}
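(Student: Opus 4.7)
The plan is to use the fact that $\Z$ satisfies simultaneously all of the meta-properties we wish to transfer (rigidity, absolute richness, regular i-rigidity, primality, atomicity, homogeneity, definability by types) and then propagate them to $\MA$ via the transfer theorems stated earlier in the paper.

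First, for item~(1) I would observe that $\Z$ is rigid and, being finitely generated in a finite language and absolutely rich, is regularly i-rigid by Corollary~\ref{cor:reg_i-rigid}. Lemma~\ref{reg_rigid+i-rigid} then applies directly to $(\Gamma,\emptyset)$ and its regular right inverse $(\Delta,\psi)$, yielding that $(\Gamma,\emptyset;\Delta,\psi)$ is a strong regular bi-interpretation between $\MA$ and $\Z$; since $\Gamma$ is absolute, this bi-interpretation is in fact regular half-absolute.

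The remaining items then cascade. Item~(2) follows from Theorem~\ref{th12}: absolute richness transfers from $\Z$ to $\MA$ through the regular half-absolute bi-interpretation established in item~(1). Item~(5) is then immediate from Corollary~\ref{cor:reg_i-rigid} applied to the now absolutely rich structure $\MA$. For item~(3), I would first note that $\Z$ is itself definable by types~--- its rigidity together with the fact that every integer is pinned down by its complete type forces any isotypic $\nsZ$ to be isomorphic to $\Z$~--- and then invoke the transfer theorem for definability by types under regular bi-interpretation. For item~(4), primality transfers from $\Z$ to $\MA$ by the transfer theorem for primality under a regular right invertible interpretation. Since $\MA$ is finitely generated in the countable language $L(\MA)$ and hence countable, being prime forces $\MA$ to be atomic, and any countable atomic structure is homogeneous by~\cite[Lemma~4.2.14]{Marker}. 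Item~(6) is then a direct application of Theorem~\ref{th:equiv1} with $\MB=\Z$.

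The main obstacle is largely organizational: several of the transfer results invoked here~--- primality under regular right invertible interpretations, definability by types under regular bi-interpretation, and the characterization of elementary equivalence via regular bi-interpretation~--- are only announced in the present paper and proved in later installments of the series. Once those inputs are available, the argument is a direct concatenation of results. A small verification I would include in the full write-up is that $\Z$ itself is prime, atomic, homogeneous, and definable by types; these are elementary facts, but they enter as the ``input'' data upon which every property of $\MA$ ultimately depends.
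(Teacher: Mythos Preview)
Your proposal is correct and takes essentially the same approach as the paper: the theorem is explicitly presented there as a summary of previously stated results, and your proof is exactly the intended concatenation---Lemma~\ref{reg_rigid+i-rigid} for item~(1), Theorem~\ref{th12} for item~(2), Corollary~\ref{cor:reg_i-rigid} for item~(5), and the announced transfer theorems for items~(3), (4), and~(6). Your acknowledgment that several inputs are deferred to later papers in the series matches the paper's own treatment.
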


The following fact is known~\cite{KhMS, Khelif, Nies2}, but we explain it differently.

\begin{lemma}\label{lem:UT}
The Heisenberg group $\UT_3(\Z)$ is not bi-interpretable with $\Z$.
\end{lemma}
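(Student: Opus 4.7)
The plan is to derive a contradiction from a hypothetical bi-interpretation $(\Gamma,\bar p;\Delta,\bar q)$ between $G:=\UT_3(\Z)$ and $\Z$, with $\bar p\in\Z$ and $\bar q\in G$, by transferring the extreme rigidity of $\Z$ to $G$ and contradicting it with the large outer automorphism group of $G$.

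First I would upgrade the weak bi-interpretation to a strong one. Since $\Z$ is rigid ($\Aut(\Z)$ is trivial) and i\=/rigid (\cite[Lemma~2.7]{AKNS}), the coordinate map of the self-interpretation $\Delta\circ\Gamma\colon\Z\rightsquigarrow\Z$ is uniquely determined and coincides with the homotopy isomorphism $\mu_\Z\colon U_{\Delta\circ\Gamma}(\Z,\bar{\bar p},\bar q)\to\Z$; in particular $\mu_\Delta\circ\mu_\Gamma$ is definable in $\Z$. Adjusting $\mu_\Gamma$ as in Remark~\ref{rigid3} to make $\mu_\Gamma\circ\mu_\Delta$ simultaneously definable in $G$ (using the $G$-homotopy provided by the bi-interpretability assumption) yields a strong bi-interpretation in the sense of Definition~\ref{def:bi}.

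Next I would use strong bi-interpretability together with rigidity of $\Z$ to constrain $\Aut(G)$. For any $\sigma\in\Aut(G)$, the tuple $\sigma(\bar q)$ again satisfies $\AC_\Delta$, so $(\Delta,\sigma(\bar q))$ interprets $\Z$ in $G$; rigidity of $\Z$ forces a unique isomorphism $\Delta(G,\bar q)\to\Delta(G,\sigma(\bar q))$. Tracking $\sigma$ through the round-trip homotopy $\mu_G\colon U_{\Gamma\circ\Delta}(G,\bar{\bar q},\bar p)\to G$ (using Lemma~\ref{le:int-transitivity} and Lemma~\ref{ABC}), one shows that if $\sigma$ fixes $\bar q$ pointwise then $\sigma=\mathrm{id}_G$. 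Thus $\Aut(G)$ embeds into the $\Aut(G)$-orbit of $\bar q$ inside $G^{|\bar q|}$, and—crucially—this orbit is controlled by a single first-order condition (namely $\AC_\Delta$ together with the homotopy requirement), forcing $\mathrm{Out}(G)=\Aut(G)/\mathrm{Inn}(G)$ to be interpretable in $G$ with parameters in a way governed by $\Aut(\Z)=1$, hence finite.

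The final step is to contradict this finiteness: for $G=\UT_3(\Z)$ the outer automorphism group $\mathrm{Out}(G)$ is infinite, because the natural $\GL_2(\Z)$-action on the abelianization modulo centre $G/\ZG(G)\simeq\Z^2$ lifts to an infinite family of pairwise non-inner automorphisms of $G$. The main obstacle is the penultimate step: making precise the reduction from the bi-interpretation data to finiteness of $\mathrm{Out}(G)$. This is the delicate bookkeeping of coordinate maps through compositions and homotopies that is the technical content of~\cite{KhMS,Khelif,Nies2}; the essential idea is that every outer automorphism of $G$ would induce an outer automorphism of the interpreted copy of $\Z$, and the rigidity of $\Z$ leaves no room for an infinite family.
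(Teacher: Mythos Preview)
Your approach differs substantially from the paper's and contains a genuine gap at precisely the step you flag as the main obstacle.

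The paper does not argue via automorphism groups at all. It first upgrades the hypothetical bi-interpretation with parameters to a strong one (Corollary~\ref{cor:rigid+i-rigid}, using that $\Z$ is rigid and i\=/rigid), and then---via the explicit regular interpretation $(\Delta,\psi)\colon\Z\rightsquigarrow\UT_3(\Z)$ of Lemma~\ref{ex:UT3} and Theorem~\ref{th23}---to a \emph{regular} bi-interpretation. Theorem~\ref{th:equiv1} then forces every model of $\Th(\UT_3(\Z))$ to be of the form $\Gamma(\nsZ)$ for some $\nsZ\equiv\Z$, and the contradiction is supplied by~\cite{MS2}, which exhibits models of $\Th(\UT_3(\Z))$ not of this non-standard form.

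Your route---that bi-interpretability with $\Z$ would force $\mathrm{Out}(\UT_3(\Z))$ to be finite---does not go through. The first part can be made precise: if $\sigma\in\Aut(G)$ fixes the \emph{full} parameter tuple $\bar s$ of the bi-interpretation (including the parameters of the connector $\theta_G$, not just $\bar q$), then rigidity of $\Z$ and surjectivity of the definable map $\mu_\Gamma\circ\mu_\Delta\colon U_{\Gamma\circ\Delta}(G,\bar{\bar p},\bar q)\to G$ do yield $\sigma=\mathrm{id}_G$. But this only produces an injection $\Aut(G)\hookrightarrow G^{|\bar s|}$ via $\sigma\mapsto\sigma(\bar s)$, which places no finiteness constraint on $\mathrm{Out}(G)$. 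Your ``essential idea'' that every outer automorphism of $G$ induces one of $\Z$ is incorrect: an automorphism $\sigma$ that moves $\bar q$ gives only an isomorphism $\Delta(G,\bar q)\to\Delta(G,\sigma(\bar q))$ between two copies of $\Z$ sitting at different locations in $G$, not an automorphism of a single fixed copy. In fact the intermediate statement you are trying to prove is false: the paper itself lists the free metabelian groups of rank $\geqslant 2$ among the groups regularly bi-interpretable with $\Z$ (Subsection~\ref{subsec:ns}, \cite{KMfreemetab}), yet their outer automorphism groups surject onto $\GL_2(\Z)$ and are infinite.
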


\begin{proof}
Let's assume the opposite, that there exists a bi-interpretation between $\UT_3(\Z)$ and $\Z$, then, by Corollary~\ref{cor:rigid+i-rigid}, $\UT_3(\Z)$ and $\Z$ are strongly bi-interpretable. Denote by $\Gamma$ the standard interpretation of $\UT_3(\Z)$ in $\Z$ (see Example~\ref{ex:UT1}) and by $(\Delta,\bar q)$ the interpretation  of $\Z$ in $\UT_3(\Z)$ from Example~\ref{ex:UT2}. Due to Lemma~\ref{ex:UT3} there exists a regular interpretation $(\Delta,\psi)\colon \Z\rightsquigarrow \UT_3(\Z)$, such that $\bar q\in \psi(\UT_3(\Z))$. Therefore, by Theorem~\ref{th23}, $\UT_3(\Z)$ and $\Z$ are regularly bi-interpretable in each other. The last statement gives us that all models of the elementary theory $\Th(\UT_3(\Z))$ have a form $\Gamma(\nsZ)$, $\nsZ\equiv \Z$, i.\,e., they are all non-standard (see Theorem~\ref{th:equiv1}). But in~\cite{MS2} it was shown that there exist models of $\Th(\UT_3(\Z))$ that are not non-standard. So we obtain a contradiction with the initial assumption that $\Z$ and $\UT_3(\Z)$ are bi-interpretable in each other. 
\end{proof}

\newpage
\addcontentsline{toc}{section}{References}

\end{document}